\newcounter{dummy}
\newcommand\myitem[1][]{\item[#1]\refstepcounter{dummy}\def\@currentlabel{#1}}
\renewcommand\tableofcontents{%
    \@starttoc{toc}%
}
\newcommand\shorttitle{Flows of $H$-structures} %title which appear on alternate pages
\newcommand\authors{Fadel--Loubeau--Moreno--S{\'{a}} Earp} %author name appears on alternate pages
\newcounter{commentCounter}
\ifodd\value{page}
\authors
\shorttitle
\newcommand*{\rom}[1]{\expandafter\@slowromancap\romannumeral #1@}
\newcommand{\rd}{{\rm d}}
\newcommand{\rg}{{\rm g}}
\newcommand{\rh}{{\rm h}}
\newcommand{\rG}{{\rm G}}
\newcommand{\rI}{{\rm I}}
\def\cD{\mathcal{D}}
\def\cE{\mathcal{E}}
\def\cF{\mathcal{F}}
\def\cH{\mathcal{H}}
\def\cI{\mathcal{I}}
\def\cL{\mathcal{L}}
\def\cT{\mathcal{T}}
\def\cV{\mathcal{V}}
\newcommand{\sX}{\mathscr{X}}
\newcommand{\fg}{{\mathfrak g}}
\newcommand{\fh}{{\mathfrak h}}
\newcommand{\fm}{{\mathfrak m}}
\newcommand{\fu}{{\mathfrak u}}
\def\bR{\mathbb R}
\def\fgl{\mathfrak{gl}}
\def\fso{\mathfrak{so}}
\def\fsu{\mathfrak{su}}
\newcommand{\spin}{\mathfrak{spin}}
\renewcommand{\O}{{\rm O}}
\newcommand{\SO}{{\rm SO}}
\newcommand{\Sp}{{\rm Sp}}
\newcommand{\SU}{{\rm SU}}
\newcommand{\GL}{\mathrm{GL}}
\newcommand{\SL}{\mathrm{SL}}
\newcommand{\U}{{\rm U}}
\def\G2{\mathrm{G}_2}
\def\S7{\mathrm{Spin}(7)}
\def\Spin7{\mathrm{Spin(7)}}
\DeclareMathOperator\Div{div}
\DeclareMathOperator\Dom{Dom}
\DeclareMathOperator\curl{curl}
\DeclareMathOperator{\Stab}{Stab}
\newcommand{\Ad}{\mathrm{Ad}}
\renewcommand{\det}{\mathop\mathrm{det}\nolimits}
\newcommand{\End}{{\mathrm{End}}}
\renewcommand{\epsilon}{\varepsilon}
\newcommand{\Lie}{\mathrm{Lie}}
\newcommand{\Ric}{{\rm Ric}}
\newcommand{\ad}{\mathrm{ad}}
\DeclareMathOperator\tr{tr}
\DeclareMathOperator\vol{vol}
\newcommand{\sym}{\mathrm{sym}}
\newcommand{\Fr}{{\rm Fr}}
\def\pt{\partial}
\def\g2{\varphi}
\def\s7{\Phi}
\def\ddt{\frac{d}{dt}}
\newcommand{\qandq}{\quad\text{and}\quad}
\newcommand{\qwithq}{\quad\text{with}\quad}
\newcommand{\qforq}{\quad\text{for}\quad}
\def\<{\mathopen{}\left<}
\def\>{\right>\mathclose{}}
\def\({\mathopen{}\left(}
\def\){\right)\mathclose{}}
\definecolor{gold}{rgb}{0.85,.66,0}
\definecolor{cherry}{rgb}{0.9,.1,.2}
\definecolor{burgundy}{rgb}{0.8,.2,.2}
\definecolor{orangered}{rgb}{0.85,.3,0}
\definecolor{orange}{rgb}{0.85,.4,0}
\definecolor{olive}{rgb}{.45,.4,0}
\definecolor{lime}{rgb}{.6,.9,0}
\definecolor{green}{rgb}{.2,.7,0}
\definecolor{grey}{rgb}{.4,.4,.2}
\definecolor{brown}{rgb}{.4,.3,.1}
\newtheorem{theorem}{Theorem}[section]
\newtheorem*{theorem*}{Theorem}
\newtheorem{corollary}[theorem]{Corollary}
\newtheorem{lemma}[theorem]{Lemma}
\newtheorem{proposition}[theorem]{Proposition}
\newtheorem*{proposition*}{Proposition}
\newtheorem{example}[theorem]{Example}
\theoremstyle{definition}
\newtheorem{definition}[theorem]{Definition}
\newtheorem{remark}[theorem]{Remark}
\numberwithin{equation}{section}
\newtheorem{thmx}{Theorem}
\newtheorem{propx}[thmx]{Proposition}
   \def\MR#1{}
\newcommand{\bigzero}{\mbox{\normalfont\Large\bfseries 0}}
\begin{document}

\title{\textbf{Flows of geometric structures}}
\author{Daniel Fadel, Eric Loubeau, Andrés J. Moreno \& Henrique N. S{\'{a}} Earp}
\date{\today}

\maketitle

\begin{abstract}
    We develop an abstract theory of flows of geometric $H$-structures, i.e., flows of tensor fields defining $H$-reductions of the frame bundle, for a closed and connected subgroup $H \subset \mathrm{SO}(n)$, on any connected and oriented $n$-manifold with sufficient topology to admit such structures. 
    
    The first part of the article sets up a unifying theoretical framework for deformations of $H$-structures, by way of the natural infinitesimal action of $\mathrm{GL}(n,\mathbb{R})$ on tensors combined with various bundle decompositions induced by $H$-structures. We compute evolution equations for the intrinsic torsion under general flows of $H$-structures and, as applications, we obtain general Bianchi-type identities for $H$-structures, and, for closed manifolds, a general first variation formula for the $L^2$-Dirichlet energy functional $\mathcal{E}$ on the space of $H$-structures.

    We then specialise the theory to the negative gradient flow of $\mathcal{E}$ over isometric $H$-structures, i.e., their harmonic flow. The core result is an almost monotonocity formula along the flow for a scale-invariant localised energy, similar to the classical formulae by Chen--Struwe \cites{Struwe1988,Struwe1989} for the harmonic map heat flow. This yields an $\epsilon$-regularity theorem and an energy gap result for harmonic structures, as well as long-time existence for the flow under small initial energy,  {relative to the} $L^\infty$-norm of initial torsion, in the spirit of  Chen--Ding \cite{Chen-Ding1990}. Moreover, below a certain energy level, the absence of a torsion-free isometric $H$-structure in the initial homotopy class imposes the formation of finite-time singularities. These seemingly contrasting statements are illustrated by examples on flat $n$-tori, so long as  {the set $[\mathbb{S}^n,\SO(n)/H]$ of homotopy classes of maps $\mathbb{S}^n\to \SO(n)/H$ contains more than one element and the universal cover of $\SO(n)/H$ is a sphere}; e.g. when $n=7$ and $H=\rm G_2$, or $n=8$ and $H=\rm Spin(7)$.
    
\end{abstract}

\begin{adjustwidth}{0.95cm}{0.95cm}
    \tableofcontents
\end{adjustwidth}

%\listoftodos
\newpage
\section*{Introduction}
\addcontentsline{toc}{section}{Introduction}
\label{sec: intro}

Given an oriented Riemannian manifold $(M^n,g)$, a universal description of compatible $H$-structures, for a closed connected subgroup $H \subset \mathrm{SO}(n)$, can be formulated in terms of sections of the homogeneous fibre bundle obtained by $H$-reduction of the oriented frame bundle \cite{loubeau-saearp}. Such geometric structures are ubiquitous in Riemannian Geometry, and their general existence is a rather topological condition, much weaker than e.g. parallel tensors corresponding to special holonomies such as K{\"a}hler, $\rm G_2$- or $\mathrm{Spin}(7)$-manifolds. 
An important toolbox to establish the existence, or indeed the absence, of special geometric structures on manifolds is provided by the analytical theory of geometric flows. 

Inspired by the foundational work of Karigiannis~\cites{Karigiannis2007,karigiannis-spin7} on flows of $\mathrm{G}_2$- and $\mathrm{Spin}(7)$-structures, this paper first aims to advance our abstract understanding of flows of $H$-structures. By formulating a general flow in terms of the infinitesimal deformations of tensors defining a geometric structure, we obtain equations for the coevolution of the Riemannian metric, the intrinsic torsion, and several of their derived quantities, as well as Ricci- and Bianchi-type identities for arbitrary $H$-flows. While several of these properties are already known in particular for $\mathrm{G}_2$- and $\mathrm{Spin}(7)$-structures, our framework allows for simpler proofs, which are independent of context-specific identities and therefore hold for general $H$-structures.

We then specialise the theory to the natural variational problem on homogeneous sections given by the Dirichlet functional, as an extension of harmonic map theory. Once the associated Euler--Lagrange equation has been derived, hence defining harmonic $H$-structures, one may study the corresponding parabolic negative gradient flow, in order to detect optimal $H$-structures within the same isometric class.
While some features will certainly vary according to the group $H$, we advance considerably the general theory of harmonic $H$-flows, obtaining broad-ranging results which subsume and expand previous developments by
Grigorian~\cites{Grigorian2017,Grigorian2019}, Bagaglini \cite{Bagaglini2019} and Dwiwedi-Karigiannis-Gianniotis~\cite{dgk-isometric} on harmonic $\mathrm{G}_2$-structures, He-Li~\cite{he2021} on harmonic almost complex structures ($H=\mathrm{U}(\frac{n}{2})$), Dwivedi-L-SE~\cite{Dwivedi-Loubeau-SaEarp2021} for harmonic $\mathrm{Spin}(7)$-structures, and Fowdar-SE~\cite{udhav2022quaternionic} for $\mathrm{Sp}(\frac{n}{4})\mathrm{Sp}(1)$-structures. See also \cite{dwivedi2023flows} and \cite{dwivedi2024gradient} for recent developments in the theory of general flows in the $\rm G_2$ and $\rm Spin(7)$ cases, respectively.

\bigskip

Section \ref{sec: H-structures} lays out the theoretical framework for a unified approach to $H$-structures on manifolds and their evolutions. Restricting to subgroups $H\subset\mathrm{SO}(n)$ that are realised as the stabiliser of a (multi-)tensor $\xi_{\circ}$ on $\mathbb{R}^n$, we identify $H$-structures on a manifold $M^n$ with tensor fields $\xi$ pointwise modelled on $\xi_{\circ}$, also known as \emph{geometric structures}. Then, a general deformation of $\xi$ as an $H$-structure can be expressed in terms of the \emph{diamond operator} $\diamond$, defined by the infinitesimal action of $\mathrm{GL}(n,\mathbb{R})$ on tensors:
\begin{equation*}
    A\diamond \xi :=\left.\ddt\right\vert_{t=0} e^{tA}.\xi \nonumber,
    \qwithq 
    A=S+C\in \Gamma(\End(TM))= (\Sigma^2\oplus\Omega^2_\fm)(M),
\end{equation*}
where $S$ and $C$ denote the symmetric and skew-symmetric parts, respectively, and $\fm:=\fh^{\perp}\subset\mathfrak{so}(n)$ is the orthogonal complement of the infinitesimal stabiliser $\fh=\Lie(H)$, relative to the Riemannian metric induced by $\xi$. After deriving the main properties of $\diamond$ in Section \ref{sec: inf_deformations}, in particular the distinct roles of the symmetric and skew-symmetric endomorphisms and the special case of deformations of the Riemannian metric itself (Lemma~\ref{lem: properties_diamond}), we give in Section \ref{sec: inner-prod and torsion} an alternative description of the intrinsic torsion $T$ of an $H$-structure $\xi$ in terms of $\diamond$ and $\nabla\xi$, where $\nabla$ denotes the Levi--Civita connection of that $\xi$-metric  (Lemma~\ref{lem: torsion_equiv_nablaxi}), as well as a useful decomposition of the rough Laplacian $\Delta\xi$ in terms of  $T$ (Lemma \ref{lem: basic_estimate_harmonic}). 

In Section \ref{sec: general_flows}, we define a general flow of $H$-structures by deforming under the $\diamond$-action of a curve in $\mathrm{GL}(n,\mathbb{R})$: 
\begin{equation}
\label{eq: general_H_flow}
\tag{GF}
    \frac{\partial}{\partial t}\xi=A\diamond\xi 
    \qforq   A(t)=S(t)+C(t), 
    \qwithq S(t)\in \Sigma^2 
    \qandq C(t)\in \Omega^2_{\fm}\subset\Omega^2.
\end{equation}
We obtain the corresponding evolution equations for important quantities dependent on the $H$-structure, such as the induced metric (Lemma \ref{lemma: isometric variation}), its volume form and Christoffel symbols (Lemma \ref{lem:volume_Christoffel_variation}). As an application, we characterise and prove short-time existence, but not uniqueness, of the simplest flow of $H$-structures coevolving with the Ricci flow of Riemannian metrics (Lemma \ref{lem: Ricci flow}). Most importantly, we also derive the evolution equations of both $\nabla\xi$ and $T$ under a general flow of $H$-structures: 

\begin{propx}[Proposition~\ref{prop: evo_nabla_xi} and Corollary~\ref{cor: evo_torsion}] %, page~\pageref{cor: evo_torsion})
\label{prop: intro - evol of torsion}
    Let $Q\subset\Fr(M)$ be an $H$-structure on $M$ completely determined by a (multi-)tensor $\xi$. Denote by $g$ its induced Riemannian metric, and let $T\in\Omega^1(M,\fm_{Q})$ be the intrinsic torsion tensor of $\xi$, where $\fm_{Q}\subset\mathfrak{so}(TM)$ is the $H$-submodule of skew-symmetric tensors (with respect to $g$) determined by $\fm:=\fh^{\perp}\subset\mathfrak{so}(n)$.
    
    Under the general flow of $H$-structures \eqref{eq: general_H_flow}, for each coordinate vector field $\partial_l$, the evolution of $\nabla _l\xi:=\nabla_{\partial_l}\xi$ is given by
\begin{align*}%\label{eq: variation_nabla_xi}
\frac{\pt}{\pt t}\nabla_l\xi=A\diamond \nabla_l\xi+(\nabla_lC-\Lambda \nabla S_l)\diamond\xi, 
%\frac{\pt}{\pt t}\nabla_l\xi=A\diamond (T_l\diamond \xi)+\pi_\fm(\Lambda \nabla S_l+\nabla_lC)\diamond\xi, 
\end{align*}
where $(\Lambda\nabla S_l)\indices{_j^i}:=(\Lambda\nabla S_l)_{jk}g^{ik}=g^{ik}(\nabla_j S_{kl}-\nabla_k S\indices{_j_l})$. Moreover, the torsion $T_l:=T_{\partial_l}$ satisfies
\begin{equation*}%\label{eq: ddt T_diamond_xi}
%\Big(\frac{\partial}{\partial t}T_l +[T_l,A]-\pi_\fm(\Lambda\nabla B_l+\nabla_lC)\Big)\diamond \xi_t=0,
    \Big(\frac{\partial}{\partial t}T_l +[A,T_l]+\Lambda\nabla S_l-\nabla_l C\Big)\diamond \xi=0,
\end{equation*} so that, denoting by $\pi_{\fm}:\mathfrak{so}(TM)\to\fm_Q$ the orthogonal projection, we have
\begin{align*}%\label{eq: m_part ddt_T}
\begin{split}
    %\pi_\fh\bigg(\frac{\partial}{\partial t}T_l\bigg)&= \pi_\fh([A,T_l])+\pi_\fh(\Lambda\nabla B_l+\nabla_lC),\\
    \pi_\fm\bigg(\frac{\partial}{\partial t}T_l\bigg)&= \pi_\fm([T_l,C])+\pi_\fm(\nabla_lC-\Lambda\nabla S_l).
\end{split}    
\end{align*}
\end{propx}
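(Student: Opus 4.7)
The plan is to differentiate the flow $\partial_t\xi = A\diamond\xi$ covariantly in $\partial_l$, track the commutator $[\partial_t,\nabla_l]$ via the variation of the Christoffel symbols, translate the resulting evolution of $\nabla_l\xi$ into one for $T_l$ using the identification from Lemma~\ref{lem: torsion_equiv_nablaxi}, and finally project onto $\fm_Q$.

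For the first step, applying $\nabla_l$ to the flow and using the Leibniz rule of $\nabla$ against the $\mathbb{R}$-linear diamond action gives
\begin{equation*}
\nabla_l(A\diamond\xi) = (\nabla_l A)\diamond\xi + A\diamond\nabla_l\xi,
\end{equation*}
while the commutator correction reads $[\partial_t,\nabla_l]\xi = (\partial_t\Gamma_l)\diamond\xi$, where the endomorphism $(\partial_t\Gamma_l)^i{}_j := \partial_t\Gamma^i_{lj}$ acts on tensors through the standard $\mathfrak{gl}(n,\mathbb{R})$-action; this is immediate from writing $\nabla_l$ in local coordinates. To compute $\partial_t\Gamma_l$ I would invoke the Koszul-type variation formula together with $\dot g_{ij} = -2S_{ij}$ (from Lemma~\ref{lemma: isometric variation}, equivalently from the $\diamond$-identity on $g$ in Lemma~\ref{lem: properties_diamond}), obtaining $(\partial_t\Gamma_l)^i{}_j = -(\nabla_l S)^i{}_j - (\Lambda\nabla S_l)^i{}_j$. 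The symmetric piece $-\nabla_l S$ then cancels exactly against the $\nabla_l S$ arising from Leibniz on $A = S + C$, yielding the stated
\begin{equation*}
\partial_t\nabla_l\xi = A\diamond\nabla_l\xi + (\nabla_l C - \Lambda\nabla S_l)\diamond\xi.
\end{equation*}

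For the second step, by Lemma~\ref{lem: torsion_equiv_nablaxi} the intrinsic torsion $T_l$ is characterised by $\nabla_l\xi = T_l\diamond\xi$ (in the sign convention adopted there). Differentiating this in $t$, substituting $\partial_t\xi = A\diamond\xi$ together with the evolution of $\nabla_l\xi$ just derived, and using that $\diamond$ is a Lie algebra action to simplify the cross-term $T_l\diamond(A\diamond\xi) - A\diamond(T_l\diamond\xi) = -[A,T_l]\diamond\xi$, rearrangement delivers
\begin{equation*}
(\partial_t T_l + [A,T_l] + \Lambda\nabla S_l - \nabla_l C)\diamond\xi = 0.
\end{equation*}

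For the third step, I would project onto $\fm_Q$ via $\pi_\fm$, which is well-posed at the level of coefficients because $\diamond\vert_{\fm_Q}$ is injective (cf.\ Lemma~\ref{lem: basic_estimate_harmonic}). The decisive algebraic simplification is that $[S,T_l] = ST_l - T_lS$ is symmetric whenever $S\in\Sigma^2$ and $T_l\in\fm_Q\subset\fso(TM)$, so $\pi_\fm([S,T_l]) = 0$, and therefore $\pi_\fm([A,T_l]) = \pi_\fm([C,T_l]) = -\pi_\fm([T_l,C])$, producing the claimed final identity. The main obstacle throughout is careful bookkeeping of signs in the interplay between the diamond action, the $t$-dependent Levi--Civita connection, and the orthogonal splitting $\End(TM) = \Sigma^2 \oplus \fh_Q \oplus \fm_Q$ relative to the time-varying metric $g(t)$; in particular, the cancellation of the symmetric Koszul contribution against the Leibniz piece is the algebraic heart of the argument, isolating the genuinely torsional part of the evolution.
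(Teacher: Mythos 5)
Your plan follows the paper's own route: Proposition~\ref{prop: evo_nabla_xi} by tracking the commutator $[\partial_t,\nabla_l]$ through the variation of Christoffel symbols, then Corollary~\ref{cor: evo_torsion} by differentiating $\nabla_l\xi = T_l\diamond\xi$ in $t$ and invoking the Jacobi-type identity, and finally the $\fm$-projection using that $[S,T_l]\in\Sigma^2$. The structure of the argument is sound, but there are three sign errors in the intermediate identities.

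First, comparing the coordinate expression of $\nabla_l$ with \eqref{eq:diamond_coordinates}, the commutator is $[\partial_t,\nabla_l]\xi = -(\partial_t\Gamma_l)\diamond\xi$, with a \emph{minus} sign, because $\diamond$ is the infinitesimal \emph{right} action (a vector transforms as $-A^{i}_{m}\xi^{m\cdots}$, while the Christoffel correction to $\nabla_l$ is $+\Gamma^i_{lm}\xi^{m\cdots}$). Second, Lemma~\ref{lemma: isometric variation} gives $\dot g_{ij}=+2S_{ij}$, not $-2S_{ij}$; correspondingly $(\partial_t\Gamma_l)^{i}{}_{j}=+(\nabla_lS)^{i}{}_{j}+(\Lambda\nabla S_l)^{i}{}_{j}$, matching Lemma~\ref{lem:volume_Christoffel_variation}. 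These two errors happen to cancel, so you do arrive at the stated formula for $\partial_t\nabla_l\xi$ — but neither intermediate claim should be left as written. Third, the cross-term identity has the wrong sign: Lemma~\ref{lem: properties_diamond}--\ref{item: diamond_Jacobi} gives $A\diamond(B\diamond\xi)-B\diamond(A\diamond\xi)=-[A,B]\diamond\xi$, i.e.\ $\diamond$ is an \emph{anti}-homomorphism of Lie algebras (it comes from a right action), so $T_l\diamond(A\diamond\xi)-A\diamond(T_l\diamond\xi)=+[A,T_l]\diamond\xi$, not $-[A,T_l]\diamond\xi$. As you wrote it, the rearrangement would produce $-[A,T_l]$ inside the brackets, with the wrong sign; the correct identity restores the stated $(\partial_t T_l +[A,T_l]+\Lambda\nabla S_l -\nabla_lC)\diamond\xi=0$. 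Lastly, a minor point: in the projection step the relevant statement is $\ker(\cdot\diamond\xi)|_{\Omega^2}=\Omega^2_\fh$ from Lemma~\ref{lem: ker_diamond}, not Lemma~\ref{lem: basic_estimate_harmonic}; and the reason $\pi_\fm$ extracts the stated identity is not merely injectivity of $\diamond|_{\fm_Q}$, but that the bracketed endomorphism lies in $\Omega^2_\fh$, so that its $\fm$-component vanishes, with the $[S,T_l]$ piece separately dropping out by symmetry, exactly as you observe.
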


Next, as an application of the above evolution equations, the diffeomorphism-invariance of $\nabla\xi$ as a function of $\xi$ leads to a Ricci formula (Proposition~\ref{prop: Bianchi_identity})
and produces, in Corollary~\ref{cor: m-part_Bianchi_identity}, a Bianchi-type identity relating covariant derivatives of the intrinsic torsion and the curvature tensor:

\begin{propx}[Proposition~\ref{cor: m-part_Bianchi_identity}] %, page~\pageref{cor: m-part_Bianchi_identity}) 
    Let $R_{la}\in\Gamma(\mathfrak{so}(TM))$ denote the components of the Riemann curvature endomorphism of $(M,g)$. Then
\begin{equation*}
    (\nabla_a T_l-\nabla_lT_a-[T_l,T_a]-R_{la})\diamond \xi=0.
\end{equation*}
\end{propx}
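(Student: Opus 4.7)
The plan is to derive the identity by computing the commutator $[\nabla_a,\nabla_l]\xi$ in two different ways and matching the outputs, in the spirit of the classical Cartan-style second structural equation. The two ingredients are the reformulation of the intrinsic torsion from Lemma~\ref{lem: torsion_equiv_nablaxi} as an identity of the schematic form $\nabla_l\xi=-T_l\diamond\xi$, and the Ricci-type identity from Proposition~\ref{prop: Bianchi_identity}.

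First, I differentiate $\nabla_l\xi=-T_l\diamond\xi$ covariantly along $\partial_a$, using Leibniz compatibility of $\nabla$ with $\diamond$; this is a purely algebraic consequence of $\diamond$ being the pointwise linear $\End(TM)$-action on tensors, as collected in Lemma~\ref{lem: properties_diamond}. This gives
\begin{equation*}
    \nabla_a\nabla_l\xi \;=\; -(\nabla_a T_l)\diamond\xi - T_l\diamond\nabla_a\xi \;=\; -(\nabla_a T_l)\diamond\xi + T_l\diamond(T_a\diamond\xi).
\end{equation*}
Antisymmetrising in $(a,l)$ and invoking the Lie-algebra property $A\diamond(B\diamond\xi)-B\diamond(A\diamond\xi)=[A,B]\diamond\xi$ of the $\diamond$-action (again from Lemma~\ref{lem: properties_diamond}), the mixed quadratic terms collapse to a single commutator:
\begin{equation*}
    [\nabla_a,\nabla_l]\xi \;=\; -\bigl(\nabla_a T_l-\nabla_l T_a-[T_l,T_a]\bigr)\diamond\xi.
\end{equation*}

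Second, the Ricci formula in Proposition~\ref{prop: Bianchi_identity} expresses the same commutator through the natural curvature action on $\xi$. Since the Levi-Civita curvature acts on any tensor via exactly the infinitesimal $\GL(n,\mathbb{R})$-action defining $\diamond$, this reads
\begin{equation*}
    [\nabla_a,\nabla_l]\xi \;=\; R_{al}\diamond\xi \;=\; -R_{la}\diamond\xi,
\end{equation*}
using the antisymmetry of the Riemann endomorphism in its first two slots. Equating the two expressions and cancelling the common $\diamond\xi$ yields precisely $(\nabla_a T_l-\nabla_l T_a-[T_l,T_a]-R_{la})\diamond\xi=0$.

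The only delicate point is the bookkeeping: one must confirm that $\diamond$ does carry a genuine Lie-algebra action on tensors, so that the antisymmetrised $T_l\diamond(T_a\diamond\xi)-T_a\diamond(T_l\diamond\xi)$ really equals $[T_l,T_a]\diamond\xi$ with no curvature-type correction hidden in the $\fh$-kernel of $\diamond$; and one must check that the sign conventions for $T$, for $\diamond$, and for the Ricci identity are compatible. Both are already packaged in Lemmas~\ref{lem: torsion_equiv_nablaxi} and~\ref{lem: properties_diamond} together with Proposition~\ref{prop: Bianchi_identity}, so the proof reduces to a short symbolic manipulation rather than any genuinely new calculation.
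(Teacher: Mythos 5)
Your argument is exactly the paper's: combine $\nabla_l\xi=T_l\diamond\xi$ from Lemma~\ref{lem: torsion_equiv_nablaxi} with the Ricci identity of Proposition~\ref{prop: Bianchi_identity} and the Jacobi-type property of $\diamond$ from Lemma~\ref{lem: properties_diamond}--\ref{item: diamond_Jacobi}, and the final identity you obtain is correct. However, every one of the three ingredient identities is quoted with the opposite sign to the paper's: the paper has $\nabla_l\xi=+T_l\diamond\xi$ (no minus sign), $A\diamond(B\diamond\xi)-B\diamond(A\diamond\xi)=-[A,B]\diamond\xi$ (not $+[A,B]\diamond\xi$; the $\diamond$-action comes from a \emph{right} action, whence the flip), and $\nabla_a\nabla_l\xi-\nabla_l\nabla_a\xi=R_{la}\diamond\xi$ (not $R_{al}\diamond\xi$). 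Your three sign flips happen to cancel in the final formula — the $T$-sign enters the quadratic term twice and the linear term once, so reversing it together with the other two conventions leaves $(\nabla_aT_l-\nabla_lT_a-[T_l,T_a]-R_{la})\diamond\xi=0$ invariant — but as written each intermediate step contradicts the lemma it cites, which is precisely the ``delicate bookkeeping'' you flagged and then did not carry out against the paper's actual conventions. With the signs corrected to match Lemma~\ref{lem: torsion_equiv_nablaxi}, Lemma~\ref{lem: properties_diamond}--\ref{item: diamond_Jacobi} and Proposition~\ref{prop: Bianchi_identity}, the computation is the paper's proof verbatim.
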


This formula yields simpler alternative proofs to a number of well-known curvature properties for various choices of $H \subset \mathrm{SO}(n)$ (Example~\ref{ex sun}---\ref{ex: g2}) and consequences, for the Ricci tensor, of holonomy in the group $H$ (Corollary~\ref{cor holonomy-ricci}).

Starting from Section \ref{sec: Dirichlet}, we specialise the theory of $H$-flows to the Dirichlet energy functional $\mathcal{E}$, defined on the space of $H$-structures $\xi$ over a closed manifold by the squared $L^2$-norm of intrinsic torsion $T$ of $\xi$. We then use our previous general evolution equations to compute the Euler--Lagrange equations of $\mathcal{E}$ under a general variation (Proposition~\ref{general EL}).
Next, we restrict ourselves to subgroups $H=\Stab_{\SO(n)}(\xi_{\circ})\subset\SO(n)$ for which there is a constant $c>0$ such that
\begin{equation}\label{eq: intro_assumption}
\tag{$\ast$}
\langle A\diamond\xi_{\circ}, B\diamond\xi_{\circ}\rangle = c\langle A,B\rangle,\quad\forall A,B\in\mathfrak{m}=\fh^{\perp}\subset\fso(n),
\end{equation} where $\langle A\diamond\xi_{\circ}, B\diamond\xi_{\circ}\rangle$ denotes the inner product on tensors induced by the flat metric in $\mathbb{R}^n$, and $\langle A,B\rangle=-\tr(AB)$ is the canonical bi-invariant metric making $\SO(n)/H$ a normal homogeneous Riemannian manifold. This assumption \eqref{eq: intro_assumption} is satisfied, for instance, when $\SO(n)/H$ is (strongly) isotropy irreducible, i.e. when $\fm$ is an irreducible $H$-module; e.g. for $H=\U(\frac{n}{2})$, $\rm G_2$ ($n=7$), $\rm Spin(7)$ ($n=8$) or $\rm{Sp}(\frac{n}{4})\rm{Sp}(1)$, and also in the reducible case where $H=\{1\}$ is the trivial subgroup (see Section \ref{sec: inner-prod and torsion} for more details). Under the above assumption, we have $|\nabla\xi|^2=c|T|^2$ and the first variation of the energy $\mathcal{E}$ under isometric deformations gives the so-called harmonic \cites{loubeau-saearp,Dwivedi-Loubeau-SaEarp2021} or $\Div T$-flow \cites{dgk-isometric,Grigorian2019,grigorian2020isometric} as the negative gradient flow of $\mathcal{D}:=c\mathcal{E}$,
\[
\frac{\partial}{\partial t}\xi = \Div T\diamond \xi,
\] in particular defining \emph{harmonic} $H$-structures by their divergence-free intrinsic torsion.

Since the homogeneous bundle describing $H$-structures typically has positive sectional curvature, chances are that any harmonic flow will develop singularities. The behaviour of flows near singularities can be understood by means of solitons, so in Section \ref{sec: solitons} we propose a theory of $H$-solitons for arbitrary flows. 
While e.g. for $H = \rm G_2\subset\mathrm{SO}(7)$, the so-called Laplacian flow is known to admit no noncompact shrinking solitons, and the only compact steady solitons must be given by torsion-free $\rm G_2$-structures, these questions remain mostly open in greater generality; as a first step in this direction, we show that arbitrary $H$-solitons induce self-similar solutions (Proposition \ref{prop: soliton_induce_self-similar}), and also that, under mild assumptions, isometric flows of $H$-structures with an underlying complete metric do not admit compact shrinking or expanding solitons (Lemma \ref{lem: no_compact_solitons}).

Section \ref{sec: harmonic_flow} is dedicated to analytical properties of the harmonic flow defined by the negative gradient of the Dirichlet energy $\mathcal{E}$ on isometric $H$-structures, i.e. compatible with a fixed Riemannian metric. While some basic facts on this flow can be deduced from the general theory of harmonic maps, as recalled in Propositions~\ref{prop: short-time_existence} and \ref{prop: Shi-estimates}, the cornerstone of our study is an almost-monotonicity formula for a scale-invariant local version of the functional, extending results of \cites{Struwe1988,Struwe1989} for harmonic maps, \cite{he2021} for almost complex structures and \cite{udhav2022quaternionic} for quaternionic-K{\"a}hler structures (i.e. $\mathrm{Sp}(\frac{n}{4})\mathrm{Sp}(1)$ reductions):

\begin{thmx}[Theorem~\ref{thm: monotonicity_Theta}] \label{thmC}
%page \pageref{thm: monotonicity_Theta})
Let $\{\xi(t)\}_{[0,\tau_0]}$ be a solution to the harmonic $H$-flow \eqref{eq: harmonic_flow} with initial condition $\xi(0)=\xi_0$ and define the (scale-invariant) function
\begin{align*}
    \Theta_{(y,\tau_0)}(t) & := (\tau_0-t)\int_{M}|T|^2(\cdot{},t)G_{(0,\tau_0)}(\cdot{},t)\phi^2\vol_g,
\end{align*} 
where $T$ is the intrinsic torsion, $\phi$ is a cut-off function supported on a small geodesic ball $B_{r_M}(y)$, and $G_{(0,\tau_0)}$ is the Euclidean backward heat kernel with singularity at $(0,\tau_0)$ in normal coordinates on $B_{r_M}(y)$ (see \eqref{eq: heat_kernel}). Then, for any $\tau_0-\min\{\tau_0,1\}<t_1\leqslant t_2<\tau_0$ and $N>1$, we have
\begin{equation*}
    \Theta(t_2)\leqslant c\Theta(t_1)+c\left(N^{n/2}(E_0+\sqrt{E_0})+\frac{1}{\ln^2N}\right)(t_2-t_1),
\end{equation*}
where $c=c(M,g)>0$  and $E_0:=\frac{1}{2}\int_M |T_{\xi_0}|^2\vol_g$.
\end{thmx}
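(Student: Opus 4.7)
I would adapt the Struwe-type monotonicity argument for the harmonic map heat flow \cites{Struwe1988,Struwe1989}, following the implementations by He--Li \cite{he2021} for almost complex structures and Fowdar--S\'a Earp \cite{udhav2022quaternionic} for quaternionic-K\"ahler structures, to our general $H$-structure framework. Since $G_{(0,\tau_0)}$ is the Euclidean backward heat kernel read in normal coordinates on $B_{r_M}(y)$, it satisfies $\partial_tG+\Delta_{\mathbb{R}^n}G=0$ exactly, and the curvature of $(M,g)$ will only enter through the error $\Delta_g-\Delta_{\mathbb{R}^n}$. The parameter $N>1$ will index a dyadic split of the spatial integrand at the parabolic radius $\sqrt{(\tau_0-t)/N}$.

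The first step is to specialise Proposition \ref{prop: intro - evol of torsion} to the isometric case $S\equiv 0$, $C=\Div T$, giving $\pi_\fm(\partial_tT_l)=\pi_\fm(\nabla_l\Div T)+\pi_\fm([T_l,\Div T])$. Pairing with $T^l$, the bracket contribution drops out by ad-invariance of the inner product on $\fso(TM)$; commuting the two derivatives in the remaining $g^{lm}g^{ab}\langle T_l,\nabla_m\nabla_a T_b\rangle$ via the Bianchi-type identity of Proposition \ref{cor: m-part_Bianchi_identity}, and absorbing the resulting cubic-derivative term by Young's inequality, should yield a reaction-diffusion inequality
\begin{equation*}
    \partial_t|T|^2 \;\leqslant\; \Delta|T|^2 - |\nabla T|^2 + c_1|T|^4 + c_2(M,g)|T|^2.
\end{equation*}
I would then differentiate $\Theta$, substitute this inequality together with $\partial_tG=-\Delta_{\mathbb{R}^n}G$, and integrate by parts twice. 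In the flat model the top-order terms combine into the completed-square
\begin{equation*}
    -(\tau_0-t)\!\int_M\Big|\nabla T + \tfrac{T\otimes\nabla r}{2(\tau_0-t)}\Big|^2 G\phi^2\,\vol_g,
\end{equation*}
which has the correct sign. The remainders fall into three classes: (i) curvature terms from $(\Delta_g-\Delta_{\mathbb{R}^n})G$; (ii) cut-off terms in $\nabla\phi,\Delta\phi$, supported on the annulus where $\phi$ is cut off; and (iii) the quartic nonlinearity $c_1|T|^4G\phi^2$.

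The main obstacle is controlling the quartic remainder (iii); this is precisely what forces the $N$-dependence of the final bound. I would split the spatial integral at the parabolic radius $\sqrt{(\tau_0-t)/N}$. On the outer region $\{r\geqslant\sqrt{(\tau_0-t)/N}\}$ the Gaussian in $G$ carries a factor of order $e^{-N/4}$; extracting one copy of $|T|^2$ against the global bound $\int_M|T|^2\vol_g\leqslant 2E_0$, which follows from the monotonicity of the total Dirichlet energy along \eqref{eq: harmonic_flow}, produces the additive $1/\ln^2 N$ contribution after time integration. On the inner region $\{r<\sqrt{(\tau_0-t)/N}\}$ the Shi-type $L^\infty$ estimate of Proposition \ref{prop: Shi-estimates} bounds two copies of $|T|$ by $\sqrt{E_0/(\tau_0-t)}$, while the Gaussian mass concentrated in a parabolic ball of that radius contributes the volumetric factor $N^{n/2}$, and the remaining $|T|^2G\phi^2$ gives the $N^{n/2}\sqrt{E_0}$ term. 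The analogous estimate on the linear remainders (i)--(ii), using only $\int_M|T|^2\vol_g\leqslant 2E_0$, contributes the $N^{n/2}E_0$ term. Integrating the resulting Gronwall-type differential inequality over $[t_1,t_2]\subset(\tau_0-1,\tau_0)$ then yields the stated almost-monotonicity.
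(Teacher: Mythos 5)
There is a genuine gap in your treatment of the nonlinearity. You propose to first derive a pointwise reaction--diffusion inequality $\partial_t|T|^2\leqslant\Delta|T|^2-|\nabla T|^2+c_1|T|^4+c_2|T|^2$ and then integrate it against $G\phi^2$; this creates a quartic remainder $\int_M|T|^4G\phi^2$ which cannot be bounded by $\Theta(t)$, $E_0$ and $N$ alone. Your proposed control of it on the inner parabolic region — that ``the Shi-type $L^\infty$ estimate of Proposition \ref{prop: Shi-estimates} bounds two copies of $|T|$ by $\sqrt{E_0/(\tau_0-t)}$'' — is not a valid estimate: Proposition \ref{prop: Shi-estimates} \emph{assumes} a pointwise bound $|T|\leqslant\kappa$ and controls higher derivatives; it does not convert the $L^2$ energy $E_0$ into an $L^\infty$ bound on $T$. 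No such bound holds in general (torsion can concentrate — that is exactly what the later $\varepsilon$-regularity theorem is designed to rule out under a smallness hypothesis, and it uses this very monotonicity formula as input, so invoking pointwise control here would be circular). The paper's proof avoids the quartic term altogether: it works directly with the evolution $\partial_t|T|^2=2\langle\nabla\Div T,T\rangle$ from Corollary \ref{cor: evo_torsion} (the commutator contribution having already vanished by ad-invariance), integrates by parts \emph{once} to complete the square in the first-order quantity $\Div T-\tfrac{1}{2(\tau_0-t)}(g^{ij}x_i\partial_j)\lrcorner T$ (not in $\nabla T$, as you write), and the only cubic term that appears — $g^{la}g^{ij}x_i\langle T_l,[T_j,T_a]\rangle$ from the Bianchi-type identity — vanishes identically by a trace computation. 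What remains are terms linear in $|T|$ or quadratic in $|T|$ weighted by $|x|^2$, $|x|^4/(\tau_0-t)$, which are handled by a spatial split at $|x|^2\lessgtr(\tau_0-t)\ln^2(\tau_0-t)$ (producing the $\ln^k(\tau_0-t)\Theta(t)$ terms absorbed by the Gronwall factor $f$) together with a case distinction in \emph{time}, $\tau_0-t\gtrless 1/N$ (producing the $N^{n/2}$ and $1/\ln^2N$ contributions) — not the single spatial dyadic split at $\sqrt{(\tau_0-t)/N}$ you describe.

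Your identification of the three error classes (curvature of $g$, cut-off terms, nonlinearity) and of the role of the energy monotonicity $\int_M|T|^2\leqslant 2E_0$ is correct, and the curvature and cut-off remainders could indeed be handled roughly as you indicate. But as written, the argument fails at the quartic term, and repairing it requires abandoning the Bochner-inequality route in favour of the exact first-variation identity for $|T|^2$ plus the algebraic cancellation of the cubic commutator term.
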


The Bochner estimate of Lemma~\ref{lem: dif_ineq} and this monotonicity property applied to the function 
\[
\Psi_{(y,\tau_0)}(r) := \int^{\tau_0-r^2}_{\tau_0-4r^2} \frac{\Theta_{(y,\tau_0)}(\xi(t))}{\tau_0-t} \, dt,
\]
as in Theorem~\ref{thm: monotonicity_Psi}, are the main tools to establish $\epsilon$-regularity along the harmonic $H$-flow:

\begin{thmx}[Theorem~\ref{thm: e-regularity}]
%, page~\pageref{thm: e-regularity})
There exists a constant $\varepsilon_0 >0$, depending only on $(M^n,g)$, the group $H$, and the energy of the initial data such that, if $\Psi_{(y,\tau_0)}(R)<\varepsilon_0 $, then
\begin{equation*}
    \sup_{P_{\delta R}(y,\tau_0)} |\nabla\xi|^2\leqslant 4(\delta R)^{-2},
\end{equation*} where $P_{\delta R}$ is a parabolic neighbourhood, and the constant $\delta>0$ depends only on the geometry and initial data. 
\end{thmx}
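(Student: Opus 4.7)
The plan is to adapt the classical Struwe blow-up argument for the harmonic map heat flow, combining the almost-monotonicity of $\Psi$ proved in Theorem~\ref{thm: monotonicity_Psi} with the Bochner-type differential inequality of Lemma~\ref{lem: dif_ineq}, and arguing by contradiction via a Hamilton-type point-picking procedure. The role of $\delta$ is to keep all parabolic neighborhoods considered inside a normal-coordinate chart of radius $r_M$, where the monotonicity formula applies and curvature errors are controlled.

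Suppose the conclusion fails. Then there exist a sequence of harmonic $H$-flows $\xi_i$, base points $(y_i,\tau_i)$ and radii $R_i$ with $\Psi_{(y_i,\tau_i)}(R_i)\to 0$ but $(\delta R_i)^2\sup_{P_{\delta R_i}}|\nabla\xi_i|^2>4$. After translating and parabolically rescaling we may assume $R_i=1$ and $(y_i,\tau_i)=(0,0)$. Choose $(x_i^*,t_i^*)\in P_\delta(0,0)$ maximising the weighted density $(\delta-d_P(\cdot,\partial P_\delta))^2|\nabla\xi_i|^2$; then $\rho_i:=|\nabla\xi_i(x_i^*,t_i^*)|^{-1}\to 0$. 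The parabolic blow-up
\[
\tilde\xi_i(x,t):=\xi_i\bigl(\exp_{x_i^*}(\rho_i x),\,t_i^*+\rho_i^2 t\bigr),
\]
expressed in normal coordinates based at $x_i^*$, satisfies $|\nabla\tilde\xi_i|(0,0)=1$ and $|\nabla\tilde\xi_i|^2\leqslant 4$ on parabolic neighborhoods of radius tending to $\infty$, while the rescaled background metrics $\rho_i^{-2}g$ converge smoothly to the flat Euclidean metric on any compact set.

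Two pieces combine to produce a contradiction. First, the Bochner inequality of Lemma~\ref{lem: dif_ineq}, together with the uniform bound $|\nabla\tilde\xi_i|^2\leqslant 4$, makes $e_i:=|\nabla\tilde\xi_i|^2$ a parabolic subsolution
\[
(\partial_t-\Delta)e_i\leqslant C(1+e_i),
\]
with $C$ independent of $i$, since curvature contributions are suppressed by a factor of $\rho_i^2\to 0$. Second, the scale-invariance of $\Theta$ and the monotonicity of Theorem~\ref{thm: monotonicity_Psi} transfer $\Psi_{(y_i,\tau_i)}(1)\to 0$ to $\Psi^{\tilde\xi_i}_{(0,0)}(1)\to 0$; combined with the uniform bound on $e_i$ and smooth convergence of the rescaled backward heat kernel to the Euclidean one, this forces $\int_{P_1(0,0)}e_i\,dx\,dt\to 0$. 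A standard parabolic mean-value (Moser) inequality applied to the subsolution $e_i$ on $P_{1/2}(0,0)$ then gives
\[
1=e_i(0,0)\leqslant C\int_{P_1(0,0)}e_i\,dx\,dt+o(1)\longrightarrow 0,
\]
which is absurd.

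The main obstacle lies in the last step: converting smallness of the weighted quantity $\Psi$ into smallness of the raw integrated energy $\int e_i$ on a rescaled unit parabolic neighborhood. The backward heat kernel blows up near the tip $(0,\tau_0)$ but vanishes on the parabolic boundary, so a naive comparison does not suffice. The remedy is to exploit the uniform pointwise bound $e_i\leqslant 4$ produced by the point-picking, together with the almost-monotonicity error from Theorem~\ref{thm: monotonicity_Theta}, whose constants are controlled by $E_0$ and thereby account for the dependence of $\varepsilon_0$ on the initial energy. A secondary technical issue is ensuring that the Bochner inequality retains uniform coefficients after rescaling, which follows from the smooth convergence of $\rho_i^{-2}g$ to the flat metric together with the Shi-type estimates of Proposition~\ref{prop: Shi-estimates} to pass to subsequential limits if needed.
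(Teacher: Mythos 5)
Your overall architecture — point-picking, parabolic rescaling to unit energy density at the picked point, the Bochner inequality of Lemma~\ref{lem: dif_ineq} feeding Moser's parabolic Harnack inequality, and the almost-monotonicity of Theorem~\ref{thm: monotonicity_Psi} to control the resulting integral — is exactly the paper's strategy (the paper runs it as a direct contradiction on the rescaled radius $r_0\geqslant 1$ for a single flow rather than along a sequence of flows, but that is a cosmetic difference). However, there is a genuine gap precisely at the step you yourself flag as ``the main obstacle'', and the remedy you propose does not close it. The difficulty is not primarily that the backward heat kernel blows up at the tip and vanishes on the parabolic boundary; it is that the kernel appearing in the hypothesis $\Psi_{(y,\tau_0)}(R)<\varepsilon_0$ is centred at the \emph{original} base point $(0,\tau_0)$, whereas the Harnack step produces the unweighted energy on a parabolic neighbourhood of the \emph{picked} point $(x_0,t_0)$, which can sit anywhere in $P_{2\delta R}(0,0)$. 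To convert one into the other you must (a) insert the Gaussian $G_{(x_0,t_0+2\lambda_0^2)}$, which dominates $c\lambda_0^{-n}$ on $P_{\lambda_0}(x_0,t_0)$, (b) use the monotonicity of Theorem~\ref{thm: monotonicity_Psi} to pass from the time slab $T_{\lambda_0}(t_0+2\lambda_0^2)$ up to $T_R$, and then (c) prove the recentering estimate
\[
G_{(x_0,t_0+2\lambda_0^2)}(x,t)\;\leqslant\; c\,G_{(0,0)}(x,t)+\varepsilon R^{-2},\qquad t\in[-4R^2,-R^2],
\]
which holds only because $|x_0|\lesssim \delta R$ and $|t_0+2\lambda_0^2|\lesssim \delta^2R^2$, and which forces the choice $\delta\sim(|\ln R|+|\ln\varepsilon|)^{-1/2}$. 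Your proposed fix — invoking the uniform pointwise bound $e_i\leqslant 4$ together with the error terms of Theorem~\ref{thm: monotonicity_Theta} — supplies neither (a) nor (c): the pointwise bound holds only on the small rescaled neighbourhood of the picked point and gives no comparison between the two differently-centred Gaussians, and the almost-monotonicity error only accounts for the $N^{n/2}(E_0+\sqrt{E_0})+\ln^{-2}N$ terms, not for the off-centre kernel. Without step (c) your assertion that $\Psi_{(y_i,\tau_i)}(1)\to 0$ ``transfers'' to $\int_{P_1(0,0)}e_i\to 0$ in the blow-up coordinates is unjustified.

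A secondary but related issue: because the correct argument fixes $\delta$ through the kernel comparison, $\delta$ ends up depending on $\min\{1,R\}$ (as the paper's statement allows), whereas your sequential blow-up presupposes a single $\delta$ working uniformly along the sequence $R_i$; after normalising $R_i=1$ this is harmless, but you should say so, since otherwise the contradiction hypothesis and the choice of $\delta$ are circular. Finally, note that the paper does not need smooth convergence of the rescaled metrics to the flat metric nor the Shi estimates at this stage: the rescaled Bochner inequality $(\partial_t-\tilde\Delta)e(\tilde\xi)\leqslant c(e(\tilde\xi)+e_0^{-2})$ is used directly, with the extra $e_0^{-2}$ absorbed at the end via $e_0^{-2}\leqslant(\delta R)^4$; your appeal to subsequential limits is unnecessary machinery.
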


As a fairly direct consequence, we obtain an energy gap theorem.

\begin{propx}[Proposition~\ref{prop: energy_gap}]%, page~\pageref{prop: energy_gap})
	There exists a constant $\varepsilon_0>0$, depending only on the geometry of $(M,g)$ and the group $H$, such that, if $\xi$ is a compatible harmonic $H$-structure on $(M,g)$ satisfying $\cD(\xi):=\frac{1}{2}\|\nabla\xi\|_{L^2(M)}^2<\varepsilon_0$, then $\xi$ is actually torsion-free, i.e. $\nabla\xi = 0$.
\end{propx}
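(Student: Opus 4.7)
The natural approach is to view $\xi$ as a stationary solution of the harmonic $H$-flow \eqref{eq: harmonic_flow}, setting $\xi(t) \equiv \xi$ for all $t \geq 0$. Since $\xi$ is harmonic, $\Div T = 0$ and the flow equation is trivially satisfied; in particular, $T$ is time-independent and $\cD(\xi(t)) = \cD(\xi) < \varepsilon_0$ for all $t$. This places us in the setting in which Theorem~\ref{thm: e-regularity} may be applied at every point $y \in M$ and every $\tau_0 > 0$.

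The first step is to bound the scale-invariant quantity $\Psi_{(y, \tau_0)}(R)$ uniformly in $\cD(\xi)$. Using the pointwise estimate $G_{(0, \tau_0)}(x, t) \leq (4\pi(\tau_0 - t))^{-n/2}$ on the Euclidean backward heat kernel together with $\phi \leq 1$, one computes
\[
\Theta_{(y, \tau_0)}(t) = (\tau_0 - t)\int_M |T|^2 G \phi^2 \vol_g \leq C_n\,\cD(\xi)\,(\tau_0 - t)^{1 - n/2},
\]
and integration in time (via the substitution $s = \tau_0 - t$) gives
\[
\Psi_{(y, \tau_0)}(R) \leq C'_n\,\cD(\xi)\,R^{2-n},
\]
uniformly in $y \in M$ and $\tau_0 > 0$, for dimensional constants $C_n, C'_n > 0$ (assuming $n \geq 3$; the low-dimensional cases are treated analogously).

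Next, fix a scale $R_0 > 0$ at which Theorem~\ref{thm: e-regularity} applies (depending only on the geometry of $M$ and the group $H$), and let $\varepsilon_1$ denote the threshold appearing there. Choosing $\varepsilon_0 := \varepsilon_1 R_0^{n-2}/C'_n$, the assumption $\cD(\xi) < \varepsilon_0$ forces $\Psi_{(y, \tau_0)}(R_0) < \varepsilon_1$ at every $y \in M$, whence Theorem~\ref{thm: e-regularity} supplies the uniform pointwise bound $|\nabla \xi|^2(y) \leq 4(\delta R_0)^{-2}$ throughout $M$.

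The main obstacle is to upgrade this $L^\infty$-control to the sharp conclusion $\nabla \xi \equiv 0$, since $\epsilon$-regularity alone only supplies a strictly positive bound proportional to $(\delta R_0)^{-2}$. I expect this final step to be carried out by combining the uniform pointwise bound with the Bochner inequality of Lemma~\ref{lem: dif_ineq} and the harmonic equation $\Div T = 0$: integrated over the closed manifold $M$ and using the identity $|\nabla \xi|^2 = c|T|^2$ from the standing isotropy-type assumption, the Bochner inequality should yield a differential inequality in which the quartic and lower-order nonlinearities in $T$ are absorbed by the smallness of $\cD(\xi)$ together with the $L^\infty$-bound just obtained, ultimately forcing $T \equiv 0$ as required.
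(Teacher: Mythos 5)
Your first three steps are sound and track the paper's own use of its machinery: the paper likewise treats the structure(s) as static harmonic flows and invokes Theorem~\ref{thm: e-regularity} (together with the Shi-type estimates of Proposition~\ref{prop: Shi-estimates}) to obtain uniform derivative bounds. The bound $\Psi_{(y,\tau_0)}(R)\leqslant C'_n\,\cD(\xi)\,R^{2-n}$ is correct and is exactly how one verifies the smallness hypothesis \eqref{ineq: smallness_Psi}. The problem is the final step, which you only sketch as an expectation, and the mechanism you propose does not work. On a \emph{closed} manifold, integrating the Bochner inequality $-\Delta e(\xi)\leqslant c(e(\xi)^2+1)$ gives $0\leqslant c\int_M(e(\xi)^2+1)\vol_g$, which is vacuous; the left-hand side integrates to zero by the divergence theorem, so there is nothing against which to absorb the nonlinearities. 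Even the sharper form retaining the good term, $\tfrac12\Delta|T|^2\geqslant|\nabla T|^2-c(|T|^4+1)$, only yields after integration an estimate of the form $\|\nabla T\|^2_{L^2}\leqslant c\|T\|_{L^\infty}^2\|T\|_{L^2}^2+c\,\mathrm{Vol}(M)$ (the $+1$ comes from curvature terms of $(M,g)$ and does not vanish), and smallness of $\|\nabla T\|_{L^2}$ would in any case not force $T\equiv 0$. So the quartic terms are not the obstacle; the obstacle is that no coercive quantity survives the integration.

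The paper closes the gap by a different device. It argues by contradiction with a sequence $(\xi_k)$ of harmonic structures with $\cD(\xi_k)\to 0$ and $\nabla\xi_k\neq 0$; the $\varepsilon$-regularity and Shi estimates give smooth subconvergence to a \emph{torsion-free} limit $\xi$. The key point is then Lemma~\ref{lem: basic_estimate_harmonic}: since $\Div T(\xi_k)=0$ and $\nabla\xi=0$, one has the pointwise estimate $|\Delta(\xi_k-\xi)|\leqslant c\,|\nabla(\xi_k-\xi)|^2$, and integration by parts gives
\[
\int_M|\nabla(\xi_k-\xi)|^2=-\int_M\langle \xi_k-\xi,\Delta(\xi_k-\xi)\rangle\leqslant c\,\|\xi_k-\xi\|_{L^\infty(M)}\int_M|\nabla(\xi_k-\xi)|^2 ,
\]
so once $\|\xi_k-\xi\|_{L^\infty(M)}<1/c$ the energy term is absorbed on the left and $\nabla\xi_k=\nabla\xi=0$, a contradiction. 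Note that this absorption happens against $\int|\nabla(\xi_k-\xi)|^2$ itself, not against a Laplacian that integrates to zero, and it crucially needs a torsion-free comparison structure $\xi$ nearby in $C^0$ — which is precisely what the compactness/contradiction setup supplies and what your direct approach lacks. If you want to keep your direct (non-sequential) formulation, you would still need to manufacture such a nearby parallel structure, which is not available from the $L^\infty$ bound alone.
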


The most important consequence of these results is that, under the hypothesis of small initial energy, relative to the $L^{\infty}$-norm of the initial torsion, we get long-time existence for the harmonic $H$-flow and convergence to a torsion-free limit, which extends the classical result by Chen-Ding \cite{Chen-Ding1990} to homogeneous sections:

\begin{thmx}[Theorem~\ref{thm: long-time_existence}]%, page~\pageref{thm: long-time_existence})
    For any given constant $\kappa>0$, there is a universal constant $\varepsilon(\kappa)>0$, depending only on $\kappa$, $(M,g)$ and $H$, such that, if
\begin{multicols}{2}
\begin{itemize}
    \item[(i)] $\|\nabla\xi_0\|_{L^{\infty}(M)}\leqslant \kappa$, \qandq
    \item[(ii)] $\cD(\xi_0)=\frac{1}{2}\|\nabla\xi_0\|_{L^2(M)}^2<\varepsilon(\kappa)$,
\end{itemize} 
\end{multicols}
\noindent
    then the harmonic $H$-flow with initial condition $\xi_0$ exists for all time and converges to a torsion-free $H$-structure.
\end{thmx}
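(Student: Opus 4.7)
The plan is to argue by contradiction on the maximal time of existence. By Proposition~\ref{prop: short-time_existence}, the harmonic $H$-flow with initial condition $\xi_0$ admits a unique smooth solution on some maximal interval $[0,T_{\max})$. The goal is to rule out $T_{\max}<\infty$ by producing a uniform $L^{\infty}$-bound on $|\nabla\xi(t)|$ up to $T_{\max}$ and then to invoke standard parabolic continuation to contradict maximality. Combined with the energy dissipation identity $\frac{d}{dt}\mathcal{E}(\xi(t))=-\|\Div T\|_{L^2}^2\leqslant 0$, long-time existence together with the energy gap of Proposition~\ref{prop: energy_gap} will yield convergence to a torsion-free structure.

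The main step is to show that, for $\varepsilon(\kappa)$ sufficiently small, the scale-invariant quantity $\Psi_{(y,\tau_0)}(R)$ from Theorem~\ref{thm: monotonicity_Psi} stays below the threshold $\varepsilon_0$ of the $\epsilon$-regularity Theorem~\ref{thm: e-regularity} at every spacetime point $(y,\tau_0)\in M\times(0,T_{\max})$, uniformly in a fixed (small) scale $R>0$. To produce such a uniform bound I would proceed in two time regimes. First, hypothesis (i) together with the Shi-type estimates of Proposition~\ref{prop: Shi-estimates} propagates the $L^{\infty}$-bound $\|\nabla\xi(t)\|_\infty\leqslant 2\kappa$ on a definite short time interval $[0,t_\kappa]$ depending only on $\kappa$ and the geometry. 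On this interval, crude bounds on $\Theta_{(y,\tau_0)}$ and $\Psi_{(y,\tau_0)}(R)$ in terms of $\kappa \cdot E_0$ are available. Second, for $t>t_\kappa$, I would invoke the almost-monotonicity of $\Theta$ (Theorem~\ref{thm: monotonicity_Theta}) to compare the values of $\Theta$ (and hence $\Psi$) to their values at a time just after the initial interval, producing an inequality of the schematic form
\[
    \Psi_{(y,\tau_0)}(R) \leqslant c\,E_0 + c\!\left(N^{n/2}(E_0+\sqrt{E_0})+\tfrac{1}{\ln^2 N}\right)R^2,
\]
where $c=c(M,g,\kappa)$. Choosing first $N$ large, then $R$ small, and finally $\varepsilon(\kappa)$ small enough (depending on $N$ and $R$), this is bounded by $\varepsilon_0$. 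Then Theorem~\ref{thm: e-regularity} yields $|\nabla\xi|\leqslant 2(\delta R)^{-1}$ on parabolic neighbourhoods covering $M\times[t_\kappa,T_{\max})$; combined with the first regime, this gives a uniform $L^{\infty}$-bound on $|\nabla\xi|$ on $[0,T_{\max})$. Higher derivative bounds follow from Proposition~\ref{prop: Shi-estimates}, allowing continuation past $T_{\max}$, whence $T_{\max}=\infty$.

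For the convergence statement, note that energy monotonicity gives
\[
    \int_0^\infty \|\Div T(\xi(t))\|_{L^2}^2\,dt = \mathcal{D}(\xi_0)-\lim_{t\to\infty}\mathcal{D}(\xi(t)) \leqslant c\,\varepsilon(\kappa),
\]
so there is a sequence $t_k\to\infty$ along which $\|\Div T(\xi(t_k))\|_{L^2}\to 0$. The uniform $C^k$ bounds obtained above, via Arzel\`a--Ascoli, extract a subsequence along which $\xi(t_k)\to\xi_\infty$ smoothly to a harmonic $H$-structure with $\mathcal{D}(\xi_\infty)\leqslant \mathcal{D}(\xi_0)<c\varepsilon(\kappa)$. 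Choosing $\varepsilon(\kappa)$ smaller than the energy gap threshold of Proposition~\ref{prop: energy_gap}, the limit $\xi_\infty$ is torsion-free. Subsequential convergence is upgraded to full convergence in the standard way, by stability near $\xi_\infty$: once $\xi(t_k)$ is sufficiently close to a torsion-free structure in a strong norm, the $\epsilon$-regularity and energy gap preclude the flow from leaving a small neighbourhood thereafter (alternatively, a \L{}ojasiewicz--Simon inequality argument near the critical point $\xi_\infty$ applies).

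The delicate step is the uniform estimate on $\Psi_{(y,\tau_0)}(R)$: the almost-monotonicity formula of Theorem~\ref{thm: monotonicity_Theta} comes with error terms that depend on the scale $N$ and the total energy, and matching these to the $\epsilon$-regularity threshold requires the combined smallness of $E_0$ and the a priori control on $|\nabla\xi|$ provided by $\kappa$ over the initial transient interval. Balancing the three parameters $N$, $R$, and $\varepsilon(\kappa)$ consistently—in particular, ensuring that the additive error in the almost-monotonicity does not accumulate to exceed $\varepsilon_0$ over the long time interval—is where the bulk of the technical work lies. The dependence $\varepsilon=\varepsilon(\kappa)$ in the statement is precisely dictated by this balancing.
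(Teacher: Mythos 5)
Your proposal is in the right spirit and invokes the correct toolkit (almost-monotonicity, $\epsilon$-regularity, Shi-type estimates, energy gap), but the paper's actual argument has a cleaner logical structure that you are partially reinventing. The paper's proof is a three-line dichotomy: one first defines
\[
\varepsilon(\kappa):=\min\Bigl\{\varepsilon_0,\varepsilon_1,c_1^{-2}\bigl(\delta\arctan\tfrac{1}{2\kappa^2}\bigr)^{n-2}\Bigr\},
\]
then observes that either $\sup_{[0,\tau)}\overline{e}(t)=\infty$ or not. In the unbounded case, Lemma~\ref{lem: unbdound_case} (which packages the blow-up/rescaling/Harnack analysis together with the almost-monotonicity) yields the maximal-time upper bound $\tau^{(n-2)/2}\leqslant c_1\sqrt{\varepsilon}$, while Corollary~\ref{cor: at_least} (a consequence of the Bochner estimate of Lemma~\ref{lem: dif_ineq}, \emph{not} of the Shi estimates as you write) gives the lower bound $\tau\geqslant\delta\arctan\tfrac{1}{2\kappa^2}$; these are incompatible by the choice of $\varepsilon(\kappa)$, so the unbounded case cannot occur. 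In the bounded case, Lemma~\ref{lem: unif_bound_case} directly gives long-time existence, subconvergence, and (via the energy gap) a torsion-free limit. You instead propose to show $\Psi_{(y,\tau_0)}(R)<\varepsilon_0$ uniformly over spacetime, which would also work and yields the $\kappa$-dependence through the lower bound $\tau_0\geqslant\sigma_\kappa$ appearing in the crude estimate of $\Psi$ at its largest radius, but this duplicates most of the content of Lemma~\ref{lem: unbdound_case} inline rather than deducing the result from the lemma plus a one-line contradiction, and it does not obviously reproduce the exponent $n-2$ in the paper's explicit constant. Two further corrections: the propagation of the pointwise torsion bound on a definite initial time interval is given by Lemma~\ref{lem: time_bounds}/Corollary~\ref{cor: at_least} (Bochner-based), not by the Shi estimates of Proposition~\ref{prop: Shi-estimates}, which control \emph{higher} derivatives once a $C^0$ bound on $\nabla\xi$ is already in hand. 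And the claim that ``$\epsilon$-regularity and energy gap preclude the flow from leaving a small neighbourhood'' and hence upgrade subconvergence to full convergence does not follow on its own; the theorem as stated only asserts subconvergence, and the paper proves uniqueness of the limit separately (Proposition~\ref{prop: uniqueness}), for a possibly smaller $\varepsilon(\kappa)$, via the convexity Lemma~\ref{lem: convexity} and the interpolation Lemma~\ref{lem: interpolation} leading to exponential decay of $\|\Div T\|_{L^2}$ — which is the precise replacement for the \L ojasiewicz--Simon-type step you allude to.
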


As a topological counterpoint to the previous statement, we prove existence of a finite-time singularity for the harmonic $H$-flow with small initial energy \emph{when no torsion-free $H$-structure exists} in the homotopy class of initial data, but the infimum of the energy on such class is nonetheless zero.

\begin{thmx}[Theorem~\ref{thm: blow_up}] \label{thmG} %, page~\pageref{thm: blow_up}) 
    Let $\overline{\xi}$ be an $H$-structure, the isometric homotopy class $[\overline{\xi}]$ of which does not contain any torsion-free $H$-structure, but still such that
$\inf_{\xi\in [\overline{\xi}]} \cD(\xi) = 0$. 

Then there exists a constant $\varepsilon_{\ast}>0$, depending only on $(M^n,g)$ and $H$, such that if $\xi_0\in [\overline{\xi}]$ has $\cD(\xi_0)<\varepsilon_{\ast}$, then the harmonic $H$-flow starting at $\xi_0$ develops a finite-time singularity. Moreover, if $\tau(\xi_0)$ denotes the maximal time of existence of the flow, then $\tau(\xi_0)\to 0$ as $\cD(\xi_0)\to 0$.
\end{thmx}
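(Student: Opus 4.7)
The plan is to argue by contradiction, exploiting the tension between long-time existence with torsion-free convergence, guaranteed in the small-energy regime, and the topological hypothesis which excludes any such limit from $[\overline{\xi}]$. Suppose the harmonic $H$-flow $\{\xi(t)\}_{t\geqslant 0}$ starting at $\xi_0$ with $\cD(\xi_0)<\varepsilon_{\ast}$ is globally defined. As the negative gradient flow of $\cD$, one has $\cD(\xi(t))\leqslant \cD(\xi_0)<\varepsilon_{\ast}$ for all $t\geqslant 0$; and as a continuous path in the space of isometric $H$-structures, the flow lies entirely within $[\overline{\xi}]$. I would fix a small parabolic scale $R=R(M,g,H)>0$ and apply the almost-monotonicity formula (Theorem~\ref{thm: monotonicity_Theta}) together with its integrated version for $\Psi_{(y,\tau_0)}$ (Theorem~\ref{thm: monotonicity_Psi}) to bound the localised energy uniformly on $M\times[4R^2,\infty)$ by $c\cD(\xi_0)$ plus error terms of the form $(N^{n/2}(\cD(\xi_0)+\sqrt{\cD(\xi_0)})+1/\ln^2 N)(t_2-t_1)$, which can be absorbed by tuning $N=N(\varepsilon_{\ast})$ in the spirit of Chen--Struwe. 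For $\varepsilon_{\ast}$ sufficiently small, this yields $\Psi_{(y,\tau_0)}(R)<\varepsilon_0$ uniformly; the $\epsilon$-regularity theorem (Theorem~\ref{thm: e-regularity}) then produces a uniform bound $\|\nabla\xi(t)\|_{L^\infty(M)}\leqslant \kappa:=2(\delta R)^{-1}$ for every $t\geqslant t_\ast:=4R^2$. Restarting the flow at $t_\ast$ with $\kappa$ fixed, and further shrinking $\varepsilon_{\ast}\leqslant\varepsilon(\kappa)$ if necessary, the long-time existence theorem (Theorem~\ref{thm: long-time_existence}) forces the restarted flow to converge to a torsion-free $H$-structure $\xi_\infty\in[\overline{\xi}]$, contradicting the topological hypothesis.

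For the quantitative refinement $\tau(\xi_0)\to 0$ as $\cD(\xi_0)\to 0$, suppose by contradiction that there exist $\delta_0>0$ and a sequence $\xi_0^k\in[\overline{\xi}]$ with $\cD(\xi_0^k)\to 0$ but $\tau(\xi_0^k)\geqslant\delta_0$. The same monotonicity-plus-$\epsilon$-regularity argument at scale $R\sim\sqrt{\delta_0}/2$ on $[0,\delta_0)$ yields a uniform gradient bound $\|\nabla\xi^k(R^2)\|_{L^\infty}\leqslant\kappa(\delta_0)$; for $k$ large, $\cD(\xi^k(R^2))\leqslant\cD(\xi_0^k)<\varepsilon(\kappa(\delta_0))$, and long-time existence from $\xi^k(R^2)$ produces a global flow converging to a torsion-free element of $[\overline{\xi}]$, the same contradiction.

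The main obstacle will be the delicate calibration of $\varepsilon_{\ast}$: one must choose it simultaneously small enough that (i) the almost-monotonicity error terms are absorbed uniformly on the unbounded interval $[t_\ast,\infty)$ via a careful balancing of $N^{n/2}\sqrt{\cD(\xi_0)}$ against $1/\ln^2 N$, and (ii) the resulting energy threshold is compatible with the long-time existence constant $\varepsilon(\kappa)$, where $\kappa$ itself depends on the scale $R$ at which $\epsilon$-regularity is applied. Once these constants are matched, the topological step --- namely that the preserved isometric homotopy class excludes a torsion-free limit --- follows essentially for free from the continuity and metric-preservation of the flow.
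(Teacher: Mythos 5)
Your proposal is logically sound, and the core ingredients (almost-monotonicity, $\epsilon$-regularity, gradient bound, bootstrapping into a long-time existence result) are the right ones. But the route is more roundabout than the paper's, which actually dispenses with the whole constant-balancing exercise you flag as your ``main obstacle''. The paper introduces two helper lemmas immediately before the theorem: Lemma~\ref{lem: unif_bound_case} shows that if $\sup_{t<\tau}\|\nabla\xi(t)\|_{L^\infty}<\infty$ and $\cD(\xi_0)<\varepsilon_0$, the flow is global and subconverges to a torsion-free limit (combining Shi estimates, $\epsilon$-regularity, and the energy gap); Lemma~\ref{lem: unbdound_case} shows that if instead $\sup_{t<\tau}\|\nabla\xi(t)\|_{L^\infty}=\infty$ and $\cD(\xi_0)<\varepsilon_1$, then $\tau^{(n-2)/2}\leqslant c_1\sqrt{\cD(\xi_0)}$, via a blow-up sequence at torsion maxima, Moser's Harnack inequality, and the $\Theta$-monotonicity. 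The theorem then follows by a two-line dichotomy: bounded torsion is ruled out by the topological hypothesis, and the unbounded case automatically furnishes both the finite-time singularity and the explicit rate $\tau(\xi_0)^{n-2}\lesssim\cD(\xi_0)$, which is strictly stronger than the $\tau\to 0$ you recover.

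Two observations about your version. First, rather than reaching for Theorem~\ref{thm: long-time_existence} (whose own proof runs through Lemmas~\ref{lem: unif_bound_case} and~\ref{lem: unbdound_case}), you could have invoked Lemma~\ref{lem: unif_bound_case} directly once you have a uniform gradient bound; that shortcut would have made your first part essentially the paper's bounded case. Second, the constant-calibration step you describe as the main obstacle is indeed where the work lies, and your sketch of it is sound --- for $\tau_0$ bounded away from zero one does get $\Psi_{(y,\tau_0)}(r_1)\lesssim\cD(\xi_0)$ at the outer scale $r_1\sim 1$ and then propagates down via Theorem~\ref{thm: monotonicity_Psi} by first fixing $N$ large to tame $1/\ln^2 N$ and then shrinking $\varepsilon_\ast$ to tame $N^{n/2}\sqrt{\varepsilon_\ast}$ --- but note that the same calibration is not needed to start close to $t=0$, since $\Psi(r_1)\lesssim\tau_0^{1-n/2}\cD(\xi_0)$ degenerates as $\tau_0\to 0$ in dimension $n>2$. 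Your ``$t_\ast=4R^2$'' should therefore be a fixed $T_0(M,g,H)>0$ independent of $R$; this does not affect the contradiction. The paper's Lemma~\ref{lem: unbdound_case} sidesteps all of this by working directly at the scale of the torsion maxima, which is why it also yields the sharp decay rate without any extra argument.
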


This blow-up phenomenon is clarified with a general construction on tori (Example~\ref{ex: finite_time_blow-up}), based on the work of He--Li \cite{he2021} for the particular case of almost complex structures, where the topology of the closed and connected subgroup $H\subset\mathrm{SO}(n)$ plays an important role, since a finite-time singularity will appear on flat $n$-tori as soon as the set $[\mathbb{S}^n,\SO(n)/H]$ of homotopy classes of maps $\mathbb{S}^n\to \SO(n)/H$ contains more than one element and the universal cover of $\SO(n)/H$ is a sphere; e.g. when $n=7$ and $H=\rm G_2$, or $n=8$ and $H=\rm Spin(7)$. 

We should remark that similar statements, such as Theorems~\ref{thmC}--\ref{thmG}, were shown in \cite{he2021} for the case $H=\mathrm{U}(\frac{n}{2})$, and we prove here not only a generalisation to $H$-harmonic flows but also provide a certain number of details and particular points absent from \cite{he2021}.

Our last highlighted result is long-time existence for small initial intrinsic torsion, with convergence to a harmonic $H$-structure, and a dynamical stability property of torsion-free $H$-structures, allowing for hope of a universal long-time existence and convergence theorem.

\begin{thmx}[Theorem~\ref{thm: stability}]%, page~\pageref{thm: stability})
\begin{itemize}
    \item[(i)] There is a constant $\kappa_0=\kappa_0(M,g,H)>0$ such that if  $\|\nabla\xi_0\|_{L^{\infty}(M)}<\kappa_0$ then the harmonic $H$-flow starting at $\xi_0$ exists for all $t\geqslant 0$ and converges smoothly to a torsion-free $H$-structure $\xi_{\infty}$ as $t\to\infty$.
    
    \item[(ii)] Suppose $(M^n,g)$ admits a torsion-free compatible $H$-structure $\overline{\xi}$. Then for all $\delta>0$ there is some $\overline{\varepsilon}(\delta,M,g,H)>0$ such that for any compatible $H$-structure $\xi_0$ on $(M^n,g)$ with $\|\xi_0-\overline{\xi}\|_{C^2(M)}<\overline{\varepsilon}$ the harmonic $H$-flow with initial condition $\xi_0$ exists for all $t\geqslant 0$, satisfies the estimate $\|\xi_t-\overline{\xi}\|_{C^1(M)}<\delta$ for all $t\geqslant 0$, and converges smoothly to a torsion-free $H$-structure $\xi_{\infty}$ as $t\to\infty$.
\end{itemize} 
\end{thmx}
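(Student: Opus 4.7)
The plan is to derive both parts from the long-time existence result (Theorem~\ref{thm: long-time_existence}): first a volume comparison reduces the $L^\infty$-torsion hypothesis to the energy hypothesis of that theorem, then Part~(ii) is bootstrapped to a uniform $C^1$-trajectory bound via energy dissipation together with a \L{}ojasiewicz--Simon argument at the analytic critical set.

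For Part~(i), on the closed manifold $M$ the pointwise estimate
\[
\cD(\xi_0) = \tfrac{1}{2}\|\nabla\xi_0\|_{L^2(M)}^2 \leqslant \tfrac{1}{2}\vol(M,g)\,\|\nabla\xi_0\|_{L^\infty(M)}^2
\]
converts $L^\infty$-smallness of $\nabla\xi_0$ directly into smallness of the Dirichlet energy. Fixing an auxiliary $\kappa_0>0$, Theorem~\ref{thm: long-time_existence} supplies a corresponding $\varepsilon(\kappa_0)>0$, and I set
\[
\kappa_\ast := \min\Bigl\{\kappa_0,\ \sqrt{2\varepsilon(\kappa_0)/\vol(M,g)}\,\Bigr\}.
\]
Any initial datum with $\|\nabla\xi_0\|_{L^\infty}<\kappa_\ast$ satisfies both hypotheses of Theorem~\ref{thm: long-time_existence} with parameter $\kappa_0$, yielding smooth long-time existence and convergence to a torsion-free $H$-structure.

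For Part~(ii), the torsion-freeness of $\overline{\xi}$ gives $\nabla\overline{\xi}=0$ by Lemma~\ref{lem: torsion_equiv_nablaxi}, so $\nabla\xi_0=\nabla(\xi_0-\overline{\xi})$ and
\[
\|\nabla\xi_0\|_{L^\infty(M)}\leqslant\|\xi_0-\overline{\xi}\|_{C^1(M)}<\overline{\varepsilon}.
\]
Taking $\overline{\varepsilon}\leqslant\kappa_\ast$ and invoking Part~(i) already provides smooth long-time existence and convergence $\xi_t\to\xi_\infty$ to some torsion-free $H$-structure. The nontrivial content is the \emph{uniform} trajectory bound $\|\xi_t-\overline{\xi}\|_{C^1}<\delta$ for all $t\geqslant 0$. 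Writing
\[
\xi_t-\overline{\xi}=(\xi_0-\overline{\xi})+\int_0^t(\Div T_s)\diamond\xi_s\,ds,
\]
it suffices to make $\int_0^\infty\|\partial_s\xi_s\|_{C^1(M)}\,ds$ small with $\overline{\varepsilon}$. I combine three ingredients: the energy-dissipation identity $\int_0^\infty\|\partial_s\xi_s\|_{L^2}^2\,ds=\cD(\xi_0)-\cD(\xi_\infty)\leqslant\cD(\xi_0)=O(\overline{\varepsilon}^2)$; the Shi-type smoothing of Proposition~\ref{prop: Shi-estimates} together with the $\epsilon$-regularity of Theorem~\ref{thm: e-regularity}, which upgrade the pointwise torsion bound to uniform $C^k$-estimates on $\xi$ along the flow; and a \L{}ojasiewicz--Simon inequality at $\overline{\xi}$, which turns the $L^2$-dissipation into integrability of the $L^2$-speed, then is interpolated against the higher derivatives into a finite $C^1$-length.

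The main obstacle is the \L{}ojasiewicz--Simon step. Part~(i) on its own only controls $\|\nabla\xi_0\|_{L^\infty}$, leaving open \emph{a priori} that the trajectory $\xi_t$ could drift far from $\overline{\xi}$ before converging to a distant torsion-free limit $\xi_\infty$. I envisage closing this gap by either (a)~invoking the standard \L{}ojasiewicz--Simon inequality for analytic gradient flows, applicable because the map $\xi\mapsto\Div T(\xi)\diamond\xi$ is rational in $\xi$ and its first derivatives and hence analytic near $\overline{\xi}$; or (b)~a direct continuity/contradiction argument where one supposes the first hitting time $t_\ast:=\inf\{t>0:\|\xi_t-\overline{\xi}\|_{C^1}=\delta\}$ is finite, bounds the total $C^1$-displacement on $[0,t_\ast]$ by $C(\overline{\varepsilon}+\sqrt{\overline{\varepsilon}})$ via the dissipation and Shi estimates, and chooses $\overline{\varepsilon}$ small relative to $\delta$ to force a contradiction. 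Either route closes the argument; the smooth convergence of the limit itself is already supplied by Part~(i).
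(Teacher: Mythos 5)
Your Part~(i) is correct and essentially equivalent to the paper's argument: you route through Theorem~\ref{thm: long-time_existence} by converting the $L^\infty$-bound into an energy bound via the volume, whereas the paper routes through Theorem~\ref{thm: long_time_small_torsion} plus the energy gap of Proposition~\ref{prop: energy_gap}; both choices of $\kappa_\ast$ work (for full convergence rather than subconvergence you are implicitly using Proposition~\ref{prop: uniqueness}, which is fine).

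Part~(ii) has a genuine gap at exactly the step you flag as the main obstacle. Your route~(b) claims to bound the total displacement on $[0,t_\ast]$ by $C(\overline{\varepsilon}+\sqrt{\overline{\varepsilon}})$ ``via the dissipation and Shi estimates,'' but this does not follow: the dissipation identity controls $\int_0^{t_\ast}\|\partial_s\xi_s\|_{L^2}^2\,ds=O(\overline{\varepsilon}^2)$, and Cauchy--Schwarz only yields $\int_0^{t_\ast}\|\partial_s\xi_s\|_{L^2}\,ds\leqslant \sqrt{t_\ast}\cdot O(\overline{\varepsilon})$, with no a priori bound on $t_\ast$. Shi-type estimates give uniform $C^k$ control of $\xi(t)$ but no decay rate for the speed, so they cannot convert an $L^2$-in-time bound on the squared speed into a finite length over an unbounded time interval. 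Your route~(a), a \L{}ojasiewicz--Simon inequality, could in principle close this (and would be a genuinely different proof), but you only assert its applicability; establishing it for the Dirichlet functional on homogeneous sections, verifying that the trajectory remains in the neighbourhood where it holds, and extracting the quantitative $O(\overline{\varepsilon})$ length bound is a substantial undertaking that your sketch does not carry out.

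The missing ingredient, which the paper supplies, is the convexity Lemma~\ref{lem: convexity}: once $|T_{\xi(t)}|^2<\Lambda/24$ pointwise (secured by the interpolation Lemma~\ref{lem: interpolation} together with the monotonicity of $\cD$ along the flow), one gets
\[
\frac{d}{dt}\int_M|\Div T_{\xi(t)}|^2\vol_g\leqslant -\frac{\Lambda}{2}\int_M|\Div T_{\xi(t)}|^2\vol_g,
\]
hence exponential decay of $\|\Div T_{\xi(t)}\|_{L^2}$ and therefore $\int_0^\infty\|\partial_s\xi_s\|_{L^1}\,ds\lesssim\overline{\varepsilon}$. Note also that the paper then avoids having to bound $\int_0^\infty\|\partial_s\xi_s\|_{C^1}\,ds$ directly, which your interpolation plan would require: it only uses the $L^1$-smallness of the displacement, separately bounds $\|\nabla\xi_t\|_{C^0}$ and $\|\nabla\xi_0\|_{C^0}$ each by $\delta/4$ via Lemma~\ref{lem: interpolation}, and upgrades $L^1$-smallness plus the gradient bound to $C^0$-smallness by the mean value theorem. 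Incorporating the convexity/exponential-decay mechanism (or fully executing a \L{}ojasiewicz--Simon argument) is what your proof needs to be complete.
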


Many results and theorems in this article are illustrated with examples for $H= \mathrm{U}(\frac{n}{2}), \rm G_2 , \mathrm{Spin}(7)$.

\bigskip

\noindent
\textbf{Notation and Conventions.} We denote by $c>0$ a generic constant, which depends at most on the background geometric data $(M^n,g,H)$, consisting of an oriented Riemannian $n$-manifold $(M^n,g)$ equipped with an $H$-structure, for some closed Lie subgroup $H\subset\mathrm{SO}(n)$. Its value might change from one occurrence to the next, and further dependencies are indicated by subscripts. We use the symbol $\circledast$ to denote generic multilinear expressions bounded by $c$, the precise form of which is unimportant. We frequently use Young's inequality, $ab \leqslant \tfrac{1}{2\varepsilon} a^2 + \tfrac{\varepsilon}{2} b^2$, for any $a, b\in\mathbb{R}$ and $\varepsilon>0$. The symbol $\Delta$ denotes the \emph{negative definite} rough Laplacian, i.e. $\Delta=-\nabla^{\ast}\nabla$. We use the Einstein summation convention throughout the text. In a local coordinate frame, the Riemann curvature $(1,3)$-tensor is given by
\[
R_{ijk}^m \frac{\partial}{\partial x^{m}} 
    = (\nabla_{i} \nabla_{j} - \nabla_{j} \nabla_{i})\frac{\partial}{\partial x^{k}},
\] and we lower the contravariant index by
$
R_{ijkl} := R_{ijk}^m g_{ml}
$.  
We write
\[
R_{ij} := R_{ijk}^m \partial_m\otimes dx^k
\] 
for the curvature endomorphism tensor, and the Ricci curvature is given by
\[
    \mathrm{Ric}_{jk} = R_{ijkl}g^{il}.
\]
The Riemannian first and second Bianchi identities are 
\begin{equation}
\label{eq: riem1stBid}\tag{$\dagger$}
\begin{aligned}
    R_{ijkl} + R_{iklj} + R_{iljk} 
    &= 0, 
    \\
    \nabla_i R_{jkab} + \nabla_j R_{kiab} + \nabla_k R_{ijab}
    &=0,
\end{aligned}
\end{equation}
where the latter contracts in $i,a$ to
\begin{equation} \tag{$\dagger\dagger$}
\label{eq: riem2ndBid}
    g^{ia}\nabla_i R_{abjk} 
    = \nabla_k \mathrm{Ric}_{jb} - \nabla_j \mathrm{Ric}_{kb}.
\end{equation}

\noindent\textbf{Acknowledgements:} 

The authors are grateful to Udhav Fowdar and also to the members of the Math AmSud (21-Math-06) collaboration Geometric Structures and Moduli Spaces, for several stimulating discussions during the Workshop at Universidad Nacional de Córdoba in August-September 2022. The first-named author also would like to thank Andrey Soldatenkov, Misha Verbitsky, Alexis Garcia, Renan Assimos, Gonçalo Oliveira and Jason Lotay for illuminating discussions that helped improve this paper. Finally, the authors are thankful to the anonymous referee for the careful reading, numerous constructive comments and detailed suggestions that considerably benefited the article.

EL and HSE benefited from a CAPES-COFECUB bilateral collaboration (2018-2022), granted by the Brazilian Coordination for the Improvement of Higher Education Personnel (CAPES) – Finance Code 001 [88881.143017/2017-01], and COFECUB [MA 898/18], and from a CAPES-MathAmSud (2021-2023) grant [88881.520221/2020-01]. 
HSE has also been funded by the São Paulo Research Foundation (Fapesp)  \mbox{[2018/21391-1]} and the Brazilian National Council for Scientific and Technological Development (CNPq)  \mbox{[307217/2017-5]}.
DF was funded by the postdoctoral scholarship [88887.643728/2021-00] of the CAPES-COFECUB collaboration.
AM was funded by the São Paulo Research Foundation (Fapesp) [2021/08026-5].
All four authors are members of the  Fapesp-ANR BRIDGES [2021/04065-6] collaboration.

\newpage
%%%%%%%%
\section{General flows of $H$-structures}
\label{sec: H-structures}

\subsection{Homogeneous sections and stabilised tensors}
\label{sec: preliminaries}

Throughout this paper, $M^n$ will denote a connected and orientable smooth $n$-manifold without boundary. Let $\Fr(M)$ denote the \emph{frame bundle} of $M$, i.e., the principal $\mathrm{GL}(n,\mathbb{R})$-bundle whose fibre over $x\in M$ consists of the linear isomorphisms $u:T_xM\to\mathbb{R}^n$, with right action $\mathrm{GL}(n,\mathbb{R})\times \Fr(M)\to \Fr(M)$ given by $(\rg,u)\mapsto \rg.u:= \rg^{-1}\circ u$. Given a Lie subgroup $H\subset\mathrm{GL}(n,\mathbb{R})$, recall that a \emph{$H$-structure} on $M^n$ is an $H$-reduction of $\Fr(M)$, i.e., a principal $H$-subbundle $Q\subset\Fr(M)$. For example, an $\mathrm{SO}(n)$-structure on $M^n$ is equivalent to a choice of a Riemannian metric $g$ and an orientation. Most of the time, we shall fix such a structure on $M^n$, i.e. we shall work with an oriented Riemannian manifold $(M^n,g)$; the associated $\mathrm{SO}(n)$-structure, or principal $\mathrm{SO}(n)$-bundle of oriented orthonormal coframes, will be denoted by $\pi_{\mathrm{SO}(n)}:\Fr(M,g)\to M$. Then, for a Lie subgroup $H\subset\mathrm{SO}(n)$, we say that $Q$ is a \emph{compatible $H$-structure} on $(M^n,g)$ if it is an $H$-reduction of $\Fr(M,g)$, i.e. if $Q\subset\Fr(M,g)$ as principal bundles. 

We shall restrict ourselves to closed and connected subgroups $H\subset\mathrm{SO}(n)$. Note that any such $H$ right-acts freely on $\Fr(M,g)$ and the quotient map $\pi_H:\Fr(M,g)\to \Fr(M,g)/H$ is a principal $H$-bundle. The map $\pi:\Fr(M,g)/H\to M$ such that $\pi_{\mathrm{SO}(n)}=\pi\circ\pi_H$ then defines a fibre bundle with fibre $\mathrm{SO}(n)/H$; indeed, $\pi:\Fr(M,g)/H\to M$ is isomorphic to the associated bundle $\Fr(M,g)\times_{\mathrm{SO}(n)}\mathrm{SO}(n)/H$.

Now observe that compatible $H$-structures $Q\subset \Fr(M,g)$ are in one-to-one correspondence with sections $\sigma\in\Gamma(\Fr(M,g)/H)$: given $Q$, we define $\sigma_Q(x):=\pi_H(u)$ for any frame $u\in Q$ with $\pi_{\mathrm{SO}(n)}(u)=x$; this is well-defined because any two  $u,\tilde{u}\in\pi_{\mathrm{SO}(n)}^{-1}(x)\subset Q$ differ by $\tilde{u}=\rh.u$ for some $\rh\in H$, and therefore $\pi_H(u)=\pi_H(\tilde{u})$. Conversely, to any section $\sigma\in\Gamma(\Fr(M,g)/H)$ we associate the compatible $H$-structure $Q_{\sigma}:=\pi_H^{-1}(\sigma(M))\subset \Fr(M,g)$, and it is easy to see that these assignments are mutually inverse. More generally, any $H$-structure $Q\subset \Fr(M)$ (not necessarily metric-compatible), can be thought of as a section $\sigma_Q\in\Gamma(\Fr(M)/H)$.
 
The existence of an $H$-structure on $M^n$ is a purely topological question. In particular, if $H=\{1\}$ is the trivial group, then a $\{1\}$-structure on $M^n$ is just a global trivialisation of $\Fr(M)$, which exists if and only if the tangent bundle $TM$ is trivialisable, i.e. $M$ is parallelisable. In dimension $n=2$, note that the only proper closed and connected subgroup $H \subset\mathrm{SO}(2)$ is the trivial group, and since the only parallelisable oriented closed surface is the $2$-torus, the discussion about compatible $\{1\}$-structures on such a Riemannian surface $(M^2,g)$ reduces to parallelisms on the $2$-torus $(\mathbb{T}^2,g)$ endowed with an arbitrary Riemannian metric $g$ (see Remark \ref{rmk: 2-torus} for further details). We shall henceforth restrict attention to dimensions $n>2$.

Next we note that the assumption of $H\subset\mathrm{SO}(n)$ being closed and connected implies that the quotient $\mathrm{SO}(n)/H$ is a normal homogeneous Riemannian manifold with the metric induced by the canonical bi-invariant metric on $\mathrm{SO}(n)$ given by $\langle A,B\rangle = - \tr(AB)$. In particular, the $H$-module decomposition
\begin{equation}\label{eq: so_decomp}
\mathfrak{so}(n)=\mathfrak{h}\oplus\mathfrak{m},
\end{equation} where $\mathfrak{m}:=\mathfrak{h}^{\perp}\subset\mathfrak{so}(n)$ is the orthogonal complement of $\mathfrak{h}=\Lie(H)$  with respect to $\langle\cdot{},\cdot{}\rangle$, is a \emph{reductive} decomposition, i.e., it satisfies $\mathrm{Ad}_{\mathrm{SO}(n)}(H)\mathfrak{m}\subseteq\mathfrak{m}$.

Now suppose that $(M^n,g)$ admits a compatible $H$-structure $Q\subset \Fr(M,g)$. Since \eqref{eq: so_decomp} is reductive, the $H$-structure induces an orthogonal $H$-module decomposition on the subbundle $\mathfrak{so}(TM):=\Fr(M,g)\times_{\mathrm{SO}(n)}\mathfrak{so}(n)$ of skew-symmetric endomorphisms in $\mathrm{End}(TM)=T^{\ast}M\otimes TM$: 
\begin{align}
    \mathfrak{so}(TM) = \fh_Q \oplus \fm_Q,\quad\text{where}\label{eq: decomp_so(TM)}\\
    \fh_Q:=Q\times_H\fh \qandq \fm_Q:=Q\times_H\fm.\nonumber
\end{align}
Recall that  a connection $\tilde{\nabla}$ on $TM$ is said to be \emph{compatible with the $H$-structure} $Q$, or simply an \emph{$H$-connection}, if the corresponding connection $1$-form $\tilde{\omega}\in\Omega^1(\Fr(M),\mathfrak{gl}(n,\mathbb{R}))$ on $\Fr(M)$ \emph{reduces} to $Q$, i.e. if $\iota_Q^{\ast}\tilde{\omega}\in\Omega^1(Q,\mathfrak{h})$ is a connection $1$-form on $Q$, where $\iota_Q:Q\hookrightarrow\Fr(M)$ is the $H$-subbundle inclusion. These $H$-connections are in fact precisely the connections on $TM$ which are induced by connections on $Q$, and they form an affine space modelled on $\Gamma({\fh}_Q)$. Since $Q$ is compatible with $g$, any $H$-connection $\tilde{\nabla}$ on $TM$ preserves $g$, and denoting by $\nabla$ the Levi--Civita connection of $(M^n,g)$, it follows that the difference $\tilde{T}_X:=\tilde{\nabla}_X-\nabla_X$ defines a skew-symmetric endomorphism $\tilde{T}_X\in\Gamma(\mathfrak{so}(TM))$, for all $X\in\sX(M)$. Essentially, $\tilde{T}$ is the \emph{torsion} of $\tilde{\nabla}$; indeed, since $\nabla$ is torsion-free, one has 
$$
\tilde{\nabla}_XY - \tilde{\nabla}_Y X - [X,Y] = \tilde{T}_XY - \tilde{T}_YX,
\quad\forall X,Y\in\sX(M).
$$ 
Writing $\tilde{T}_X = \pi_{\fh}(\tilde{T}_{X}) + \pi_{\fm}(\tilde{T}_X)$, where $\pi_{\fh},\pi_{\fm}$ denote the orthogonal projections associated to the decomposition \eqref{eq: decomp_so(TM)}, we can define the $H$-connection $\nabla_X^H:=\tilde{\nabla}_X - \pi_{\fh}(\tilde{T}_X)$. Since the difference between any two $H$-connections lies in $\Gamma({\fh}_Q)$, it follows that $\nabla^H$ is the unique $H$-connection on $M$ whose torsion $T=T^Q$ satisfies
\begin{equation}
\label{eq: def_T}
    T_X = \nabla_X^H - \nabla_X\in \Gamma({\fm}_Q).
\end{equation} The tensor $T\in\Omega^1(M,{\fm}_Q)$ is called the \emph{intrinsic torsion} of the $H$-structure $Q$, and $Q$ is said to be \emph{torsion-free} when $T=0$ identically, which means that the Levi--Civita connection is an $H$-connection and its holonomy is a subgroup of $H$, see e.g. \cite{Gonzalez-Davila2009}*{\textsection 2} and \cite{Joyce2000}*{\textsection 2.6}.

We now characterise $H$-structures on manifolds in terms of their stabilised tensors. The canonical right-action of $\GL(n,\bR)$ on tensors is the natural extension of its respective right-actions on $\mathbb{R}^n$ and $(\mathbb{R}^n)^*$:
\[
(\rg,v)\mapsto \rg^{-1}v \qforq v\in \mathbb{R}^n,
\qandq
(\rg,\alpha)\mapsto \rg^*\alpha=\alpha\circ\rg \qforq \alpha\in(\mathbb{R}^n)^*.
\]
In terms of the canonical basis $\{e_i\}$ on $\mathbb{R}^n$, and its dual basis $\{e^i\}$ on  $(\mathbb{R}^n)^{\ast}$, let us denote the components of a $(p,q)$-tensor $\xi_{\circ}\in\cT^{p,q}(\mathbb{R}^n):=\left(\bigotimes^p\mathbb{R}^n\right)\otimes\left(\bigotimes^q(\mathbb{R}^n)^{\ast}\right)$ by
\[
    \xi_{\circ} = \xi^{i_1 \ldots i_p}_{j_1 \ldots j_q} e_{i_1}\otimes \ldots\otimes e_{i_p}\otimes e^{j_1}\otimes\ldots \otimes e^{j_q},
\]
where $\xi^{i_1 \dots i_p}_{j_1 \dots j_q}:=\xi_{\circ}(e^{i_1},\dots,e^{i_p},e_{j_1},\dots, e_{j_q})\in\mathbb{R}$, and the summation convention is assumed throughout; the sum above is taken over all subsets $\{i_1,\dots,i_p\},\{j_1,...,j_q\}\subseteq\{1,\dots,n\}$. Then $\rg\in\GL(n,\bR)$ acts on $\xi_{\circ}$ by
\begin{align}
\label{diamond_operator}
    \rg.\xi_{\circ} = \xi^{i_1 \dots i_p}_{j_1 \dots j_q} \rg^{-1} e_{i_1}\otimes \dots\otimes \rg^{-1}e_{i_p}\otimes \rg^\ast e^{j_1}\otimes\dots \otimes \rg^\ast e^{j_q}.
\end{align} We shall denote the stabiliser of $\xi_{\circ}$ under this right $\mathrm{GL}(n,\mathbb{R})$-action by
\[
\mathrm{Stab}(\xi_{\circ}):=\{\rg\in\GL(n,\bR): \rg.\xi_{\circ} = \xi_{\circ}\}.
\] More generally, if $\xi_{\circ}=((\xi_{\circ})_1,\ldots,(\xi_{\circ})_k)$ is a (finite) collection of tensors $(\xi_{\circ})_i$, then we let $\GL(n,\mathbb{R})$ act on $\xi_{\circ}$ componentwise, so that 
\[
\mathrm{Stab}(\xi_{\circ})=\bigcap_i \mathrm{Stab}((\xi_{\circ})_i).
\] In particular, the standard (flat) Euclidean  metric and volume form (orientation)
\begin{align}
g_{\circ} &:= \delta_{ij}e^i\otimes e^j,\label{eq: flat_metric}\\
\vol_{\circ} &:= e^1\wedge\ldots\wedge e^n\nonumber,
\end{align} 
are stabilised by $\mathrm{Stab}(g_{\circ},\vol_{\circ})=\mathrm{Stab}(g_{\circ})\cap\mathrm{Stab}(\vol_{\circ}) = \O(n)\cap\SL(n,\mathbb{R}) = \SO(n)$. 

Now given an $H$-structure $\sigma\in\Gamma(\Fr(M)/H)$, a tensor field $\xi\in\Gamma(\cT^{p,q}(TM))$ is said to be \emph{stabilised by $H$} if, for any adapted $H$-coframe $u\in Q_{\sigma}:=\pi_H^{-1}(\sigma(M))\subset\Fr(M)$, where $\pi_H:\Fr(M)\to\Fr(M)/H$, one has $H\subseteq\mathrm{Stab}(u^{-1}.\xi)$. In what follows, we shall be mostly interested in $H$-structures that are completely characterised by their stabilised tensors. This amounts to assuming that $H\subset\mathrm{SO}(n)$ is the stabiliser of one or several tensors on $\mathbb{R}^n$, meaning $H=\mathrm{Stab}(\xi_{\circ})$ for some element $\xi_{\circ}=((\xi_{\circ})_1,\ldots,(\xi_{\circ})_k)$ in a $r$-dimensional $\GL(n,\bR)$-submodule $V\leqslant\oplus\cT^{p,q}(\mathbb{R}^n)$, $V=V_1\oplus\ldots\oplus V_k$ with $V_i\leqslant \cT^{p_i,q_i}(\mathbb{R}^n)$. Indeed, let $\cF \leqslant \bigoplus \cT^{p,q}(TM)$ be a rank $r$ subbundle with fibre $V\cong\bR^r$. 
We have a natural monomorphism of principal bundles
\begin{equation}
\label{eq: monomorphism rho}
    \rho: \Fr(M)\hookrightarrow \Fr(\cF),
    \qquad
    \rho(u_x):\cF_x \tilde\to \;V.
\end{equation}
which identifies, at each $x\in M$, the element $u_x\in \Fr(M)_x$ with a frame on the fibre $\cF_x$, i.e., with a linear isomorphism onto the typical fibre.
A section $\xi\in\Gamma(\cF)$ is a \emph{geometric structure}, modelled on a fixed element $\xi_{\circ} \in V\leqslant \oplus\cT^{p,q}(\mathbb{R}^n)$, if, for each $x\in M$, there exists a frame of $T_x M$ identifying $\xi(x)$ and $\xi_{\circ}$.
Suppose now $H\subset\SO(n)$ fixes the (linear) model structure $\xi_{\circ}$:
\begin{equation}
\label{eq: H=Stab(xi)}
    H=\Stab(\xi_{\circ}).    
\end{equation}

Equation \eqref{eq: H=Stab(xi)} in fact defines the \emph{universal section}  $\Xi\in\Gamma(\pi^*\cF)$, which codifies all smooth $H$-structures, by
\begin{align}
\label{eq: universal section}
    \Xi(y):=y^*\xi_{\circ}.
\end{align}
Explicitly, one assigns to each $H$-class of frames $y\in\Fr(M)/H$ the vector in $\cF_{\pi(y)}$ whose coordinates are given by the model tensor $\xi_{\circ}$ in the frame $\rho(u_{\pi(y)})$, as in \eqref{eq: monomorphism rho}. Now, to each homogeneous section  $\sigma\in\Gamma(\Fr(M)/H)$, defining an $H$-structure, one associates a geometric structure $\xi\in\Gamma(\cF)$ modelled on $\xi_{\circ}$ by
\begin{align}
\label{eq: Xi correspondence}
    \xi_\sigma:=\sigma^*\Xi
    =\Xi\circ \sigma.
\end{align}
Conversely, to a given geometric structure $\xi\in\Gamma(\cF)$ stabilised by $H$, one associates, at each $x\in M$, the $H$-class of frames $\sigma(x)\in\pi^{-1}(x)$ such that $\xi(x)=\sigma(x)^*\xi_{\circ}$. In view of the correspondence \eqref{eq: Xi correspondence}, one often colloquially speaks of geometric structures, $H$-structures, and homogeneous sections interchangeably.

Note that for $H$-structures $\sigma\in\Gamma(\Fr(M,g)/H)$ compatible with a \emph{fixed} background metric and orientation, it suffices to consider a (multi-)tensor $ {\xi_{\circ}}$ lying in an $\SO(n)$-submodule $ V\leqslant\oplus\cT^{p,q}(\mathbb{R}^n)$,
so that $H=\mathrm{Stab}_{\SO(n)}(\xi_{\circ}):=\{\rg\in\SO(n):\rg.\xi_{\circ}=\xi_{\circ}\}$. Then the homogeneous section $\sigma$ corresponds bijectively to a geometric structure $\xi$ modelled on $\xi_{\circ}$, satisfying the necessary compatibility relations with the metric and orientation.

Motivated by Berger's list of the possible holonomy groups of a simply-connected and non-locally symmetric Riemannian manifold (see \cite{Joyce2007}*{Theorem 3.4.1}), we now discuss the three main examples of $H$-structures (defined by geometric structures) in which we shall be mostly interested throughout this paper.

\begin{example}[$\U(m)$-structures]\label{ex: almost_complex_structure}
Let $n=2m\geqslant 4$ and $H=\U(m)\subset \SO(2m)$. We can write
\[
\U(m) = \mathrm{Stab}(J_{\circ})\cap\mathrm{Stab}(g_{\circ}) = \mathrm{Stab}_{\SO(n)}(J_{\circ}),
\] where $J_{\circ}\in\mathrm{End}(\mathbb{R}^{2m})$ is the standard complex structure on $\mathbb{R}^{2m}=\mathbb{R}^m\oplus\mathbb{R}^m$ given in canonical coordinates by the matrix
\[
J_{\circ}=\left(\begin{array}{@{}c|c@{}}
  \textbf{0}
  & -\textbf{1} \\
\hline
  \textbf{1}
  & \textbf{0}
\end{array}\right).
\] 
A $\U(m)$-structure $(g,J)$ on $M^{2m}$, also called an \emph{almost Hermitian structure}, is determined by a Riemannian metric $g$ on $M^{2m}$ and an orthogonal almost complex structure $J$, i.e., an element $J\in\Gamma(\mathrm{End}(TM))$ such that $J^2=-\mathrm{Id}_{TM}$, and satisfying $J^{\ast}g=g$. Note that $|J|_g^2=2m$, and one further has $\frac{1}{m!}\omega^m = \vol_g$, where $\omega:=g(J\cdot{},\cdot{})\in\Omega^2(M)$ is the associated fundamental non-degenerate $2$-form. In fact,
\[
\U(m) = \mathrm{Stab}(\omega_{\circ})\cap\mathrm{Stab}(g_{\circ}) = \mathrm{Stab}_{\SO(n)}(\omega_{\circ}),
\] where $\omega_{\circ}\in\Lambda^2(\mathbb{R}^{2m})^{\ast}$ is the standard symplectic $2$-form associated to $(g_{\circ},J_{\circ})$. Thus, when considering a compatible $\U(m)$-structure on a given oriented Riemannian $2m$-manifold $(M^{2m},g)$, the model structure can be taken to be either $\xi_{\circ}=J_{\circ}$ or $\xi_{\circ}=\omega_{\circ}$. Note that compatible $\U(m)$-structures are in one-to-one correspondence with sections of the $\SO(2m)/\U(m)$-bundle $\pi:\Fr(M^{2m},g)/\U(m)\to M$.

Under the metric identification $\Lambda^2\cong\mathfrak{so}(2m)$, we have the $\mathrm{U}(m)$-irreducible decomposition
\begin{align*}
\Lambda^2 &=\Lambda^2_{\fu(m)}\oplus \Lambda^2_\fm,\quad\text{where}\\     
\Lambda^2_{\fu(m)}
%=\{\alpha\in \Lambda^2 : J^\ast\alpha=\alpha \}\simeq\mathfrak{u}(m)
&\cong\fu(m) =\{A\in \fso(2m) : J_oA=AJ_o\},
%\simeq[\Lambda_0^{1,1}]\oplus\langle\omega\rangle,
\\  %where $[\Lambda_0^{1,1}]=\mathfrak{su}(m)$ is the orthogonal complement of $\langle\omega\rangle$ in $[\Lambda^{1,1}]$, and
\Lambda^2_\fm%=\{\alpha\in \Lambda^2 : J^\ast\alpha=-\alpha\}
&\cong\mathfrak{m}:=\mathfrak{u}(m)^{\perp} =\{A\in \fso(2m) : J_oA=-AJ_o\}.
%[[\Lambda^{2,0}]]=[[\Lambda^{0,2}]]
\end{align*}
%is $\mathrm{U}(m)$-irreducible. Here $[\Lambda^{1,1}]\otimes\mathbb{C} = \Lambda^{1,1}$ and $[[\Lambda^{2,0}]]\otimes\mathbb{C} = \Lambda^{2,0}\oplus\Lambda^{0,2}$.
\end{example}

%\begin{example}[Quaternionic-K\"ahler structures]
%Let $n=4m\geqslant 8$ and $H=\mathrm{Sp}(m)\cdot{\mathrm{Sp}(1)}\subset\mathrm{SO}(4m)$. Then the model structure $\xi_{\circ}:=\Omega_{\circ}\in\Lambda^4(\mathbb{R}^{4m})^{\ast}$ is given by the $4$-form
%\[
%\Omega_{\circ} := \omega_1^2+\omega_2^2+\omega_3^2,
%\] where $\omega_1$, $\omega_2$ and $\omega_3$ are the fundamental $2$-forms respectively associated to the three standard complex structures $J_1,J_2$ and $J_3$ on $\mathbb{R}^{4m}\cong\mathbb{H}^m$ and the flat metric. So a $\mathrm{Sp}(m)\cdot{\mathrm{Sp}(1)}$-structure on $M^{4m}$ is equivalent to a $4$-form $\Omega$ on $M$ linearly isomorphic to $\Omega_{\circ}$ at each point.
%\end{example}

\begin{example}[$\rm G_2$-structures]\label{ex: G2_structures}
Let $n=7$ and $H=\rm G_2\subset\mathrm{SO}(7)$. In terms of the standard basis $(e^1,\ldots,e^7)$ of $(\mathbb{R}^7)^{\ast}$, define the standard $\rm G_2$-structure $\varphi_{\circ}\in\Lambda^3(\mathbb{R}^7)^{\ast}$ by\footnote{Here our sign convention follows e.g. \cite{Karigiannis2007}.}
\[
\varphi_{\circ} = e^{123} + e^1\wedge(e^{45} - e^{67}) + e^2\wedge(e^{46} - e^{75}) + e^3\wedge(e^{47} - e^{56}).
\] Then $\rm G_2 = \Stab(\varphi_{\circ})\subset\SO(7)$, and $\varphi_{\circ}$ induces the standard Euclidean metric $g_{\circ}$ and orientation $\vol_{\circ}$ through the nonlinear algebraic relation
\begin{equation}\label{eq: induced_metric_g2}
(X\lrcorner\varphi_{\circ})\wedge(Y\lrcorner\varphi_{\circ})\wedge\varphi_{\circ} = -6g_{\circ}(X,Y)\vol_{\circ},\quad\forall X,Y\in\sX(\mathbb{R}^7).
\end{equation} Moreover, if $\ast_{\circ}$ denotes the Hodge star operator induced from $(g_{\circ},\vol_{\circ})$, then $\psi_{\circ}:=\ast_{\circ}\varphi_{\circ}\in\Lambda^4(\mathbb{R}^7)^{\ast}$ is given by
\[
\psi_{\circ} = e^{4567} - e^{4523} - e^{4163} - e^{4127} - e^{2637} - e^{1537} - e^{1526},
\] and one also has $\rm G_2 = \Stab_{\SO(7)}(\psi_{\circ})$. Note from the above expressions that $|\varphi_{\circ}|_{\circ}^2=|\psi_{\circ}|_{\circ}^2=7$, or equivalently $\varphi_{\circ}\wedge\psi_{\circ} = 7\vol_{\circ}$. 

A $\rm G_2$-structure on a smooth $7$-manifold $M^7$ is then defined by a $3$-form $\varphi$ which is pointwise linearly identified with $\varphi_{\circ}$, also known as a \emph{positive} $3$-form $\varphi\in\Omega_{+}^3(M)$. It then induces a metric $g$ and orientation $\vol_g$ on $M^7$ via the pointwise algebraic relation \eqref{eq: induced_metric_g2}. It is known that $M^7$ admits a $\rm G_2$-structure if and only if it is both orientable and spinnable. When that is the case, then for any metric $g$ the Riemannian manifold $(M^7,g)$ admits a compatible $\rm G_2$-structure \cite{Bryant2006}*{Remark 3}, i.e. a positive $3$-form $\varphi\in\Omega_{+}^3(M)$ satisfying the compatibility condition
\[
(X\lrcorner\varphi)\wedge(Y\lrcorner\varphi)\wedge\varphi = -6g(X,Y)\vol_{g},\quad\forall X,Y\in\sX(M).
\] The compatible $\rm G_2$ structures on $(M^7,g)$ are in one-to-one correspondence with sections of the fibre bundle $\pi: \Fr(M^7,g)/\mathrm{G}_2\to M$, with fibre $\rm SO(7)/{\rm G_2}\cong\mathbb{RP}^7$. In fact, given a compatible $\rm G_2$-structure $\varphi$ on $(M^7,g)$, then any other compatible $\rm G_2$-structure can be explicitly parametrised by pairs $(f,X)\in C^{\infty}(M)\times\Gamma(TM)$ satisfying $f^2+|X|^2 = 1$, and $\pm(f,X)$ induce the same $\rm G_2$-structure: if we let $\psi:=\ast\varphi$, then the $\rm G_2$-structure $\varphi_{(f,X)}$ corresponding to $(f,X)$ is \cite{Bryant2006}*{(3.6)}
\begin{equation}
\label{eq: explicit_param_g2}
	\varphi_{(f,X)} = (f^2-|X|^2)\varphi - 2f X\lrcorner\psi + 2X\wedge (X\lrcorner\varphi).    
\end{equation}
As for the decomposition \eqref{eq: so_decomp} in the case $H=\rm G_2$, we note that under the metric identification $\Lambda^2\cong\mathfrak{so}(7)$, we have the following irreducible $\rm G_2$-module decomposition:
\begin{align}
\Lambda^2 &= \Lambda_{\fg_2}^2\oplus\Lambda_\fm^2,\quad\text{where}\nonumber\\ 
\Lambda_{\fg_2}^2 &= \{\omega: \ast(\omega\wedge\varphi) = \omega\} = \{\omega: \omega\wedge\ast\varphi = 0\}\cong\mathfrak{g}_2, \nonumber\\
\Lambda_{\fm}^2 &= \{\omega: \ast(\omega\wedge\varphi) = -2\omega\} = \{u\lrcorner\varphi : u\in \bR^7\}\cong\mathfrak{m}.\label{eq: decomp_lambda_g2}
\end{align}
\end{example}

\begin{example}[$\rm Spin(7)$-structures]\label{ex: Spin7_structures}
Let $n=8$ and $H=\mathrm{Spin}(7)\subset\mathrm{SO}(8)$. The model structure here is $\xi_{\circ}=\Phi_{\circ}\in\Lambda^4(\mathbb{R}^8)^{\ast}$, given in terms of the standard basis $(e^0,e^1,\ldots,e^7)$ of $(\mathbb{R}^8)^{\ast}=(\mathbb{R})^{\ast}\oplus(\mathbb{R}^7)^{\ast}$ by
\[
\Phi_{\circ} = e^0\wedge\varphi_{\circ} + \ast_{\mathbb{R}^7}\varphi_{\circ}.
\] A compatible $\mathrm{Spin}(7)$-structure on $(M^8,g)$ is defined by a $4$-form $\Phi\in\Omega^4(M)$ which is pointwise linearly isomorphic to $\Phi_{\circ}$, and which induces the metric $g$ and orientation $\vol_g$; for each $p\in M$, if we extend any non-zero tangent vector $v\in T_pM$ to a local frame $\{v, e_1, \cdots , e_7\}$, and let
\begin{align*}
    B_{ij}(v)&=((e_i\lrcorner v\lrcorner \s7)\wedge (e_j\lrcorner v\lrcorner \s7)\wedge (v\lrcorner \s7))(e_1, \cdots , e_7),\\
    A(v)&=((v\lrcorner \s7)\wedge \s7)(e_1, \cdots, e_7),
\end{align*} then we have
\[
    (g(v,v))^2=-\frac{7^3}{6^{\frac{7}{3}}}\frac{(\textup{det}\ B_{ij}(v))^{\frac 13}}{A(v)^3}.    
\]
The metric and the orientation determine a Hodge star operator $\ast$, and the $4$-form is \emph{self-dual}, i.e., $\ast\Phi =\Phi$. The compatible $\rm Spin(7)$-structures on $(M^8,g)$ are in one-to-one correspondence with sections of the fibre bundle $\pi:\Fr(M^8,g)/\rm Spin(7)\to M$ with fibre $\SO(8)/\rm Spin(7)\cong\mathbb{RP}^7$, and in fact there is an explicit parametrisation of such structures analogous to the one for $\rm G_2$-structures \eqref{eq: explicit_param_g2}, which can be found in \cite{Dwivedi-Loubeau-SaEarp2021}*{Theorem A}.

Under the identification  $\Lambda^2\cong\mathfrak{so}(8)=\mathfrak{spin}(8)$, we have the irreducible $\mathrm{Spin}(7)$-module decomposition
\begin{align*}
\Lambda^2 &= \Lambda_{\spin(7)}^2\oplus\Lambda_\fm^2,\quad\text{where}\\ 
\Lambda_{\spin(7)}^2 &= \{\omega: \ast(\omega\wedge\Phi) = \omega\}\cong\mathfrak{spin}(7)
=\mathfrak{so}(7), \\
\Lambda_{\fm}^2 &= \{\omega: \ast(\omega\wedge\Phi) = -3\omega\}\cong\mathfrak{m}.
\end{align*}
\end{example}

\subsection{Infinitesimal deformations}
\label{sec: inf_deformations}

We will describe infinitesimal deformations of $H$-structures following the perspective adopted by Karigiannis  for $H=\rG_2,\Spin7$ \cites{Karigiannis2007,karigiannis-spin7}, building upon some notation and results established in \cite{Dwivedi-Loubeau-SaEarp2021}. This will allow us to derive several useful identities regarding the infinitesimal action of $\mathrm{GL}(n,\mathbb{R})$ on tensors, and some important facts relating this action with $H$-structures defined by (multi)tensor fields. 

Let $(M^n,g)$ be an oriented Riemannian $n$-manifold. The musical isomorphisms defined by $g$ induce the following decomposition of endomorphisms on $TM$: 
\begin{align*}
    \Gamma(\End(TM))=\Gamma(\sym(TM))\oplus\Gamma(\fso(TM))\simeq\Sigma^2(M)\oplus\Omega^2(M),   
\end{align*}
where $\Gamma(\sym(TM))$ (resp. $\Sigma^2(M)$) denotes the space of symmetric endomorphisms (resp. symmetric bilinear forms) on $TM$. Explicitly, for any $A\in\Gamma(\mathrm{End}(TM))$, we let $A_{ij}:= g_{lj}A_i^l$ and we decompose $A=S+C\in \Sigma^2(M)\oplus\Omega^2(M)$, where $S_{ij}=\frac{1}{2}(A_{ij} + A_{ji})$ and $C_{ij}=\frac{1}{2}(A_{ij} - A_{ji})$. 

Let $\xi\in\Gamma(\cT^{p,q}(TM))$ be any $(p,q)$-tensor field on $M$. 
In local coordinates, we write
\[
    \xi = \xi^{i_1 \dots i_p}_{j_1 \dots j_q} \frac{\partial}{\partial x^{i_1}}\otimes \dots\otimes\frac{\partial}{\partial x^{i_p}}\otimes dx^{j_1}\otimes\dots \otimes dx^{j_q},
\]
where $\xi^{i_1 \dots i_p}_{j_1 \dots j_q}:=\xi(dx^{i_1},\dots,dx^{i_p};\frac{\partial}{\partial x^{j_1}},\dots,\frac{\partial}{\partial x^{j_q}})$ are smooth local functions. Now, the canonical right $\GL(n,\bR)$-action \eqref{diamond_operator} on tensors on $\mathbb{R}^n$ extends naturally pointwise to tensors on $M$.
This induces an \emph{infinitesimal action} of endomorphisms $A\in\Gamma(\mathrm{End}(TM))$ on $\cT^{p,q}(TM)$ given by 
\begin{align}
\label{eq: diamond_operator}
    A\diamond \xi &:=\left.\ddt\right\vert_{t=0} e^{tA}.\xi \nonumber\\
    &= \xi^{i_1 \dots i_p}_{j_1 \dots j_q}  \Big(
    \sum_{r=1}^{p} 
    - \frac{\partial}{\partial x^{i_1}}\otimes \dots \otimes A\frac{\partial}{\partial x^{i_r}}\otimes \dots\otimes \frac{\partial}{\partial x^{i_p}}\otimes dx^{j_1}\otimes\dots \otimes dx^{j_q}\\
    &\quad\quad+ \sum_{s=1}^{q}
    \frac{\partial}{\partial x^{i_1}}\otimes \dots\otimes \frac{\partial}{\partial x^{i_p}}\otimes dx^{j_1}\otimes\dots \otimes A^{\ast}dx^{j_s}\otimes\dots\otimes dx^{j_q} \Big)\nonumber.
\end{align} Writing $A=(A^i_j)\in \mathfrak{gl}(n, \mathbb{R})$ (pointwise) in the above coordinates, and reordering terms in \eqref{eq: diamond_operator}, one has
\begin{equation}
\label{eq:diamond_coordinates}
    (A\diamond\xi)^{i_1\dots i_p}_{j_1\dots j_q}=-\sum_{r=1}^p A^{i_r}_m\xi^{i_1\dots m \dots i_p}_{j_1 \dots j_q}+\sum_{s=1}^q A^m_{j_s}\xi_{j_1 \dots m \dots j_q}^{i_1 \dots i_p}.
\end{equation} 
More generally, if $\xi=(\xi_1,\ldots,\xi_k)$ is a multi-tensor, we define the \emph{diamond operator} by the component-wise infinitesimal action of $A\in\Gamma(\End(TM))$:
\[
A\diamond\xi := (A\diamond\xi_1,\ldots,A\diamond\xi_k).
\]

We now collect some useful preliminary facts about the diamond operator.
\begin{lemma}
\label{lem: properties_diamond}
    Let $(M,g)$ be a Riemannian manifold, $A,B \in\Gamma(\mathrm{End}(TM))$ and $\xi,\eta\in \Gamma(\cT^{p,q}(TM))$. Then the operator $\diamond$ given by \eqref{eq: diamond_operator} satisfies the following identities:
\begin{itemize}
    \myitem[(i)] \label{item: diamond_bracket} $A\diamond B=-[A,B]$.
    
    \myitem[(ii)] \label{item: diamond_Jacobi} $A\diamond(B\diamond\xi)-B\diamond(A\diamond\xi)=-[A,B]\diamond\xi$.
    
    \myitem[(iii)] \label{item: diamond_sym_alt} Suppose that $p=0$, i.e. $\xi\in\Gamma(\cT^{0,q}(TM))$. If $\xi$ is a symmetric (resp. alternating) tensor, then $A\diamond\xi$ is a symmetric (resp. alternating) tensor.
    
    \myitem[(iv)] \label{item: diamond_g} \cite{Dwivedi-Loubeau-SaEarp2021}*{Lemma 2.4}  $g\diamond\xi=(q-p)\xi$; we shall call $\ell:=q-p$ the \emph{net degree} of the tensor $\xi$.
    
    \myitem[(v)] \label{item: diamond_S} Decomposing $A=S+C\in \Sigma^2(M)\oplus\Omega^2(M)$, we have $A\diamond g = 2S$. In particular, $\ker(\cdot{}\diamond g) = \Omega^2$.
    
    \myitem[(vi)] \label{item: diamond_vol} $A\diamond\vol_g = \tr(A)\vol_g$; in particular, $\ker(\cdot{}\diamond \vol_g) = \Sigma_0^2\oplus\Omega^2$.
    
    \myitem[(vii)] \label{item: diamond_orth} If $D\in\Omega^2(M)$ then $\langle D\diamond\xi,\xi\rangle_g = 0$.
    
    \myitem[(viii)] \label{item: diamond_adjoint} If $D\in\Omega^2(M)$ then $\langle  {\eta},D\diamond\xi\rangle_g = -\langle D\diamond {\eta},\xi\rangle_g$.
\end{itemize}
\end{lemma}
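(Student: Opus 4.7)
The plan is to treat the eight assertions in two groups: items (i), (iii), (iv), (v), (vi) follow by direct inspection of the coordinate formula \eqref{eq:diamond_coordinates}, while items (ii), (vii), (viii) are most cleanly derived from the structural fact that $\diamond$ is the infinitesimal version of the right $\mathrm{GL}(n,\mathbb{R})$-action \eqref{eq: diamond_operator} on tensors. I would write the proof in this order, dispatching the purely algebraic identities first so that the structural arguments can invoke them freely.

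For the coordinate-based items, I would start from \eqref{eq:diamond_coordinates}. Item (i) is a one-line check: treating $B$ as a $(1,1)$-tensor, the two sums produce $-A^i_m B^m_j + A^m_j B^i_m = -(AB-BA)^i_j$. Item (iii) is immediate because, in \eqref{eq:diamond_coordinates} with $p=0$, the action of $A$ permutes two lower indices of $\xi$ via $A^m_{j_s}$ in a manner that commutes with transposing them, hence preserves symmetry and antisymmetry. Item (iv) is obtained by applying \eqref{eq:diamond_coordinates} with $A = \mathrm{Id}$ (which is the endomorphism associated to the metric $g$, namely $\delta^i_j$), producing exactly $q-p$ copies of $\xi$. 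Item (v) is the direct computation $(A\diamond g)_{ij} = A^m_i g_{mj} + A^m_j g_{im} = A_{ij}+A_{ji} = 2S_{ij}$; the kernel statement is then immediate. Item (vi) uses that $\vol_g$ is a top-degree alternating $(0,n)$-tensor, so after contracting $(A\diamond\vol_g)_{j_1\dots j_n} = \sum_s A^m_{j_s}(\vol_g)_{j_1\dots m\dots j_n}$ only the term $m=j_s$ survives when all indices are distinct, yielding $\tr(A)(\vol_g)_{j_1\dots j_n}$; equivalently, $\det(e^{tA}) = e^{t\,\tr A}$. The kernel then decomposes as $\Sigma_0^2 \oplus \Omega^2$.

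For the structural items, item (ii) follows from the observation that since $A\diamond\xi = \frac{d}{dt}\big|_{t=0} e^{tA}.\xi$ and $\rg\mapsto \rg.(\cdot{})$ is a right action of $\mathrm{GL}(n,\mathbb{R})$, the assignment $A\mapsto A\diamond(\cdot{})$ is a Lie algebra anti-homomorphism; explicitly, from
\[
A\diamond(B\diamond\xi) = \left.\frac{\partial^2}{\partial s\,\partial t}\right|_{s=t=0}(e^{tB}e^{sA}).\xi,
\]
one reads off $A\diamond(B\diamond\xi) - B\diamond(A\diamond\xi) = [B,A]\diamond\xi = -[A,B]\diamond\xi$. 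Item (vii) uses the fact that for $D\in\Omega^2(M)\simeq\mathfrak{so}(TM)$ the flow $e^{tD}$ takes values in $\mathrm{O}(T_xM,g_x)$ pointwise, hence preserves the inner product induced on $\cT^{p,q}(TM)$ by $g$; differentiating $\langle e^{tD}.\xi, e^{tD}.\xi\rangle_g \equiv \langle\xi,\xi\rangle_g$ at $t=0$ yields $2\langle D\diamond\xi,\xi\rangle_g = 0$. Finally, item (viii) is the polarised version of (vii): applying (vii) to $\xi+\eta$ and cancelling diagonal terms gives the skew-adjointness relation $\langle D\diamond\eta,\zeta\rangle_g = -\langle\eta, D\diamond\zeta\rangle_g$ for every $\eta,\zeta\in\Gamma(\cT^{p,q}(TM))$, and the claim follows by choosing $\eta = A\diamond\xi$, $\zeta = \xi$.

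I expect no serious obstacles; the only delicate point is the sign in (ii), where the right-action convention forces the derived map $\mathfrak{gl}(n,\mathbb{R})\to\End(\cT^{p,q}(TM))$ to be an \emph{anti}-homomorphism, which is consistent with the sign in (i). A secondary care is that in (iv) one must remember that the endomorphism version of the metric is the identity $\delta^i_j$, not $g_{ij}$ itself, while in (v) the object $g$ is the $(0,2)$-tensor $g_{ij}$; the distinction is transparent once the musical identifications are made explicit at the outset.
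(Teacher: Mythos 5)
Your proof is correct, and it follows the same underlying philosophy as the paper — the right $\mathrm{GL}(n,\mathbb{R})$-action and its infinitesimal generator — but differs in implementation in a few places. For (i), you compute in coordinates directly from \eqref{eq:diamond_coordinates}, whereas the paper identifies the action on $(1,1)$-tensors as conjugation and differentiates $\mathrm{Ad}(e^{-tA})=e^{-t\ad A}$; both are elementary and yield the sign immediately. For (ii), you invoke the standard fact that a right action differentiates to a Lie-algebra anti-homomorphism via a mixed second partial; the paper instead establishes the identity $\rg.(B\diamond\xi)=(\rg.B)\diamond(\rg.\xi)$ and then differentiates along $e^{tA}$, reusing (i). Your route is more "off-the-shelf" while the paper's distributivity lemma is itself useful elsewhere (e.g.\ in the proof of Lemma \ref{lem: orthogonality}), so there is a mild economy to the paper's choice. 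For (viii), your polarisation of (vii) to obtain skew-adjointness of $D\diamond(\cdot{})$ and then specialising $\eta=A\diamond\xi$, $\zeta=\xi$ is cleaner than the paper's insertion of $e^{tD}e^{-tD}$; both give the same statement. Items (iii)--(vii) coincide with the paper's treatment (the paper proves (vi) via $\det(e^{tA})$, matching your second argument). One small clarification worth recording in (ii): the interchange $e^{sA}.(\tfrac{d}{dt}\big|_0 e^{tB}.\xi)=\tfrac{d}{dt}\big|_0 e^{sA}.(e^{tB}.\xi)$ uses that the $\mathrm{GL}(n,\mathbb{R})$-action is linear in the tensor argument, which is immediate from the defining formula but should be stated before the mixed-partial step.
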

\begin{proof}\quad
\begin{itemize}
    \item[(i)]  An element $\rg\in\mathrm{GL}(n,\mathbb{R})$ acts on endomorphisms by conjugation, i.e.,  $\rg.B=\rg^{-1}B\rg=\mathrm{Ad}_{\mathrm{GL}(n,\mathrm{R})}(\rg^{-1})(B)$. Along a curve $\rg(t)=e^{tA}$ one has $\mathrm{Ad}_{\mathrm{GL}(n,\mathrm{R})}(e^{-tA})=e^{-t\ad(A)}$, and we get $A\diamond B = -[A,B]$ immediately by differentiation. %Alternatively, one can compute explicitly:  %
    %\begin{align*}
     %   A\diamond B &=\sum B^i_j (-A\frac{\partial}{\partial x^i}\otimes dx^j+\frac{\partial}{\partial x^i}\otimes A^{\ast}dx^j)\\
      %      &=\sum -B^i_j A^k_i \frac{\partial}{\partial x^k}\otimes dx^j+B^i_j A^j_l\frac{\partial}{\partial x^i}\otimes dx^l\\
       %     &=\sum\{ -(AB)^i_j + (BA)^i_j \}\frac{\partial}{\partial x^i}\otimes dx^j\\
        %    &=-[A,B].
    %\end{align*}
    \item[(ii)] Notice that the $\GL(n,\bR)$-action is distributive over the diamond operator: 
    \begin{align*}
        \rg.(B\diamond \xi) &= \left.\ddt\right\vert_{t=0} \rg.(e^{tB}.\xi)
        = \left.\ddt\right\vert_{t=0} (\rg^{-1}e^{tB}\rg).(\rg.\xi) \\
         &= \left.\ddt\right\vert_{t=0} e^{t(\rg.B)}.(\rg.\xi)
         = (\rg.B)\diamond (\rg.\xi).
    \end{align*}
    Applying identity $(i)$ along a curve $\rg(t)=e^{tA}$, we have
    \begin{align*}
        A\diamond(B\diamond\xi) &= \left.\ddt\right\vert_{t=0} (e^{tA}.B)\diamond(e^{tA}.\xi)\\
        &= -[A,B]\diamond\xi+B\diamond(A\diamond \xi).
    \end{align*}
    \item[(iii)] For $\xi\in\Gamma(\cT^{0,q}(TM))$,
    \[
    (A\diamond\xi)_{i_1\ldots i_q} = A_{i_1}^m\xi_{m i_2\ldots i_q} + A_{i_2}^m\xi_{i_1 m\ldots i_q} + \ldots + A_{i_q}^m\xi_{i_1 i_2\ldots m}.  
    \] 
    It is clear that  $\xi\in\Sigma^q(M)$ (resp. $\Omega^q(M)$) implies $A\diamond\xi\in\Sigma^q(M)$ (resp. $\Omega^q(M)$).
    
    \item[(iv)] If $A_{ij} = \delta_{ij}$ is the metric tensor,  in normal coordinates at a point, then
    \[
    A\frac{\partial}{\partial x^{i_r}} = \frac{\partial}{\partial x^{i_r}} ; \quad A^{\ast}dx^{j_s} = dx^{j_s},
    \] 
    and we conclude by counting terms in \eqref{eq: diamond_operator}.
    
    \item[(v)] We compute directly:
    $(A\diamond g)_{ij} 
        = A_i^p g_{pj} + A_j^p g_{ip}
        = (S_{ij}+C_{ij}) + (S_{ji}+C_{ji})
        = 2S_{ij}$.

    \item[(vi)] For any $t\in\bR$, note that
    \[
    e^{tA}.\vol_g = (e^{tA})^{\ast}\vol_g = \det(e^{tA})\vol_g;
    \] hence,
    \[
    A\diamond\vol_g = \frac{d}{dt}\Bigr|_{t=0} \det(e^{tA})\vol_g = \tr(A)\vol_g.
    \]

    \item[(vii)] If $D\in\Omega^2(M)$ then
    \begin{align*}
        2\langle D\diamond\xi,\xi\rangle_g &= \frac{d}{dt}\Bigr|_{t=0}\langle e^{tD}.\xi,e^{tD}.\xi\rangle_g\\
        &= \frac{d}{dt}\Bigr|_{t=0}\langle\xi,\xi\rangle_g\quad\text{(since $e^{tD}\in\mathrm{SO}(TM)$)}\\
        &= 0.
    \end{align*} 
    \item[(viii)] If $D\in\Omega^2(M)$ then
    \begin{align*}
        \langle  {\eta},D\diamond\xi\rangle &= \frac{d}{dt}\Bigr|_{t=0}\langle   {\eta},e^{tD}.\xi\rangle
        = \frac{d}{dt}\Bigr|_{t=0}\langle e^{tD}.(e^{-tD}.\eta),e^{tD}.\xi\rangle\\
        &= \frac{d}{dt}\Bigr|_{t=0}\langle e^{-tD}.\eta,\xi\rangle\quad\text{(since $e^{tD}\in\mathrm{SO}(TM)$)}\\
        &= -\langle D\diamond\eta,\xi\rangle.
        \qedhere
    \end{align*}
\end{itemize}
\end{proof}
\begin{remark}
    The minus sign in the identity $A\diamond B = -[A,B]$ of Lemma \ref{lem: properties_diamond}--\ref{item: diamond_bracket} is a result of the fact that the infinitesimal action comes from a \emph{right} action.
\end{remark}
\begin{remark}
    The identity of Lemma \ref{lem: properties_diamond}--\ref{item: diamond_Jacobi} reduces to the Jacobi identity for the commutator Lie bracket $[\cdot,\cdot]$ of endomorphisms when $\xi$ is a $(1,1)$-tensor, by identity \ref{item: diamond_bracket}. 
\end{remark}

Now suppose $(M^n,g)$ admits a compatible $H$-structure $Q\subset \Fr(M,g)$. From \eqref{eq: decomp_so(TM)}, we get a corresponding $H$-module decomposition on $\Lambda^2(T^{\ast}M)\simeq\mathfrak{so}(TM)$: 
\begin{align*}
    \Lambda^2 = \Lambda^2_\fh \oplus \Lambda^2_\fm,\qwithq
    \Lambda^2_\fh \simeq \fh_Q \qandq \Lambda^2_\fm\simeq \fm_Q.
\end{align*} 
We shall write $\Omega^2_{\fh}:=\Gamma(\Lambda^2_{\fh})$ and $\Omega^2_{\fm}:=\Gamma(\Lambda^2_{\fm})$. Then, splitting out the trivial submodule $\Omega^0$ of $\Sigma^2(M)$ spanned by the Riemannian metric, we have
\begin{align}
\label{eq:splitting TM* x TM-GENERAL}
 \Gamma(\End(TM))\simeq\Omega^0\oplus \Sigma_0^2 \oplus \Omega^2_\fh \oplus \Omega^2_\fm,
\end{align}
where $\Sigma_0^2$ denotes the space of traceless symmetric bilinear forms. Hence, with respect to \eqref{eq:splitting TM* x TM-GENERAL}, we can decompose 
$$A=\frac{1}{\dim M} (\tr A)g+A_0+A_\fh+A_\fm,
$$ 
where $A_0$ is a symmetric traceless $2$-tensor. 
\begin{lemma}
\label{lem: ker_diamond}
    Let $(M^n,g)$ be an oriented Riemannian $n$-manifold and suppose that $\sigma\in\Gamma(\Fr(M,g)/H)$ is a compatible $H$-structure. Then the following hold:
\begin{itemize}
    \myitem[(i)] If $\xi\in\Gamma(\cT^{p,q}(TM))$ is stabilised under the action of $H$, then $\Omega_{\fh}^2\subseteq\ker(\cdot{}\diamond \xi)$.
    \myitem[(ii)] If $H=\mathrm{Stab}(\xi_{\circ})$, as in \eqref{eq: H=Stab(xi)}, so that  $\sigma$ corresponds to a geometric structure $\xi=(\xi_1,\ldots,\xi_k)$ modelled on $\xi_{\circ}$, then
    \[
    \Omega_{\fh}^2=\ker(\cdot{}\diamond\xi)=\ker(\cdot{}\diamond\xi_1)\cap\ldots\cap\ker(\cdot{}\diamond\xi_k).
    \] 
    \myitem[(iii)] \label{item: ker in Omega^2}
    If $H=\mathrm{Stab}_{\SO(n)}(\xi_{\circ})$, so that $\sigma$ corresponds to a geometric structure $\xi$ modelled on $\xi_{\circ}$, which is compatible with $g$ and $\vol_g$, then
    \[
    \Omega_{\fh}^2=\ker(\cdot{}\diamond\xi)\cap\Omega^2.
    \]
\end{itemize}
\end{lemma}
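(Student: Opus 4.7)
My plan is to reduce each assertion to a pointwise linear-algebraic statement on $\bR^n$ by trivialising with an adapted coframe $u\in Q|_x$, leveraging the natural equivariance of the diamond operator,
\[
    g.(A\diamond\xi) = (g.A)\diamond(g.\xi), \qquad g\in\GL(n,\bR),
\]
which I would verify by differentiating the chain $g.(e^{tA}.\xi)=(g^{-1}e^{tA}g).(g.\xi)=e^{t(g^{-1}Ag)}.(g.\xi)$ at $t=0$, recalling that $g.A=g^{-1}Ag$ on endomorphisms by Lemma~\ref{lem: properties_diamond}--\ref{item: diamond_bracket}. Applied to $u\in Q|_x$, this identifies $A_x\diamond\xi(x)=0$ with $B\diamond\xi^{(u)}=0$, where $B\in\fgl(n,\bR)$ is the matrix representing $A_x$ and $\xi^{(u)}\in\cT^{p,q}(\bR^n)$ represents $\xi(x)$ in the adapted frame~$u$ (with $\xi^{(u)}=\xi_\circ$ in cases (ii) and (iii)).

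For (i), any $A\in\Omega^2_\fh$ is represented in an adapted frame by $B\in\fh$. Since $\xi$ is $H$-stabilised, $H$ fixes $\xi^{(u)}$, so $e^{tB}.\xi^{(u)}=\xi^{(u)}$ for all $t$; differentiating at $t=0$ yields $B\diamond\xi^{(u)}=0$, hence $A\diamond\xi\equiv 0$ by equivariance.

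For (ii), I first note that $\ker(\cdot\diamond\xi)=\bigcap_i\ker(\cdot\diamond\xi_i)$ is immediate from the componentwise definition of $\diamond$ on multi-tensors, and the inclusion $\Omega^2_\fh\subseteq\ker(\cdot\diamond\xi)$ is (i). For the reverse inclusion, supposing $A\diamond\xi\equiv 0$, the reduction gives $B\diamond\xi_\circ=0$ at each $x$, so $B$ lies in the infinitesimal stabiliser $\{X\in\fgl(n,\bR):X\diamond\xi_\circ=0\}$. I will invoke the standard identification of this infinitesimal stabiliser with $\Lie(\Stab(\xi_\circ))=\fh$ (which follows from the submersion theorem applied to the orbit map $g\mapsto g.\xi_\circ$), yielding $A_x\in\fh_Q|_x$, i.e.\ $A\in\Omega^2_\fh$.

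For (iii), the same reduction places $B$ in $\Lie(\Stab(\xi_\circ))$, which may now strictly contain $\fh=\Lie(\Stab_{\SO(n)}(\xi_\circ))$ because the full $\GL(n,\bR)$-stabiliser of $\xi_\circ$ need not sit inside $\SO(n)$. The extra hypothesis $A\in\Omega^2$ forces $B\in\fso(n)$, so $B\in\Lie(\Stab(\xi_\circ))\cap\fso(n)=\Lie(\Stab(\xi_\circ)\cap\SO(n))=\fh$, closing the argument. The only delicate point is this careful separation between the full $\GL(n,\bR)$-stabiliser and its $\SO(n)$-intersection, which is precisely what makes the skew-symmetry hypothesis in (iii) unavoidable; the rest is a mechanical translation of the $H$-module decomposition of $\fso(TM)$ into pointwise linear algebra on $\bR^n$.
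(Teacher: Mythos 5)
Your proof is correct, and the overarching strategy — reduce to a pointwise linear-algebraic statement on $\bR^n$ via an adapted coframe and the $\GL(n,\bR)$-equivariance of $\diamond$ — is the same as the paper's. The genuine difference is in how the nontrivial inclusion in (ii) is handled: the paper \emph{re-derives} the Lie-theoretic fact that the infinitesimal stabiliser $\{B\in\fgl(n,\bR):B\diamond\xi_\circ=0\}$ equals $\fh=\Lie(\Stab(\xi_\circ))$, by using the $\Ad$-equivariance of fundamental vector fields to show that $B\diamond\xi_\circ=0$ forces the whole curve $e^{tB}.\xi_\circ$ to be constant (hence $e^{tB}\in\Stab(\xi_\circ)$ for all $t$); you instead cite this as a textbook result. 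Both are legitimate — the paper buys self-containment, you buy brevity. (A small terminological quibble: the orbit map $\GL(n,\bR)\to V$ is not a submersion in general; the relevant result is the constant-rank / orbit theorem, of which the paper's Adjoint computation is precisely the proof of the key rank-invariance step.) For (iii), the paper invokes (ii) for the augmented tensor $(\xi_\circ,g_\circ,\vol_\circ)$, whose full $\GL(n,\bR)$-stabiliser is $\Stab_{\SO(n)}(\xi_\circ)$, together with $\ker(\cdot\diamond g)\cap\ker(\cdot\diamond\vol_g)=\Omega^2$ from Lemma~\ref{lem: properties_diamond}--\ref{item: diamond_S},\ref{item: diamond_vol}; you intersect Lie algebras, $\Lie(\Stab(\xi_\circ))\cap\fso(n)=\Lie(\Stab(\xi_\circ)\cap\SO(n))=\fh$, which is an equivalent bookkeeping device. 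Your remark that the full $\GL(n,\bR)$-stabiliser may strictly contain $H$ (e.g.\ $\GL(m,\bC)\supsetneq\U(m)$) is exactly the right observation explaining why the $\Omega^2$-hypothesis in (iii) cannot be dropped — this is the same point illustrated by the paper in Remark~\ref{rmk: ker_diamond} and Example~\ref{ex: diamond_J}.
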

\begin{proof}
\quad
\begin{itemize}
    \item[(i)] By the hypotheses, if $A\in\Omega_{\fh}^2$ then $e^{tA}.\xi = \xi$ for all $t\in\bR$, which implies $A\diamond\xi = 0$.
    
    \item[(ii)] It suffices to prove the claim pointwise, and by (i) it suffices to show that, if $A\in\mathfrak{gl}(n,\bR)$ is such that $\frac{d}{dt}(e^{tA}.\xi_{\circ})|_{t=0}=0$, then $A\in\mathfrak{h}$. Recall that the model structure $\xi_{\circ}$ lies in  a $\GL(n,\bR)$-submodule $V\leqslant\oplus\cT^{p,q}(\mathbb{R}^n)$ under the natural right-action $$
    \phi:\GL(n,\bR)\times V\to V, \quad \phi(\rg,\xi)=\rg.\xi.
    $$ 
    Fixing elements $\rg\in \GL(n,\bR)$ and $\xi\in V$ induces, respectively, partial action maps \[\phi_\rg:=\phi(\rg,\cdot{}): V\rightarrow V
    \qandq
     {\psi}_{\xi}:=\phi(\cdot{},\xi):\GL(n,\bR)\rightarrow V,
    \]
    and the infinitesimal action of $A\in\fgl(n,\bR)$ on $\xi$ is given by
\begin{equation*}
    \widetilde{A}(\xi)   {:=} A\diamond\xi = \left.\ddt\right\vert_{t=0} e^{tA}.\xi = \left.\ddt\right\vert_{t=0}   {\psi}_{\xi}(e^{tA}) = (d {\psi}_{\xi})_1(A).
\end{equation*}
In particular, $\widetilde{A}$ defines a vector field  {on} the orbit $\GL(n,\bR).\xi_{\circ}$, with flow $\phi_{e^{tA}}$. In fact, for any $\xi\in \GL(n,\bR).\xi_{\circ}$, the curve $t\mapsto e^{tA}.\xi$ is the flow line of $\widetilde{A}$  through $\xi$, since
\begin{align}
\label{eq: flow_line_eq}
    \frac{d}{dt}(e^{tA}.\xi)=\left.\frac{d}{ds}\right\vert_{s=0} e^{(s+t)A}.\xi=\widetilde{A}(\phi_{e^{tA}}(\xi)).
\end{align}
On the other hand, the translation by $\rg$ of the vector field $\widetilde{A}$ is given by 
\begin{equation}
\label{eq: Adjoint_action_vectors}
    (d\phi_{\rg^{ {-1}}})_{\phi_{\rg}(\xi)}\Big(\widetilde{A}(\phi_\rg(\xi))\Big)
    =(\widetilde{\Ad(\rg^{ {-1}})(A)})(\xi).
\end{equation}
Thus, if $\widetilde{A}(\xi_{\circ})=A\diamond\xi_{\circ}=0$, then it follows from \eqref{eq: flow_line_eq} and \eqref{eq: Adjoint_action_vectors} that $e^{tA}.\xi_{\circ}$ is constant for any $t\in\bR$:
\begin{align*}
    \frac{d}{dt}(e^{tA}.\xi_{\circ})
    &=\widetilde{A}(\phi_{e^{tA}}(\xi_{\circ})) \\
    &= d(\phi_{e^{tA}} \circ \phi_{e^{-tA}})_{\phi_{e^{tA}}(\xi_{\circ})} \Big(\widetilde{A}(\phi_{e^{tA}}(\xi_{\circ}))\Big) \\
    &=(d\phi_{e^{tA}})_{\xi_{\circ}}\Big((\widetilde{\Ad(e^{-tA})(A)})(\xi_{\circ})\Big)\\
    &=(d\phi_{e^{tA}})_{\xi_{\circ}}(\widetilde{A}(\xi_{\circ}))\\
    &=0,
\end{align*}
where we have used the fact that $ {\Ad(e^{tA}) (A) = A}$.
Now $H=\mathrm{Stab}(\xi_{\circ})$ implies $e^{tA}\in H$, for all $t$, and therefore $A\in \fh$, as claimed.

\item[(iii)] Since $H=\Stab_{\SO(n)}(\xi_{\circ})=\Stab(\xi_{\circ},g_{\circ},\vol_{\circ})$, we conclude from (ii) and Lemma \ref{lem: properties_diamond}--\ref{item: diamond_S},\ref{item: diamond_vol}. \qedhere
\end{itemize}
\end{proof}
%\begin{remark}
 %   From the closed subgroup theorem, the smooth structure of $H$ as an embedded Lie subgroup of $\SO(n)$ is obtained from the structure of $\langle \exp \fh\rangle$, where $$\fh=\{C\in\fso(n);\quad  e^{tC}\in H, \quad \forall \quad t\in \bR\},
  %  $$ 
  %  on the other hand, the stabiliser Lie algebra of $\xi$ into $\fso(n)$ is
  %  $$\mathfrak{stab}(\xi)=\{C\in \fso(n); \quad C\diamond \xi =0\}
  %  $$
   % Then we have $\ker(\cdot{}\diamond \, \xi)\cap \fso(n)=\fh$. 
%\end{remark}
\begin{remark}
\label{rmk: ker_diamond}
     In the situation of Lemma \ref{lem: ker_diamond}, if $H=\Stab_{\SO(n)}(\xi_{\circ})\subsetneq\Stab(\xi_{\circ})$ then $\Sigma^2(M)\cap\ker(\cdot{}\diamond \xi)\neq\{0\}$; see Example \ref{ex: diamond_J} below.
\end{remark}
\begin{example}
\label{ex: diamond_J}
    Consider the compatible almost complex case from Example \ref{ex: almost_complex_structure}.
    %Let $n=2m\geqslant 4$ and $H=U(m)=\mathrm{Stab}_{\mathrm{SO}(2m)}(J_{\circ})$, where $J_{\circ}$ is the standard complex structure in $\mathbb{R}^{2m}$, and consider $\xi=J$ a compatible almost complex structure on $(M^{2m},g)$. 
    Since $J$ is a $(1,1)$-tensor, it follows from Lemma \ref{lem: properties_diamond}--\ref{item: diamond_g} that $g\diamond J = 0$, i.e. $\Omega^0\subseteq\ker(\cdot{}\diamond J)$ in the sense of \eqref{eq:splitting TM* x TM-GENERAL}:
    $$
    \mathrm{End}(TM)=\Omega^0\oplus\Sigma_0^2\oplus\Omega_{\mathfrak{u}(m)}^2\oplus\Omega_{\mathfrak{u}(m)^{\perp}}^2.
    $$
    By Lemma \ref{lem: properties_diamond}--\ref{item: diamond_bracket}, one has $A\in\ker(\cdot{}\diamond J)$ if and only if $A$ commutes with $J$, because $\mathrm{Stab}_{\mathrm{GL}(2m,\mathbb{R})}(J_{\circ}) = \mathrm{GL}(m,\mathbb{C})$, so indeed  $\ker(\cdot{}\diamond J)\simeq\mathfrak{gl}(m,\mathbb{C})\subset\mathfrak{gl}(2m,\mathbb{R})$ pointwise. Moreover, the instance of
    Lemma \ref{lem: ker_diamond}--\ref{item: ker in Omega^2}, giving $\Omega_{\mathfrak{u}(m)}^2=\ker(\cdot{}\diamond J)\cap\Omega^2$, is a reflection of the fact that $\U(m)=\mathrm{GL}(m,\mathbb{C})\cap\mathrm{SO}(2m)$. 
    
    Further decomposing the trace-free symmetric endomorphisms $\Sigma_0^2=U\oplus W$ into the $(m^2-1)$-dimensional subspace $U$, of elements commuting with $J$, and the $m(m+1)$-dimensional subspace $W$, of elements anti-commuting with $J$, then it follows by dimension-counting that $\ker(\cdot{}\diamond J)=\Omega^0\oplus U\oplus\Omega_{\mathfrak{u}(m)}^2$, and therefore $\cdot{}\diamond J$ maps $W\oplus\Omega_{\mathfrak{u}(m)^{\perp}}^2$ isomorphically into itself, respecting the decomposition.
\end{example}

\subsection{Inner-product relations and torsion}
\label{sec: inner-prod and torsion}

We examine how the tensor inner-product behaves under the infinitesimal $\diamond$ action, in terms of the reducibility of the orthogonal complement $\fm=\fh^\perp\subset\Lambda^2$. This in turn will lead to a quantitative study of the intrinsic torsion of an $H$-structure in terms of its covariant derivative, culminating at an analytically useful Laplacian estimate. We illustrate the process by some original computations, and by recovering   {along the way} a number of familiar properties of almost complex, $\rG_2$-, and $\S7$-geometries. The overarching point here is that, while such facts have been derived in the literature by a strong appeal to context-specific contraction identities and algebraic identifications, with no immediately apparent extension to other $H$-structures, in truth they are particular instances of an abstract general theory  {(even though explicit computations will rely on constants $\lambda_i$ peculiar to the group $H$)}.

\begin{lemma}
\label{lem: orthogonality}
    Let $(M^n,g)$ be an oriented Riemannian $n$-manifold and suppose that $\xi$ is a compatible $H$-structure, where $H=\mathrm{Stab}_{\SO(n)}(\xi_{\circ})$. Let $\fm=\fm_1\oplus\ldots\oplus\fm_k$ be an orthogonal decomposition of $\fm$, with respect to the bi-invariant metric $\langle A,B\rangle=-\tr(AB)$, into non-equivalent, 
     {isotypic} $\mathrm{Ad}_{\SO(n)}(H)$-submodules\footnote{By Schur's Lemma, such a decomposition is unique up to ordering.}  {(i.e. direct sums of isomorphic irreducible $\mathrm{Ad}_{\SO(n)}(H)$-submodules)}. Then the following assertions hold.
\begin{itemize}
    \myitem[(i)] \label{item: inner_product} 
    There are positive constants $\lambda_i\in\mathbb{R}_{+}$ such that, for all $A,B\in\Omega_{\fm}^2(M)$,
\begin{equation}
\label{eq: inner_product_decomp}
    \langle A\diamond\xi,B\diamond\xi\rangle 
    = \sum\limits_{i=1}^k \lambda_i\langle A_i,B_i\rangle,
\end{equation} 
    where $A_i:=\pi_{\fm_i}(A)$, $B_i:=\pi_{\fm_i}(B)$, for  $i=1,\ldots,k$.
    
    \myitem[(ii)] \label{item: almost_orthogonality}
    In particular, 
\begin{equation}
\label{eq: almost_orthogonality}
    \langle C\diamond(C\diamond\xi),D\diamond\xi\rangle = \sum\limits_{i=1}^k \lambda_i\langle[C,D],C_i\rangle,
    \quad\forall 
    C,D\in\Omega_{\fm}^2(M).
\end{equation}
    
    \myitem[(iii)]\label{item: orthogonality}
    \begin{tabular}[t]{ll}
    If 
    &
    \begin{NiceTabular}[t]{rl} either  & $C=C_i\in\Omega_{\fm_i}^2(M)$ for some $i\in\{1,\ldots,k\}$, and $D\in\Omega_{\fm}^2(M)$ is arbitrary,  \\ 
    or & $C,D\in\Omega_{\fm}^2(M)$ are arbitrary and $\lambda_1=\ldots=\lambda_k$ (e.g. when $\fm$ is irreducible), \\ 
    \end{NiceTabular}
\end{tabular}
\medskip\\
    then the following orthogonality relation holds:
\begin{equation}
\label{eq: orthogonality}
    \langle C\diamond(C\diamond\xi), D\diamond\xi
    \rangle = 0. 
\end{equation}
\end{itemize}    
\end{lemma}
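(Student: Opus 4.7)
The plan is to treat the three parts in order, using throughout the $H$-equivariance of the diamond action.

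\textbf{For (i),} the map $\Phi_{\xi}: \fm_Q \to \cT^{p,q}(TM)$, $A \mapsto A \diamond \xi$, is $H$-equivariant, so the bilinear form $b(A,B):=\langle A\diamond\xi,B\diamond\xi\rangle$ is a symmetric, $\mathrm{Ad}(H)$-invariant form on each fibre of $\fm_Q$. Pulled back via any local adapted frame $u\in Q$, it becomes an $\mathrm{Ad}(H)$-invariant symmetric bilinear form on the fixed model $\fm\subset\mathfrak{so}(n)$, which explains why the constants I shall produce are truly constants on $M$. I then invoke Schur's lemma applied to the decomposition $\fm=\bigoplus_i\fm_i$ into pairwise non-equivalent irreducibles: this forces $b$ to be block-diagonal, with each block necessarily a scalar multiple of $\langle\cdot,\cdot\rangle|_{\fm_i}$, so $b(A,B)=\sum_i\lambda_i\langle A_i,B_i\rangle$ for some $\lambda_i\in\bR$. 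Positivity $\lambda_i>0$ drops out of Lemma~\ref{lem: ker_diamond}--(iii): $\Phi_{\xi}|_{\fm_Q}$ has trivial kernel, so $b$ is positive-definite on every $\fm_i$.

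\textbf{For (ii),} I plan a short algebraic chain relying on Lemma~\ref{lem: properties_diamond}. Since $C\in\Omega^2$, the operator $C\diamond(\cdot)$ is anti-self-adjoint with respect to the tensor inner product (a direct polarisation of (vii)); applied to the pair $C\diamond\xi,\,D\diamond\xi$ this gives
\begin{equation*}
\langle C\diamond(C\diamond\xi),\,D\diamond\xi\rangle=-\langle C\diamond\xi,\,C\diamond(D\diamond\xi)\rangle.
\end{equation*}
Next, (ii) of Lemma~\ref{lem: properties_diamond} expands $C\diamond(D\diamond\xi)=D\diamond(C\diamond\xi)-[C,D]\diamond\xi$, and $\langle C\diamond\xi,\,D\diamond(C\diamond\xi)\rangle$ vanishes by re-applying (vii) (to the tensor $C\diamond\xi$ and skew $D$), leaving $\langle[C,D]\diamond\xi,\,C\diamond\xi\rangle$. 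Splitting $[C,D]=[C,D]_{\fh}+[C,D]_{\fm}$, the $\fh$-part is annihilated by $\diamond$ against $\xi$ (Lemma~\ref{lem: ker_diamond}--(i)), and (i) above applied to $[C,D]_{\fm}$ and $C$ gives $\sum_i\lambda_i\langle([C,D]_{\fm})_i,\,C_i\rangle=\sum_i\lambda_i\langle[C,D],\,C_i\rangle$, the last equality following from $\fh\perp\fm$ together with $\fm_j\perp\fm_i$ for $j\neq i$.

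\textbf{For (iii),} in either subcase the sum collapses to a multiple of $\langle[C,D],\,C\rangle$, which vanishes by $\mathrm{Ad}$-invariance of the bi-invariant form on $\mathfrak{so}(n)$: $\langle[C,D],C\rangle=-\langle D,[C,C]\rangle=0$. If $C=C_i$ lies purely in $\fm_i$, only the $j=i$ term survives and produces $\lambda_i\langle[C,D],C\rangle=0$; if instead all $\lambda_j$ coincide with a common $\lambda$, then $\sum_i\lambda\langle[C,D],C_i\rangle=\lambda\langle[C,D],C\rangle=0$.

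\textbf{Main obstacle.} I expect the representation-theoretic step in (i) to be the main content: verifying the absence of cross-terms in $b$ between distinct $\fm_i,\fm_j$. This relies crucially on the hypothesis of pairwise non-equivalence of the $\fm_i$ (absent which, Schur's lemma admits nonzero intertwiners and the clean diagonal form \eqref{eq: inner_product_decomp} would break down) and on the injectivity furnished by Lemma~\ref{lem: ker_diamond}--(iii) (absent which, one would only get $\lambda_i\geqslant 0$). Parts (ii) and (iii) should then follow as formal consequences of (i) together with the algebraic identities of Lemma~\ref{lem: properties_diamond}.
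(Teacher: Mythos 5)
Your proposal is correct and follows essentially the same route as the paper: for (i), both proofs reduce to a pointwise statement, observe that $b(A,B)=\langle A\diamond\xi_\circ,B\diamond\xi_\circ\rangle$ is an $\Ad_{\SO(n)}(H)$-invariant positive-definite form on $\fm$ (positivity via the injectivity from Lemma~\ref{lem: ker_diamond}), and invoke Schur's lemma on the decomposition into non-equivalent irreducibles to get the block-scalar form; and for (ii)–(iii) both proceed by the same algebraic chain built from Lemma~\ref{lem: properties_diamond}--\ref{item: diamond_Jacobi},\ref{item: diamond_orth},\ref{item: diamond_adjoint} and $\Ad$-invariance of $\langle\cdot,\cdot\rangle$. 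The only stylistic difference is that you compress the paper's explicit verification of $\Ad(H)$-invariance of $b$ and the identification with the normal homogeneous metric on $\SO(n)/H$ into a single invocation of equivariance, which is fine.
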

\begin{proof}\quad
\begin{itemize}
    \item[(i)]  
    If $Q_{\xi}\subset\Fr(M,g)$ is the $H$-subbundle reduction determined by $\xi$, we recall that for each $x\in M$ there is $u_x\in (Q_{\xi})_x$ such that $\xi_x=u_x.\xi_{\circ}$ and $g {_x}=u_x.g_{\circ}$. Hence, it suffices to prove the claim pointwise, for $C,D\in \fm$ and $\xi_{\circ}\in V$, where $V$ is a $\SO(n)$-submodule of $\oplus\cT^{p,q}(\mathbb{R}^n)$.

Note that the orbit $\SO(n).\xi_{\circ}$  is a submanifold of $V$, which is naturally identified with the normal homogeneous Riemmanian manifold $\SO(n)/H$, with the $\SO(n)$-invariant Riemannian metric $\langle A,B\rangle = -\tr(AB)$. As in the proof of Lemma \ref{lem: ker_diamond}, for each $\xi\in \SO(n).\xi_{\circ}$, we can think of $\widetilde{A}(\xi) := A\diamond\xi$ as a tangent vector to $\SO(n).\xi_{\circ}$ at $\xi$. Since $\SO(n)/H$ is reductive, there is a one-to-one correspondence between $\SO(n)$-invariant Riemannian metrics on $\SO(n)/H$ and $\Ad_{\SO(n)}(H)$-invariant inner products on $\fm\cong T_{\xi_{\circ}}(\SO(n)\cdot{\xi_{\circ}})\subset V$. Now, thinking of $\widetilde{A}(\xi)$ as a tensor in $V$, the metric $g_{\circ}$ induces an $\Ad_{\SO(n)}(H)$-invariant inner product on $\fm$: 
\[
\langle\langle A,B\rangle\rangle := \langle A\diamond\xi_{\circ},B\diamond \xi_{\circ}\rangle,\quad\forall A,B\in\fm.
\] Indeed, $\ker(\cdot{}\diamond\xi_{\circ})|_{\fm}$ is injective, by Lemma \ref{lem: ker_diamond}, and for all $h\in H$, using the distributivity $h.(C\diamond\eta) = (h.C)\diamond (h.\eta)$ found in the proof of Lemma \ref{lem: properties_diamond}--\ref{item: diamond_Jacobi}, together with the facts that $\SO(n)$ acts by isometries of $g_o$ and $H=\Stab_{\SO(n)}(\xi_{\circ})$, we have
\begin{align*}
    \langle\langle \Ad_{\SO(n)}(h)A,\Ad_{\SO(n)}(h)B\rangle\rangle 
    &= \langle (h.A)\diamond\xi_{\circ},(h.B)\diamond\xi_{\circ}\rangle 
    %\\&
    = \langle h.(A\diamond h^{-1} {.}\xi_{\circ}),h.(B\diamond h^{-1}.\xi_{\circ})\rangle\\
    &= \langle A\diamond h^{-1} {.}\xi_{\circ},B\diamond h^{-1}.\xi_{\circ}\rangle
    %\\&
    = \langle A\diamond \xi_{\circ},B\diamond \xi_{\circ}\rangle\\
    &= \langle\langle A,B\rangle\rangle.
\end{align*}
In particular, the $\langle\cdot,\cdot\rangle$-orthogonal decomposition $\fm=\fm_1\oplus\ldots\oplus\fm_k$ into (non-equivalent)  {isotypic} $\Ad_{\SO(n)}(H)$-submodules is also orthogonal with respect to $\langle\langle\cdot{},\cdot{}\rangle\rangle$, and $\langle\langle\cdot{},\cdot{}\rangle\rangle$ restricts to give $\Ad_{\SO(n)}(H)$-invariant inner products on each of the (non-equivalent) {isotypic} isotropy summands $\fm_i\subset\fm$. On the other hand, the restriction of the bi-invariant metric $\langle\cdot,\cdot\rangle$ to $\fm_i$ is also canonically an $\Ad_{\SO(n)}(H)$-invariant inner product. Hence, by Schur's lemma,  for each $i=1,\ldots,k$ there is $\lambda_i\in\mathbb{R}_{+}$ such that (see e.g. \cite{Besse2008}*{Theorem 7.44})
\[
\langle\langle\cdot{},\cdot{}\rangle\rangle|_{\fm_i\times\fm_i} = \lambda_i\langle\cdot,\cdot\rangle|_{\fm_i\times\fm_i}.
\] 
In conclusion, given $A,B\in\fm$ and letting $A_i:=\pi_{\fm_i}(A)$, $B_i:=\pi_{\fm_i}(B)$, then
 \begin{align*}
     \langle A\diamond\xi_\circ ,B\diamond\xi_\circ \rangle = \langle\langle A,B\rangle\rangle = \sum_{i=1}^k \langle\langle A_i,B_i\rangle\rangle =\sum_{i=1}^k\lambda_i\langle A_i,B_i\rangle,
 \end{align*} 
 which proves the desired identity \eqref{eq: inner_product_decomp}.
    
    \item[(ii)] Using Lemma \ref{lem: properties_diamond}-\ref{item: diamond_Jacobi},\ref{item: diamond_orth},\ref{item: diamond_adjoint} we get 
 \begin{align}
   \langle C\diamond(C\diamond\xi),D\diamond\xi\rangle &= -\langle C\diamond\xi,C\diamond(D\diamond\xi)\rangle\nonumber
   = -\langle C\diamond\xi,D\diamond(C\diamond\xi) - [C,D]\diamond\xi\rangle\nonumber\\
   &= \langle [C,D]\diamond\xi,C\diamond\xi\rangle.\label{eq: orth_first_part}
 \end{align} 
Then, combining equation \eqref{eq: orth_first_part} with the identity \eqref{eq: inner_product_decomp} of the first part \ref{item: inner_product}, we immediately get \eqref{eq: almost_orthogonality}.
    
    \item[(iii)] Under either one of the hypotheses, using the equation \eqref{eq: almost_orthogonality} of the second part \ref{item: almost_orthogonality} one has $\langle C\diamond(C\diamond\xi),D\diamond\xi\rangle = \mathrm{const.}\langle[C,D],C\rangle$, and by Ad-invariance, $\langle [C,D],C\rangle = -\langle D,[C,C]\rangle = 0$, one gets the desired orthogonality relation \eqref{eq: orthogonality}.
\qedhere
\end{itemize}
\end{proof} 
\begin{remark}
If, instead of either one of the assumptions in Lemma \ref{lem: orthogonality}-\ref{item: orthogonality}, one assumes that $\SO(n)/H$ is a \emph{symmetric space}, i.e., that moreover $[\fm,\fm]\subset\fh$, then we also obtain the orthogonality relation \eqref{eq: orthogonality} immediately from equation \eqref{eq: almost_orthogonality} in item \ref{item: almost_orthogonality}. But we claim that if $\SO(n)/H$ is a symmetric space, then $\SO(n)/H$ must be in fact an \emph{irreducible} symmetric space, i.e. $\fm$ is $H$-irreducible, and thus one is actually in the case of the assumption $\lambda_1=\ldots=\lambda_k$ in Lemma \ref{lem: orthogonality}-\ref{item: orthogonality}. To see this, start noting that by the long exact sequence of homotopy groups it follows that the space $\SO(n)/H$ is simply connected, since $\SO(n)$ is simply connected ($n>2$) and we assume $H$ to be connected. Now, if $n\neq 4$ then $\SO(n)$ is a simple Lie group, and it is well-known that any simply connected Riemannian symmetric space $G/H$ with $G$ simple must be irreducible, thus in this case the assumption forces indeed $\SO(n)/H$ to be an irreducible symmetric space. If $n=4$, one may use the well-known $2:1$ epimorphism $\SU(2)\times\SU(2)\to\SO(4)$ to understand the possible closed and connected subgroups $H\subset\SO(4)$, and confront this with the fact that if $\SO(4)/H$ is a (simply connected) symmetric space then it can be written as a finite product of irreducible simply connected Riemannian symmetric spaces of compact type, and one can check the known classification tables of the later spaces (see e.g. \cite[Chapter X]{helgason1978differential}) to conclude that $\SO(4)/H$ must be one of the following irreducible symmetric spaces: $\SO(4)/\U(2)\cong\mathbb{S}^2$, $\SO(4)/\SO(3)\cong\mathbb{S}^3$, or $\SO(4)/(\SO(2)\times\SO(2))$.
\end{remark}
We now illustrate Lemma \ref{lem: orthogonality} in several cases of interest. We start with the three main examples that we have been considering so far, where $H=\U(m), \rm G_2$ or $\rm Spin(7)$, in which the $H$-module $\fm$ is irreducible:
\begin{example}
\label{ex: inner_prod_J}
    When $H=\U(m)=\Stab_{\SO(2m)}(J_{\circ})$, as in Example \ref{ex: almost_complex_structure}, the complement  $\fm=\fu(m)^{\perp}=\{A\in\mathfrak{so}(n):AJ_{\circ} = -J_{\circ}A \}$ is irreducible, and for any compatible $\U(m)$-structure $\xi=J$ on $(M^{2m},g)$, using Lemma \ref{lem: properties_diamond}--\ref{item: diamond_bracket} we can compute, for all $A,B\in\Omega_{\fm}^2(M)$,
\begin{align}
    \label{eq: inner_prod_J}
    \langle A\diamond J,B\diamond J\rangle 
    &= \langle [A,J],[B,J]\rangle\nonumber
    = \langle 2AJ, (-2)JB\rangle
    = 4\tr(AJJB) \\
    &= 4\langle A,B\rangle.
\end{align}
\end{example}
\begin{example}
\label{ex: inner_prod_G2}
    When $H=\rm G_2\subset\SO(7)$, as in Example \ref{ex: G2_structures}, the complement $\fm=\Lambda^2_7\subset\fso(7)$ is irreducible, and if $\varphi$ is a $\rm G_2$-structure on $M^7$ then  \cite{Karigiannis2007}*{\textsection 2.2}
    \begin{equation}
    \label{eq: inner_prod_G2}
        \langle A\diamond\varphi, B\diamond\varphi\rangle = 6\langle A,B\rangle,
        \quad\forall 
        A,B\in\Omega_{\fm}^2(M).
    \end{equation}
\end{example}
\begin{example}
\label{ex: inner_prod_Spin7}
    When $H=\rm{Spin}(7)\subset\SO(8)$, as in Example \ref{ex: Spin7_structures}, the complement $\fm=\Lambda^2_7\subset\fso(8)$ is irreducible, and if $\Phi$ is a $\rm Spin(7)$-structure on $M^8$ then \cite{karigiannis-spin7}*{Proposition 2.5}:
    \begin{equation}
    \label{eq: inner_prod_Spin7}
        \langle A\diamond\Phi, B\diamond\Phi\rangle = 16\langle A,B\rangle,
        \quad\forall
        A,B\in\Omega_{\fm}^2(M).
    \end{equation} 
\end{example}

As a first instance of the \emph{reducible} complement case, let us derive the corresponding inner-product relation for the infinitesimal action on frame fields.
\begin{example}
\label{ex: trivial_subgroup}
    When $H=\{1\}\subset\SO(n)$ is the trivial subgroup, we have $\fh=\{0\}$ and the $\{1\}$-module $\fm=\fso(n)$ splits completely into the trivial one-dimensional representations generated by each element of the standard basis of $\fso(n)$. In this case,  
    \[
    \lambda_1=\ldots=\lambda_{\mathrm{dim}\text{ }\fso(n)} = 1
    \] 
    in Lemma \ref{lem: orthogonality}. Indeed, $\{1\}=\Stab(\xi_{\circ})$, where $\xi_{\circ}:=(e_1,\ldots,e_n)$ is the canonical basis of $\mathbb{R}^n$, so that a compatible $\{1\}$-structure on an oriented Riemannian manifold $(M^n,g)$ is simply a global oriented orthonormal frame $\xi=(\xi_1,\ldots,\xi_n)$ of $TM$, and then by definition $A\diamond\xi = (-A\xi_1,\ldots,-A\xi_n)$ for every $A\in\Omega^2_\fm(M)=\Omega^2(M)$, so 
    \begin{equation}
    \langle A\diamond\xi,B\diamond\xi\rangle = \sum\limits_{j=1}^n \langle A\xi_j,B\xi_j\rangle = \langle A,B\rangle,\quad\forall A,B\in\Omega^2_\fm(M)=\Omega^2(M).
    \end{equation}
\end{example} 

Last but not least, the following example for $H=\SU(m)\subset\SO(2m)$, $m\geqslant 2$, to the best of our knowledge, is new. In this case, it is well-known that $\fm$ is reducible and splits into two  non-trivial irreducible submodules $\fm_1$ and $\fm_2$, so we find it instructive to compute the corresponding $\lambda_1(m)$ and $\lambda_2(m)$ predicted by Lemma \ref{lem: orthogonality}. We will see in particular that $\lambda_1(m)\neq\lambda_2(m)$, for all $m\geqslant 2$, and moreover that the orthogonality relation \eqref{eq: orthogonality} does not always hold for arbitrary elements in $\fm$.
\begin{example}
\label{ex: inner_prod_SU}
    We now consider $H=\SU(m)\subset\SO(2m)$, $m\geqslant 2$. We adopt the following description of the group $\SU(m)$. Let $(x^1,\ldots,x^m,y^1,\ldots,y^m)$ be the standard coordinates on $\mathbb{R}^{2m}=\mathbb{R}^m\oplus\mathbb{R}^m$, so that the canonical complex structure $J_{\circ}\in\mathrm{End}(\mathbb{R}^{2m})$, the Euclidean metric $g_{\circ}$ and the fundamental $2$-form $\omega_{\circ}$ on $\mathbb{R}^{2m}$ (cf. Example \ref{ex: almost_complex_structure}) are given by:
    $$\begin{array}{c}
        J_{\circ}\frac{\partial}{\partial x^p} = \frac{\partial}{\partial y^p},\quad J_{\circ}\frac{\partial}{\partial y^p}=-\frac{\partial}{\partial x^p},\\ 
        g_{\circ} = \sum\limits_{p=1}^m \left(dx^p\otimes dx^p + dy^p\otimes dy^p\right),\quad\text{and}\quad
        \omega_{\circ} = \sum\limits_{p=1}^m dx^p\wedge dy^p.
    \end{array}$$ 
    Accordingly, let $z^p=x^p+iy^p$ be complex coordinates in $\mathbb{C}^m\cong\mathbb{R}^{2m}$. Then, $\rm SU(m)$ is the subgroup of $\GL(2m,\mathbb{R})$ preserving $g_{\circ}$, $J_{\circ}$ (and $\omega_{\circ}$) and the \emph{complex determinant}, or \emph{complex volume form}, $\Upsilon_{\circ}$, given by
    \begin{equation}
        \Upsilon_{\circ}:=dz^1\wedge\ldots\wedge dz^m\in\Lambda_{\mathbb{C}}^m(\mathbb{C}^m)^{\ast}.
    \end{equation} Thus, we may consider the model structure $\xi_{\circ}:=(J_{\circ},\Upsilon_{\circ})$ and write $\SU(m) = \Stab_{\SO(2m)}(\xi_{\circ})$.
    
    We now note that $\fm:=\fsu(m)^{\perp}\subset\fso(2m)$ is a reducible $H$-module. Indeed, we have the orthogonal $H$-module decompositions 
    \[
    \fso(2m)=\fu(m)\oplus\fu(m)^{\perp}=\fsu(m)\oplus\langle J_{\circ}\rangle\oplus\fu(m)^{\perp},
    \]
    and so we have an orthogonal decomposition $\fm =\fm_1\oplus\fm_2$ into the irreducible submodules $\fm_1$ and $\fm_2$ given by
    \[
    \fm_1 :=\langle J_{\circ}\rangle\subset\fso(2m)\quad\text{and}\quad\fm_2:=\fu(m)^{\perp}=\{A\in\fso(2m):AJ_{\circ} = - J_{\circ}A\}.
    \] 
    By Lemma \ref{lem: orthogonality}, there are positive constants $\lambda_1,\lambda_2\in\mathbb{R}_{+}$ such that, for all $A,B\in\fm$, 
\begin{equation}
\label{eq: inner_prod_SU_general}
    \langle A\diamond\xi,B\diamond\xi\rangle 
    = \lambda_1\langle \pi_{\fm_1}(A),\pi_{\fm_1}(B)\rangle + \lambda_2\langle \pi_{\fm_2}(A),\pi_{\fm_2}(B)\rangle.
\end{equation} 
    In particular, $|J_{\circ}\diamond\xi_{\circ}|^2 =\lambda_1|J_{\circ}|^2$, and since $|J_{\circ}|^2=2m$, we can compute $\lambda_1$ from the tensor norm under the action $J_{\circ}\diamond\xi_{\circ}=(J_{\circ}\diamond J_{\circ},J_{\circ}\diamond\Upsilon_{\circ})$. Now, by Lemma \ref{lem: properties_diamond}--\ref{item: diamond_bracket}, we know that $J_{\circ}\diamond J_{\circ} = [J_{\circ},J_{\circ}]=0$, and naturally extending the definition of the $\diamond$-operator [cf. \eqref{eq: diamond_operator}] to complexified forms, we have
\[
J_{\circ}\diamond dz^p =  J_{\circ}\diamond dx^p + i J_{\circ}\diamond dy^p = -dy^p + i dx^p = idz^p,
\] 
hence
\begin{align*}
    J_{\circ}\diamond\Upsilon_{\circ} 
    &= (J_{\circ}\diamond(dz^1))\wedge dz^2\wedge\ldots\wedge dz^m + dz^1\wedge (J_{\circ}\diamond(dz^2))\wedge\ldots\wedge dz^m\\
    &\quad + \ldots + dz^1\wedge\ldots\wedge dz^{m-1}\wedge (J_{\circ}\diamond(dz^m))\\
    &=m i\Upsilon_{\circ}.
\end{align*} 
Noting that $|\Upsilon_{\circ}|^2=2^m$, the above implies that
\[
|J_{\circ}\diamond\xi_{\circ}|^2 = |J_{\circ}\diamond\Upsilon_{\circ}|^2 = m^2 2^m = m 2^{m-1}|J_{\circ}|^2;
\] hence, $\lambda_1 \equiv \lambda_1(m)=m 2^{m-1}$. 

In order to compute $\lambda_2$, observe that for any $A\in\fm_2$ we have $|A\diamond J_{\circ}|^2 = 4|A|^2$, as in Example \ref{ex: inner_prod_J}, so using \eqref{eq: inner_prod_SU_general} we get
\begin{equation}
\label{eq: lambda_2}
    \lambda_2|A|^2=|A\diamond\xi|^2 = 4|A|^2+|A\diamond\Upsilon_{\circ}|^2.
\end{equation} Our task now is to compute $|A\diamond\Upsilon_{\circ}|^2$ for some convenient choice of $A\in\fm_2$. When the complex dimension $m$ is even, it is easy to verify that $A\in\mathfrak{gl}(2m,\mathbb{R})$ defined by
\[
A\frac{\partial}{\partial x^p} = (-1)^p\frac{\partial}{\partial y^{m-p+1}}\quad\text{and}\quad A\frac{\partial}{\partial y^p} = (-1)^p\frac{\partial}{\partial x^{m-p+1}},
\] is skew-symmetric and anti-commutes with $J_{\circ}$, i.e. $A\in\fm_2$, and also $A^2=-1$, so that $|A|^2=2m$. Moreover,
\[
A\diamond dz^p = (-1)^{p+1}(dy^{m-p+1}+idx^{m-p+1}) = (-1)^{p+1}id\overline{z}^{m-p+1}.
\] In particular,
\begin{align*}
    A\diamond\Upsilon_{\circ} &= (id\overline{z}^m)\wedge dz^2\wedge dz^3\wedge\ldots\wedge dz^m - dz^1\wedge (id\overline{z}^{m-1})\wedge dz^3\wedge\ldots\wedge dz^m\\
    &\quad +\ldots+ (-1)idz^1\wedge dz^2\wedge\ldots\wedge dz^{m-1}\wedge d\overline{z}^1.
\end{align*} 
    Since the terms on the right-hand side are pairwise orthogonal, and $idz^{p}\wedge d\overline{z}^p = 2dx^p\wedge dy^p$, 
\[
|A\diamond\Upsilon_{\circ}|^2 = m.4.2^{m-2} = 2^{m-1}|A|^2.
\] 
Combining with  \eqref{eq: lambda_2}, we conclude that $\lambda_2 \equiv \lambda_2(m) = 4+2^{m-1}$ when $m$ is even. 

When $m\geqslant 2$ is odd, i.e., $m=2k+1$ for some $k\geqslant 1$, the computation is analogous for $A$ defined e.g. by
\begin{align*}
    A\frac{\partial}{\partial x^p} 
    =\begin{cases}
        \frac{\partial}{\partial x^{m-p+1}},
        &1\leqslant p\leqslant k,\\
        0,
        &p = k+1,\\
        -\frac{\partial}{\partial x^{m-p+1}},
        &k+1< p\leqslant m
    \end{cases}
    \qandq
    A\frac{\partial}{\partial y^p} 
    = \begin{cases}
        -\frac{\partial}{\partial y^{m-p+1}},
        &1\leqslant p\leqslant k,\\
        0,
        &p = k+1,\\
        \frac{\partial}{\partial y^{m-p+1}},
        &k+1< p\leqslant m
    \end{cases}.
\end{align*} 
Then it follows that $A\in\fm_2$, $|A|^2=2(m-1)$, and
\begin{align*}
    A\diamond dz^p = \begin{cases}
         {-d\overline{z}^{m-p+1}},
        &1\leqslant p\leqslant k,\\
        0,
        &p = k+1,\\
        d\overline{z}^{m-p+1},
        &k+1< p\leqslant m,
    \end{cases}
\end{align*} so that
\[
|A\diamond\Upsilon_{\circ}|^2 = (m-1)2^m = 2^{m-1}|A|^2.
\] 
Together with \eqref{eq: lambda_2}, this also gives $\lambda_2(m) = 4+2^{m-1}$, which therefore holds for any $m\geqslant 2$. It is easy to conclude that $\lambda_1(m)\neq\lambda_2(m)$ for all $m\geqslant 2$; indeed $\lambda_1(2)<\lambda_2(2)$ and $\lambda_1(m)>\lambda_2(m)$ for  $m\geqslant 3$.

As an application, we show that the orthogonality relation \eqref{eq: orthogonality} of Lemma \ref{lem: orthogonality} does not hold for all $C,D\in\fm$ in this reducible case. Let $C=C_1+C_2\in\fm_1\oplus\fm_2$ and $D=D_1+D_2\in\fm_1\oplus\fm_2$ be arbitrary. Then, by equation \eqref{eq: almost_orthogonality} of Lemma \ref{lem: orthogonality},
    \[
    \langle C\diamond (C\diamond\xi), D\diamond\xi\rangle = \lambda_1\langle[C,D],C_1\rangle + \lambda_2\langle[C,D],C_2\rangle.
        \] Using the bi-invariance of $\langle\cdot{},\cdot{}\rangle=-\tr(\cdot{}\cdot{})$, and the fact that we can write $C_1=aJ_{\circ}$ and $D_1=bJ_{\circ}$, for some constants $a,b\in\mathbb{R}$, we have  {(using skew-symmetry of the adjoint action)}
    \begin{align*}
    \langle[C,D],C_1\rangle 
    & {=\langle a[J_{\circ},D_2] + b[C_2,J_{\circ}] + [C_2,D_2], a J_{\circ}\rangle }\\
    &=a\langle [C_2,D_2], J_{\circ}\rangle \\
    &= -a\langle [J_{\circ},D_2], C_2\rangle=-\langle[C,D],C_2\rangle,
    \end{align*}
    and thus
    \[
    \langle C\diamond (C\diamond\xi), D\diamond\xi\rangle = a(\lambda_1-\lambda_2)\langle [C_2,D_2], J_{\circ}\rangle.
    \]
    In particular, the orthogonality \eqref{eq: orthogonality} holds if $C=C_2\in\fm_2$ (i.e. $a=0$ above). In general, since $\lambda_1\neq\lambda_2$ (for any $m\geqslant 2$), it follows that \eqref{eq: orthogonality} is true for all $C,D\in\Omega_{\fm}^2(M)$ if and only if $[\fm_2,\fm_2]\subset\langle J_{\circ}\rangle^{\perp}$. But observe that in general $[\fm_2,\fm_2]\subset\fu(m)=\fsu(m)\oplus\langle J_{\circ}\rangle$, and in complex dimension $m\geqslant 2$ there are examples of elements $C_2,D_2\in\fm_2$ such that $[C_2,D_2]\in\langle J_{\circ}\rangle$; e.g. for $m=2$ it is easy to check that
    \[
C_2 :=\left(\begin{array}{@{}c|c@{}}
 \bigzero
  & \begin{matrix}
  0 & 1 \\
  -1 & 0
  \end{matrix} \\
\hline
  \begin{matrix}
  0 & 1 \\
  -1 & 0
  \end{matrix} &
  \bigzero
\end{array}\right)\quad\text{and}\quad
D_2 :=\left(\begin{array}{@{}c|c@{}}
  \begin{matrix}
  0 & -1 \\
  1 & 0
  \end{matrix}
  & \bigzero \\
\hline
  \bigzero &
  \begin{matrix}
  0 & 1 \\
  -1 & 0
  \end{matrix}
\end{array}\right),
\] are such that $C_2,D_2\in\fm_2$, $C_2D_2= J_{\circ} = - D_2C_2$ and thus $[C_2,D_2]=2J_{\circ}$. (In fact, whenever $m=2k\geqslant 2$ is even, one can always take $C_2$ and $D_2$ to be the other two almost complex structures of the standard hyperk\"ahler triple in $\mathbb{R}^{4k}$.)
\end{example}

The first part of the following result was also proved in \cite{Dwivedi-Loubeau-SaEarp2021}*{Lemma 2.5}, by a different approach. The second part is new, at this level of generality, and it will play a pivotal role in the analytic study of flows of $H$-structures, particularly in the derivation of `Shi-type' estimates.
\begin{lemma}
\label{lem: torsion_equiv_nablaxi}
    Let $(M^n,g)$ be an oriented Riemannian $n$-manifold admitting a compatible $H$-structure  {$\xi$, where $H=\mathrm{Stab}_{\SO(n)}(\xi_{\circ})$ and $\xi$ is a geometric structure modelled on the (multi-)tensor $\xi_{\circ}$.} Then
\begin{align}
\label{eq: nabla of xi-GENERAL}
    \nabla_X\xi 
    &= T_X \diamond \xi, 
    \quad\forall\, X \in \sX(M),
\end{align} 
    where $T\in\Omega^1(M,{\fm}_{\xi})$ denotes the torsion of the $H$-structure $\xi$. 
    In particular, there are constants $c,\tilde{c}>0$, depending only on $(M,g)$ and $H$, such that
\begin{equation}
\label{ineq: T_equiv_nablaxi}
    \tilde{c}|T|^2\leqslant |\nabla\xi|^2\leqslant c|T|^2.
\end{equation} 
    If furthermore there is $c>0$ such that $\langle A\diamond\xi,B\diamond\xi\rangle = c\langle A,B\rangle$, for all $A,B\in\Omega_\fm^2(M)$, i.e. if $c:=\lambda_1=\ldots=\lambda_k$ in Lemma \ref{lem: orthogonality} (e.g. if $\fm$ is an irreducible $H$-module), then in fact
\begin{equation}
\label{eq: T_equiv_nablaxi}
    |\nabla\xi|^2=c|T|^2.   
\end{equation}
\end{lemma}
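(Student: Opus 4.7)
The plan has two stages. For the identity $\nabla_X\xi = T_X\diamond\xi$, I would introduce the minimal $H$-connection $\nabla^H := \nabla + T$ from \eqref{eq: def_T} and first show that $\nabla^H\xi = 0$. The point is that in any local section $u$ of $Q$, the tensor $\xi$ has constant components equal to those of the model $\xi_\circ$, because any two elements of $Q_x$ differ by an element of $H\subseteq\Stab(\xi_\circ)$; while the connection $1$-form of $\nabla^H$ in $u$ takes values in $\fh$. A direct calculation in that frame then expresses $\nabla^H_X\xi$ as the infinitesimal action on $\xi_\circ$ of an $\fh$-valued endomorphism, which vanishes by Lemma \ref{lem: ker_diamond}(i). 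Next, a routine verification — checking the cases of vectors and covectors against \eqref{eq:diamond_coordinates} and extending by the Leibniz rule to arbitrary tensor products — gives the general formula
\begin{equation*}
(\nabla + S)_X\eta - \nabla_X\eta = -S_X\diamond\eta, \qquad \forall\, \eta\in\Gamma(\cT^{p,q}(TM)),\ S\in\Omega^1(M,\End(TM)),
\end{equation*}
where the minus sign reflects the fact that $\diamond$ arises from a \emph{right} action (cf.\ Lemma~\ref{lem: properties_diamond}\ref{item: diamond_bracket}). Applying this with $S=T$ and $\eta=\xi$, together with $\nabla^H\xi=0$, yields \eqref{eq: nabla of xi-GENERAL}.

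For the norm estimates, I would work pointwise in a local $g$-orthonormal frame $\{e_a\}$. Using the identity just proved, $|\nabla\xi|^2 = \sum_a |T_{e_a}\diamond\xi|^2$ at every point. Since $T_{e_a}\in\fm_Q$, Lemma \ref{lem: orthogonality}\ref{item: inner_product} gives
\begin{equation*}
|T_{e_a}\diamond\xi|^2 = \sum_{i=1}^k \lambda_i\,|\pi_{\fm_i}(T_{e_a})|^2, \qquad \lambda_i>0.
\end{equation*}
Setting $\tilde c := \min_i\lambda_i > 0$ and $c := \max_i\lambda_i$, and summing over $a$ using $|T|^2 = \sum_a |T_{e_a}|^2$, one obtains $\tilde c\,|T|^2 \leqslant |\nabla\xi|^2 \leqslant c\,|T|^2$, which is precisely \eqref{ineq: T_equiv_nablaxi}. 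Under the extra hypothesis $\lambda_1 = \ldots = \lambda_k = c$ (automatic when $\fm$ is $H$-irreducible, but also satisfied in certain reducible cases such as $H=\{1\}$; see Examples \ref{ex: inner_prod_J}--\ref{ex: trivial_subgroup}), the pinching collapses and $|\nabla\xi|^2 = c\,|T|^2$, giving \eqref{eq: T_equiv_nablaxi}.

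The only delicate step is the first one, namely the identification $\nabla^H\xi = 0$ together with the sign in the connection-difference formula. A cleaner, coordinate-free alternative is available: view $\xi$ as the section of $Q\times_H V$ corresponding to the constant $H$-equivariant function $\tilde\xi\equiv\xi_\circ\co Q\to V$, so that the induced $\nabla^H$-derivative of $\tilde\xi$ is manifestly zero by horizontality; the deviation from $\nabla$ is, by definition of $T$, precisely captured by the $\fm_Q$-valued $1$-form $T$ acting infinitesimally on fibre values, recovering \eqref{eq: nabla of xi-GENERAL} without any bookkeeping of signs. Once the identity is in hand, the inequality is a direct consequence of the module-theoretic information already encoded in Lemma \ref{lem: orthogonality}.
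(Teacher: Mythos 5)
Your proposal is correct and follows essentially the same route as the paper: establish $\nabla^H\xi=0$ from the fact that $\nabla^H$ is an $H$-connection and $\xi$ is $H$-stabilised, convert the difference $\nabla^H-\nabla=T$ into the $\diamond$-action (your sign in $(\nabla+S)_X\eta-\nabla_X\eta=-S_X\diamond\eta$ checks out against \eqref{eq:diamond_coordinates}), and then deduce the norm (in)equalities pointwise from Lemma~\ref{lem: orthogonality} with $\tilde c=\min_i\lambda_i$ and $c=\max_i\lambda_i$. The paper carries out the same computation restricted to covariant tensors after reducing via the musical isomorphisms, so there is no substantive difference.
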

\begin{proof}
    Equation \eqref{eq: nabla of xi-GENERAL} follows almost immediately from the fundamental relation \eqref{eq: def_T} for the torsion, we just need to unravel some definitions. Without loss of generality, we can assume $\xi$ is a single tensor (if instead $\xi=(\xi_1,\ldots,\xi_k)$, then one simply works componentwise with each tensor $\xi_i$). Since both $\nabla^H$ and $\nabla$ are metric connections, and under the musical isomorphisms any $(r,s)$-tensor field on $(M,g)$ is metric-equivalent to a $(0,r+s)$-tensor field, it suffices to consider $\xi$ as a covariant $(0,q)$-tensor field. Now, $\nabla^H$ is an $H$-connection and $\xi$ is stabilised by $H$, so $\nabla^H\xi = 0$.  {It follows that}, 
\[
X\xi(Y_1,\ldots,Y_q) 
    = \sum\limits_{j=1}^q\xi(Y_1,\ldots,\nabla_X^H Y_j,\ldots,Y_q),
\quad\forall
    X,Y_1,\ldots,Y_q\in\sX(M),
\]
and we know from \eqref{eq: def_T} that $\nabla_X^H = \nabla_X + T_X$, so
\begin{align*}
    \nabla_X\xi(Y_1,\ldots,Y_q) 
    &= X\xi(Y_1,\ldots,Y_q) - \sum\limits_{j=1}^q \xi(Y_1,\ldots,\nabla_XY_j,\ldots,Y_q)\\
    &= \sum\limits_{i=1}^q\xi(Y_1,\ldots,T_XY_j,\ldots,Y_q) \\
    &= (T_X\diamond\xi)(Y_1,\ldots,Y_q),
\end{align*} 
where the last equality follows the definition of the diamond operator on covariant tensors, cf. \eqref{eq:diamond_coordinates}.

Finally, when $H=\mathrm{Stab}_{\SO(n)}(\xi_{\circ})$, it follows from Lemma \ref{lem: ker_diamond} that the linear operator $(\cdot{}\diamond\xi)|_{\Omega_{\fm}^2}$ is injective, and since $T_X\in\Omega_{\fm}^2$ the inequality \eqref{ineq: T_equiv_nablaxi} immediately follows from \eqref{eq: nabla of xi-GENERAL}. In fact, more explicitly, combining \eqref{eq: nabla of xi-GENERAL} with Lemma \ref{lem: orthogonality}, it follows that if $\fm=\fm_1\oplus\ldots\fm_k$ is an orthogonal decomposition into (non-equivalent) {isotypic} $H$-submodules, there are positive constants $\lambda_1,\ldots,\lambda_k$ such that
\[
|\nabla\xi|^2 = \sum\limits_{i=1}\lambda_i|\pi_{\fm_i}(T)|^2. 
\] Thus, if we let $\lambda_{\max}:=\max\limits_{1\leqslant i\leqslant k}\lambda_i$ and $\lambda_{\min}:=\min\limits_{1\leqslant i\leqslant k}\lambda_i$, then $\lambda_{\min}|T|^2\leqslant |\nabla\xi|^2\leqslant \lambda_{\max}|T|^2$. In particular, if furthermore $\lambda_1=\ldots=\lambda_k$ (e.g. if $\fm$ is $\Ad_{\SO(n)}(H)$-irreducible) then we get $|\nabla\xi|^2=\lambda_{\max} |T|^2$, as we wanted.
\end{proof}
    
\begin{example}
\label{ex: nabla_J}
    When $H=\U(m)\subset \SO(2m)$, as in Example \ref{ex: almost_complex_structure},   and the geometric structure is an almost complex structure $\xi=J$, the identity \eqref{eq: nabla of xi-GENERAL} of Lemma \ref{lem: torsion_equiv_nablaxi} and Lemma \ref{lem: properties_diamond}--\ref{item: diamond_bracket} give
    \[
        \nabla_X J = (T_X\diamond J) = -[T_X,J] = 2JT_X, \quad \forall X \in \sX(M),
    \] since $T_X\in\Omega_{\mathfrak{u}(m)^{\perp}}^2\simeq\{A\in\mathfrak{so}(M):AJ=-JA\}$. Thus, we have
    \begin{equation}\label{eq: torsion_almost_complex}
    T_X=-\frac{1}{2}J\nabla_X J, \quad \forall X \in \sX(M).  
\end{equation} 
    In particular, 
\begin{equation}
\label{eq: norm_nabla_J_T}
    |\nabla J|^2 = 4|T|^2.
\end{equation} 
    Moreover, we know from Example \ref{ex: diamond_J} that $\cdot{}\diamond J$ maps $\Omega_{\fu(m)^{\perp}}$ into itself, so that $\nabla_X J\in\Omega_{\fu(m)^{\perp}}^2$, for all $X\in\sX(M)$. Alternatively, the identity \eqref{eq: norm_nabla_J_T} follows by combining \eqref{eq: nabla of xi-GENERAL} with the explicit form of the inner product \eqref{eq: inner_prod_J} derived in Example \ref{ex: inner_prod_J}.
\end{example}

\begin{example}
\label{ex: Intrinsic_torsion_G2}
    When $H=\rG_2\subset\SO(7)$, as in Example \ref{ex: G2_structures}, given a $\rm G_2$-structure $\varphi$ on $M^7$, then in addition to the irreducible $\rG_2$-decomposition of $\Omega^2$, the space of $3$-forms is decomposed into irreducible $\rG_2$-modules
   $\Omega^3=\Omega_1^3\oplus\Omega_7^3\oplus\Omega_{27}^3$. By Lemma \ref{lem: ker_diamond}, we know that $\ker(\cdot{}\diamond\varphi)=\Omega_{\fg_2}^2$, and since $\cdot{}\diamond\varphi$ is $\rm G_2$-equivariant, thus respecting the $\rm G_2$-module decompositions,  it follows by dimension counting that $\cdot{}\diamond\varphi$ maps $\Omega_{\fm}^2$ isomorphically into $\Omega_7^3$. Since $T_X\in\Omega_\fm^2$, we see from equation  \eqref{eq: nabla of xi-GENERAL} of Lemma \ref{lem: torsion_equiv_nablaxi} that $\nabla_X\varphi\in\Omega_7^3$ for every $X \in \sX(M)$, thus recovering a well-known fact dating back to the work of Fernández and Gray \cite{Fernandez1982} (see also \cite[Lemma 2.14]{Karigiannis2009}). 
   
   Quantitatively, it follows from the inner product relation \eqref{eq: inner_prod_G2} of Example \ref{ex: inner_prod_G2}  that
\begin{equation}
\label{eq: norm_nabla_varphi_T}
    |\nabla \varphi|^2 
    = 6|T|^2.
\end{equation}
    Because of the description of $\Omega_{\fm}^2(M)$  as in \eqref{eq: decomp_lambda_g2}, it is common in $\rm G_2$-geometry to identify the intrinsic torsion $T\in\Omega^1(M,\Lambda_\fm^2)$ with the endomorphism $\mathcal{T}_{lm}$ defined by  $T_{l;ij}=:-\frac{1}{3}\mathcal{T}_{lm}\varphi_{mij}$. Applying a well-known self-contraction identity for the $3$-form $\varphi$ \cite[Lemma A.8]{Karigiannis2007}, one has $\langle X\lrcorner\varphi,X\lrcorner\varphi\rangle = 6|X|^2$, and thus
\begin{equation}
\label{eq: relation_torsions_norms_G2}
    |T|^2 = \frac{1}{9}\sum\limits_{i=1}^7\langle \cT(e_i)\lrcorner\varphi,\cT(e_i)\lrcorner\varphi\rangle = \frac{2}{3}|\cT|^2.
\end{equation} 
    
    Moreover, the dual $4$-form $\psi:=\ast\varphi$ is also stabilised by $\rm G_2$, and it follows from contraction identities between $\varphi$ and $\psi$ \cite[Appendix A.3]{Karigiannis2007} that the equations $\nabla\varphi = T\diamond\varphi$ and $\nabla\psi = T\diamond\psi$ that we obtain from \eqref{eq: nabla of xi-GENERAL} applied to $\xi=\varphi$ and $\xi=\psi$, are equivalent respectively to:
\begin{align}
	\nabla_p\varphi_{ijk} 
	&= \mathcal{T}_{pm}\psi_{mijk}
	\label{eq: nabla_varphi}\\
	\nabla_p \psi_{mijk} 
	&= -\mathcal{T}_{pm}\varphi_{ijk} + \mathcal{T}_{pi}\varphi_{mjk} - \mathcal{T}_{pj}\varphi_{mik} + \mathcal{T}_{pk}\varphi_{mij}.
	\label{eq: nabla_psi}
\end{align} 
    Finally, we may invert \eqref{eq: nabla_varphi} to express 
\begin{equation}
\label{eq: g2_torsion}
    \mathcal{T}_{pq} 
    = \frac{1}{24}\nabla_p\varphi_{ijk}\psi_{qijk}.
\end{equation} 
\end{example}

\begin{example}
    When $H=\S7\subset\SO(8)$, as in Example \ref{ex: Spin7_structures}, a $\rm Spin(7)$-structure $\Phi$ on $M^8$ induces a decomposition on the space of $4$-forms into irreducible $\S7$-submodules $\Omega^4=\Omega^4_1\oplus\Omega^4_7\oplus\Omega^4_{27}\oplus\Omega^4_{35}$. Then, arguing as in the previous example, $\cdot\diamond\Phi$ maps $\Omega_\fm^2$ isomorphically into $\Omega_7^4$, and \eqref{eq: nabla of xi-GENERAL} yields $\nabla_X \Phi = T_X\diamond\Phi$, implying that $\nabla_X\Phi\in \Omega^4_7$ for every $X\in\sX(M)$ \cite{fernandez-spin7}. Moreover, from equation \eqref{eq: inner_prod_Spin7} of Example \ref{ex: inner_prod_Spin7} we obtain
\begin{equation}
\label{eq: norm_nabla_Phi_T}
        |\nabla \Phi|^2 = 16|T|^2.
\end{equation}
\end{example}

Henceforth, we shall denote by $\Delta$ the negative definite rough Laplacian, i.e. $\Delta := -\nabla^{\ast}\nabla$, so that at the center of normal coordinates $\Delta=\nabla_k\nabla_k$. 
\begin{lemma}[cf. {\cite[Lemma 3.14]{Gonzalez-Davila2009}}]\label{lem: basic_estimate_harmonic}
Let $(M^n,g)$ be an oriented Riemannian $n$-manifold admitting a compatible  {(geometric) $H$-structure $\xi$ with torsion $T$; we assume that $H=\mathrm{Stab}_{\SO(n)}(\xi_{\circ})$ and $\xi$ is modelled on $\xi_{\circ}$.} Then
\begin{equation}\label{eq: Laplacian_of_xi}
    \Delta\xi = \Div T\diamond\xi + T_k\diamond (T_k\diamond \xi),
\end{equation} 
    where $(\Div T)_{ij}:=\nabla_k T_{k;ij}\in\Omega_{\fm}^2(M)$. 

    In particular, there is a constant $c>0$, depending only on $(M,g)$ and $H$, such that if $\Div T = 0$ then
\begin{equation}\label{ineq: basic_estimate_harmonic}
    |\Delta\xi|\leqslant c|\nabla\xi|^2.
\end{equation} 
    If furthermore there is $c>0$ such that $\langle A\diamond\xi,B\diamond\xi\rangle = c\langle A,B\rangle$ for all $A,B\in\Omega_\fm^2(M)$, i.e. if $c:=\lambda_1=\ldots=\lambda_k$ in Lemma \ref{lem: orthogonality} (e.g. if $\mathfrak{m}$ is an irreducible $H$-module), then the decomposition \eqref{eq: Laplacian_of_xi} of $\Delta\xi$ is orthogonal.
\end{lemma}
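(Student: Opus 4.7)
The plan is to differentiate the torsion identity $\nabla_l \xi = T_l \diamond \xi$ from Lemma \ref{lem: torsion_equiv_nablaxi}, take a metric trace, and then read off the remaining claims as corollaries. The key observation is that, since the diamond operator is a sum of index contractions by the coordinate formula \eqref{eq:diamond_coordinates}, the Levi--Civita connection acts on it as a derivation:
\[
\nabla_k(A\diamond\eta)=(\nabla_k A)\diamond\eta+A\diamond(\nabla_k\eta),
\]
for any endomorphism field $A$ and tensor $\eta$. Granting this, I apply $\nabla_k$ to $\nabla_l\xi=T_l\diamond\xi$ and substitute $\nabla_k\xi=T_k\diamond\xi$ on the right to obtain $\nabla_k\nabla_l\xi=(\nabla_k T_l)\diamond\xi+T_l\diamond(T_k\diamond\xi)$; tracing with $g^{kl}$ then yields \eqref{eq: Laplacian_of_xi} with $\Div T := g^{kl}\nabla_k T_l$.

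Three loose ends remain. First, the claim $\Div T\in\Omega^2_{\fm}$: a priori $\Div T$ is only skew-symmetric (because $\nabla$ is metric and preserves the skew-symmetry of $T_l$), so $\Div T\in\Omega^2$; its $\fh$-component lies in $\ker(\cdot\diamond\xi)\cap\Omega^2=\Omega^2_{\fh}$ by Lemma \ref{lem: ker_diamond}-\ref{item: ker in Omega^2}, hence is invisible in $\Div T\diamond\xi$, so I may silently project onto $\Omega^2_{\fm}$ without changing either side of the formula. Second, the pointwise bound \eqref{ineq: basic_estimate_harmonic}: under $\Div T=0$ the Laplacian formula reduces to $\Delta\xi=T_k\diamond(T_k\diamond\xi)$, and the pointwise bilinear estimate $|A\diamond\eta|\leqslant c(n,p,q)|A||\eta|$ combined with $|\xi|\equiv|\xi_{\circ}|$ gives $|\Delta\xi|\leqslant c|T|^2$, which \eqref{ineq: T_equiv_nablaxi} upgrades to $|\Delta\xi|\leqslant c|\nabla\xi|^2$. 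Third, the orthogonality when $\lambda_1=\cdots=\lambda_k$: applying Lemma \ref{lem: orthogonality}-\ref{item: orthogonality} pointwise with $C=T_k$ (no sum) and $D=\Div T\in\Omega^2_{\fm}$ gives $\langle T_k\diamond(T_k\diamond\xi),\Div T\diamond\xi\rangle=0$ for each $k$, and summing in $k$ yields the claimed orthogonality of the two summands in \eqref{eq: Laplacian_of_xi}.

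\textbf{Main obstacle.} There is no real difficulty beyond the Leibniz computation in the first paragraph; the only subtlety is the $\Omega^2$-versus-$\Omega^2_{\fm}$ distinction for $\Div T$, which is resolved for free by the kernel identification of Lemma \ref{lem: ker_diamond}.
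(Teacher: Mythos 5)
Your proof is correct and follows essentially the same route as the paper: apply the Leibniz rule for $\diamond$ to $\nabla\xi = T\diamond\xi$, trace, and then deduce the estimate from \eqref{ineq: T_equiv_nablaxi} and the orthogonality from Lemma \ref{lem: orthogonality}. One small remark on your first ``loose end'': the worry that $\Div T$ might only lie in $\Omega^2$ rather than $\Omega^2_\fm$ is in fact unfounded, though your fallback (silent $\fm$-projection, legitimate since the $\fh$-part dies under $\cdot\diamond\xi$) is a sound safety net. Indeed, writing $\nabla_k T_l = \nabla^H_k T_l + [T_k,T_l]$ via $T_k\diamond T_l = -[T_k,T_l]$ (Lemma \ref{lem: properties_diamond}-\ref{item: diamond_bracket}), the trace $g^{kl}[T_k,T_l]$ vanishes by symmetry of $g^{kl}$ against skewness of the bracket, so $\Div T = g^{kl}\nabla^H_k T_l$, and since $\nabla^H$ is an $H$-connection preserving $\Gamma(\fm_Q)$ the divergence genuinely lands in $\Omega^2_\fm$.
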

\begin{proof}
    To prove \eqref{eq: Laplacian_of_xi}, we apply the `Leibniz rule'  $\nabla_k(A\diamond \xi)=(\nabla_k A)\diamond\xi+A\diamond\nabla_k\xi$, for $A\in\Gamma(\mathrm{End}(TM))$, to $A=T_k$, together with \eqref{eq: nabla of xi-GENERAL}:
\begin{align*}
    \Delta\xi 
    &= \nabla_k\nabla_k\xi = \nabla_k(T_k\diamond\xi)= \nabla_k T_k\diamond\xi + T_k\diamond\nabla_k\xi\\
    &= \Div T\diamond\xi + T_k\diamond (T_k\diamond \xi).
\end{align*} 
    For the second part, we combine \eqref{ineq: T_equiv_nablaxi}, \eqref{eq: Laplacian_of_xi} and $\Div T = 0$:
\[
|\Delta\xi| = |T_k\diamond(T_k\diamond\xi)|\leqslant c|\xi||T|^2\leqslant c|\nabla\xi|^2.
\] 
    Finally, under the last assumption, it follows from the orthogonality relation \eqref{eq: orthogonality} of Lemma \ref{lem: orthogonality} that $\langle \Div T\diamond\xi , T_k\diamond (T_k\diamond\xi)\rangle = 0$, as claimed. 
\end{proof}
\begin{example}
    When $H=\U(m)=\Stab_{\SO(2m)}(J_{\circ})$, recall from Example \ref{ex: diamond_J} that $\cdot{}\diamond J\colon\mathrm{End}(TM)\to\mathrm{End}(TM)$ is not surjective; it rather maps $\mathrm{End}(TM)=\Omega^0\oplus U\oplus W\oplus\Omega_{\fu(m)}^2\oplus\Omega_{\fu(m)^{\perp}}^2$ onto $W\oplus\Omega_{\fu(m)^{\perp}}^2$. Now, for any $C\in\Omega_{\fu(m)^{\perp}}^2$, using $JC=-CJ$ and Lemma \ref{lem: properties_diamond}--\ref{item: diamond_bracket}, we have:
\begin{equation}
\label{eq: c_c_J}
    E:=C\diamond(C\diamond J) = [C,[C,J]] = 4C^2 J.
\end{equation} 
    Recalling also that $J^{-1}=J^t=-J$, we see immediately that $[E,J]=0$ and $E\in\Omega^2$, i.e. \[E\in\ker(\cdot{}\diamond J)\cap\Omega^2=\Omega_{\fu(m)}^2.\] 
    In particular, there is no $A\in\mathrm{End}(TM)$ such that $E=A\diamond J$. Moreover, if $D\in\Omega_{\fu(m)^{\perp}}^2$, then $D\diamond J\in\Omega_{\fu(m)^{\perp}}^2$ and it is clear that $\langle E,D\diamond J\rangle = 0$.
    
    Using \eqref{eq: torsion_almost_complex} from Example \ref{ex: nabla_J}, together with  \eqref{eq: c_c_J}, and recalling that $\nabla_lJ\in \Omega_{\fu(m)^{\perp}}^2$, we can compute
    \[
    T_l\diamond(T_l\diamond J) = 4 T_l T_l J = J(\nabla_l J)J(\nabla_l J)J = J(\nabla_l J)(\nabla_l J).
    \] 
    Therefore, since $\Div T\diamond J=-[\Div T,J]$, equation \eqref{eq: Laplacian_of_xi} of Lemma \ref{lem: basic_estimate_harmonic} becomes
    \[
    \Delta J = -[\Div {T},J] + J(\nabla_l J)(\nabla_l J).
    \] 
    This yields an alternative  proof of the orthogonality results in \cite[Lemma 3.2]{he2021}.
\end{example}

\begin{example}
\label{ex: G2_Laplacian_of_phi}
    When $H=\rG_2$, as in Examples \ref{ex: G2_structures} and \ref{ex: Intrinsic_torsion_G2},
    a direct computation using equations \eqref{eq: nabla_varphi} and \eqref{eq: nabla_psi} gives
\begin{align}
	(\Delta \varphi)_{ijk} 
	&= \nabla_p\nabla_p\varphi_{ijk} \nonumber
	%\\&
	= (\Div\mathcal{T})_m \psi_{mijk} + \mathcal{T}_{pm}\nabla_p\psi_{mijk}\nonumber\\
	&= (\Div\mathcal{T})_m \psi_{mijk} - |\mathcal{T}|^2\varphi_{ijk} + \mathcal{T}_{pm}\mathcal{T}_{pi}\varphi_{mjk} - \mathcal{T}_{pm}\mathcal{T}_{pj}\varphi_{mik} + \mathcal{T}_{pm}\mathcal{T}_{pk}\varphi_{mij}.
	\label{eq: laplacian_phi}
\end{align} 
    In particular, for \emph{harmonic} $\rG_2$-structures, i.e. when $\Div T = 0$, a \emph{uniform estimate} follows from \eqref{eq: g2_torsion}, $|\varphi|^2 = |\psi|^2 = 7$,  and \eqref{eq: laplacian_phi}: 
\begin{equation}
\label{ineq: estimate_laplacian}
	|\Delta\varphi| \leqslant  
  {\sqrt{\frac{3}{2}}}|\nabla\varphi|^2.
\end{equation} 
	On the other hand, combining the above with Lemma \ref{lem: basic_estimate_harmonic}, it follows that $S:=-\frac{1}{3}|\mathcal{T}|^2g + \mathcal{T}\mathcal{T}^t\in\Sigma^2(M)$ satisfies
    \[
    S\diamond\varphi = T_l\diamond(T_l\diamond\varphi).
    \] 
    Letting $C:=\Div T\in\Omega_{\fm}^2(M)$, then the endomorphism $A=A^i_j\in\mathrm{End}(TM)$ defined by $A_{ij}=S_{ij}+C_{ij}$ describes the Laplacian completely as an infinitesimal action:
    \[
    \Delta\varphi = A\diamond\varphi.
    \]
\end{example}

\subsection{General flows}
\label{sec: general_flows}

Let $M^n$ be a connected, orientable $n$-manifold. Recall from Lemma \ref{lem: ker_diamond}, that whenever $M$ admits an $H$-structure $\xi$ defined by one or several tensor fields which are stabilised by $H\subset \SO(n)$, then the $H$-submodule $\Omega^2_\fh$ is a subspace of $\ker (\cdot{}\diamond\xi)$.  {At each point $p\in M$, a representative frame in the class of the homogeneous section $\sigma$ of Section~\ref{sec: preliminaries} identifies $T_{p}M$ with $\mathbb{R}^n$ and the $H$-structure $\xi_{p}$ with the model structure 
$\xi_{\circ}$. This correspondence not only allows us to describe all $H$-structures but also their infinitesimal deformations.}

Consequently, a general $\mathrm{GL}(n,\bR)$-variation of $\xi$ can be written as (cf. \cite[Proposition 2.3]{Dwivedi-Loubeau-SaEarp2021}):
\begin{equation}
    \frac{\partial}{\partial t}\xi=A\diamond\xi \qforq  A\equiv A(t)=S(t)+C(t), \qwithq S(t)\in \Sigma^2 \qandq C(t)\in \Omega^2_{\fm}\subset\Omega^2.
\end{equation}
From now on, we shall restrict ourselves to $H$-structures defined by tensors, i.e. we shall assume that $H=\mathrm{Stab}(\xi_{\circ})$. The evolution equation \eqref{eq: general_H_flow} is then the \emph{general flow equation} for such an $H$-structure $\xi$.

We now want to derive the evolution equations of the main quantities  related to an $H$-structure $\xi$ under the general flow \eqref{eq: general_H_flow}. We start with the evolution of the associated Riemannian $\xi$-metric.
\begin{lemma}
\label{lemma: isometric variation}
    Suppose that $\{\xi(t)\}_{t\in I\ni 0}$ is a family of $H$-structures evolving under \eqref{eq: general_H_flow}. If $g(t)$ is the unique Riemannian metric on $M^n$ determined by $\xi(t)$, then
    \[
    \frac{\partial}{\partial t} g(t)=A(t)\diamond g(t)=2S(t).
    \]
\end{lemma}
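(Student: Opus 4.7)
The strategy is to exploit the canonical $\GL(n,\bR)$-equivariance of the assignment $\xi\mapsto g(\xi)$. Since $H=\Stab(\xi_\circ)\subset\SO(n)=\Stab(g_\circ)$, at each $x\in M$ the metric $g(x)$ is determined pointwise by $\xi(x)$ as follows: pick any frame $u$ with $u.\xi_\circ=\xi(x)$, and set $g(x):=u.g_\circ$; this is independent of the chosen adapted frame because any two differ by an element of $H$, which stabilises $g_\circ$ as well. Consequently, this construction intertwines the $\GL(n,\bR)$-actions on the respective tensor spaces, in the sense that
\begin{equation*}
    g(\rg.\xi)=\rg.g(\xi),
    \qquad\forall\,\rg\in\GL(n,\bR).
\end{equation*}

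Differentiating this equivariance identity at $\rg=\mathrm{id}$ along a one-parameter subgroup $\rg(s)=e^{sA}$ gives the infinitesimal statement
\begin{equation*}
    Dg(\xi)\cdot(A\diamond\xi)\;=\;A\diamond g(\xi),
    \qquad\forall\,A\in\fgl(n,\bR).
\end{equation*}
Applying the chain rule to the flow \eqref{eq: general_H_flow} yields
\begin{equation*}
    \frac{\partial}{\partial t}g(t)
    \;=\;Dg(\xi(t))\cdot\Big(\tfrac{\partial}{\partial t}\xi(t)\Big)
    \;=\;Dg(\xi(t))\cdot\Big(A(t)\diamond\xi(t)\Big)
    \;=\;A(t)\diamond g(t).
\end{equation*}
The second equality in the lemma then follows directly from Lemma~\ref{lem: properties_diamond}--\ref{item: diamond_S}, which asserts $A\diamond g = 2S$ whenever $A=S+C$ with $S\in\Sigma^2$ and $C\in\Omega^2\subset\ker(\cdot\diamond g)$.

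The only non-formal step is establishing the well-definedness and equivariance of $\xi\mapsto g(\xi)$, which is essentially just an unpacking of the canonical construction of the induced metric from an $H$-structure, relying crucially on the chain of inclusions $H\subset\SO(n)$ so that $H\subseteq\Stab(g_\circ,\xi_\circ)$. No delicate computations or group-specific contraction identities are needed, consistent with the paper's unifying philosophy. This same argument will in fact apply verbatim to any tensor stabilised by $H$, which is the template used for the subsequent evolution lemmas.
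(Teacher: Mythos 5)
Your proof is correct and takes essentially the same route as the paper's: both rest on the observation that the pointwise assignment $\xi\mapsto g(\xi)$, given by $g=u.g_{\circ}$ for any adapted frame $u$, intertwines the canonical $\GL(n,\bR)$-actions, followed by a first-order differentiation and an application of Lemma~\ref{lem: properties_diamond}--\ref{item: diamond_S}. You phrase the argument slightly more abstractly through equivariance and the chain rule, whereas the paper specialises to the one-parameter path $\xi(t)=e^{tA}.\xi$ and shows directly that it induces $g(t)=e^{tA}.g$ via the frame bundle picture, but these are the same first-order computation; note only that your sentence ``any two adapted frames differ by an element of $H$'' implicitly uses that $H$ is the \emph{full} $\GL(n,\bR)$-stabiliser of $\xi_{\circ}$ (which the paper's proof states explicitly), the very hypothesis under which the lemma's ``unique Riemannian metric determined by $\xi$'' makes sense.
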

\begin{proof}
    Denote by $\xi_{\circ}$ the pointwise linear model of $\xi$, so that $H=\{h\in\mathrm{GL}(n,\mathbb{R}): h.\xi_{\circ} = \xi_{\circ}\}$. Let
\[
Q_{\xi(t)}:=\{u\in\Fr(M): u.\xi_{\circ}=\xi(t)\}
\] 
    be the principal $H$-subbundle of $\Fr(M)$ uniquely determined by $\xi(t)$. Since $H\subset\mathrm{SO}(n)$, we know that $\xi(t)$ also determines uniquely a principal $\mathrm{SO}(n)$-subbundle $P_{\xi(t)}:=\mathrm{SO}(n)\cdot{Q_{\xi(t)}}\subset\Fr(M)$ containing $Q_{\xi(t)}$, which corresponds to a unique metric $g(t)$ and orientation $\vol_{g(t)}$ on $M^n$. Note that
\[
g(t) = u.g_{\circ},\quad\forall u\in P_{\xi(t)},
\] 
    and write $\xi:=\xi(0)$, and $g:=g(0)$. 
    
    We compute the first order variation $\frac{\partial}{\partial t}g(t)|_{t=0}$ of $g$, given the first order variation $\frac{\partial}{\partial t}\xi(t)|_{t=0} = A\diamond\xi$ of $\xi$. It suffices to consider any path $\xi(t)$ of $H$-structures satisfying the latter equation, of the form
\[
\xi(t) = e^{tA}.\xi.
\] 
    Then, for any $u\in Q_{\xi}\subset P_{\xi}$, we have
\[
\xi(t) = e^{tA}.\xi = e^{tA}.(u.\xi_{\circ}) = (ue^{tA}).\xi_{\circ}.
\] 
    Thus $ue^{tA}\in Q_{\xi(t)}\subset P_{\xi(t)}$ for all $t$. In particular, since $u\in P_{\xi}$ and $ue^{tA}\in P_{\xi(t)}$,
\[
g(t) = (ue^{tA}).g_{\circ} = e^{tA}.(u.g_{\circ}) = e^{tA}.g.
\] 
    Therefore, $\frac{\partial}{\partial t} g(t)|_{t=0} = A\diamond g = 2S$, 
    where the last equality follows from Lemma \ref{lem: properties_diamond}--\ref{item: diamond_S}. \qedhere
\end{proof}
\begin{remark}
    If we consider the metric $g(t)$ itself as one of the tensors among the components of $\xi(t)$, then since the diamond in \eqref{eq: general_H_flow} acts componentwise, we already get the evolution $\frac{\partial}{\partial t}g = A\diamond g = 2S$ directly from Lemma \ref{lem: properties_diamond}--\ref{item: diamond_S}. This is true, for instance, when $H=\U(m)$ and $\xi(t)=(g(t),J(t))$, where $J(t)$ is an almost complex structure compatible with $(M^{2m},g(t))$. In other cases, for instance when $H=\rm G_2$, we consider $\xi(t)=\varphi(t)\in\Omega_{+}^3(M)$ as a single tensor, instead of a coevolving pair $\xi(t)=(g(t),\varphi(t))$, so the above proof of Lemma \ref{lemma: isometric variation} is a more general way to deduce the evolution of the associated metric.
\end{remark}

\begin{example}[Ricci $H$-flow]
\label{ex: Ricci flow}
    According to Lemma \ref{lemma: isometric variation}, the simplest flow of $H$-structures $\xi(t)$ inducing the Ricci flow
\[
\partial_t g_{\xi(t)} = - 2\mathrm{Ric}(g_{\xi(t)})
\] on the corresponding co-evolving metrics $\{g_{\xi(t)}\}$ is given by
\[
\frac{\partial}{\partial t}\xi = -\mathrm{Ric}(g_{\xi(t)})\diamond\xi(t).
\] 
 {When $H$ is one of the special holonomy groups, the corresponding  Ricci $H$-flow seems to be a promising tool in the search for the Ricci-flat metrics induced by torsion-free $H$-structures. Paraphrasing the argument by Dwivedi et al. in \cite{dgk-isometric}*{p.1856}, in the context $H=\rG_2$, one possible motivation for studying the Ricci $H$-flow is that it induces precisely the Ricci flow on metrics, in contrast to the Laplacian flow, which induces Ricci flow plus lower order terms involving the torsion. Therefore, one may hope to first flow the $H$-structure in a way that improves the metric, and then flow it isometrically in a way that still decreases the torsion, eg. by \eqref{eq: harmonic_flow} below -- possibly with some additional skew-symmetric terms in $C(t)$, for greater regularity. }
\end{example}

One can envisage leveraging some of the celebrated literature on the standard Ricci flow in order to explore, under suitable assumptions,  the properties  of the Ricci $H$-flow. As a first illustrative step, let us use the short-time existence and uniqueness of the Ricci flow to prove the \emph{existence} of short-time solutions (though not, in general, their uniqueness) to the Ricci $H$-flow:

\begin{lemma}[Short time existence of Ricci $H$-flow]
\label{lem: Ricci flow}
    Let $(M^n,g_0)$ be a closed Riemannian manifold admitting a compatible $H$-structure $\xi_0$. Then there is $\tau>0$ such that there exists a solution $\xi(t)$, defined for all $t\in [0,\tau)$, to the problem
\begin{equation}
\label{eq: Ricci flow_H-str}
\begin{cases}
    \frac{\partial}{\partial t}\xi(t) 
    = -\mathrm{Ric}(g_{\xi(t)})\diamond\xi(t)\\
	\xi(0)=\xi_0
\end{cases}.
\end{equation}
\end{lemma}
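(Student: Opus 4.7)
The plan is to decouple the problem: first solve the Ricci flow for the metric, then lift the solution to an $H$-structure by a pointwise linear ODE. The bridge is Lemma \ref{lemma: isometric variation}, which states that any flow of $H$-structures driven by an endomorphism $A$ induces $\partial_t g_{\xi(t)} = A\diamond g_{\xi(t)}$ on the associated metric; taking $A=-\Ric$ (raised by the metric) thus recovers the Ricci flow at the level of metrics, so it is natural to build $\xi(t)$ over the Ricci flow solution.

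First I would invoke Hamilton's short-time existence theorem for the Ricci flow on the closed manifold $(M^n,g)$, obtaining $\tau>0$ and a smooth family $\{g(t)\}_{t\in[0,\tau)}$ with $g(0)=g$ and $\partial_t g(t)=-2\Ric(g(t))$. Next I would define the smooth, time-dependent section $A(t)\in\Gamma(\End(TM))$ by $A(t)^{i}_{j}:=-g(t)^{ik}\Ric(g(t))_{kj}$; this endomorphism is symmetric with respect to $g(t)$. At each $x\in M$, the map $A(t,x)\diamond(\cdot)$ is a linear endomorphism of the finite-dimensional tensor fibre modelled on $\xi_\circ$, so the pointwise linear ODE
\[
    \frac{d}{dt}\xi(t,x) = A(t,x)\diamond \xi(t,x),\qquad \xi(0,x)=\xi(x),
\]
admits a unique smooth solution on all of $[0,\tau)$, with smooth dependence on $x$, by standard linear ODE theory. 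Since $\diamond$ is the infinitesimal generator of the $\GL(T_xM)$-action on tensors, the right-hand side is tangent to the $\GL$-orbit of $\xi(x)$; the integral curve $\xi(t,x)$ therefore remains on this orbit, so $\xi(t)$ stays pointwise modelled on $\xi_\circ$ and defines a bona fide $H$-structure for every $t\in[0,\tau)$.

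The decisive final step, and main subtlety, is the bootstrap identification $g_{\xi(t)}=g(t)$: once established, $A(t)=-\Ric(g(t))^{\sharp}=-\Ric(g_{\xi(t)})^{\sharp}$ and the ODE above coincides with \eqref{eq: Ricci flow_H-str}. Setting $\eta(t):=g_{\xi(t)}-g(t)$, which vanishes at $t=0$, Lemma \ref{lemma: isometric variation} yields $\partial_t g_{\xi(t)} = A(t)\diamond g_{\xi(t)}$, while $\partial_t g(t) = A(t)\diamond g(t) = -2\Ric(g(t))$ (the latter because $A(t)$ is $g(t)$-symmetric). By linearity of $\diamond$ in its second argument, the difference $\eta$ satisfies the linear, time-dependent ODE $\partial_t \eta = A(t)\diamond \eta$ on the finite-dimensional bundle of symmetric $(0,2)$-tensors, with zero initial condition, so uniqueness forces $\eta\equiv 0$. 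The delicate point I anticipate is precisely this reconciliation of the two a priori distinct ways of raising $\Ric$ to an endomorphism (via $g(t)$ versus via $g_{\xi(t)}$); the construction succeeds only because both $\xi$ and $\eta$ turn out to satisfy \emph{linear} ODEs with smooth coefficients on the full interval of existence of the background Ricci flow.
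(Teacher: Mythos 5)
Your proposal is correct and follows essentially the same route as the paper: invoke Hamilton's short-time existence for the Ricci flow, integrate the fibrewise linear ODE $\partial_t\xi = A(t)\diamond\xi$ with $A(t)=-\Ric(g(t))^{\sharp}$, and use Lemma \ref{lemma: isometric variation} to identify $g_{\xi(t)}$ with $g(t)$. If anything, your version is slightly more careful than the paper's, which writes the solution as $e^{\tilde A(t)}.\xi$ with $\tilde A(t)=\int_0^t A(s)\,ds$ (valid verbatim only when the $A(s)$ commute) and is terse about why $g_{\xi(t)}=g(t)$, whereas your appeal to general linear ODE theory and the uniqueness argument for $\eta=g_{\xi(t)}-g(t)$ make both steps explicit.
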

\begin{proof}
    By the well-known short-time existence and uniqueness theorem for the Ricci flow equation (originally due to Hamilton \cite{hamilton1982three}; see also DeTurck's simplification \cite{deturck1983deforming}), there exist $\tau>0$ and a unique smooth solution $g(t)$, defined for all $t\in [0,\tau)$, to the Ricci flow equation with initial metric $g_0=g_{\xi_0}$:
    \begin{equation}\label{eq: Ricci_flow}
		\begin{cases}
		\frac{\partial}{\partial t}g(t) = -2\mathrm{Ric}(g(t))\\
		g(0) = g_0.\\
		\end{cases}
	\end{equation} For each $t\in [0,\tau)$, define $A(t),\tilde{A}(t)\in\Gamma(\mathrm{End}(TM))$ by
	\[
	A(t)_j^i:= -g(t)^{il}\mathrm{Ric}(g(t))_{lj}\quad\text{and}\quad \tilde{A}(t)_j^i :=\int_0^t A(s)_i^j ds.
	\] Defining $\xi(t):=e^{\tilde{A}(t)}\cdot{\xi_0}$, for every $t\in [0,\tau)$, we then get a solution to the problem
	\begin{equation}
\begin{cases}
    \frac{\partial}{\partial t}\xi(t) = -\mathrm{Ric}(g(t))\diamond\xi(t)\\
	\xi(0)=\xi_0.
\end{cases}
\end{equation} 
It follows from Lemma \ref{lemma: isometric variation} that the Riemannian metrics $\{g_{\xi(t)}\}$ induced by the $H$-structures $\{\xi(t)\}$ are precisely the unique Riemannian metrics $\{g(t)\}$ that solve \eqref{eq: Ricci_flow}, so in fact $\{\xi(t)\}$ is a solution to the problem \eqref{eq: Ricci flow_H-str} for all $t\in [0,\tau)$.
\end{proof}

From the evolution of the metric $g$ obtained in Lemma \ref{lemma: isometric variation}, one can immediately derive the evolution of other objects related to $g$, such as the volume form and the Christoffel symbols, cf. \cite[Corollary 3.3]{Karigiannis2009}:
%\begin{lemma}
%For any $A,B\in \fgl(m,\bR)$ and $X_l\in \cX(M)$, we have
%\begin{align}\label{eq: diamond_composition}
 %   A\diamond(B\diamond \xi)=-\frac{1}{2}[A,B]\diamond\xi
%\end{align}
%\end{lemma}

%\begin{proof}
%For $t$ and $s$ small enough to assure that $tA,sB\in V\subset \fgl(m,\bR)$, where $V$ is a neighbourhood of $0$, we can apply the Backer-Campbell-Hausdorff identity $e^{tA}e^{sB}=e^{c(tA,sB)}$ where
%\begin{equation}\label{eq: BCH identity}
 %   c(tA,sB)=tA+sB-\frac{ts}{2}[A,B]+\frac{
  %  t^2s}{2}[[A,B],B]-\frac{ts^2}{2}[[A,B],A]+...
%\end{equation}
%Then, 
 %   \begin{align*}
  %  A\diamond(B\diamond \xi)=&\left.\frac{d}{d t}\right|_{t=0}\left.\frac{d}{d s}\right|_{s=0}(e^{tA}e^{sB}).\xi\\
   % =&\left.\frac{d}{d t}\right|_{t=0}\left.\frac{d}{d s}\right|_{s=0}(e^{c(tA,sB)}).\xi\\
    %=&=-\frac{1}{2}[A,B]\diamond\xi.
%\end{align*}
%\end{proof}
\begin{lemma}[Evolution of $g^{ij}$, $\vol_g$ and $\Gamma_{ij}^k$]\label{lem:volume_Christoffel_variation}
The evolution of the inverse of the metric $g^{ij}$, the volume form $\vol_g$ and the Christoffel symbols $\Gamma_{ij}^k$ under the flow \eqref{eq: general_H_flow} are given by
\begin{align*}
    \frac{\pt}{\pt t} g^{ij}=-2S^{ij}, \quad \frac{\pt}{\pt t} \vol_g=\tr_g(S)\vol_g \qandq \frac{\pt}{\pt t} \Gamma_{ij}^k=g^{kl}(\nabla_i S_{jl}+\nabla_j S_{il}-\nabla_l S_{ij}).
\end{align*}
\end{lemma}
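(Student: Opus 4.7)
The entire lemma is a direct unpacking of Lemma~\ref{lemma: isometric variation}, which already supplies the key input $\partial_t g_{ij} = 2S_{ij}$. My plan is to treat each of the three quantities in turn, using only this identity plus standard differential-geometric tricks.

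For the inverse metric, I will differentiate the defining relation $g^{ik}g_{kj} = \delta^i_j$ in $t$ and solve for $\partial_t g^{ij}$. Combining with $\partial_t g_{kj} = 2S_{kj}$ gives $g_{kj}\partial_t g^{ik} = -2g^{il}S_{lj}$, hence $\partial_t g^{ij} = -2S^{ij}$, where indices are raised with $g$.

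For the volume form, I will work in local coordinates where $\vol_g = \sqrt{\det(g_{ij})}\,dx^1\wedge\cdots\wedge dx^n$. Applying Jacobi's formula,
\begin{equation*}
\partial_t \det(g_{ij}) \;=\; \det(g_{ij})\,g^{ij}\partial_t g_{ij} \;=\; 2\det(g_{ij})\,\tr_g(S),
\end{equation*}
which yields $\partial_t \vol_g = \tr_g(S)\vol_g$. As an alternative, since $\vol_g$ is determined by $\xi$, one can invoke Lemma~\ref{lem: properties_diamond}--\ref{item: diamond_vol} to write $\partial_t\vol_g = A\diamond \vol_g = \tr(A)\vol_g$ and note that $\tr(C) = 0$ because $C$ is skew, so $\tr(A) = \tr_g(S)$.

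For the Christoffel symbols, the point to watch is that $\Gamma_{ij}^k$ is not itself a tensor, but the difference of two connections is; since $\partial_t \Gamma_{ij}^k$ is (infinitesimally) such a difference, it is a tensor, so proving the formula at a single arbitrary point in any convenient coordinate system suffices. I will fix $p\in M$ and choose normal coordinates for $g(t_0)$ centred at $p$, in which $\Gamma_{ij}^k(p) = 0$ and $\partial_l g_{ij}(p) = 0$, so at $p$ ordinary and covariant derivatives of the metric agree. Differentiating $\Gamma_{ij}^k = \tfrac{1}{2}g^{kl}(\partial_i g_{jl}+\partial_j g_{il}-\partial_l g_{ij})$ in $t$, the term involving $\partial_t g^{kl}$ is multiplied by $(\partial_i g_{jl}+\partial_j g_{il}-\partial_l g_{ij})(p) = 0$ and drops out, leaving
\begin{equation*}
    \partial_t \Gamma_{ij}^k(p) \;=\; \tfrac{1}{2}g^{kl}\big(\partial_i \partial_t g_{jl} + \partial_j \partial_t g_{il} - \partial_l \partial_t g_{ij}\big)(p) \;=\; g^{kl}\big(\nabla_i S_{jl} + \nabla_j S_{il} - \nabla_l S_{ij}\big)(p),
\end{equation*}
using $\partial_t g = 2S$ and the normal-coordinate identification of $\partial$ with $\nabla$ at $p$. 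Both sides being tensors, the identity propagates to arbitrary coordinates and arbitrary points, completing the proof. There is no genuine obstacle here; the only subtlety is the non-tensorial nature of $\Gamma$, handled by the standard normal-coordinate argument familiar from Ricci-flow calculations.
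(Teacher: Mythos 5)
Your proof is correct, and it supplies the standard derivations that the paper elides: the paper gives no proof of this lemma, merely citing Karigiannis and noting that it follows "immediately" from the $\partial_t g = 2S$ formula of Lemma~\ref{lemma: isometric variation}. Your three arguments (differentiating $g^{ik}g_{kj}=\delta^i_j$, Jacobi's formula, and the normal-coordinate tensoriality trick for $\partial_t\Gamma^k_{ij}$) are exactly the textbook Ricci-flow computations the authors are implicitly relying on, and your alternative route to the volume-form identity via Lemma~\ref{lem: properties_diamond}--\ref{item: diamond_vol} (using $\tr C=0$) is a nice in-paper cross-check that fits the spirit of the $\diamond$-formalism developed in Section~\ref{sec: inf_deformations}.
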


Let us derive the evolution of the torsion tensor $T$ of the $H$-structure $\xi$ under the general flow \eqref{eq: general_H_flow}. We begin with the evolution of $\nabla_l\xi$. In what follows, using the definition of $\diamond$ given in \eqref{eq: diamond_operator}, note that we can write the general flow \eqref{eq: general_H_flow} in coordinates as
%\begin{equation}
 %   \frac{\partial}{\partial t}\xi^{i_1,...,i_p}_{j_1,...,j_q}=-\sum_{r=1}^pA\indices{^{i_r}_m}\xi^{i_1,...,m,...,i_p}_{j_1,...,j_q}+\sum_{s=1}^q A\indices{^m_{j_s}}\xi_{j_1,...,m,...,j_q}^{i_1,...,i_p}.
%\end{equation}
\begin{equation}
    \frac{\partial}{\partial t}\xi^I_J=-\sum_{i\in I}A^i_m\xi^{I_m(i)}_J+\sum_{j\in J} A^m_j\xi^I_{J_m(j)},
\end{equation}
%\begin{equation}
 %   \frac{\partial}{\partial t}\xi^I_J=-\sum_{a=1}^pA\indices{^{i_a}_m}\xi\indices{^{I_m(\hat{i_a})}_J}+\sum_{b=1}^q A\indices{^m_{j_b}}\xi\indices{_{J_m(\hat{j_b})}^I},
%\end{equation}
where $I=\{i_1,...,i_p\}$, $J=\{j_1,\dots,j_q\}$ and, $I_m(i)$ and $J_m(j)$ denote, respectively, the index sets $I$ and $J$ after replacing the indexes $i\in I$ and $j\in J$ with $m$.
\begin{proposition}\label{prop: evo_nabla_xi}
The evolution of $\nabla _l\xi$ under the flow \eqref{eq: general_H_flow} is given by
\begin{align}\label{eq: variation_nabla_xi}
\frac{\pt}{\pt t}\nabla_l\xi=A\diamond \nabla_l\xi+(\nabla_lC-\Lambda \nabla S_l)\diamond\xi, 
%\frac{\pt}{\pt t}\nabla_l\xi=A\diamond (T_l\diamond \xi)+\pi_\fm(\Lambda \nabla S_l+\nabla_lC)\diamond\xi, 
\end{align}
where $ {(\Lambda\nabla S_l)\indices{_j^i}}:=(\Lambda\nabla S_l)_{jk}g^{ik}=g^{ik}(\nabla_j S_{kl}-\nabla_k S\indices{_j_l})$.
\end{proposition}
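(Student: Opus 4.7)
The plan is to compute $\partial_t \nabla_l \xi$ by commuting the time derivative with the covariant derivative, then simplify using the flow equation and the variation of Christoffel symbols given by Lemma~\ref{lem:volume_Christoffel_variation}. The key identity to exploit is the Leibniz rule for the diamond operator against $\nabla$, namely
\[
\nabla_l(A \diamond \xi) = (\nabla_l A) \diamond \xi + A \diamond \nabla_l \xi,
\]
which follows immediately from the pointwise algebraic definition \eqref{eq:diamond_coordinates} and the metric compatibility of $\nabla$.

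First, I would write
\[
\frac{\partial}{\partial t}\nabla_l \xi
 = \nabla_l\Big(\frac{\partial}{\partial t}\xi\Big) + \Big[\frac{\partial}{\partial t},\nabla_l\Big]\xi
 = \nabla_l(A \diamond \xi) + \Big[\frac{\partial}{\partial t},\nabla_l\Big]\xi.
\]
The commutator term is tensorial in the variation of the Christoffel symbols. Writing $\xi$ in local coordinates as in \eqref{eq:diamond_coordinates} and differentiating the coordinate formula for $\nabla_l \xi$ in $t$, a direct bookkeeping of indices shows
\[
\Big[\frac{\partial}{\partial t},\nabla_l\Big]\xi^{I}_{J}
 = \sum_{r}\Big(\frac{\partial \Gamma^{i_r}_{lm}}{\partial t}\Big)\xi^{I_m(i_r)}_{J}
 - \sum_{s}\Big(\frac{\partial \Gamma^{m}_{lj_s}}{\partial t}\Big)\xi^{I}_{J_m(j_s)}
 = -\Big(\frac{\partial \Gamma_l}{\partial t}\diamond \xi\Big)^{I}_{J},
\]
where $(\partial_t\Gamma_l)\indices{^i_j}:=\partial_t\Gamma^{i}_{lj}$ is viewed as an endomorphism of $TM$. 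Thus
\[
\frac{\partial}{\partial t}\nabla_l \xi
 = A \diamond \nabla_l \xi + (\nabla_l A)\diamond \xi - \frac{\partial \Gamma_l}{\partial t}\diamond \xi.
\]

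The remaining task is purely algebraic: identify $(\nabla_l A - \partial_t\Gamma_l)$ with $(\nabla_l C - \Lambda \nabla S_l)$. Decomposing $A=S+C$ and using Lemma~\ref{lem:volume_Christoffel_variation} to get $\partial_t\Gamma^{k}_{lj} = g^{kp}(\nabla_l S_{jp}+\nabla_j S_{lp}-\nabla_p S_{lj})$, one lowers the upper index with $g$ and uses the symmetry of $S$ to obtain
\[
(\partial_t\Gamma_l)_{ij} = \nabla_l S_{ij} + \nabla_j S_{il} - \nabla_i S_{jl}.
\]
Therefore $(\nabla_l S - \partial_t\Gamma_l)_{ij} = \nabla_i S_{jl} - \nabla_j S_{il} = -(\Lambda\nabla S_l)_{ij}$, using precisely the definition of $\Lambda\nabla S_l$ given in the statement. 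Adding back $\nabla_l C$ yields
\[
\nabla_l A - \frac{\partial \Gamma_l}{\partial t} = \nabla_l C - \Lambda\nabla S_l,
\]
which plugged into the displayed formula above gives the claimed evolution equation.

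The only subtlety worth double-checking is the sign convention in passing between the coordinate variation $\partial_t\Gamma^i_{lj}$ and the diamond action on a mixed $(p,q)$-tensor; the signs in \eqref{eq:diamond_coordinates} are arranged so that the upper-index contributions come with a minus and the lower-index ones with a plus, which is exactly the pattern produced by $[\partial_t,\nabla_l]\xi$, giving the overall $-(\partial_t\Gamma_l)\diamond\xi$. Once this sign is correctly fixed, everything else is a matter of symmetrising and antisymmetrising the two Christoffel-like terms against the symmetry of $S$.
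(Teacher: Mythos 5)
Your argument is correct and follows essentially the same route as the paper: both computations hinge on the variation of the Christoffel symbols from Lemma~\ref{lem:volume_Christoffel_variation} together with the Leibniz rule $\nabla_l(A\diamond\xi)=(\nabla_lA)\diamond\xi+A\diamond\nabla_l\xi$, with the paper running the whole thing in a single coordinate expansion where you first isolate the commutator $[\partial_t,\nabla_l]\xi=-(\partial_t\Gamma_l)\diamond\xi$ and then do the algebra at the endomorphism level. One bookkeeping point to tidy up: with the paper's lowering convention $A_{ij}=g_{lj}A^l_i$ the correct formula is $(\partial_t\Gamma_l)_{ij}=\nabla_lS_{ij}+\nabla_iS_{jl}-\nabla_jS_{il}$, giving $(\nabla_lS-\partial_t\Gamma_l)_{ij}=\nabla_jS_{il}-\nabla_iS_{jl}=-(\Lambda\nabla S_l)_{ij}$; your intermediate expression is the transpose of this, and your matching against the stated definition of $(\Lambda\nabla S_l)_{ij}$ has the sign flipped, but since $\Lambda\nabla S_l$ is antisymmetric these two slips cancel, so the endomorphism identity $\nabla_lA-\partial_t\Gamma_l=\nabla_lC-\Lambda\nabla S_l$ you arrive at is correct.
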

\begin{proof}
Using the evolution of Christoffel symbols from Lemma \ref{lem:volume_Christoffel_variation}, we compute:
\begin{align*}
    \frac{\pt}{\pt t}(\nabla_l\xi)^I_J=& \nabla_l\left(\frac{\pt}{\pt t}(\xi_t)^I_J\right)-\sum_{j\in J}\left(\frac{\pt}{\pt t}\Gamma_{lj}^n\right)\xi^I_{J_n(j)}+\sum_{i\in I}\left(\frac{\pt}{\pt t}\Gamma_{lm}^{i}\right)\xi^{I_m(i)}_J \\
    =&\nabla_l\left(-\sum_{i\in I} A^i_m\xi^{I_m(i)}_J+\sum_{j\in J} A^n_j\xi^I_{J_n(j)}\right)-\sum_{j\in J} g^{nk}(\nabla_l S_{jk}+\nabla_j S_{kl}-\nabla_k S_{lj})\xi^I_{J_n(j)}\\
    &+\sum_{i\in I}g^{ik}(\nabla_l S_{mk}+\nabla_m S_{kl}-\nabla_k S_{lm})\xi^{I_m(i)}_J\\
    =&-\sum_{i\in I}A^i_m\nabla_l\xi^{I_m(i_a)}_J+\big(\nabla_l C^i_m+g^{ik}(\nabla_k S_{ml}-\nabla_m S_{kl})\big)\xi^{I_m(i)}_J\\
    &+\sum_{j\in J} A^n_j\nabla_l\xi^I_{J_n(j)}+\big(\nabla_lC^n_j+g^{nk}(\nabla_k S_{jl}-\nabla_j S_{kl})\big)\xi^I_{J_n(j)}\\
    =&(A\diamond\nabla_l\xi)^I_J+(\nabla_l C\diamond\xi)^I_J-\sum_{i\in I}g^{ik}(\Lambda\nabla S_l)_{km}\xi^{I_m(i)}_J+\sum_{i\in J}g^{nk}(\Lambda\nabla S_l)_{kj}\xi^I_{J_n(j)}
\end{align*}
where ${(\Lambda\nabla S_l)\indices{_j^i}}=(\Lambda\nabla S_l)_{jk}g^{ki}=(\nabla_j S_{kl}-\nabla_k S\indices{_j_l})g^{ki}$. 
\end{proof}

Combining the above with Lemmas \ref{lem: properties_diamond} and \ref{lem: ker_diamond} we conclude the proof of Proposition \ref{prop: intro - evol of torsion}:

\begin{corollary}[Evolution of the torsion]
\label{cor: evo_torsion}
    Under the flow \eqref{eq: general_H_flow}, for each coordinate vector field $\partial_l$, the torsion $T_l:=T_{\partial_l}$ satisfies
\begin{equation}\label{eq: ddt T_diamond_xi}
%\Big(\frac{\partial}{\partial t}T_l +[T_l,A]-\pi_\fm(\Lambda\nabla B_l+\nabla_lC)\Big)\diamond \xi_t=0,
    \Big(\frac{\partial}{\partial t}T_l +[A,T_l]+\Lambda\nabla S_l-\nabla_l C\Big)\diamond \xi=0.
\end{equation} If $\pi_{\fm}:\Omega^2\to\Omega_{\fm}^2$ 
%If $\pi_\fm:\fso(TM)\to \\fm_Q$ 
denotes the orthogonal projection, we then have
\begin{align}\label{eq: m_part ddt_T}
\begin{split}
    %\pi_\fh\bigg(\frac{\partial}{\partial t}T_l\bigg)&= \pi_\fh([A,T_l])+\pi_\fh(\Lambda\nabla B_l+\nabla_lC),\\
    \pi_\fm\bigg(\frac{\partial}{\partial t}T_l\bigg)&= \pi_\fm([T_l,C])+\pi_\fm(\nabla_lC-\Lambda\nabla S_l).
\end{split}    
\end{align}
In particular, 
\begin{equation}\label{eq: ddt_norm_T}
    \frac{\partial}{\partial t}|T|^2=2\langle \nabla C-\Lambda\nabla S,T\rangle-2\langle T_m,T_n\rangle S^{mn}%-4\langle T_m,T_n S\rangle g^{mn}.
\end{equation}
\end{corollary}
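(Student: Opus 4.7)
The plan proceeds in three stages, one for each of the three identities.

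For \eqref{eq: ddt T_diamond_xi}, I would differentiate in $t$ the identity $\nabla_l\xi = T_l\diamond\xi$ established in Lemma \ref{lem: torsion_equiv_nablaxi}. The left-hand side evolves as in Proposition \ref{prop: evo_nabla_xi}, producing $A\diamond\nabla_l\xi + (\nabla_lC - \Lambda\nabla S_l)\diamond\xi$. The right-hand side, by the Leibniz rule for $\diamond$, expands to $(\tfrac{\partial}{\partial t}T_l)\diamond\xi + T_l\diamond(A\diamond\xi)$, and the Jacobi-type identity of Lemma \ref{lem: properties_diamond}--\ref{item: diamond_Jacobi} rewrites the latter summand as $A\diamond(T_l\diamond\xi) + [A,T_l]\diamond\xi = A\diamond\nabla_l\xi + [A,T_l]\diamond\xi$. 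Equating both sides and cancelling the common term $A\diamond\nabla_l\xi$ leaves exactly \eqref{eq: ddt T_diamond_xi} after rearrangement.

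For \eqref{eq: m_part ddt_T}, I would project \eqref{eq: ddt T_diamond_xi} via $\pi_\fm$. The hypothesis $H=\mathrm{Stab}_{\mathrm{SO}(n)}(\xi_\circ)$ combined with Lemma \ref{lem: ker_diamond}--\ref{item: ker in Omega^2} gives $\ker(\cdot{}\diamond\xi)\cap\Omega^2 = \Omega_\fh^2$, which is annihilated by $\pi_\fm$. Splitting $A=S+C$, a short coordinate check shows that $[S,T_l]$ is $g(t)$-symmetric (since $S^{\ast_g}=S$ and $T_l^{\ast_g}=-T_l$), while $[C,T_l]$, $\nabla_l C$ and $\Lambda\nabla S_l$ are all $g(t)$-skew. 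Hence the $g(t)$-skew part of the bracketed expression in \eqref{eq: ddt T_diamond_xi} lies in $\Omega_\fh^2$; applying $\pi_\fm$ kills both the $[S,T_l]$ symmetric contribution and the $\fh$-piece, leaving exactly \eqref{eq: m_part ddt_T}.

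For \eqref{eq: ddt_norm_T}, I would differentiate $|T|^2 = g^{la}g^{ip}g^{jq}T_{l;ij}T_{a;pq}$ term by term, using $\partial_t g^{ab} = -2S^{ab}$ from Lemma \ref{lem:volume_Christoffel_variation}. The outermost $g^{-1}$ factor contributes the expected $-2\langle T_m,T_n\rangle S^{mn}$, while the inner two $g^{-1}$ factors (those raising the skew endomorphism indices) combine to give $-4\langle T_m,T_n S\rangle g^{mn}$, after identifying skew-skew traces with the endomorphism inner product via $\langle A,B\rangle = -\tr(AB)$. The torsion derivative produces $2g^{la}\langle\partial_t T_l, T_a\rangle$; since $T_a\in\Omega_\fm^2$, only the $\pi_\fm$-projection of $\partial_t T_l$ contributes, so substituting \eqref{eq: m_part ddt_T} reduces the expression to $2g^{la}\langle[T_l,C]+\nabla_l C-\Lambda\nabla S_l,\,T_a\rangle$. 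The commutator contribution vanishes: one computes
\begin{equation*}
    g^{la}\langle[T_l,C],T_a\rangle = -g^{la}\tr([T_l,C]T_a) = g^{la}\tr([T_l,T_a]C) = 0,
\end{equation*}
since $[T_l,T_a]$ is antisymmetric in $l,a$ while $g^{la}$ is symmetric. The surviving piece is $2\langle\nabla C-\Lambda\nabla S,T\rangle$, completing \eqref{eq: ddt_norm_T}.

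The main technical subtlety will be the bookkeeping in the third step: $\partial_t T_l$ is not a priori $g(t)$-skew (because the skew-symmetry condition on $T_l$ itself moves with $g(t)$), so $\pi_\fm(\partial_t T_l)$ must be interpreted as the projection of its skew part, and one must verify that the complementary symmetric piece drops out of every pairing against $T_a\in\Omega_\fm^2$; this is what legitimises using \eqref{eq: m_part ddt_T} inside the trace computations.
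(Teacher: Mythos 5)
Your approach mirrors the paper's on all three identities: differentiate $\nabla_l\xi = T_l\diamond\xi$ and use the Jacobi-type identity for step one; project via $\pi_\fm$ for step two; differentiate $|T|^2$ and kill the commutator trace for step three. Step one is exactly right, including the sign bookkeeping. Step three is also correct; your computation $g^{la}\langle[T_l,C],T_a\rangle = g^{la}\tr([T_l,T_a]C)=0$ is equivalent to the paper's $\tr(T_mCT_n - CT_mT_n)g^{mn}=0$.

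However, step two contains a genuine gap. You split the bracketed expression $E := \partial_t T_l + [A,T_l] + \Lambda\nabla S_l - \nabla_l C$ into the symmetric piece $[S,T_l]$ and the skew pieces $[C,T_l]$, $\Lambda\nabla S_l$, $-\nabla_l C$, and then assert ``hence the $g(t)$-skew part of the bracketed expression lies in $\Omega_\fh^2$''. That inference is not automatic: $E\diamond\xi=0$ does not by itself imply $E_{\mathrm{skew}}\diamond\xi=0$, since $\Sigma^2$ is generically \emph{not} in $\ker(\cdot\diamond\xi)$, so $E_{\mathrm{skew}}\diamond\xi = -E_{\mathrm{sym}}\diamond\xi$ could in principle be nonzero. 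And one cannot invoke a clean decomposition $\ker(\cdot\diamond\xi) = (\ker\cap\Sigma^2)\oplus(\ker\cap\Omega^2)$ from the lemmas alone, because $\Sigma^2$ and $\Omega^2$ may share isomorphic $H$-irreducibles. The missing observation is that the symmetric part of $E$ \emph{vanishes identically}: since $T_l$ is $g(t)$-skew for all $t$, differentiating $g(T_lX,Y)+g(X,T_lY)=0$ and using $\partial_t g = 2S$ gives $(\partial_t T_l)_{\mathrm{sym}} = -[S,T_l]$, which cancels the $[S,T_l]$ coming from $[A,T_l]$. Only then is $E$ purely skew, so $E\in\ker(\cdot\diamond\xi)\cap\Omega^2 = \Omega_\fh^2$ by Lemma \ref{lem: ker_diamond}--\ref{item: ker in Omega^2}, and applying $\pi_\fm$ gives \eqref{eq: m_part ddt_T}. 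Your closing remark about ``the complementary symmetric piece'' is precisely the right concern, but you place it in the context of step three, where the $(0,2)$-tensor $\partial_t T_{l;ab}$ is automatically skew in $a,b$ and the issue never actually arises; it is in step two, for the $(1,1)$-tensor $\partial_t T_l$, that the cancellation must be established.
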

\begin{proof}
On one hand, taking $\partial/\partial t$ of $\nabla_l\xi = T_l\diamond\xi$, as in \eqref{eq: nabla of xi-GENERAL}, and using the general flow equation \eqref{eq: general_H_flow}, 
\begin{equation}
\label{eq: evo_torsion_1}
    \frac{\partial}{\partial t}\nabla_l\xi =\Big(\frac{\partial}{\partial t}T_l\Big)\diamond\xi+T_l\diamond\Big(\frac{\partial}{\partial t}\xi\Big)
    =\Big(\frac{\partial}{\partial t}T_l\Big)\diamond\xi+T_l\diamond(A\diamond\xi).
\end{equation} 
    On the other hand, using equation \eqref{eq: variation_nabla_xi} of Proposition \ref{prop: evo_nabla_xi} and  \eqref{eq: nabla of xi-GENERAL}, we have
\begin{equation}
\label{eq: evo_torsion_2}
    \frac{\pt}{\pt t}\nabla_l\xi
    =A\diamond (T_l\diamond\xi)+(\nabla_lC-\Lambda \nabla S_l)\diamond\xi.
\end{equation} 
    Thus, using Lemma \ref{lem: properties_diamond}--\ref{item: diamond_Jacobi} to get $A\diamond(T_l\diamond\xi) - T_l\diamond(A\diamond\xi) = -[A,T_l]\diamond\xi$, and combining equations \eqref{eq: evo_torsion_1} and \eqref{eq: evo_torsion_2} we get the desired evolution equation \eqref{eq: ddt T_diamond_xi}. We then obtain \eqref{eq: m_part ddt_T} from \eqref{eq: ddt T_diamond_xi}, together with $[T_l,S]\in \Sigma^2$ and the fact that $\Omega^2_\fh= \ker(\cdot{}\diamond\xi)|_{\Omega^2}$ [Lemma \ref{lem: ker_diamond}]. 

    Finally, using equation \eqref{eq: m_part ddt_T} and the evolution equations of Lemma \ref{lem:volume_Christoffel_variation}, we get \eqref{eq: ddt_norm_T} by a simple computation:
\begin{align*}
    \frac{\partial}{\partial t}|T|^2
    &= 2 \frac{\partial}{\partial t}T_{m;ab}T_{n;ij}g^{mn}g^{ai}g^{bj}-2T_{m;ab}T_{n;ij}S^{mn}g^{ia}g^{bj}-4T_{m;ab}T_{n;ij}g^{mn}g^{ia}S^{bj}\\
    &= 2\bigg(\frac{\partial}{\partial t}T_{m;ab}-2T_{m;a}^lS_{lb}\bigg)T_{n;ij}g^{mn}g^{ai}g^{bj}-2\langle T_m,T_n\rangle S^{mn}\\
    &= 2\pi_{\fm}\bigg(\frac{\partial}{\partial t}T_m\bigg)_a^lg_{lb}T_{n;ij}g^{mn}g^{ai}g^{bj}-2\langle T_m,T_n\rangle S^{mn}\\
    %&= 2\pi_\fm\bigg(\frac{\partial}{\partial t}T_{m;ab}\bigg)T_{n;ij}g^{mn}g^{ai}g^{bj}-2\langle T_m,T_n\rangle S^{mn}-4\langle T_m,T_n S\rangle g^{mn}\\
    &= 2\pi_\fm\big(\nabla_mC_{ab}-(\Lambda\nabla S_m)_{ab}\big)T_{n;ij}g^{mn}g^{ai}g^{bj}-2\langle T_m,T_n\rangle S^{mn}\\%-4\langle T_m,T_n S\rangle g^{mn}\\
    &= 2\langle \pi_\fm\big(\nabla_m C-\Lambda\nabla S_m\big),T_{n}\rangle g^{mn}-2\langle T_m,T_n\rangle S^{mn}%-4\langle T_m,T_n S\rangle g^{mn},
\end{align*}
    where we have used that 
\begin{align*}
    \langle \pi_\fm([T_m,C]),T_n\rangle g^{mn}
    &=\langle [T_m,C],T_n\rangle g^{mn}
    =-\tr([T_m,C]T_n)g^{mn}
    =-\tr(T_m C T_n-C T_m T_n)g^{mn}\\
    &=0 .\qedhere
\end{align*}    
\end{proof}

\begin{remark}
    Similarly, the %$\Omega^2_
    $\fm$-part of the evolution  {of} $T_{l;ik}=\frac{1}{2}\left({T_{l;}}_i^jg_{jk}-{T_{l;}}_k^jg_{ji}\right)$ can be obtained from Lemma \ref{lem:volume_Christoffel_variation} and \eqref{eq: m_part ddt_T}:
    \begin{equation}
    \label{eq: Omega^2_m-part ddt_T}
         \pi_{\fm}\left(\frac{\partial}{\partial t}T_l\right)_{ik}=\pi_{\fm}(AT_l+T_lA^t)_{ik}+\pi_{\fm}(\nabla_lC-\Lambda\nabla S_l)_{ik},
         %\pi_{\Omega^2_\fm}\left(\frac{\partial}{\partial t}T_l\right)_{ik}=\pi_{\Omega^2_\fm}(AT_l+T_lA^t)_{ik}+\pi_{\Omega^2_\fm}(\nabla_lC-\Lambda\nabla S_l)_{ik},
    \end{equation}
    which agrees with \cite[(3.3)]{karigiannis-spin7} in the context of $\S7$-structures.
\end{remark}

\begin{example}\label{ex: evolution of nabla J}
    Using the torsion of the almost complex structure \eqref{eq: torsion_almost_complex} and applying \eqref{eq: variation_nabla_xi}, we obtain
    \begin{equation*}
         \frac{\partial}{\partial t}\nabla_l J=[A,[T_l,J]]+[\nabla_l C-\Lambda\nabla S_l,J].
    \end{equation*}
    Thus the evolution of the torsion $T_l$ is
    \begin{align*}
         \frac{\partial}{\partial t} T_l &=\frac{1}{2}[A,J]\nabla_l J-\frac{1}{2}J[A,[T_l,J]]-\frac{1}{2}J[\nabla_lC-\Lambda\nabla S_l,J]\\
         &= (T_lA)-(AT_l) {-}\pi_{\fu(m)^{\perp}}(\nabla_lC-\Lambda\nabla S_l)\\
         &= [T_l,A] {-}\pi_{\fu(m)^{\perp}}(\nabla_lC-\Lambda\nabla S_l).
    \end{align*}
\end{example}

\begin{example}[\cite{Karigiannis2007}*{Lemma 3.7}]
    When $H=\rm G_2$, the geometric structure is $\xi=\varphi\in \Omega_+^3(M)$ and equation \eqref{eq: variation_nabla_xi} becomes 
    \begin{equation*}
        \frac{\partial}{\partial t}\nabla_l\varphi=S\diamond\nabla_l\varphi+X\lrcorner\nabla_l\psi-(\Lambda\nabla S_l)\diamond\varphi+\nabla_l X\lrcorner\psi.
    \end{equation*}
\end{example}    

 {
\begin{remark}[The $\fh$-component of the evolution of $T$ under isometric variations]\label{rm: h-part ddt of T}
    Corollary \ref{cor: evo_torsion} only provides the $\fm$-component of the evolution of the torsion $T$. The $\fh$-component depends on the group $H$. For instance, for isometric variations (i.e. with $A=C$)  the $\fh$-part of the evolution of $T$ for the groups $H=\U(n), \S7$ and  $\mathrm{Sp}(2)\mathrm{Sp}(1)$ is (see Example \ref{ex: evolution of nabla J}, \cite[(3.3)]{karigiannis-spin7} and \cite{udhav2022quaternionic}*{(4.3)}, respectively)
    \[
    \pi_{\fh}\left(\frac{\partial}{\partial t} T_l\right)=\pi_{\fh}([T_l,C]).
    \]
    Finally, for $H=\rm G_2$, notice that the evolution of $\cT$ (see \cite{dgk-isometric}*{(2.13)}) gives
    $$
    \frac{\partial}{\partial t}T_{p,mn}=-\frac{1}{3}X_m\cT_{pn}+\frac{1}{3}X_n\cT_{pm}-\frac{1}{3}\nabla_pX_q\varphi_{qmn},
    $$
    where $C=-\frac{1}{3}X\lrcorner\varphi$. Thus, using the projection $\pi_{\fg_2}(\beta)_{mn}=\frac{2}{3}\beta_{mn}+\frac{1}{6}\beta_{ij}\psi_{ijmn}$ (cf. \cite{Karigiannis2007}*{(2.10)}), one can show that $\pi_{\fg_2}\left(\frac{\partial}{\partial t} T_l\right)=\pi_{\fg_2}([T_l,C])$. Alternatively, $\pi_{\fg_2}\left(\frac{\partial}{\partial t} T_l\right)$ can be obtained from \cite{chemtov2022}*{(4.6)}, but beware that our sign convention for the bracket $[T_l,C]_{ij}=[T_l,C]_i^kg_{kj}$ differs from \cite{chemtov2022}*{(4.1)}.
\end{remark}}

Finally, following the proof of Corollary \ref{cor: evo_torsion}, and using Lemma \eqref{lem:volume_Christoffel_variation}, we obtain the variation of $|\nabla T|^2$:

\begin{corollary}
    Under the flow \eqref{eq: general_H_flow},  we have:
    \begin{align}\label{eq: evolution_norm_nablaT}
    \frac{1}{2}\frac{\partial}{\partial t}|\nabla T|^2
    =&\langle \nabla\dot{T}-(\nabla S)\diamond T-(\Lambda\nabla S)\diamond T-S\diamond \nabla T, \nabla T\rangle,
\end{align}
    where $\dot{T}=\frac{\partial}{\partial t}T$ and $\langle \cdot, \cdot\rangle$ is the induced metric on $\cT^{2,0}(TM)\otimes\Omega^2_\fm$.
\end{corollary}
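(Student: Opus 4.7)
The plan is to differentiate the contracted expression
\[
|\nabla T|^2 = g^{ab}g^{lm}g^{ip}g^{jq}(\nabla_a T_{l;ij})(\nabla_b T_{m;pq})
\]
by the product rule, producing two classes of contributions: (i) the time derivatives of the four inverse metrics, and (ii) the time derivative of $\nabla T$ itself (appearing twice by symmetry), and then identify each class with one of the four terms on the right-hand side.

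For (ii), I would first differentiate the coordinate expression
\[
\nabla_a T_{l;ij} = \partial_a T_{l;ij} - \Gamma_{al}^k T_{k;ij} - \Gamma_{ai}^k T_{l;kj} - \Gamma_{aj}^k T_{l;ik}
\]
and use the evolution $\dot{\Gamma}_{ab}^k = g^{kp}(\nabla_a S_{bp} + \nabla_b S_{ap} - \nabla_p S_{ab})$ of Lemma~\ref{lem:volume_Christoffel_variation}. The algebraic heart of the proof is the decomposition
\[
\dot{\Gamma}_{al}^k = (\nabla_a S)^k{}_l + (\Lambda\nabla S_a)^k{}_l,
\]
with $(\nabla_a S)^k{}_l := g^{kp}\nabla_a S_{lp}$ and $(\Lambda\nabla S_a)^k{}_l := g^{kp}(\nabla_l S_{ap} - \nabla_p S_{al})$, exactly as defined in Proposition~\ref{prop: evo_nabla_xi}. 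Substituting and recognising the coordinate formula \eqref{eq:diamond_coordinates} for the diamond operator acting on the $(0,3)$-tensor $T_{l;ij}$ (three covariant slots $l,i,j$), one obtains
\[
\frac{\partial}{\partial t}(\nabla_a T_{l;ij}) = \nabla_a \dot T_{l;ij} - \bigl((\nabla_a S)\diamond T\bigr)_{l;ij} - \bigl((\Lambda\nabla S_a)\diamond T\bigr)_{l;ij}.
\]

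For (i), the evolution $\partial_t g^{ij} = -2S^{ij}$ from Lemma~\ref{lem:volume_Christoffel_variation} gives one $-2S^{\cdot\cdot}$ contribution for each of the four inverse metrics. Reading again \eqref{eq:diamond_coordinates} for the $(0,4)$-tensor $\nabla T$ with covariant slots $a,l,i,j$, the sum of these four contributions is precisely $-2\langle S\diamond \nabla T,\nabla T\rangle$. Combining (i) and (ii) and dividing by $2$ yields the stated formula, where the inner product is the one induced by $g$ on the appropriate tensor bundle.

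I do not anticipate any genuine obstacle: the computation is an exercise in careful bookkeeping, and no subtle cancellation or Bianchi-type input is needed. The single conceptual point is that \emph{both} the Christoffel contribution $\dot\Gamma\cdot T$ and the metric-inverse contribution naturally reorganise as diamond actions, thanks to \eqref{eq:diamond_coordinates}, so that the answer displays the symmetric part $S$ of $A$ in the three distinct but natural combinations $S\diamond \nabla T$, $(\nabla S)\diamond T$, and $(\Lambda\nabla S)\diamond T$. The skew-symmetric part $C$ does not appear because the evolution equations of $g$ and $\Gamma$ depend only on $S$.
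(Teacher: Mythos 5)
Your proposal is correct and is essentially the proof the paper has in mind: the paper only says the corollary follows ``following the proof of Corollary~\ref{cor: evo_torsion}, and using Lemma~\ref{lem:volume_Christoffel_variation}'', and your computation --- splitting $\dot\Gamma_{al}^k$ as $(\nabla_a S)^k{}_l+(\Lambda\nabla S_a)^k{}_l$, recognising the resulting Christoffel corrections as $\bigl((\nabla_a S)+(\Lambda\nabla S_a)\bigr)\diamond T$ via \eqref{eq:diamond_coordinates}, and packaging the four $\partial_t g^{-1}=-2S$ contributions as $-2\langle S\diamond\nabla T,\nabla T\rangle$ --- is exactly that argument. The only interpretive choice you make (treating $T_{l;ij}$ and $\nabla_a T_{l;ij}$ as fully covariant tensors so that all slots carry both a Christoffel correction and an inverse metric) is the one consistent with the paper's own computation of $\partial_t|T|^2$ in the proof of Corollary~\ref{cor: evo_torsion}, so no gap remains.
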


 {
\begin{proof}
Since, summing on repeated indices,
\begin{align*}
    |\nabla T|^2 &= (\nabla_{l}T)_{p;ab} (\nabla_{m}T)_{q;ij} g^{ai} g^{bj} g^{pq} g^{lm} ,
\end{align*}
we have that
\begin{align} \label{neweq}
 \frac{1}{2}  \frac{\partial}{\partial t} |\nabla T|^2 
 &= \frac{\partial}{\partial t} \Big(\nabla_{l}T_{p;ab}\Big) \nabla_{m}T_{q;ij} g^{ai} g^{bj} g^{pq} g^{lm} - 2 \nabla_{l}T_{p;ab} \nabla_{m}T_{q;ij} S^{ai} g^{bj} g^{pq} g^{lm} \\
 & -  \nabla_{l}T_{p;ab} \nabla_{m}T_{q;ij} S^{lm} g^{bj} g^{pq} g^{ai} -  \nabla_{l}T_{p;ab} \nabla_{m}T_{q;ij} S^{pq} g^{bj} g^{lm} g^{ai} . \notag
\end{align}
But
$$
\nabla_{l}T_{p;ab}= \frac{\partial}{\partial x_l} (T_{p;ab})
    - \Gamma^{n}_{al} T_{p;nb} - \Gamma^{n}_{bl} T_{p;an} - \Gamma^{n}_{{p}l} T_{n;ab} ,$$
so
\begin{align}\label{eq: ddt_nabla_T}
 & \frac{\partial}{\partial t} \nabla_{l}T_{p;ab} =
\nabla_{l} \dot{T}_{p;ab}  - T_{p;nb} g^{nr} ( \nabla_a S_{lr} + \nabla_l S_{ar} -\nabla_r S_{la}) \\ \nonumber
& - T_{p;an} g^{nr} ( \nabla_b S_{lr} + \nabla_l S_{br} -\nabla_r S_{lb}) 
 - T_{n;ab} g^{nr} ( \nabla_p S_{lr} + \nabla_l S_{pr} -\nabla_r S_{lp}) .
\end{align}
Replacing \eqref{eq: ddt_nabla_T} into \eqref{neweq}, the first term yields $\langle \nabla\dot{T}, \nabla T\rangle$ while, within the {parentheses}, the middle terms give $-\langle (\nabla S)\diamond T, \nabla T\rangle$ and the rest is equal to $-\langle(\Lambda\nabla S)\diamond T, \nabla T\rangle$.\\
Finally, the last three terms of \eqref{neweq} sum up to $-\langle S\diamond \nabla T, \nabla T\rangle$.
\end{proof}
}

\begin{remark}
    We cannot, in general, formulate an explicit `Shi-type' estimate from \eqref{eq: evolution_norm_nablaT}, since the projection onto $\Omega^2_\fm$ in \eqref{eq: Omega^2_m-part ddt_T} depends specifically on the subgroup $H\subset\SO(n)$. 
    
    Nevertheless, we can say a little more in the particular case of \emph{oriented frame fields}, considered as $\{1\}$-structures, i.e. when $H=\{1\}\subset\SO(n)$  is the trivial subgroup. Then $\pi_\fm=\rI\rd$, and \eqref{eq: Omega^2_m-part ddt_T} becomes 
$$
 \frac{\partial}{\partial t}T_{b;cd}=A_{cm}T_{b;md}-A_{dm}T_{b;mc}+\nabla_bC_{cd}-\nabla_c S_{db}+\nabla_d S_{cb},
$$
 {
so, using $A= S+C$, a computation yields
\begin{align*}
   & \langle \nabla\dot{T} -(\nabla S)\diamond T-S\diamond \nabla T, \nabla T\rangle =
    \Big( \nabla_a C_{mr} T_{b;rn} - \nabla_a C_{nr} T_{b;rm} + C_{mr} \nabla_a T_{b;rn} -  C_{nr} \nabla_a T_{b;rm} + \nabla_a\nabla_b C_{mn} \\
    &- \nabla_a\nabla_m S_{nb} + \nabla_a\nabla_n S_{mb} - \nabla_a S_{br} T_{r;mn} - S_{ar} \nabla_r T_{b;mn} - S_{br} \nabla_a T_{r;mn} \Big) \nabla_i T_{j;pq} g^{ia}g^{bj}g^{mp}g^{nq} \\
    &= \langle \nabla_aC\diamond T_b + C\diamond \nabla_a T_b + \nabla_a\nabla_b C -\nabla_a\Lambda\nabla S_b - \nabla_a S_{br} T_{r} - S_{ar} \nabla_r T_{b} - S_{br} \nabla_a T_{r}, \nabla_i T_j \rangle g^{ai}g^{bj} .
\end{align*}
}

Hence \eqref{eq: evolution_norm_nablaT} has an explicit form amenable to such estimates:
\begin{align*}
    \frac{1}{2}\frac{\partial}{\partial t}|\nabla T|^2
    =&\langle\nabla_a\nabla_b C-\nabla_a\Lambda\nabla S_b+\nabla_aC\diamond T_b 
      {+ C\diamond \nabla_a T_b}
    -\nabla_aS_{bm}T_m\\
    &-S_{am}\nabla_mT_{b}-S_{bm}\nabla_aT_{m}-(\Lambda\nabla S_a\diamond T)_{b} ,\nabla_iT_{j}\rangle g^{ai}g^{bj} .
\end{align*} 
Flows of frame fields are a rather unexplored tool, with possibly relevant applications. For instance, if a flow converges to a smooth frame field with parallel torsion, then it endows the manifold with a Lie group structure.
\end{remark}

\subsection{General Bianchi-type identities and applications}
\label{sec: Bianchi}

We apply the evolution of torsion \eqref{eq: variation_nabla_xi} to derive a Ricci identity and a Bianchi-type identity for manifolds with an $H$-structure $\xi$. Our approach follows the exposition in \cite[\textsection 4]{Karigiannis2007}, using the diffeomorphism-invariance of the intrinsic torsion as a function of $\xi$.

Let $(M^n,g)$ be an oriented Riemannian manifold with compatible $H$-structure $\xi$. First, we note that given $Y\in \sX(M)$, in view of the splitting \eqref{eq:splitting TM* x TM-GENERAL} of $\End(TM)$, we may write 
\[
\nabla Y=\frac 12 \cL_Yg +\nabla_\fm (Y) +\nabla_\fh(Y),
\]
where $\nabla_k(Y):=\pi_k(\nabla Y)\in \Omega^2_k$. In particular  $\nabla_\fh Y\in\Omega^2_\fh \subset \ker(\cdot{}\diamond\, \xi)$, and one can prove the following:
\begin{lemma}[{\cite[Lemma 2.6]{Dwivedi-Loubeau-SaEarp2021}}]
    In terms of the torsion $T$, the Lie derivative of the $H$-structure $\xi$ with respect to a vector field $Y\in \sX(M)$ is given by
\begin{align}
\label{eq: Lie of xi-GENERAL}
    \cL_Y \xi
    &= (Y\lrcorner T +\frac 12 \cL_Yg +\nabla_\fm (Y))\diamond \xi.
\end{align}    
\end{lemma}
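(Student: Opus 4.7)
The plan is to reduce the computation of $\cL_Y\xi$ to the covariant derivative $\nabla_Y\xi$ (which we already understand via the torsion) by exploiting the well-known algebraic identity relating these two derivations on tensor fields, and then to match the correction term with the $\diamond$-action of the endomorphism $\nabla Y$.

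Concretely, I would first establish the tensorial identity
\begin{equation*}
    \cL_Y \xi = \nabla_Y \xi + (\nabla Y)\diamond \xi,
\end{equation*}
valid for any tensor field $\xi$ on $(M,g)$ and any $Y\in\sX(M)$, where $\nabla Y \in \Gamma(\End(TM))$ denotes the endomorphism $X\mapsto \nabla_X Y$. This comes from the torsion-freeness of the Levi--Civita connection: for a $(0,q)$-tensor one has $[Y,X_i]=\nabla_Y X_i-\nabla_{X_i}Y$, so that
\begin{equation*}
    (\cL_Y\xi - \nabla_Y\xi)(X_1,\dots,X_q)=\sum_i \xi(X_1,\dots,(\nabla Y)X_i,\dots,X_q),
\end{equation*}
and the right-hand side is exactly $((\nabla Y)\diamond \xi)(X_1,\dots,X_q)$ by the coordinate expression \eqref{eq:diamond_coordinates} of $\diamond$. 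The contravariant slots are handled by the dual statement $(\cL_Y-\nabla_Y)\alpha=(\nabla Y)^*\alpha$ for $\alpha\in\Omega^1(M)$, which produces the appropriate minus sign matching the first sum in \eqref{eq: diamond_operator}; the general case follows by linearity over tensor products.

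Next, I would invoke the fundamental identity \eqref{eq: nabla of xi-GENERAL} to replace $\nabla_Y\xi = T_Y\diamond\xi=(Y\lrcorner T)\diamond\xi$, arriving at
\begin{equation*}
    \cL_Y\xi = \left(Y\lrcorner T + \nabla Y\right)\diamond\xi.
\end{equation*}
Finally, I would decompose $\nabla Y$ according to \eqref{eq:splitting TM* x TM-GENERAL}: its symmetric part, being the $g$-dual of $\tfrac12(\nabla_iY_j+\nabla_jY_i)$, is precisely $\tfrac12\cL_Yg$, and its skew-symmetric part splits as $\nabla_\fm(Y)+\nabla_\fh(Y)$. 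Since $\nabla_\fh(Y)\in\Omega^2_\fh\subset\ker(\cdot{}\diamond\xi)$ by Lemma \ref{lem: ker_diamond}, this summand drops out of the $\diamond$-action, yielding
\begin{equation*}
    \cL_Y\xi = \Bigl(Y\lrcorner T + \tfrac{1}{2}\cL_Y g + \nabla_\fm(Y)\Bigr)\diamond\xi,
\end{equation*}
as claimed. I do not anticipate any genuine obstacle here; the only mildly delicate step is bookkeeping the signs between covariant and contravariant slots in the identity $\cL_Y-\nabla_Y=(\nabla Y)\diamond(\cdot{})$, which must be consistent with the right-action convention underlying the definition of $\diamond$.
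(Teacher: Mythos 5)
Your proof is correct and is exactly the intended argument: establish $\cL_Y\xi=\nabla_Y\xi+(\nabla Y)\diamond\xi$ from torsion-freeness of $\nabla$, substitute $\nabla_Y\xi=T_Y\diamond\xi$ via \eqref{eq: nabla of xi-GENERAL}, and discard $\nabla_\fh(Y)$ via Lemma \ref{lem: ker_diamond}. The paper only cites this lemma from \cite{Dwivedi-Loubeau-SaEarp2021}, but it uses precisely your identity $\cL_Y=\nabla_Y+(\nabla Y)\diamond(\cdot)$ later (e.g.\ in the proof of Proposition \ref{prop: Bianchi_identity}), so your route is the one the authors have in mind.
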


\begin{proposition}[Ricci identity]
\label{prop: Bianchi_identity}
The diffeomorphism invariance of the tensor $\nabla \xi$ as a function of the geometric structure $\xi$ is equivalent to the Ricci identity: 
\begin{equation}
\label{eq: Ricci_identity}
    \nabla_a\nabla_l\xi-\nabla_l\nabla_a\xi=R_{la}\diamond \xi,
\end{equation}
where $R_{al}\in\Gamma(\mathfrak{so}(TM))$ is the endomorphism given by the Riemannian curvature tensor.
\end{proposition}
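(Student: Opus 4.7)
The plan is to exploit the naturality of the Levi--Civita connection under diffeomorphisms, as in Karigiannis~\cite{Karigiannis2007}*{\S4}, producing two independent expressions for the variation of $\nabla\xi$ along a diffeomorphism flow, whose equality will encode the Ricci identity. First I would pick an arbitrary vector field $Y\in\sX(M)$ with flow $\{\varphi_t\}$, set $\xi_t := \varphi_t^*\xi$, and observe that naturality of $\nabla$ gives $\nabla^{g_t}\xi_t = \varphi_t^*(\nabla\xi)$; differentiating at $t=0$ yields $\tfrac{\pt}{\pt t}|_{t=0}\nabla\xi_t = \mathcal{L}_Y(\nabla\xi)$, which is exactly the diffeomorphism-invariance of $\nabla\xi$.

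Both sides of this identity can then be computed independently. On one hand, by \eqref{eq: Lie of xi-GENERAL} we have $\partial_t\xi_t|_{t=0} = A\diamond\xi$ with $A = Y\lrcorner T + \tfrac12\mathcal{L}_Y g + \nabla_\fm Y$, so Proposition~\ref{prop: evo_nabla_xi} with $S = \tfrac12\mathcal{L}_Y g$ and $C = Y\lrcorner T + \nabla_\fm Y$ yields
\[
\tfrac{\pt}{\pt t}\big\vert_{t=0}\nabla_l\xi_t = A\diamond\nabla_l\xi + (\nabla_l C - \Lambda\nabla S_l)\diamond\xi.
\]
On the other hand, the elementary formula $\mathcal{L}_Y\zeta = \nabla_Y\zeta + (\nabla Y)\diamond\zeta$ (valid for any tensor $\zeta$, obtained by substituting $\partial$ with $\nabla$ in the coordinate Lie-derivative expression, the symmetric Christoffels cancelling) applied to $\zeta = \nabla\xi$, and separating the action on the extra covariant slot $l$ from its action on the original $\xi$-slots, produces
\[
\mathcal{L}_Y(\nabla_l\xi) = Y^a\nabla_a\nabla_l\xi + (\nabla_l Y^k)\nabla_k\xi + (\nabla Y)\diamond_\xi\nabla_l\xi.
\]

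Equating both expressions, the $(\nabla Y)\diamond_\xi\nabla_l\xi$ terms cancel. Using $\nabla_k\xi = T_k\diamond\xi$ from \eqref{eq: nabla of xi-GENERAL} together with the Leibniz identity $\nabla_l((Y\lrcorner T)\diamond\xi) = (\nabla_l(Y\lrcorner T))\diamond\xi + (Y\lrcorner T)\diamond\nabla_l\xi$, the torsion-dependent terms reorganise so as to produce the commutator $Y^a(\nabla_a\nabla_l - \nabla_l\nabla_a)\xi$ on the left, while the remainder on the right collapses to $[\nabla_l\nabla_\fm Y - \Lambda\nabla S_l]\diamond\xi$. A coordinate expansion of the latter yields a combination of second covariant derivatives of $Y_\flat$ which, via the standard Ricci identity $[\nabla_i,\nabla_j]Y_k = -R_{ijk}^m Y_m$ and the first Bianchi identity \eqref{eq: riem1stBid}---the latter used to collapse a cyclic three-term combination---matches $Y^a R_{la}$ as an endomorphism up to a correction in $\Omega^2_\fh$, which lies in $\ker(\cdot\diamond\xi)$ and hence drops out after $\diamond$-ing with $\xi$. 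Since $Y$ is arbitrary at each point, the pointwise Ricci identity follows. The chief technical obstacle will be meticulous bookkeeping: consistently distinguishing the partial $\diamond_\xi$-action on the original $\xi$-indices from the full $\diamond$-action including the extra slot $l$ (as in the proof of Proposition~\ref{prop: evo_nabla_xi}), carefully tracking and cancelling all torsion cross-terms, and finally reducing the surviving second-derivative combination of $Y_\flat$ to a single curvature endomorphism modulo $\Omega^2_\fh$.
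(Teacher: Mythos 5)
Your strategy is exactly the paper's: differentiate the naturality relation $\nabla^{g_t}\xi_t=\varphi_t^*(\nabla\xi)$, compute one side with Proposition~\ref{prop: evo_nabla_xi} applied to $\cL_Y\xi=A\diamond\xi$, the other with $\cL_Y=\nabla_Y+(\nabla Y)\diamond$, and reduce the surviving second derivatives of $Y_\flat$ via the commutator identity for $\nabla$ and the first Bianchi identity \eqref{eq: riem1stBid}. There is, however, a concrete error in how you dispose of the $\fh$-component of $\nabla Y$. You take $C=Y\lrcorner T+\nabla_\fm Y$, so that $A=S+C=\nabla Y+Y\lrcorner T-\nabla_\fh Y$; consequently the $(\nabla Y)\diamond_\xi\nabla_l\xi$ terms do \emph{not} fully cancel: after the $Y\lrcorner T$ contributions are absorbed by the Leibniz rule, a term $-(\nabla_\fh Y)\diamond\nabla_l\xi$ survives. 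Since $(\nabla_\fh Y)\diamond\xi\equiv 0$, differentiating gives $(\nabla_\fh Y)\diamond\nabla_l\xi=-(\nabla_l\nabla_\fh Y)\diamond\xi=-[\nabla_\fh Y,T_l]\diamond\xi$ by Lemma~\ref{lem: properties_diamond}--\ref{item: diamond_Jacobi}, and reductivity gives $[\Omega^2_\fh,\Omega^2_\fm]\subseteq\Omega^2_\fm$, so this correction lies in the \emph{complement} of $\ker(\cdot\diamond\xi)$ and is generically nonzero; it cannot be discarded as ``a correction in $\Omega^2_\fh$''. Relatedly, $\nabla_l\nabla_\fm Y=\nabla_l\big(\pi_\fm(\pi_\fso(\nabla Y))\big)$ is not a pure combination of second covariant derivatives of $Y_\flat$ --- it picks up derivatives of the projection $\pi_\fm$, i.e.\ torsion terms --- so the coordinate reduction you describe does not apply to it directly.

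The fix is what the paper does: represent the skew part of the deformation by the \emph{full} skew-symmetric part, $C=Y\lrcorner T+\pi_\fso(\nabla Y)$. This is legitimate because the added $\nabla_\fh Y\in\Omega^2_\fh\subset\ker(\cdot\diamond\xi)$ does not change $\cL_Y\xi=A\diamond\xi$, and the two applications of Proposition~\ref{prop: evo_nabla_xi} agree precisely by the identity $(\nabla_l B)\diamond\xi=-B\diamond\nabla_l\xi$ for $B\in\Omega^2_\fh$. With that choice $A=\nabla Y+Y\lrcorner T$, the $(\nabla Y)\diamond_\xi\nabla_l\xi$ terms cancel exactly, the remainder is $(\nabla_l\pi_\fso(\nabla Y)-\Lambda\nabla S_l)\diamond\xi$, and $\nabla_l\pi_\fso(\nabla Y)-\Lambda\nabla S_l$ is an honest combination of terms $\nabla\nabla Y_\flat$ which the commutator identity and \eqref{eq: riem1stBid} collapse to $Y^aR_{al}$ with no leftover. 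Equivalently, keep your choice of $C$ but retain the surviving $[\nabla_\fh Y,T_l]\diamond\xi$ term, which recombines $\nabla_l\nabla_\fm Y$ into $\nabla_l\pi_\fso(\nabla Y)$. The rest of your outline is sound and coincides with the paper's argument.
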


\begin{proof}
  From the diffeomorphism invariance of $\nabla_l\xi$, we have
\begin{equation}\label{eq: diffeo_invariance_nabla_xi}
    \cL_Y\nabla_l\xi=\nabla_Y\nabla_l\xi+\nabla Y\diamond \nabla_l \xi=(D\nabla_l \xi)(\cL_Y\xi),
\end{equation}
where $D\nabla_l\xi$ is the linearisation of the tensor $\nabla_l\xi$ as a function of $\xi$, which means $(D\nabla_l \xi)(\cL_Y\xi)=\frac{\partial}{\partial t}\nabla_l\xi$ under the flow $\frac{\partial\xi}{\partial t}=\cL_Y\xi$. Using \eqref{eq: Lie of xi-GENERAL} and the fact that $(\cL_Y g)_{ij} = \nabla_i Y_j + \nabla_j Y_i$, we see that $\cL_Y\xi = A\diamond\xi$, where $A=S+C$ is the endomorphism with symmetric $S$ and skew-symmetric $C$ parts given by
\begin{align*}
    S^i_j&=\frac{1}{2}(\nabla_jY_k+\nabla_kY_j)g^{ik},\\
    C^i_j&=(Y\lrcorner T)_{jk}g^{ik} +\frac{1}{2}(\nabla_jY_k-\nabla_kY_j)g^{ik}. 
\end{align*}
Applying the evolution equation \eqref{eq: variation_nabla_xi} for such $A$, we get
\begin{align*}
    (D\nabla_l\xi)(\cL_Y\xi) &= (\nabla Y)\diamond \nabla_l\xi+(Y\lrcorner T)\diamond\nabla_l\xi+\big(\nabla_l(Y\lrcorner T)\big)\diamond\xi+\big(\nabla_l\pi_{\fso}(\nabla Y)\big)\diamond\xi-(\Lambda\nabla S_l)\diamond \xi\\
    &= (\nabla Y)\diamond \nabla_l\xi+\nabla_l(Y\lrcorner T\diamond\xi)+\big(\nabla_l\pi_{\fso}(\nabla Y)-\Lambda\nabla S_l)\big)\diamond \xi.
\end{align*}
    Now, from \eqref{eq: diffeo_invariance_nabla_xi} we obtain
    \begin{align*}
        \nabla_Y\nabla_l\xi-\nabla_l\nabla_Y\xi=\big(\nabla_l\pi_{\fso}(\nabla Y)-\Lambda\nabla S_l)\diamond \xi,
    \end{align*}
    and we conclude using the first Bianchi identity \eqref{eq: riem1stBid}:
\begin{align*}
    \nabla_l(\pi_\fso(\nabla Y))^i_j-(\Lambda\nabla S_l)^i_j &= 
    \frac{1}{2}\big(  {(}\nabla_l\nabla_jY  {)}_k-  {(}\nabla_l\nabla_kY {)}_j-  {(}\nabla_j\nabla_kY {)}_l-  {(}\nabla_j\nabla_lY {)}_k+  {(}\nabla_k\nabla_jY {)}_l+  {(}\nabla_k\nabla_lY {)}_j \big) g^{ki}\\
    &= \frac{Y^a}{2}\left(R_{ljak}+R_{kjal}+R_{klaj}\right)g^{ki}=Y^aR_{alkj}g^{ki}.\qedhere
\end{align*}
\end{proof}

From the defining relation \eqref{eq: def_T} of the torsion $T_X=\nabla^H_X-\nabla_X$, the Riemannian curvature tensor $R\in\Omega^2(M,\mathfrak{so}(TM))$ can be expressed in terms of the curvature $R^H$ of $\nabla^H$ by
\begin{equation}
\label{eq: curvature_gauge_formula}
    R=R^H-d_{\nabla^H}T+\frac{1}{2}[T\wedge T],
\end{equation}
where $d_{\nabla^H}T(X,Y)=\nabla^H_XT_Y-\nabla^H_YT_X$ and $[T\wedge T](X,Y)=2[T_X,T_Y]$. Since $R^H\in \Omega^2\otimes \Omega^2_\fh $ and $\nabla^H_XT_Y\in \Omega^2_\fm$,  taking the $\fm$-projection of \eqref{eq: curvature_gauge_formula} one obtains the following Bianchi-type identity, cf. \cite[Lemma 3.9]{Gonzalez-Davila2009}:
\begin{align}
\label{eq: Cabrera_Bianchi_ident}
    \begin{split}
    \pi_{\fm}(R_{ij})
    =&-\nabla^H_iT_j+\nabla^H_jT_i+\pi_\fm([T_i,T_j])\\
    =&-\nabla_iT_j+\nabla_jT_i-2[T_i,T_j]+\pi_\fm([T_i,T_j]),
    \end{split}
\end{align}
Alternatively, combining the relation $\nabla_X\xi = T_X\diamond\xi$ with the Ricci identity \eqref{eq: Ricci_identity} in Proposition \ref{prop: Bianchi_identity}, we also derive \eqref{eq: Cabrera_Bianchi_ident}:

\begin{corollary}[Bianchi-type identity]
\label{cor: m-part_Bianchi_identity}
The torsion $T_l$ satisfies the following Bianchi-type identity
\begin{equation}
\label{eq: m-part_Bianchi_identity}
    (\nabla_a T_l-\nabla_lT_a-[T_l,T_a]-R_{la})\diamond \xi=0.
\end{equation}
Equivalently,
\[
  \nabla_a T_l-\nabla_lT_a=-2[T_a,T_l]+\pi_\fm([T_a,T_l])+\pi_\fm(R_{la}).
\]
\end{corollary}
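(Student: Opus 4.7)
The strategy is to apply the Ricci identity of Proposition~\ref{prop: Bianchi_identity} to $\xi$ itself, and then expand $\nabla_a \nabla_l \xi$ using the fundamental relation $\nabla_X \xi = T_X \diamond \xi$ of Lemma~\ref{lem: torsion_equiv_nablaxi} together with the Jacobi-type property of $\diamond$ from Lemma~\ref{lem: properties_diamond}--\ref{item: diamond_Jacobi}. Since $\diamond$ is a pointwise multilinear tensor operation, the Levi--Civita connection acts on it as a derivation, yielding the Leibniz rule $\nabla_a(T_l\diamond\xi) = (\nabla_a T_l)\diamond\xi + T_l\diamond(\nabla_a\xi) = (\nabla_a T_l)\diamond\xi + T_l\diamond(T_a\diamond\xi)$. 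Subtracting the analogous expression with $a$ and $l$ swapped, and applying Lemma~\ref{lem: properties_diamond}--\ref{item: diamond_Jacobi} to collect the second-order $\diamond$-terms, produces
\[
\nabla_a\nabla_l\xi - \nabla_l\nabla_a\xi = \big(\nabla_a T_l - \nabla_l T_a - [T_l,T_a]\big)\diamond\xi,
\]
which, equated with $R_{la}\diamond\xi$ from the Ricci identity, gives exactly the first display \eqref{eq: m-part_Bianchi_identity}.

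For the ``equivalently'' statement I would split into $\fh$- and $\fm$-components with respect to $\Omega^2 = \Omega^2_\fh \oplus \Omega^2_\fm$. The tensor inside the diamond in \eqref{eq: m-part_Bianchi_identity} lies in $\Omega^2$, so Lemma~\ref{lem: ker_diamond}--\ref{item: ker in Omega^2} forces $\nabla_a T_l - \nabla_l T_a - [T_l, T_a] - R_{la} \in \Omega^2_\fh$; projecting onto $\Omega^2_\fm$ yields the $\fm$-part
\[
\pi_\fm(\nabla_a T_l - \nabla_l T_a) = \pi_\fm([T_l, T_a]) + \pi_\fm(R_{la}).
\]
For the $\fh$-part, the $\diamond$-relation alone provides no information, and one must separately invoke that $\nabla^H = \nabla + T$ is an $H$-connection preserving $\fm_Q$: since $T_l \in \Gamma(\fm_Q)$ and $\nabla^H$ acts on endomorphism-valued tensors by $\nabla^H_a T_l = \nabla_a T_l + [T_a, T_l]$, the inclusion $\nabla^H_a T_l \in \Gamma(\fm_Q)$ yields $\pi_\fh(\nabla_a T_l) = -\pi_\fh([T_a, T_l])$, and antisymmetrising gives $\pi_\fh(\nabla_a T_l - \nabla_l T_a) = -2\pi_\fh([T_a, T_l])$. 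Recombining both projections and using the elementary rearrangement $-2\pi_\fh(X) - \pi_\fm(X) = -2X + \pi_\fm(X)$ with $X = [T_a, T_l]$ delivers the stated formula.

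The main conceptual point I expect to require care with is precisely this last step: the $\diamond$-identity only constrains the $\fm$-component of $\nabla_a T_l - \nabla_l T_a$, so the full ``equivalent'' formula genuinely requires the complementary $\fh$-input coming from the $H$-compatibility of $\nabla^H$, rather than from the $\diamond$-calculus alone. Everything else reduces to routine derivation-type manipulation of the algebraic properties of $\diamond$ already collected in Lemma~\ref{lem: properties_diamond}.
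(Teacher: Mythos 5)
Your proof is correct and follows essentially the same route as the paper: the first display is obtained by combining the Ricci identity (Proposition \ref{prop: Bianchi_identity}), the relation $\nabla_X\xi = T_X\diamond\xi$, and the Jacobi-type property of $\diamond$ from Lemma \ref{lem: properties_diamond}--\ref{item: diamond_Jacobi}, while the ``equivalent'' formula uses $\nabla^H_a T_l = \nabla_a T_l + [T_a,T_l]\in\Gamma(\fm_Q)$ together with $\ker(\cdot\diamond\xi)|_{\Omega^2}=\Omega^2_\fh$. Your organisation into an explicit $\fh$/$\fm$-split makes visible what the paper leaves implicit (that the $\diamond$-identity only constrains the $\fm$-part, and that the $\fh$-part comes from $\nabla^H$ preserving $\fm_Q$), but the underlying argument is the same.
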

\begin{proof}
As already mentioned above, equation \eqref{eq: m-part_Bianchi_identity} follows directly by combining equations \eqref{eq: nabla of xi-GENERAL} and \eqref{eq: Ricci_identity}, and applying Lemma \ref{lem: properties_diamond}-\ref{item: diamond_Jacobi}. As for the second assertion, {since $\ker(\cdot \diamond \xi)|_{\Omega^2}=\Omega_{\fh}^2$, equation \eqref{eq: m-part_Bianchi_identity} implies 
\begin{equation}\label{eq: m-part_skew_nabla_T}
    \pi_{\fm}(\nabla_a T_l-\nabla_l T_a)=-\pi_{\fm}([T_a,T_l])+\pi_\fm(R_{la}).
\end{equation}
Moreover, from \eqref{eq: def_T} we get $\nabla_a T_l-\nabla_lT_a=\nabla^H_a T_l-\nabla^H_lT_a-2[T_a,T_l]$ and taking its $\fh$-part, it follows 
\begin{equation}\label{eq: h-part_skew_nabla_T}
\pi_{\fh}(\nabla_a T_l-\nabla_lT_a)=-2\pi_{\fh}([T_a,T_l]),
\end{equation}
since $\nabla^H_a T_l\in\Omega^2_\fm$. Hence, combining \eqref{eq: m-part_skew_nabla_T} and \eqref{eq: h-part_skew_nabla_T} the result follows}. 
\end{proof}

\begin{example}[\cite{nagy2011}*{(2.6)}] 
\label{ex sun}
    When $H=\U(m)\subset\SO(2m)$, we can follow Example \ref{ex: nabla_J} to compute
\begin{align*}
    \nabla_a T_l=&-\frac{1}{2}\nabla_aJ\nabla_lJ-\frac{1}{2}J\nabla_a\nabla_lJ\\
    =&-2T_aT_l-\frac{1}{2}J\nabla_a\nabla_lJ.
\end{align*}
    Using the abstract Ricci identity \eqref{eq: Ricci_identity} of Proposition \ref{prop: Bianchi_identity} and Lemma \ref{lem: properties_diamond}-\ref{item: diamond_bracket}, we recover Nagy's formula:
\begin{align*}
    \nabla_aT_l-\nabla_lT_a=&-2T_aT_l+2T_lT_a-\frac{1}{2}J(R_{la}\diamond J)\\
    =&-2[T_a,T_l]+\frac{1}{2}J[R_{la},J],
\end{align*}
where $\pi_\fm(R_{la})=\frac{1}{2}J[R_{la},J]$ and $\pi_\fm([T_a,T_l])=0$ since $[\fm,\fm]\subset \fu(m)$.
\end{example}

\begin{example}[\cite{Karigiannis2007}*{Theorem 4.2}]
\label{ex: g2}
When $H=G_2\subset\SO(7)$, we have seen in Example \ref{ex: G2_Laplacian_of_phi} that the intrinsic torsion is identified with the endomorphism $\cT$ defined by $\nabla_a\varphi_{ijk}=\cT\indices{_a^b}\psi_{bijk}$; differentiating this relation, we have 
$$
  \nabla_l\nabla_a\varphi_{ijk}=\nabla_l\cT\indices{_a^b}\psi_{bijk}+\cT\indices{_a^b}(-\cT_{lb}\varphi_{ijk}+\cT_{li}\varphi_{bjk}-\cT_{lj}\varphi_{bik}+\cT_{lk}\varphi_{bij}).
$$
%and $\xi=\varphi\in \Omega_+^3$, the identity \eqref{eq: Ricci_identity} of Proposition \ref{prop: Bianchi_identity} is equivalent to
%Indeed, using the definition of torsion of $\varphi$ 
%\begin{equation*}
 %   \nabla_a\varphi_{ijk}=T\indices{_a^b}\psi_{bijk},
%\end{equation*}
%\begin{align*}
 %   \nabla_l\nabla_a\varphi_{ijk}-\nabla_a\nabla_l\varphi_{ijk}=&\quad \nabla_lT\indices{_a^b}\psi_{bijk}+T\indices{_a^b}\nabla_l\psi_{bijk}-\nabla_aT\indices{_l^b}\psi_{bijk}-T\indices{_l^b}\nabla_a\psi_{bijk}\\
  %  =&\quad \nabla_lT\indices{_a^b}\psi_{bijk}+T\indices{_a^b}(-T_{lb}\varphi_{ijk}+T_{li}\varphi_{bjk}-T_{lj}\varphi_{bik}+T_{lk}\varphi_{bij})\\
   % &-\nabla_aT\indices{_l^b}\psi_{bijk}-T\indices{_l^b}(-T_{ab}\varphi_{ijk}+T_{ai}\varphi_{bjk}-T_{aj}\varphi_{bik}+T_{ak}\varphi_{bij}).
%\end{align*}
Contracting with the dual $4$-form and using the identities $\varphi_{ijk}\psi\indices{_m^i^j^k}=0$, $\varphi_{ijk}\psi\indices{_a_b^j^k}=-4\varphi_{iab}$ and $\psi_{aijk}\psi\indices{_b^{ijk}}=24g_{ab}$ \cite[Lemmas A.13 \& A.14]{Karigiannis2007}, 
\begin{align*}
    \nabla_l\nabla_a\varphi_{ijk}\psi\indices{_m^{ijk}}
    &= 24\nabla_l\cT\indices{_a_m}-12\cT\indices{_a_b}\cT_{li}\varphi\indices{_m^i^b}.
\end{align*}
The left-hand side of the Ricci identity \eqref{eq: Ricci_identity} then becomes
$$
  (\nabla_l\nabla_a\varphi_{ijk}-\nabla_a\nabla_l\varphi_{ijk})\psi\indices{_m^i^j^k}=24(\nabla_l\cT_{am}-\nabla_a\cT_{lm}-\cT_{li}\cT_{ab}\varphi\indices{^i^b_m}).
$$
Finally, the right-hand side of \eqref{eq: Ricci_identity} is given by
\begin{align*}
    (R_{la}\diamond\varphi)_{ijk}\psi\indices{_m^{ijk}}=&\quad 
    (R\indices{_l_a_i^n}\varphi_{njk}+R\indices{_l_a_j^n}\varphi_{ink}+R\indices{_l_a_k^n}\varphi_{ijn})\psi\indices{_m^{ijk}}\\
    =&-4R\indices{_l_a_i^n}\varphi\indices{_n_m^i}-4R\indices{_l_a_j^n}\varphi\indices{_n_m^j}-4R\indices{_l_a_k^n}\varphi\indices{_n_m^k}\\
    =&\quad  {-} 12R\indices{_l_a_i^n}\varphi\indices{_n_m^i}.
\end{align*} 
Hence we recover Karigiannis' Ricci identity:
\begin{equation}
    \nabla_l\cT_{am}-\nabla_a\cT_{lm}=\frac{1}{2}R_{lain}\varphi\indices{^i^n_m}+\cT_{li}\cT_{an}\varphi\indices{^i^n_m}.
\end{equation}
\end{example}

An application of the Bianchi-type identity \eqref{eq: m-part_Bianchi_identity} is the following strong restriction on the Riemann curvature tensor of Riemannian metrics coming from torsion-free $H$-structures.
\begin{corollary}
\label{cor: strong_restrictions}
    If $\xi$ is a torsion-free $H$-structure inducing the Riemannian metric $g$, then its Riemann curvature tensor $R_{abcd}\in \Sigma^2(\Lambda^2)$ actually lies in the subspace $\Sigma^2(\Lambda_{\fh}^2)$.
\end{corollary}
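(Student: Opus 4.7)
The plan is to exploit the Bianchi-type identity from Corollary \ref{cor: m-part_Bianchi_identity} in the torsion-free case, combined with the characterisation of $\ker(\cdot{}\diamond\xi)\cap\Omega^2$ from Lemma \ref{lem: ker_diamond}--\ref{item: ker in Omega^2} and the standard pair symmetry of the Riemann tensor.

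First, apply Corollary \ref{cor: m-part_Bianchi_identity} with $T\equiv 0$: the Bianchi-type identity reduces to
\begin{equation*}
    R_{la}\diamond \xi = 0\qforq \text{all } l,a.
\end{equation*}
Since each curvature endomorphism $R_{la}\in\Gamma(\mathfrak{so}(TM))$ is skew-symmetric (hence $R_{la}\in\Omega^2$), and $H=\mathrm{Stab}_{\SO(n)}(\xi_{\circ})$ by hypothesis, Lemma \ref{lem: ker_diamond}--\ref{item: ker in Omega^2} yields $R_{la}\in\Omega_{\fh}^2$ pointwise, for every pair of indices $l,a$. Equivalently, $R_{lacd}B^{cd}=0$ for every $B\in\Lambda_{\fm}^2$.

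The second (and really the only nontrivial) step is to upgrade this to a statement about the full symmetric bilinear form $R\in\Sigma^2(\Lambda^2)$ defined by $R(\alpha,\beta):=R_{abcd}\alpha^{ab}\beta^{cd}$. The previous step says exactly that $R(e_l\wedge e_a,\beta)=0$ for any $\beta\in\Lambda_{\fm}^2$; by linearity in the first slot, $R(\alpha,\beta)=0$ whenever $\beta\in\Lambda_{\fm}^2$. Invoking now the algebraic symmetry $R_{abcd}=R_{cdab}$ (part of the curvature symmetries recalled in \eqref{eq: riem1stBid}), the form $R$ is symmetric, so we also get $R(\alpha,\beta)=0$ whenever $\alpha\in\Lambda_{\fm}^2$. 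Decomposing $\alpha=\alpha_{\fh}+\alpha_{\fm}$ and $\beta=\beta_{\fh}+\beta_{\fm}$ according to the $H$-splitting $\Lambda^2=\Lambda_{\fh}^2\oplus\Lambda_{\fm}^2$, we conclude $R(\alpha,\beta)=R(\alpha_{\fh},\beta_{\fh})$, i.e. $R\in\Sigma^2(\Lambda_{\fh}^2)$, as claimed.

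There is essentially no hard step: the Bianchi-type identity \eqref{eq: m-part_Bianchi_identity} does all the heavy lifting, and the pair symmetry of $R$ converts the one-sided constraint $R_{la}\in\Omega_{\fh}^2$ into the two-sided one. The only minor subtlety to be careful about is to invoke Lemma \ref{lem: ker_diamond}--\ref{item: ker in Omega^2} rather than part (i)/(ii), so as to use the intersection with $\Omega^2$ (which is where $R_{la}$ naturally lives).
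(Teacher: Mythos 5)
Your proof is correct and follows the same approach as the paper: use Corollary~\ref{cor: m-part_Bianchi_identity} with $T=0$ to get $R_{la}\diamond\xi=0$, apply Lemma~\ref{lem: ker_diamond} to place $R_{la}$ in $\Omega_{\fh}^2$, and finish with the pair symmetry $R_{abcd}=R_{cdab}$. One small imprecision: the symmetry $R_{abcd}=R_{cdab}$ is a standard consequence of the Riemannian curvature symmetries but is not literally what is displayed in \eqref{eq: riem1stBid}, which is the first Bianchi identity.
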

\begin{proof}
Using $T=0$ in \eqref{eq: m-part_Bianchi_identity}, together with Lemma \ref{lem: ker_diamond}, we see that $R_{abcd}=(R_{ab})_{cd}$ lies in $\Omega_{\fh}^2$, as a skew-symmetric tensor in $c,d$. Then the result follows from the standard symmetry $R_{abcd}=R_{cdab}$.
\end{proof}
\begin{remark}
    The content of Corollary \ref{cor: strong_restrictions} is well-known, so the novelty here is the alternative proof, generalising the context-specific arguments formulated by Karigiannis in the $\rm G_2$ and $\rm Spin(7)$ cases  \cites{Karigiannis2007,karigiannis-spin7}. In fact, when an $H$-structure $\xi$ is torsion-free, the Riemannian holonomy group $\mathrm{Hol}(g)$ of the induced metric $g$ is a subgroup of $H$, and it follows from the Ambrose--Singer theorem that the Riemann curvature $R$ lies in the subspace $\Sigma^2(\mathfrak{hol}(g))$, see e.g. \cite[Theorem 3.1.7]{Joyce2007}. 
\end{remark}

Another byproduct of the abstract Ricci identity is a direct proof of the following classical results.  
\begin{proposition}
\label{cor holonomy-ricci}
    Let $\xi$ be a compatible torsion-free $H$-structure on $(M^n,g)$.
\begin{itemize}
    \item[(i)] If $n=2m$ with $m\geqslant 2$ and $H=\U(m)\subset\SO(2m)$, then the Ricci tensor of $g$ is Hermitian.
    \item[(ii)] If $n=2m$ with $m\geqslant 2$ and $H=\SU(m)\subset\SO(2m)$, then $(M^{2m},g)$ is Ricci-flat.
    
    \item[(iii)] If $n=4k$ with $k\geqslant 2$ and $H=\mathrm{Sp}(k)\subset\mathrm{SO}(4k)$, then $(M^{4k},g)$ is Ricci-flat.
    
    %\item If $n=4m$ with $m\geqslant 2$ and $H=\mathrm{Sp}(m)\cdot{\mathrm{Sp}(1)}\subset\mathrm{SO}(4m)$, then $(M^{4m},g)$ is Einstein.
    \item[(iv)] If $n=7$ and $H=\rm G_2\subset\mathrm{SO}(7)$, then $(M^7,g)$ is Ricci-flat.
    \item[(v)]  If $n=8$ and $H=\rm Spin(7)\subset\mathrm{SO}(8)$, then $(M^8,g)$ is Ricci-flat.
\end{itemize}
\end{proposition}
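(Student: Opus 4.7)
My unifying tool is Corollary \ref{cor: strong_restrictions}, which under $T = 0$ forces the curvature endomorphism $R_{ab}$ to lie in $\fh_Q \subset \fso(TM)$ at every point. I would couple this pointwise algebraic constraint with the first Bianchi identity \eqref{eq: riem1stBid} and the pair-symmetry $R_{abcd} = R_{cdab}$, and argue case by case. In each case the task reduces to translating the abstract inclusion $R_{ab}\in\fh_Q$ into a concrete contraction identity on $R_{abcd}$ involving the stabilised tensor $\xi_{\circ}$, and then feeding it back into a standard trace to isolate $\Ric$.

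For (i), $H = \U(m)$: the inclusion $R_{ab} \in \fu(m)_Q$ is equivalent to $[R_{ab}, J] = 0$, which gives $R(X, Y, JZ, JW) = R(X, Y, Z, W)$; combined with pair symmetry, this yields the familiar K\"ahler-type identity $R(JX, JY, Z, W) = R(X, Y, Z, W)$, and tracing in an orthonormal frame $\{e_i\}$ (using that $\{Je_i\}$ is again orthonormal since $J^{\ast}g=g$) immediately gives $\Ric(JY, JW) = \Ric(Y, W)$. For (ii), $H = \SU(m)$: the additional datum is the parallel complex volume form $\Upsilon$; from $R_{ab}\diamond\Upsilon = 0$, together with the standard identity $A\diamond\Upsilon = i(\tr_{\bC}A)\,\Upsilon$ for $A\in\fu(m)$ (as in our computation in Example \ref{ex: inner_prod_SU}), we deduce $\tr_{\bC}R_{ab} = 0$, i.e.\ $R_{ab}\in\fsu(m)_Q$; the Ricci form $\rho(X,Y):=\Ric(JX,Y)$ of a K\"ahler manifold being proportional to the $\fu(m)$-trace of the curvature $2$-form (Chern--Weil), we obtain $\rho \equiv 0$ and hence $\Ric \equiv 0$. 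For (iii), since $\fsp(k) \subset \fsu(2k)$, a torsion-free $\Sp(k)$-structure refines under any of its three compatible complex structures to a Calabi--Yau structure, so (ii) applies.

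For (iv) and (v), I would translate $R_{ab} \in (\fg_2)_Q$ (resp.\ $\spin(7)_Q$) into the pointwise statement that the second pair of indices of $R_{abcd}$, viewed as a $2$-form, lies in $\Lambda^2_{\fg_2}$ (resp.\ $\Lambda^2_{\spin(7)}$); concretely, in the $\rG_2$ case this reads $R_{abcd}\,\varphi^{cd}{}_m = 0$. Contracting the first Bianchi identity with $\varphi^{bcd}$ (resp.\ with $\Phi^{abcd}$) and applying the contraction identities between $\varphi$ and $\psi = \ast\varphi$ from \cite{Karigiannis2007} (resp.\ the self-duality and analogous identities of $\Phi$ from \cite{karigiannis-spin7}) should then reduce to a scalar multiple of $\Ric$. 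A cleaner alternative uses the Weitzenb\"ock identity for the parallel (hence harmonic) form $\varphi$ (resp.\ $\Phi$): since $\varphi$ is parallel, $\nabla^{\ast}\nabla\varphi = 0$, while $(d+d^{\ast})^2\varphi = 0$ by closedness and coclosedness, so the curvature term in the Weitzenb\"ock formula---known on $\rG_2$- (resp.\ $\Spin(7)$-)manifolds to equal a nonzero constant times $\Ric$ acting on $\varphi$---must vanish, forcing $\Ric = 0$.

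The main obstacle lies in cases (iv)--(v): unlike (i)--(iii), which reduce cleanly to Hermitian linear algebra inside our abstract framework, the passage from $R_{ab} \in (\fg_2)_Q$ (resp.\ $\spin(7)_Q$) to Ricci-flatness genuinely depends on contraction identities specific to $\varphi$, $\psi$ (resp.\ $\Phi$) that are not part of the general $H$-structure formalism developed here. These identities are combinatorially routine but require care with index conventions to correctly isolate the Ricci contribution; this is where the unified abstract argument truly hands over to case-specific computation.
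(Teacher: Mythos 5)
Your proposal is correct and rests on the same pillar as the paper's proof: Corollary \ref{cor: strong_restrictions} forces $R_{ab}\in\fh_Q$, and the remaining work is first Bianchi plus pair symmetry. Cases (i), (iii) and the overall plan are essentially identical. For (ii), you substitute the standard Chern--Weil identification of the Ricci form with the complex trace of the curvature endomorphism where the paper instead performs an explicit index computation via the projections onto $\fsu(m)\oplus\langle J\rangle\oplus\fu(m)^{\perp}$ and the identity \eqref{eq: m1_prop_projection}; these are literally the same fact (the first Bianchi identity is precisely what makes $\tr(JR_{ab})$ proportional to the Ricci form), only derived versus invoked, and the paper's route has the advantage of staying self-contained inside the $\diamond$-formalism. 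Minor slip: from Example \ref{ex: inner_prod_SU} the normalisation is $A\diamond\Upsilon=(\tr_{\bC}A)\Upsilon$ on $\fu(m)$, i.e.\ $J\diamond\Upsilon=mi\Upsilon$, not $i\tr_{\bC}A$; this does not affect the conclusion. For (iv)--(v) the paper just cites Karigiannis's contraction argument, which matches your first proposal. Your Weitzenb\"ock alternative is valid, but as stated it is slightly imprecise: the curvature term in the Weitzenb\"ock formula on $3$-forms (resp.\ $4$-forms) is \emph{not} a priori a constant times $\Ric$ acting on $\varphi$; it splits into a Ricci piece and a full-Riemann piece. What actually happens is that the Riemann piece is a contraction of $R_{ab}$ (viewed in the last pair of indices, where by Corollary \ref{cor: strong_restrictions} and pair symmetry it lies in $\Lambda^2_{\fh}$) against $X\lrcorner\varphi\in\Lambda^2_{\fm}$, and this vanishes by orthogonality of the $H$-modules. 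One is then left with $\Ric\diamond\varphi=0$, which forces $\Ric=0$ because $\Ric$ is symmetric while $\ker(\cdot\diamond\varphi)=\Omega^2_{\fh}$ is skew (Lemma \ref{lem: ker_diamond}). Framed that way, the Weitzenb\"ock route actually sits very nicely inside the paper's abstract formalism and is arguably a genuine improvement over simply citing the contraction identity.
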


\begin{proof}\quad
\begin{itemize}
    \item[(i)] Since $R_{ij}=\pi_{\fu(n)}(R_{ij})$, it follows from the definitions that $R_{ij}=JR_{ij}J^t$.
    \item[(ii)] According with the reductive decomposition $\fso(2m)=\fsu(m)\oplus\fm_1\oplus\fm_2$, where $\fm_1$ is the trivial $\SU(m)$-submodule generated by $J$ and $\fm_2=\fu(m)^\perp$ is  the $\SU(m)$-submodule of skew-symmetric matrices anti-commuting with $J$, we have the projections: 
    \begin{align*}
        \pi_{\fsu(m)}(R_{ij})
        &=\frac{1}{2}R_{ij}-\frac{1}{2}JR_{ij}J+\frac{1}{2m}\tr(JR_{ij})J,\\
        \pi_{\fm_1}(R_{ij})
        &=-\frac{1}{2m}\tr(JR_{ij})J,\\
        \pi_{\fm_2}(R_{ij})
        &=\frac{1}{2}R_{ij}+\frac{1}{2}JR_{ij}J.
    \end{align*}
    We claim that \begin{equation}\label{eq: m1_prop_projection}
    (JR_{ij}J)^i_k=\frac{1}{2}\tr(JR_{ij})J^i_k.
    \end{equation}
    Indeed, using the first Bianchi identity \eqref{eq: riem1stBid} we have:
    \begin{align*}
        (JR_{ij}J)^i_k &= J_{km}g^{mn}R_{ijnp}g^{pq}J_{ql}g^{il}\\
        &= (-R_{jnip}-R_{nijp})J_{km}g^{mn}J_{ql}g^{pq}g^{il}\\
        &= -(R_{jn})^q_iJ^i_qJ^n_k+J_{km}g^{mn}(R_{pj})_{ni}g^{il}J_{ql}g^{pq}\\
        &= \tr(JR_{nj})J^n_k-(JR_{pj}J)_{kq}g^{qp}.
    \end{align*}
  Then, for the Ricci tensor of a torsion free $\SU(m)$-structure, using Corollary \ref{cor: strong_restrictions} we have
    \begin{align*}
    \Ric_{jk} &= R_{ijkl}g^{il}=\pi_{\fsu(m)}(R_{ij})_{kl}g^{il}\\
            &= \left(\frac{1}{2}R_{ijkl}-\frac{1}{2}(JR_{ij}J)_{kl}+\frac{1}{2m}\tr(JR_{ij})J_{kl}\right)g^{il}\\
            &= \pi_{\fm_2}(R_{ij})_{kl}g^{il}+m\pi_{\fm_1}(R_{ij})_{kl}g^{il}-\pi_{\fm_1}(R_{ij})_{kl}g^{il}=0.
    \end{align*}
    
    \item[(iii)] According with the inclusion $\Sp(k)\subset \SU(2k)$ \cite{Salamon1989}*{(5.8)} and (ii), a compatible torsion-free $\Sp(k)$-structure is Ricci-flat.
    
    \item[(iv)] See \cite{Karigiannis2007}*{Corollary 4.12}, which is a consequence of the Ricci identity for $H=\rG_2$, see also Example \ref{ex: g2}.
    
    \item[(v)] See \cite{karigiannis-spin7}*{Corollary 4.7}, which invokes the same argument as in the $\rG_2$ case.
    \qedhere
\end{itemize}
\end{proof}

\subsection{Dirichlet-type energy functionals and related flows}
\label{sec: Dirichlet}

We are going to consider natural energy functionals on the space of $H$-structures over a closed and oriented $n$-manifold $M$. We start by considering the functional which assigns to each $H$-structure $\xi$ a suitable normalisation of the squared $L^2$-norm of its torsion $T$, with respect to its induced metric $g$:
\begin{equation}
\label{eq: energy_T}
    \cE(\xi):=\frac{1}{2}\int_M|T|^2\vol_g.
\end{equation} 
Let us compute the general first variation of \eqref{eq: energy_T}:
\begin{proposition}
\label{general EL}
    If $\{\xi(t)\}$ is a smooth $1$-parameter family of $H$-structures, satisfying any condition of the form \eqref{eq: general_H_flow}, with $\xi(0)=\xi$, then
\begin{align}
\label{eq: conformal variation}
    \frac{d}{dt}\Bigr|_{t=0}\cE(\xi)=&\int_M\Big(\big(\Div T^t_{ia}+\Div T^t_{ai}-(T\ast T)_{ia}+\frac{1}{2}|T|^2g_{ia}\big)S_{pq}-\Div T_{ia}C_{pq}\Big)g^{ip}g^{aq}\vol_g,
\end{align}
    where $T$ is the intrinsic torsion of the $H$-structure $\xi$, $g$ is its associated metric, the transpose torsion  $T^t\in\Omega_\fm^2\otimes\Omega^1$ is defined by $T^t_{ba;i}:=T_{i;ba}$, and $(T\ast T)_{ia}:=T_{i;jk}T_{a;bc}g^{jb}g^{kc}$.
%\[
%(T\ast T)_{ia}:=T_{i;jk}T_{a;bc}g^{jb}g^{kc}.
%\]
    An $H$-structure $\xi$ on $M$ is a critical point of \eqref{eq: energy_T} if, and only if,
\begin{equation}\label{eq: Euler_Lagrange_eq_of_E}
    \sym (\Div T^t)-T\ast T+\frac{1}{2}|T|^2g=0 \qandq \Div T=0.
\end{equation}
\end{proposition}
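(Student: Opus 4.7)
The plan is a direct variational calculation: we would differentiate $\cE$ under the integral using the pointwise evolutions of $|T|^2$ and $\vol_g$ already at hand, then integrate by parts to move all covariant derivatives from the variation parameters $S,C$ onto the torsion $T$, and finally split the result according to the independent $S$ and $C$ pieces. Substituting \eqref{eq: ddt_norm_T} and $\partial_t\vol_g = \tr_g(S)\vol_g$ (from Lemma~\ref{lem:volume_Christoffel_variation}) directly yields
\begin{align*}
    \frac{d}{dt}\bigg|_{t=0}\cE(\xi)=\int_M\Big(\langle \nabla C-\Lambda\nabla S,T\rangle -\langle T_m,T_n\rangle S^{mn}-2\langle T_m,T_n S\rangle g^{mn}+\tfrac{1}{2}|T|^2\tr_g(S)\Big)\vol_g.
\end{align*}

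For the $C$-piece, only $\langle \nabla C,T\rangle$ depends on $C$; one integration by parts transfers the covariant derivative onto $T$ and produces $-\int_M \Div T_{ia}\,C_{pq}\,g^{ip}g^{aq}\vol_g$, which is precisely the $C$-term of \eqref{eq: conformal variation}. For the $S$-piece, the integration by parts in $-\int_M\langle \Lambda\nabla S,T\rangle\vol_g$ moves the derivative onto $T$ in one of its $\fm$-slots; together with the fact that $S^{ia}$ is symmetric (so only the symmetrised part of the resulting tensor survives) and the definition $T^t_{ba;i}:=T_{i;ba}$, this contribution becomes $\int_M(\Div T^t_{ia}+\Div T^t_{ai})S_{pq}g^{ip}g^{aq}\vol_g$. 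The remaining pointwise terms are quadratic in $T$: expanding $\langle T_m,T_n\rangle$, $\langle T_m,T_n S\rangle$ and $|T|^2$ in coordinates and systematically using the antisymmetry of each $T_l$ in its 2-form slots (to shuffle the dummy indices and expose the three-term cyclic pattern defining $(T*T)_{ia}$) repackages them into $\int_M\big(-(T*T)_{ia}+\tfrac{1}{2}|T|^2g_{ia}\big)S_{pq}g^{ip}g^{aq}\vol_g$. Summing the two contributions yields \eqref{eq: conformal variation}.

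The Euler--Lagrange equations \eqref{eq: Euler_Lagrange_eq_of_E} then follow by applying the fundamental lemma of the calculus of variations separately to arbitrary $S\in \Sigma^2(M)$ and $C\in \Omega^2_\fm(M)$. The pointwise coefficient of $S^{pq}$ is manifestly symmetric and must vanish, giving $\sym(\Div T^t)-T*T+\tfrac{1}{2}|T|^2 g=0$. The coefficient of $C^{pq}$ is $-\Div T_{ia}$, which lies in $\Omega^2_\fm$ by Lemma~\ref{lem: basic_estimate_harmonic}, and its pairing with arbitrary $C\in\Omega^2_\fm$ vanishes if and only if $\Div T=0$.

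The main obstacle I expect is the algebraic repackaging of the quadratic-in-$T$ terms into the cyclically symmetric form $(T*T)_{ia}$: this is a bookkeeping exercise relying only on the antisymmetry of $T$ in its 2-form indices and on careful index relabelling, but it is where most of the room for error lies. Everything else is standard integration by parts on a closed manifold and the independence of the $S$- and $C$-directions of deformation.
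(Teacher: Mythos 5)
Your proposal is correct and follows essentially the same route as the paper: the paper likewise differentiates $\cE$ under the integral using the evolution of the torsion (via \eqref{eq: Omega^2_m-part ddt_T}, which is exactly what packages into your starting formula \eqref{eq: ddt_norm_T}) together with $\partial_t g^{ij}=-2S^{ij}$ and $\partial_t\vol_g=\tr_g(S)\vol_g$, then integrates by parts to expose the coefficients of $S$ and $C$ and reads off the Euler--Lagrange equations from the independence of the two deformation directions. The quadratic-in-$T$ bookkeeping you defer is precisely the content of the paper's middle display, where the three metric-differentiation terms are recombined into $(T\ast T)_{ia}$, so nothing essential is missing.
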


%\begin{lemma}
%If $\xi_t$ is a smooth family of $H$-structures with $\xi(0)=\xi_0$ and $\frac{d}{dt}|_{t=0}\xi_t=A\diamond\xi_0$ where $A=\alpha g+C\in \Omega^0\oplus\Omega^2_\fm$, then
%\begin{align}\label{eq: conformal variation}
 %   \frac{d}{dt}\Bigr|_{t=0}\cE(\xi)=&-\int_M \Big(2\alpha\Div(\tr T)+\langle\Div T,C\rangle+(3-\frac{n}{2})\alpha|T|^2\Big)\vol_g,
%\end{align}
%where $\tr T_p=T_{j;pq}g^{jq}$.
%\end{lemma}

\begin{proof}
Using Lemma \ref{lem:volume_Christoffel_variation}, \eqref{eq: ddt_norm_T} and integration by parts, we have:
   \begin{align*}
       \frac{d}{dt}\Bigr|_{t=0}\cE(\xi(t))
       =&\;\frac{1}{2}\frac{d}{dt}\Bigr|_{t=0}\int_M T_{i;jk}(t)T_{a;bc}(t)g^{ia}(t)g^{jb}(t)g^{kc}(t)\vol_{g(t)}\\
       =&\int_M\Big((-2\nabla_jS_{ki}+\nabla_iC_{jk})T_{a;bc}g^{ia}g^{jb}g^{kc}-T_{i;jk}T_{a;bc}g^{jb}g^{kc}S_{pq}g^{ip}g^{aq}+\frac{1}{2}|T|^2g_{ia}S_{pq}g^{ip}g^{aq}\Big)\vol_g.\\
       =&\int_M\Big(\big((\nabla_jT_{i;ba}+\nabla_jT_{a;bi})g^{jb}-(T\ast T)_{ia}+\frac{1}{2}|T|^2g_{ia}\big)S_{pq}-\Div T_{ia}C_{pq}\Big)g^{ip}g^{aq}\vol_g.
       \qedhere
   \end{align*}
%   Replacing $S=\alpha g$ and
 %  using integration by parts:
  % \begin{align*}
   %    \frac{d}{dt}|_{t=0}\cE(\xi)
    %   =&-\int_M\Big(C_{ij}\nabla_mT_{n;pq}g^{mn}g^{ip}g^{jq}-2\alpha\nabla_i(\tr T_{p})g^{ip}+(3-\frac{n}{2})\alpha|T|^2\Big)\vol_g,
   %\end{align*}
   %here $\tr T_{p}:=T_{j;pq}g^{jq}$. Finally, applying Stokes' theorem we obtain the result.
\end{proof}
\begin{corollary}\label{cor: first_iso_variation}
    If $\{\xi(t)\}$ is a smooth family of isometric $H$-structures, inducing the fixed Riemannian metric $g$, with $\xi(0)=\xi$ and  $\frac{d}{dt}|_{t=0}\xi(t)=C\diamond\xi$, for $C\in\Omega^2_\fm$, then
\begin{align}
    \frac{d}{dt}\Bigr|_{t=0}\cE(\xi)=&-\int_M\langle\Div T, C\rangle \vol_g,
\end{align} where $T$ is the torsion of the $H$-structure $\xi$.
\end{corollary}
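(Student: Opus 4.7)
The plan is to read Corollary~\ref{cor: first_iso_variation} as the direct specialisation of Proposition~\ref{general EL} to isometric variations. By Lemma~\ref{lemma: isometric variation}, under a general flow \eqref{eq: general_H_flow} the metric evolves by $\frac{\partial}{\partial t}g = 2S$, so the assumption that $\{\xi(t)\}$ is isometric forces $S(0)=0$. Writing $A=S+C$ with $S=0$ gives $\frac{d}{dt}|_{t=0}\xi=C\diamond\xi$ with $C\in\Omega^2_\fm$, exactly as in the hypothesis of the corollary. Thus I am in the setting of \eqref{eq: conformal variation} with $S_{pq}\equiv 0$.

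With $S=0$, every term in \eqref{eq: conformal variation} that is multiplied by $S_{pq}$ (namely the symmetrised divergence, the trilinear $T\ast T$-term and the scalar $\tfrac{1}{2}|T|^2 g$ term) drops out, leaving only
\[
\left.\frac{d}{dt}\right|_{t=0}\cE(\xi)=-\int_M \Div T_{ia}\, C_{pq}\, g^{ip}g^{aq}\,\vol_g.
\]
Since both $\Div T$ and $C$ lie in $\Omega^2_\fm\subset\Omega^2$, the contraction $\Div T_{ia} C_{pq} g^{ip}g^{aq}$ is precisely the pointwise inner product $\langle \Div T,C\rangle$ induced by $g$; this yields the claimed formula
\[
\left.\frac{d}{dt}\right|_{t=0}\cE(\xi)=-\int_M\langle\Div T,C\rangle\,\vol_g.
\]

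There is no real obstacle: the only thing to double-check is internal consistency of signs and projections. On one hand, $\Div T = g^{kl}\nabla_k T_l$ is automatically valued in $\Omega^2_\fm$, because each $T_l$ lies there and the Levi-Civita connection preserves the bundle decomposition induced by the $H$-structure (equivalently, one may pair against $C\in\Omega^2_\fm$ and note that any $\Omega^2_\fh$ component would contract to zero). On the other hand, the sign is consistent with the Euler--Lagrange system \eqref{eq: Euler_Lagrange_eq_of_E}: setting the $C$-coefficient of \eqref{eq: conformal variation} equal to zero for arbitrary $C\in\Omega^2_\fm$ recovers $\Div T=0$, which is exactly the critical point condition for $\cE$ restricted to an isometric class. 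Hence the corollary follows without any further computation.
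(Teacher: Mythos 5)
Your proof is correct and follows exactly the route the paper intends: Corollary~\ref{cor: first_iso_variation} is the specialisation of the general first variation \eqref{eq: conformal variation} in Proposition~\ref{general EL} to $S=0$, with the surviving term identified as $-\langle \Div T, C\rangle$. One small caveat: your aside that the Levi--Civita connection preserves the $\fh_Q\oplus\fm_Q$ decomposition is false in general (the failure to do so is precisely what the intrinsic torsion measures; only the $H$-connection $\nabla^H$ preserves it), but this does not affect your argument since, as you correctly note, pairing against $C\in\Omega^2_\fm$ annihilates any $\Omega^2_\fh$-component of $\Div T$.
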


\begin{example}
\label{ex: general_Dirichlet_flow_G2}
    When $H=\rG_2$, according to Example \ref{ex: Intrinsic_torsion_G2}, the Euler--Lagrange equations \eqref{eq: Euler_Lagrange_eq_of_E}  become:
    \begin{equation*}
        h=\sym(-\curl \cT^t+\cT(\cT\lrcorner \psi))-6\cT^t\cT+3|\cT|^2g \qandq C=\Div \cT\lrcorner\varphi,
    \end{equation*}
    where $\cT\in \End(TM)$ denotes the full torsion tensor of $\varphi$, $(\cT\lrcorner \psi)_{cd}=\cT_{ab}\psi\indices{^a^b_c_d}$ and  $\curl \cT^t_{ab}=\nabla_m\cT_{an}\varphi\indices{_b^m^n}$. Using the identity \cite[Proposition 2.9]{Karigiannis2007},
    \begin{equation*}
        \langle S\diamond\varphi, R\diamond \varphi\rangle=\tr(S)\tr(R)+2\tr(SR) \qforq R,S\in \Sigma^2,
    \end{equation*}
    and writing  $\psi(\cT,\cT):=\cT_{ab}\cT_{cd}\psi^{abcd}$, the corresponding gradient flow of \eqref{eq: energy_T} is
\begin{align*}
    \frac{\partial}{\partial t}\varphi
    &=-\frac{1}{2}h\diamond \varphi+\frac{\tr h}{18}g\diamond\varphi+3\Div \cT\lrcorner\psi\\
    &=\frac{1}{2}(\sym(\curl (\cT^t)-\cT(\cT\lrcorner\psi))+3\cT^t\cT-\frac{1}{9}\left(6|\cT|^2-\tr(\curl \cT)+\psi(\cT,\cT)\right)g)\diamond\varphi+3\Div \cT\lrcorner\psi.
\end{align*}
    
    Furthermore, using the expressions for the Ricci tensor and the scalar curvature in terms of the full torsion tensor \cite{grigorian2020flows}*{Lemma 2.1}, 
\begin{align*}
    \Ric&=-\frac{1}{2}\sym(\curl (\cT^t)-\nabla(\cT\lrcorner\varphi)+\cT^2-\tr(\cT)\cT)\\
    s&=2\tr(\curl \cT)-\psi(\cT,\cT)-\tr(\cT^2)+(\tr (\cT))^2,
\end{align*}
    and writing $L(\nabla \cT)_{ab}:=\nabla_a\cT_{mn}\varphi\indices{^m^n_b}$, the $\rG_2$-gradient flow is
\begin{align*}
    \frac{\partial}{\partial t}\varphi
    =&\;(-\Ric+3\cT^t\cT-\frac{1}{2}\sym(\cT^2-\tr(\cT)\cT-L(\nabla \cT))\\
    &+\frac{1}{9}\left(s+\tr(\cT^2)-(\tr(\cT))^2-6|\cT|^2-\tr(\curl \cT)\right)g)\diamond\varphi+3\Div \cT\lrcorner\psi.
\end{align*}
    Notice that the induced $\rG_2$-metric evolves as a Ricci-like flow, modified by the leading terms $\sym(L(\nabla \cT))$ and $\tr(\curl \cT)g$.
\end{example}

In order to properly connect the discussion with a natural notion of \emph{harmonicity}, understood as criticality of a Dirichlet-type gradient flow, let us consider the following alternative energy functional:
\begin{equation}
\label{eq: Dirichlet_energy}
    \cD(\xi):=\frac{1}{2}\int_M |\nabla\xi|^2\vol_g.
\end{equation} By Lemma \ref{lem: torsion_equiv_nablaxi}, there are $c,\tilde{c}>0$ depending only on $(M,g)$ and $H$ such that
\[
\tilde{c}\cE(\xi)\leqslant \cD(\xi)\leqslant c\cE(\xi).
\] Moreover, under the assumption that $c:=\lambda_1=\ldots=\lambda_k$ in Lemma \ref{lem: orthogonality}, i.e. if there is $c>0$ such that $\langle A\diamond\xi, B\diamond\xi\rangle=c\langle A,B\rangle$ for all $A,B\in\Omega_{\fm}^2(M)$ (e.g. if $\fm$ is an irreducible $H$-module), then
\[
\cD(\xi)=c\cE(\xi).
\]
\begin{lemma} 
\label{lem: gradient_dirichlet_along_iso_var}
    Suppose that $H=\mathrm{Stab}_{\SO(n)}(\xi_{\circ})$ is such that $\lambda_1=\ldots=\lambda_k$ in Lemma \ref{lem: orthogonality} (e.g. when $\fm$ is an irreducible $H$-module). If $\{\xi(t)\}$ is a smooth family of compatible $H$-structures on $(M^n,g)$, with $\xi(0)=\xi$ and $\frac{d}{dt}|_{t=0}\xi_t=C\diamond\xi$, for some $C\in\Omega_{\fm}^2$, then
\begin{align*}
    \frac{d}{dt}\Bigr|_{t=0}\cD(\xi(t)) = -\int_M \langle C\diamond\xi,\Div T\diamond\xi\rangle\vol_g.
\end{align*} 
    Thus, the restriction of the energy \eqref{eq: Dirichlet_energy}  to compatible $H$-structures on $(M^n,g)$ has gradient $-\Div T\diamond\xi$ at each point $\xi$.
\end{lemma}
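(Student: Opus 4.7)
The plan is a direct first-variation computation, specialising the evolution equations of Section~\ref{sec: general_flows} to the isometric case $S=0$. Since Lemma~\ref{lemma: isometric variation} then gives $\partial_t g = 0$, the metric and volume form are fixed along $\xi(t)$, so
\begin{equation*}
    \frac{d}{dt}\Bigr|_{t=0}\cD(\xi(t)) = \int_M \langle\partial_t\nabla_l\xi,\nabla_l\xi\rangle\,\vol_g.
\end{equation*}
Proposition~\ref{prop: evo_nabla_xi} at $S=0$ yields $\partial_t\nabla_l\xi = C\diamond\nabla_l\xi + \nabla_l C\diamond\xi$, and the first contribution drops out pointwise by Lemma~\ref{lem: properties_diamond}\ref{item: diamond_orth} applied to the tensor $\nabla_l\xi$ with skew-symmetric endomorphism $C$.

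For the remaining term I would substitute $\nabla_l\xi = T_l\diamond\xi$ via Lemma~\ref{lem: torsion_equiv_nablaxi} and rewrite $\nabla_l C\diamond\xi = \nabla_l(C\diamond\xi) - C\diamond\nabla_l\xi$, observing that $\langle C\diamond(T_l\diamond\xi), T_l\diamond\xi\rangle = 0$ by another use of Lemma~\ref{lem: properties_diamond}\ref{item: diamond_orth}. This collapses the integrand to $\langle \nabla_l(C\diamond\xi), T_l\diamond\xi\rangle$, which after integration by parts becomes $-\langle C\diamond\xi, \nabla_l(T_l\diamond\xi)\rangle$. Expanding $\nabla_l(T_l\diamond\xi) = \Div T\diamond\xi + T_l\diamond(T_l\diamond\xi)$, the cross-term $\langle C\diamond\xi, T_l\diamond(T_l\diamond\xi)\rangle$ vanishes by the orthogonality relation \eqref{eq: orthogonality} of Lemma~\ref{lem: orthogonality}\ref{item: orthogonality}, which is precisely what the hypothesis $\lambda_1=\ldots=\lambda_k$ guarantees.

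The main subtlety is that both pointwise cancellations rely on the skew-adjointness of the $\diamond$-action on tensors, the second one requiring precisely the uniform-$\lambda_i$ assumption; without it, a residual $\langle[T_l,C],T_l\rangle$-type term would survive and obstruct the clean gradient formula. As a consistency check, the same identity follows from the shortcut $\cD = c\,\cE$---immediate from Lemma~\ref{lem: torsion_equiv_nablaxi} under the hypothesis---combined with Corollary~\ref{cor: first_iso_variation} and the pointwise identity $\langle\Div T\diamond\xi, C\diamond\xi\rangle = c\langle\Div T, C\rangle$ obtained by splitting $\Div T$ along the decomposition $\Omega^2 = \Omega^2_{\fh}\oplus\Omega^2_{\fm}$ and applying Lemma~\ref{lem: orthogonality}\ref{item: inner_product}, noting that $\Omega^2_{\fh}\subseteq\ker(\cdot\diamond\xi)$ by Lemma~\ref{lem: ker_diamond}.
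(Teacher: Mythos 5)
Your argument is correct and is essentially the paper's own direct proof: after reducing to $\int_M\langle\nabla(C\diamond\xi),\nabla\xi\rangle\vol_g$ (which you reach via Proposition~\ref{prop: evo_nabla_xi} and Lemma~\ref{lem: properties_diamond}--\ref{item: diamond_orth}, where the paper simply commutes $\partial_t$ with the fixed Levi--Civita connection), both proofs integrate by parts to $-\int_M\langle C\diamond\xi,\Delta\xi\rangle\vol_g$, decompose $\Delta\xi$ via Lemma~\ref{lem: basic_estimate_harmonic}, and kill the quadratic torsion term using the orthogonality of Lemma~\ref{lem: orthogonality} under the uniform-$\lambda_i$ hypothesis. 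Your closing consistency check via $\cD=c\,\cE$ and Corollary~\ref{cor: first_iso_variation} is precisely the alternative route the paper also mentions.
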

\begin{proof}
    Using the assumption $\langle A\diamond\xi,B\diamond\xi\rangle = c\langle A,B\rangle$ for all $A,B\in\Omega_{\fm}^2(M)$, one can either derive the result immediately from Corollary \ref{cor: first_iso_variation}, or adopt the following direct proof. Since $C\in\Omega_{\fm}^2$ [Lemma \ref{lemma: isometric variation}], i.e. the variation is isometric, integration by parts gives immediately
\[
    \frac{d}{dt}\Bigr|_{t=0}\cD(\xi(t)) 
    = \int_M\langle \nabla(C\diamond\xi),\nabla\xi\rangle\vol_g 
    = -\int_M \langle C\diamond\xi, \Delta\xi\rangle\vol_g.
\] 
    Using again $C\in\Omega_{\fm}^2$ and the decomposition of $\Delta\xi$ given by Lemma \ref{lem: basic_estimate_harmonic}, together with the orthogonality given by Lemma \ref{lem: orthogonality}, we conclude:
\begin{align*}
    \frac{d}{dt}\Bigr|_{t=0}\cD(\xi(t)) 
    &= -\int_M \langle C\diamond\xi,\Div T\diamond\xi + T_l\diamond(T_l\diamond\xi)\rangle \vol_g\\
    &= -\int_M \langle C\diamond\xi,\Div T\diamond\xi\rangle \vol_g.
    \qedhere
\end{align*}
\end{proof}
This motivates a natural harmonicity theory for arbitrary $H$-structures, initially proposed in \cite{loubeau-saearp}:
\begin{definition}
    Let $(M^n,g)$ be an oriented Riemannian $n$-manifold admitting a compatible $H$-structure. A family of compatible $H$-structures $\{\xi(t)\}_{t\in I}$ on $(M,g)$, parameterised by a non-degenerate interval $I\subset\mathbb{R}$, is a solution to the \emph{harmonic flow of $H$-structures} (or \emph{harmonic $H$-flow} for short) if the following evolution equation holds for every $t\in I$:
    \begin{equation}
    \label{eq: harmonic_flow}
    \tag{HF}
        \frac{\partial}{\partial t}\xi(t)=\Div T(t)\diamond \xi(t),
    \end{equation} 
    where $T(t)$ denotes the torsion of $\xi(t)$. Given a compatible $H$-structure $\xi_0$ on $(M^n,g)$, a solution to the harmonic flow of $H$-structures with \emph{initial condition} (or \emph{starting at}) $\xi_0$ is a solution of \eqref{eq: harmonic_flow} defined for every $t\in [0,\tau_0)$, for some $0<\tau_0\leqslant\infty$, and such that $\xi(0)=\xi_0$.
\end{definition}
\begin{definition}
    Let $(M^n,g)$ be an oriented Riemannian $n$-manifold admitting a compatible $H$-structure $\xi$. We say that $\xi$ is \emph{harmonic} when it has divergence-free torsion:
    \[
    \Div_g T = 0.
    \]
\end{definition}

When $M$ is moreover closed, then under the assumptions of Lemma \ref{lem: gradient_dirichlet_along_iso_var} the harmonic $H$-flow \eqref{eq: harmonic_flow} is the negative gradient flow of the energy functional $\cD$ \eqref{eq: Dirichlet_energy} restricted to compatible $H$-structures on $(M^n,g)$. Furthermore, the critical points of the latter are precisely the harmonic $H$-structures. 

Alternatively, one can describe the harmonic flow and the harmonicity condition for $H$-structures viewed as sections of the bundle $\pi:\Fr(M,g)/H\to M$. Denoting by $\omega\in\Omega^1(\Fr(M,g),\mathfrak{so}(n))$ the connection $1$-form associated to the Levi--Civita connection $\nabla$, the tangent bundle of $\Fr(M,g)$ splits as the sum $\ker(\pi_{\mathrm{SO}(n)})_{\ast}\oplus \ker\omega$. Then, considering the principal $H$-bundle $\pi_H:\Fr(M,g)\to\Fr(M,g)/H$ and writing $N:=\Fr(M,g)/H$, we have the following decomposition of $TN$ into vertical and horizontal distributions:
\begin{align*}
    TN = \cV &\oplus\cH,\quad\text{where}\\ 
    \cV :=(\pi_H)_{\ast}(\ker(\pi_{\mathrm{SO}(n)})_{\ast})\quad &\text{and}\quad\cH:=(\pi_H)_{\ast}(\ker\omega).
\end{align*} 
Note that, with respect to the bundle projection $\pi:N\to M$, we have $\cV=\ker\pi_{\ast}$ and $\pi_{\ast}\cH = TM$, and there is a canonical isomorphism of vector bundles $\mathcal{I}$, from $\cV$ to the vector bundle $\underline{\mathfrak{m}}$ associated to $\pi_H:\Fr(M,g)\to N$ with fibre $\mathfrak{m}$, see \cite[§1.1]{loubeau-saearp}). Together with the Riemannian metric $g$ on $M$, and the natural bi-invariant metric on $\mathfrak{m}=\mathfrak{h}^{\perp}\subset\mathfrak{so}(n)$, $\cI$ induces a metric $\eta$ on $N$ by
\[
\eta(A,B):=\langle\pi_{\ast}A,\pi_{\ast}B\rangle + \langle\mathcal{I}(\mathrm{proj}_{\cV}(A)),\mathcal{I}(\mathrm{proj}_{\cV}(B))\rangle.
\] 
When $(M^n,g)$ is closed, the following Dirichlet energy is defined on compatible $H$-structures on $(M^n,g)$, viewed as homogeneous sections $\sigma\in\Gamma(\Fr(M,g)/H)$: 
\begin{equation}
\label{eq: Dirichlet energy}
    E(\sigma):=\frac{1}{2}\int_M |d^\cV\sigma|_\eta^2 \vol_g,
\end{equation}
where $d^\cV\sigma$ is the projection of $d\sigma$ onto the distribution $\cV=\ker\pi_*\subset TN$. In \cite[Proposition 4]{loubeau-saearp} it was shown that the critical point set of \eqref{eq: Dirichlet energy} is the vanishing locus of the \emph{vertical tension} field $\tau^\cV(\sigma):=\tr_g\nabla^{\cV} d^\cV\sigma$, where $\nabla^\cV$ is the vertical part of the Levi--Civita connection of $(N,\eta)$. Moreover, the negative gradient flow associated to the Dirichlet energy \eqref{eq: Dirichlet energy}, named as the \emph{harmonic section flow}, is given by:
\begin{equation}
\label{eq: HSF}
\tag{HSF}
    \frac{\partial}{\partial t} \sigma = \tau^\cV (\sigma).
\end{equation} On the other hand, it was shown in \cite[Theorem 3.3]{Gonzalez-Davila2009} that %\footnote{Our convention for the isomorphism $\mathcal{I}$ differs from the analogous identification $\phi$ in the reference \cite{Gonzalez-Davila2009} by a minus sign.} $\mathcal{I}(d^\cV\sigma)=T$ and
$|d^\cV\sigma|_\eta^2=|T|_g^2$, where %here we identify $\sigma^{\ast}\underline{\mathfrak{m}}\cong\mathfrak{m}_{\sigma}$, and
$T$ is the intrinsic torsion of $\sigma$. Moreover, by \cite[Theorem 3.6]{Gonzalez-Davila2009}, $\sigma$ is a critical point of \eqref{eq: Dirichlet energy} if and only if $\Div T=0$. In fact,  {since the universal section $\Xi$ establishes the one-to-one correspondence \eqref{eq: Xi correspondence} between homogeneous sections and $H$-invariant (muti-)tensors}, under the assumptions of Lemma \ref{lem: gradient_dirichlet_along_iso_var} there corresponds,  to each solution $\{\sigma(t)\}_{t\in I}$ of \eqref{eq: HSF},  a solution $\{\xi(t)\}_{t\in I}$ of \eqref{eq: harmonic_flow}.

\subsection{Solitons of general flows}
\label{sec: solitons}

Let $M^n$ be an oriented manifold admitting a geometric $H$-structure. We will now formulate a general theory of solitons and self-similarity for arbitrary $H$-flows \eqref{eq: general_H_flow}. In particular, by encompassing non-isometric flows (with $S\neq0$), this framework substantially expands -- while building heavily upon --  the studies of isometric/harmonic  $\rG_2$-solitons in \cite{dgk-isometric}*{\textsection2.5}, $\S7$-solitons in \cite{Dwivedi-Loubeau-SaEarp2021}*{\textsection2.1.3}, and $\mathrm{Sp}(\frac{n}{4})\mathrm{Sp}(1)$-solitons in \cite{udhav2022quaternionic}.

Suppose we have a map $A:\xi\mapsto A(\xi)$ assigning to each $H$-structure $\xi$ on $M$, with induced metric $g$ and corresponding  decomposition \eqref{eq:splitting TM* x TM-GENERAL}, a tensor 
$$
A(\xi)=S(\xi)+C(\xi)\in\Sigma^2(M)\oplus\Omega_{\fm}^2(M),
$$ 
completely determined by $\xi$ and its associated structures, satisfying the following assumptions:
\begin{equation}
\label{eq: assumption_flow}
\begin{cases}
    f^{\ast}A(\xi) = A(f^{\ast}\xi),
    &\forall f\in\mathrm{Diff}(M),
    \quad\text{(diffeomorphism equivariance)}\\
    A(\lambda\xi)\diamond(\lambda\xi) = \lambda^{\alpha}A(\xi)\diamond\xi,
    &\text{for some $\alpha\in\mathbb{R}$, and for all $\lambda>0$}.
    \quad\text{(scaling property)}
\end{cases}
\end{equation}
For example, it is easy to see that the Ricci curvature $A=S=\mathrm{Ric}(g)$ satisfies these conditions with $\alpha=0$; the diffeomorphism equivariance is clear, and the scaling property follows from Lemma \ref{lem: properties_diamond} --\ref{item: diamond_S}:
\[
\mathrm{Ric}(\lambda g)\diamond(\lambda g) 
= 2\mathrm{Ric}(\lambda g)
=2\mathrm{Ric}(g)
=\mathrm{Ric}(g)\diamond g,\quad\forall\lambda>0.
\] 
As another instance, the divergence of torsion map $A:\xi\mapsto\Div T(\xi)$ also satisfies such conditions with $\alpha=1-2/\ell$, where $\ell$ is the net degree of $\xi$, which we shall assume throughout to be \emph{nonzero}; for the scaling property,  \eqref{eq: nabla of xi-GENERAL} implies $T(\lambda\xi)=T(\xi)$, and since the associated Riemannian metrics satisfy $g_{\lambda}=\lambda^{2/\ell}g$, we have $\Div_{g_{\lambda}} T(\lambda\xi) = \lambda^{-2/\ell}\Div_g T(\xi)$, see e.g. the proof of Lemma \ref{lem: rescaling}. It is also easy to check conditions \eqref{eq: assumption_flow} for the more general $S$ arising in Example \ref{ex: general_Dirichlet_flow_G2}, along the general gradient flow of the Dirichlet energy \eqref{eq: energy_T}.

For $A:\xi\mapsto A(\xi)$ as above, satisfying \eqref{eq: assumption_flow}, we shall consider the induced flow of $H$-structures on $M$ given by \eqref{eq: general_H_flow}:

\begin{equation}
\label{eq: general_H_flow_with_assumptions}
    \frac{\partial}{\partial t}\xi(t) = A(\xi(t))\diamond\xi(t) = S(\xi(t))\diamond\xi(t) + C(\xi(t))\diamond\xi(t). 
\end{equation}
For simplicity, in what follows we assume that $H$ is the stabiliser of a single tensor, with net degree $\ell$.
\begin{definition}[Self-similar solutions]
\label{def: self-similar}
    Let $\{\xi(t)\}_{t\in I\ni 0}$ be a solution to the flow \eqref{eq: general_H_flow_with_assumptions}. We say that $\xi(t)$ is a \emph{self-similar solution} if there exist a family of diffeomorphisms $\{f_t:M\to M\}_{t\in I}$, with $f_0=\mathrm{Id}_M$, and a smooth function $\rho:I\to\mathbb{R}\setminus\{0\}$ with $\rho(0)=1$, such that
    \[
    \xi(t) = \rho(t)^{\ell}f_t^{\ast}\xi(0),\quad\forall t\in I.
    \] In this case, we define the \emph{stationary vector field} of $\xi(t)$ by $X_t:=(f_t^{-1})_{\ast}W_t\in\sX(M)$, where $W_t\in\sX(M)$ is the infinitesimal generator of $f_t$, i.e. $\partial_t f_t = W_t f_t$.
\end{definition}

\begin{lemma}
\label{lem: self-similar}
    Let $\{\xi(t)\}_{t\in I}$ be a self-similar solution of the flow \eqref{eq: general_H_flow_with_assumptions}, with stationary vector field $X_t$, as in Definition \ref{def: self-similar}. Then the Riemannian metric  associated to each $\xi(t)$ is given by
\begin{equation}
\label{eq: self-similar_metric}
    g(t)=\rho(t)^2f_t^{\ast}g(0)
\end{equation}
    and, for each $t\in I$, its Lie derivative along the stationary vector field is
\begin{equation}
\label{eq: self-similar_metric_evo}
    \cL_{X_t} g(t) = -2\rho'(t)\rho(t)^{-1}g(t) + 2S(\xi(t)).
\end{equation} 
    Moreover, the torsion $T(t)$ of $\xi(t)$ satisfies the \emph{stationary condition}:
\begin{equation}
\label{self-similar_C_evo}
    C(\xi(t)) = X_t\lrcorner T(t) + \nabla_{\fm}(X_t).
\end{equation}
\end{lemma}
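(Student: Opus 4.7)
My plan is to derive all three identities by substituting the self-similarity ansatz $\xi(t)=\rho(t)^{\ell}f_t^{\ast}\xi(0)$ into the flow equation \eqref{eq: general_H_flow_with_assumptions} and comparing with the variational identities already established in Sections \ref{sec: general_flows} and \ref{sec: Bianchi}. Throughout, I would use the two standard identities for the time-dependent diffeomorphism family $f_t$ with infinitesimal generator $W_t$: first, $\partial_t(f_t^{\ast}\alpha)=f_t^{\ast}(\mathcal{L}_{W_t}\alpha)$ for any tensor field $\alpha$; and second, the naturality relation $f_t^{\ast}\mathcal{L}_{W_t}=\mathcal{L}_{X_t}f_t^{\ast}$, with $X_t=(f_t^{-1})_{\ast}W_t$.

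First I would prove \eqref{eq: self-similar_metric}. The tensor-to-metric assignment $\xi\mapsto g_{\xi}$ is diffeomorphism-equivariant, i.e. $g_{f^{\ast}\xi}=f^{\ast}g_{\xi}$, and homogeneous of weight $2/\ell$, meaning $g_{\lambda^{\ell}\xi}=\lambda^2 g_{\xi}$ for every $\lambda>0$. The latter reflects the net-degree convention (for instance, in the $\rG_2$ case with $\ell=3$ one reads it off the defining relation \eqref{eq: induced_metric_g2}, both sides scaling as $\lambda^9$). Applying both properties to the ansatz immediately yields $g(t)=\rho(t)^2 f_t^{\ast}g(0)$. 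To obtain \eqref{eq: self-similar_metric_evo} I would then differentiate this in $t$ using the two standard identities above to get $\partial_t g(t)=2\rho'(t)\rho(t)^{-1}g(t)+\mathcal{L}_{X_t}g(t)$, and match against $\partial_t g(t)=2S(\xi(t))$ from Lemma \ref{lemma: isometric variation}.

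For the main identity \eqref{self-similar_C_evo}, I would analogously differentiate the ansatz for $\xi$ itself, obtaining
\[
\partial_t\xi(t)=\ell\,\frac{\rho'(t)}{\rho(t)}\,\xi(t)+\mathcal{L}_{X_t}\xi(t).
\]
Rewriting the first term as $(\rho'/\rho)\,g(t)\diamond\xi(t)$ via Lemma \ref{lem: properties_diamond}--\ref{item: diamond_g}, expanding $\mathcal{L}_{X_t}\xi(t)$ by \eqref{eq: Lie of xi-GENERAL}, and substituting \eqref{eq: self-similar_metric_evo} for $\tfrac12\mathcal{L}_{X_t}g(t)$ causes the $(\rho'/\rho)g(t)\diamond\xi(t)$ contributions to cancel, leaving
\[
\partial_t\xi(t)=\bigl(S(\xi(t))+X_t\lrcorner T(t)+\nabla_{\fm}(X_t)\bigr)\diamond\xi(t).
\]
Matching this against $\partial_t\xi=(S(\xi)+C(\xi))\diamond\xi$ reduces matters to showing that the combination $C(\xi(t))-X_t\lrcorner T(t)-\nabla_{\fm}(X_t)$, which lies in $\Omega^2_{\fm}$, is annihilated by $\cdot{}\diamond\xi(t)$; this forces it to vanish by Lemma \ref{lem: ker_diamond}--\ref{item: ker in Omega^2}, since $\Omega^2_{\fm}\cap\Omega^2_{\fh}=\{0\}$.

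The only genuinely delicate point, which I expect to be the main bookkeeping obstacle, is the careful handling of the time-dependent generators — keeping track of $W_t$ versus $X_t=(f_t^{-1})_{\ast}W_t$ through the pullback/Lie-derivative manipulations. Once this is set up, the rest is a clean sequence of applications of Lemmas \ref{lem: properties_diamond}, \ref{lem: ker_diamond}, \ref{lemma: isometric variation}, and the Lie-derivative formula \eqref{eq: Lie of xi-GENERAL}.
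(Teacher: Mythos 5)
Your proposal is correct and takes essentially the same route as the paper: the identity \eqref{eq: self-similar_metric} is obtained from the equivariance and scaling behaviour of the induced metric (the paper phrases this via the frame-bundle calculation $\xi(t)=(\rho(t)uf_t).\xi_\circ$, which encodes precisely the two homogeneity/naturality properties you invoke), then \eqref{eq: self-similar_metric_evo} and \eqref{self-similar_C_evo} follow by differentiating, applying Lemma~\ref{lemma: isometric variation}, the Lie-derivative formula \eqref{eq: Lie of xi-GENERAL}, Lemma~\ref{lem: properties_diamond}--\ref{item: diamond_g}, and finishing with the injectivity of $\cdot\,\diamond\,\xi$ on $\Omega^2_\fm$ from Lemma~\ref{lem: ker_diamond}. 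The intermediate steps, the cancellation of the $(\rho'/\rho)\,g\diamond\xi$ terms, and the final kernel argument coincide with the paper's proof.
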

\begin{proof}
    Start noting that, as in the proof of Lemma \ref{lemma: isometric variation}, for $u\in Q_{\xi(0)}$ we have $\xi(t)=(\rho(t)uf_t).\xi_o$, thus 
    \[
    g(t)=(\rho(t)uf_t).g_o=(\rho(t)f_t).(u.g_o)=\rho(t)^2f_t^\ast g(0).
    \] Now,
    using Lemma \ref{lemma: isometric variation} and \eqref{eq: self-similar_metric}, we compute:
\begin{align*}
    2S(\xi(t)) = \frac{\partial}{\partial t} g(t)
    &= 2\rho(t)\rho'(t)f_t^{\ast}g(0) + \rho(t)^2f_t^{\ast}\cL_{W_t}g(0)\\
    &= 2\rho'(t)\rho(t)^{-1}g(t) + \cL_{X(t)}g(t).
\end{align*} 
    Using the self-similarity of $\xi(t)$, equations \eqref{eq: Lie of xi-GENERAL} and \eqref{eq: self-similar_metric_evo}, and Lemma \ref{lem: properties_diamond}-\ref{item: diamond_g} we have:
\begin{align*}
    S(\xi(t))\diamond\xi(t) +C(\xi(t))\diamond\xi(t) 
    &= \frac{\partial}{\partial t}\xi(t) 
    = \ell\rho'(t)\rho(t)^{-1}\xi(t) + \cL_{X_t}\xi(t)\\
    &= \ell\rho'(t)\rho(t)^{-1}\xi(t) + \left(X_t\lrcorner T(t) + \frac{1}{2}\cL_{X_t}g(t) + \nabla_{\fm}(X_t)\right)\diamond\xi(t)\\
    &= S(\xi(t))\diamond\xi(t)+\left(X_t\lrcorner T(t) + \nabla_{\fm}(X_t)\right)\diamond\xi(t),
\end{align*} which, together with Lemma \ref{lem: ker_diamond} implies equation \eqref{self-similar_C_evo}.
\end{proof}

\begin{definition}[Solitons]\label{def solitons}
    A \emph{soliton} for the flow \eqref{eq: general_H_flow_with_assumptions} is a triple $(\xi,X,c)$ consisting of an $H$-structure $\xi$, a vector field $X\in\sX(M)$ and a constant $c\in\mathbb{R}$, such that
    \begin{equation}
    \label{eq: soliton}
        \begin{cases}
            \cL_X g = cg + 2S(\xi),\\
            C(\xi) = X\lrcorner T + \nabla_{\fm}(X),
        \end{cases}
    \end{equation} 
    where $g$ is the Riemannian metric induced by $\xi$, and $T$ denotes the torsion of $\xi$.
    When the scaling constant $\alpha$ from \eqref{eq: assumption_flow}
    satisfies $\ell(\alpha-1)<0$, the soliton $(\xi,X,c)$ is called \emph{shrinking}, \emph{steady} or \emph{expanding}, according to  whether $c>0$, $c=0$ or $c<0$ respectively.
\end{definition}

The adjectives qualifying soliton evolution in Definition \ref{def solitons} are proposed by analogy with the theory of Ricci flow. As seen earlier in this section, when $A=\Ric(g)$ one has $\ell=2$ and $\alpha=0$, which agrees with the  `sign' convention $\ell(\alpha-1)<0$.
\begin{example}\label{ex: harmonic_soliton}
    For the harmonic flow of $H$-structures, i.e., when $S(\xi)=0$ and $C(\xi)=\Div T$, we have $\alpha=1-2/\ell$ and thus $\ell(\alpha-1)=-2<0$, and \eqref{eq: soliton} agrees with the definition of \emph{harmonic solitons} $(\xi,X,c)$ \cite[Definition 2.10]{Dwivedi-Loubeau-SaEarp2021} (see also \cite[Definition 2.16]{dgk-isometric}):
    \begin{equation}%\label{eq: soliton}
        \begin{cases}
            \cL_X g = cg,\\
            \Div T = X\lrcorner T + \nabla_{\fm}(X).
        \end{cases}
    \end{equation}
\end{example}
\begin{example}\label{ex: Ricci_soliton}
    For the Ricci flow of $H$-structures considered in Example \ref{ex: Ricci flow}, we have $S(\xi)=-\mathrm{Ric}(g)$, $C(\xi)=0$, and $\alpha=0$. Thus a triple $(\xi,X,c)$ is a soliton for this flow when:
    \begin{equation}\label{eq: Ric soliton}
        \begin{cases}
            \cL_X g = cg - 2\mathrm{Ric}(g),\\
            X\lrcorner T + \nabla_{\fm}(X) = 0.
        \end{cases}
    \end{equation} Of course, when $H=\mathrm{SO}(n)$ and the structure is just a Riemannian metric $\xi=g$, we have $T=0$ and $\mathfrak{m}=0$, and \eqref{eq: Ric soliton} agrees with the definition of Ricci solitons.
\end{example}
By Lemma \ref{lem: self-similar}, every self-similar solution $\{\xi(t)\}_{t\in I\ni 0}$ of the flow \eqref{eq: general_H_flow_with_assumptions}, as in Definition \ref{def: self-similar}, induces the soliton $(\xi(0),X_0,-2\rho'(0))$. When $\ell(\alpha-1)<0$, note that this soliton is `shrinking', i.e., $c:=-2\rho'(0)>0$, exactly when $\rho'(0)<0$, which explains the choice of wording. For the converse, we argue as in \cite[Section 4.4]{Lauret2016}:

\begin{proposition}
\label{prop: soliton_induce_self-similar}
   Every soliton of the flow \eqref{eq: general_H_flow_with_assumptions} induces a self-similar solution.
\end{proposition}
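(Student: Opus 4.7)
My strategy is to invert the argument of Lemma~\ref{lem: self-similar}: given soliton data $(\xi,X,c)$, construct the rescaling $\rho(t)$ and the diffeomorphisms $f_t$ explicitly out of the vector field $X$, and verify the ansatz $\xi(t):=\rho(t)^\ell f_t^*\xi$ solves \eqref{eq: general_H_flow_with_assumptions}. The key point is that the scaling assumption \eqref{eq: assumption_flow} and diffeomorphism-equivariance force $A(\rho^\ell f_t^*\xi)\diamond(\rho^\ell f_t^*\xi)=\rho^{\ell(\alpha-1)}f_t^*\bigl(A(\xi)\diamond\xi\bigr)$; in particular the induced maps $\xi\mapsto S(\xi)$ and $\xi\mapsto C(\xi)$ enjoy the same covariance. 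So one should expect the soliton equations \eqref{eq: soliton} to match the flow precisely after a carefully chosen time-reparametrisation.

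\textbf{Construction.} Let $\rho:I\to\mathbb{R}_{>0}$, with $0\in I$, be the maximal solution of the ODE
\begin{equation*}
    \rho'(t)=-\tfrac{c}{2}\,\rho(t)^{1+\ell(\alpha-1)},\qquad \rho(0)=1,
\end{equation*}
and define $s:I\to\mathbb{R}$ by $s'(t)=\rho(t)^{\ell(\alpha-1)}$, $s(0)=0$. If $\phi^X_s$ denotes the (local) flow of the complete-enough vector field $X$ on $M$, set
\begin{equation*}
    f_t:=\phi^X_{s(t)},\qquad \xi(t):=\rho(t)^\ell\,f_t^{\,*}\xi.
\end{equation*}
Then $f_0=\mathrm{Id}_M$, the generator of $f_t$ is $W_t=s'(t)\,X\circ f_t$, so the stationary vector field of $\xi(t)$ in the sense of Definition~\ref{def: self-similar} is $X_t=s'(t)X=\rho(t)^{\ell(\alpha-1)}X$. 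Consequently $g(t)=\rho(t)^2 f_t^{\,*}g$ by Lemma~\ref{lem: self-similar}'s metric identity (which depends only on the ansatz, not on $\xi(t)$ solving the flow).

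\textbf{Verification.} Differentiating the ansatz and invoking \eqref{eq: Lie of xi-GENERAL} yields
\begin{equation*}
    \partial_t\xi(t)=\ell\,\rho^{-1}\rho'\,\xi(t)+\cL_{X_t}\xi(t)
    =\Bigl(\rho^{-1}\rho'\,g(t)+\tfrac{1}{2}\cL_{X_t}g(t)+X_t\lrcorner T(t)+\nabla_\fm X_t\Bigr)\diamond\xi(t),
\end{equation*}
where I used Lemma~\ref{lem: properties_diamond}--\ref{item: diamond_g} to rewrite $\ell\,\xi(t)=g(t)\diamond\xi(t)$. Now I apply diffeomorphism-equivariance $f_t^{\,*}T(\xi)=T(f_t^{\,*}\xi)$ and the fact that rescaling $\xi\mapsto\rho^\ell\xi$ leaves the intrinsic torsion invariant (cf.\ the remark on scaling preceding \eqref{eq: general_H_flow_with_assumptions}), together with $X_t=s'(t)X$, to transport the soliton identities \eqref{eq: soliton} to time $t$:
\begin{equation*}
    \tfrac{1}{2}\cL_{X_t}g(t)=s'(t)\rho^2 f_t^{\,*}\!\bigl(\tfrac{c}{2}g+S(\xi)\bigr),\qquad X_t\lrcorner T(t)+\nabla_\fm X_t=s'(t)\rho^2 f_t^{\,*}C(\xi).
\end{equation*}
Substituting and collecting terms, the coefficient of $g(t)\diamond\xi(t)$ becomes $\rho^{-1}\rho'+\tfrac{c}{2}s'(t)\rho^{-\ell(\alpha-1)}\cdot\rho^{-2}\cdot\rho^2=\rho^{-1}\rho'+\tfrac{c}{2}s'(t)$, which vanishes identically by the choice of $\rho$ and $s$. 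The remaining terms, by the scaling property \eqref{eq: assumption_flow} applied separately to the symmetric and skew-symmetric components (which inherit scaling because $\Sigma^2$ and $\Omega^2_\fm$ are preserved by $A(\xi)\mapsto A(\rho^\ell\xi)$), reassemble to $\bigl(S(\xi(t))+C(\xi(t))\bigr)\diamond\xi(t)=A(\xi(t))\diamond\xi(t)$, as required.

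\textbf{Main obstacle.} The delicate point is the bookkeeping that forces the two separate cancellations to occur simultaneously: the ODE for $\rho$ must be chosen so that the $c$-term in the soliton equation for $g$ cancels the $\rho^{-1}\rho'$ term arising from differentiating the conformal factor, while simultaneously the reparametrisation $s(t)$ must conform the spatial rescaling with the scaling exponent $\alpha$ of $A$. Once the prescription $\rho'=-\tfrac{c}{2}\rho^{1+\ell(\alpha-1)}$, $s'=\rho^{\ell(\alpha-1)}$ is in place, everything else is a direct application of Lemmas~\ref{lem: properties_diamond} and~\ref{lem: self-similar}, the diffeomorphism-equivariance of $A$ and $T$, and the definition of a soliton.
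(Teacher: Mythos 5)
Your construction coincides with the paper's: the ODE $\rho'=-\tfrac{c}{2}\rho^{1+\ell(\alpha-1)}$ is equivalent to the paper's $(\rho^\ell)'=-\tfrac{c\ell}{2}(\rho^\ell)^\alpha$, and $f_t=\phi^X_{s(t)}$ with $s'=\rho^{\ell(\alpha-1)}$ generates exactly the paper's time-dependent field $W_t=\rho^{\ell(\alpha-1)}X$, so the proof is correct and essentially the same, differing only in that you transport the soliton identities to time $t$ before substituting, whereas the paper substitutes them at $t=0$ inside the pullback and invokes \eqref{eq: assumption_flow} once at the very end. Two small points: the intermediate coefficient should simply be $\rho^{-1}\rho'+\tfrac{c}{2}s'(t)$ (your spurious factor $\rho^{-\ell(\alpha-1)}$ does not belong there, though the conclusion that it vanishes is right), and the appeal to a ``separate'' scaling of $S$ and $C$ is both unjustified by \eqref{eq: assumption_flow} and unnecessary, since only the combined identity $A(\lambda\xi)\diamond(\lambda\xi)=\lambda^\alpha A(\xi)\diamond\xi$ is ever used.
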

\begin{proof}
    Let $(\xi,X,c)$ be a soliton of \eqref{eq: general_H_flow_with_assumptions}. For the scaling factor of self-similarity, we need a smooth function $\rho:I\to\mathbb{R}\setminus\{0\}$ such that 
    $$
    (\rho(t)^{\ell})'=-\frac{c}{2}\ell(\rho(t)^{\ell})^{\alpha}
    \qandq \rho(0)=1,
    $$
    which can be solved precisely by
\[
\rho(t)=\left\{
\begin{array}{cc}
    \left(1+\frac{\ell(\alpha-1)}{2}ct\right)^{-\frac{1}{\ell(\alpha-1)}}, & \text{for} \quad \alpha\neq 1,\\
    e^{-\frac{c}{2}t}, &\text{for} \quad \alpha=1.
\end{array}\right.
\] 
\begin{multicols}{2}
    When $\ell(\alpha-1)<0$, the maximal definition interval $I$ of the scaling factor, for each soliton type, is given by the following table, along with the corresponding age. 
    \columnbreak
\[\begin{tabular}{c|c|c|c}
    $c$ & type & $I=\Dom(\rho)$ & age\\\hline
    $c>0$ & shrinking &  $(-\infty,-\frac{2}{\ell(\alpha-1)c})$ & ancient\\
    $c=0$ & steady & $(-\infty,\infty)$ & eternal\\
    $c<0$ & expanding & $(-\frac{2}{\ell(\alpha-1)c},\infty)$ & immortal
\end{tabular}\]
\end{multicols}
     
    In each case, we define the time-dependent smooth vector field
\[
W_t:=\rho(t)^{\ell(\alpha-1)}X,
\] 
    and let $f_t:M\to M$ be the corresponding $1$-parameter family of diffeomorphisms. Now define the $H$-structures $\xi(t):=\rho(t)^{\ell}f_t^{\ast}\xi$. We claim that $\xi(t)$ is a (self-similar) solution to  \eqref{eq: general_H_flow_with_assumptions} for every $t\in I$. Indeed, using  \eqref{eq: Lie of xi-GENERAL}, Lemma \ref{lem: properties_diamond}-\ref{item: diamond_g} and the soliton equations \eqref{eq: soliton}, we get:
\begin{align*}
    \frac{\partial}{\partial t}\xi(t) 
    &= (\rho(t)^{\ell})'f_t^{\ast}\xi + \rho(t)^{\ell}f_t^{\ast}\cL_{W_t}\xi =\rho(t)^{\ell\alpha}\left(-\frac{c}{2}\ell f_t^{\ast}\xi + f_t^{\ast}\cL_{X}\xi\right)\\
    &=\rho(t)^{\ell\alpha}f_t^{\ast}\left(-\frac{c}{2}\ell\xi  + \frac{1}{2}(\cL_X g\diamond\xi) + (X\lrcorner T + \nabla_{\fm} X)\diamond\xi\right)\\
    &=(\rho(t)^{\ell})^\alpha f_t^{\ast}(A(\xi)\diamond\xi)\\
    &= A(\xi(t))\diamond\xi(t),
\end{align*} 
    where in the last line we used the key assumption \eqref{eq: assumption_flow}.
\end{proof}
As a corollary of the above proof, we note that if $M$ is closed and the flow \eqref{eq: general_H_flow_with_assumptions} is the $\pm$-gradient flow of some functional which is invariant under diffeomorphisms, then \emph{steady solitons are critical points of the functional}; e.g., in the case where $S=0$ and $C=\Div T$, which induce the negative gradient flow of $\mathcal{D}$ restricted to isometric $H$-structures (by Lemma \ref{lem: gradient_dirichlet_along_iso_var}), the steady solitons are harmonic structures (generalising e.g. \cite[Remark 2.18]{dgk-isometric}).
\begin{corollary}
    Suppose furthermore that $M^n$ is a closed manifold and that the flow \eqref{eq: general_H_flow_with_assumptions}, satisfying the assumptions \eqref{eq: assumption_flow}, is the $\pm$-gradient flow of some functional $E$ on a certain subspace $\mathcal{H}$ of $H$-structures on $M$, satisfying $E(f^{\ast}\xi) = E(\xi)$ for any $f\in\mathrm{Diff}(M)$; so suppose we have $A(\xi(t))=(\pm)\mathrm{grad}(E)(\xi(t))$, with respect to some natural induced metric on $\mathcal{H}$. Then steady solitons of \eqref{eq: general_H_flow_with_assumptions} are critical points of the functional $E$.
\end{corollary}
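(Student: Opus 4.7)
The plan is to exploit the fact that a steady soliton $(\xi, X, 0)$ induces, via Proposition~\ref{prop: soliton_induce_self-similar}, a self-similar solution along which the functional $E$ is forced to be constant by diffeomorphism invariance, yet monotone by the gradient flow structure; this will pin down the initial datum as a critical point.

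First, I would specialise the construction in the proof of Proposition~\ref{prop: soliton_induce_self-similar} to the case $c=0$. The defining ODE $(\rho(t)^{\ell})' = -\tfrac{c}{2}\ell\,(\rho(t)^{\ell})^{\alpha}$ with $\rho(0)=1$ then forces $\rho(t)\equiv 1$ on the maximal interval $I$, so the stationary vector field is constant in time, $W_t = X$, and the induced self-similar solution is simply $\xi(t) = f_t^{\ast}\xi$, where $\{f_t\}$ is the one-parameter group of diffeomorphisms generated by $X$.

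Next, I would use the diffeomorphism invariance hypothesis $E(f^{\ast}\xi)=E(\xi)$ to conclude that $E(\xi(t)) = E(f_t^{\ast}\xi) = E(\xi)$ for all $t\in I$; in particular, $\tfrac{d}{dt}E(\xi(t))\equiv 0$. On the other hand, the gradient flow identity $A(\xi(t))\diamond\xi(t) = (\pm)\mathrm{grad}(E)(\xi(t))$ together with the chain rule, computed with respect to the natural metric on $\mathcal{H}$, yields
\begin{equation*}
    \frac{d}{dt}E(\xi(t))
    = \Big\langle \mathrm{grad}(E)(\xi(t)),\, \tfrac{\partial}{\partial t}\xi(t)\Big\rangle
    = (\pm)\,\big\|\mathrm{grad}(E)(\xi(t))\big\|^2.
\end{equation*}
Combining the two expressions forces $\mathrm{grad}(E)(\xi(t))=0$ for all $t\in I$, and in particular at $t=0$ this gives $\mathrm{grad}(E)(\xi)=0$, so $\xi$ is a critical point of $E$, as claimed.

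There is no serious obstacle here: the only delicate point is verifying that the hypothesis ``$A(\xi(t)) = (\pm)\mathrm{grad}(E)(\xi(t))$ with respect to some natural metric'' genuinely entails the chain-rule identity above, but this is automatic once one reads the statement as saying that $A\diamond\xi$ represents (plus or minus) the gradient vector field of $E$ on $\mathcal{H}$ under the pairing used to define it. Everything else is a direct assembly of the self-similarity from Proposition~\ref{prop: soliton_induce_self-similar} with the two monotonicity/invariance inputs.
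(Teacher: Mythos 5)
Your argument is correct and follows essentially the same route as the paper's proof: both specialise Proposition~\ref{prop: soliton_induce_self-similar} to $c=0$ to obtain the eternal self-similar solution $\xi(t)=f_t^{\ast}\xi$, invoke diffeomorphism invariance to get $E(\xi(t))\equiv E(\xi)$, and differentiate at $t=0$ using the gradient-flow structure to conclude $\mathrm{grad}(E)(\xi)=0$. Your version merely spells out the chain-rule step $\tfrac{d}{dt}E(\xi(t))=(\pm)\|\mathrm{grad}(E)(\xi(t))\|^2$ that the paper leaves implicit.
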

\begin{proof}
    If $(\xi,X,c)$ is a steady soliton for the induced flow \eqref{eq: general_H_flow_with_assumptions}, then by the above proof of Proposition \ref{prop: soliton_induce_self-similar} it induces a self-similar solution of the form $\xi(t)=f_t^{\ast}\xi$, defined for all $t\in\mathbb{R}$, for some family $f_t\in\mathrm{Diff}(M)$ with $f_0=\mathrm{Id}_M$. It then follows by the invariance of $E$ under diffeomorphisms that $E(\xi(t))=E(\xi)$ for all $t\in\mathbb{R}$, so that taking derivatives at $t=0$ we get $\mathrm{grad}(E)(\xi)=0$, i.e., $\xi$ is a critical point of $E$.
\end{proof}

 We also note the following:
\begin{lemma}\label{lem: no_compact_solitons}
    Suppose that the flow of $H$-structures \eqref{eq: general_H_flow_with_assumptions}, satisfying the assumptions \eqref{eq: assumption_flow}, is isometric, i.e. $S\equiv 0$ (e.g. as in the case of the harmonic $H$-flow), and suppose the underlying metric $g$ on $M$ is complete. If there is a shrinking or expanding soliton of the flow, then $(M^n,g)$ must be isometric to the Euclidean space $(\mathbb{R}^n,g_{\circ})$. In particular, there is no compact shrinking or expanding soliton of such an isometric flow.
\end{lemma}
\begin{proof}
    This follows from the classical result that a complete Riemannian $n$-manifold $(M,g)$ admitting a nontrivial homothetic vector field, i.e. a vector field $0\neq X\in\sX(M)$ satisfying $\mathcal{L}_X g = cg$, for some constant $c\neq 0$, must be isometric to the Euclidean space $(\mathbb{R}^n,g_{\circ})$ (see e.g. \cite{tashiro1965complete}).
\end{proof}

For solitons in Euclidean space we have the following generalisation of \cite[Proposition 2.19]{dgk-isometric}.

\begin{lemma}
\label{lem: soliton_in_Rn}
    Suppose that $(\xi,X,c)$ is a soliton for the  {isometric flow of $H$-structures 
    \eqref{eq: general_H_flow_with_assumptions} with $S\equiv 0$, satisfying the assumptions \eqref{eq: assumption_flow}, on $M=\mathbb{R}^n$ and with} $\xi$ compatible with the Euclidean metric $g_{\circ}$ \eqref{eq: flat_metric}. Then the components of $X=X_i\partial_i$ have the form
\begin{equation}\label{eq: soliton_solution}
X_i = \frac{c}{2}x^i + \sum_{\substack{1\leqslant j\leqslant n\\ j\neq i}}a_{ij}x^j + b_i,    
\end{equation}
    where $[a_{ij}]$ is a skew-symmetric matrix and $b_i\in\mathbb{R}$. 
\end{lemma}
\begin{proof}
    For $g=g_{\circ}$, the equation $\cL_X g = cg $ becomes $\partial_i X_j + \partial_j X_i = c\delta_{ij} $,  {and its solutions are of the form \eqref{eq: soliton_solution}}.
\end{proof}
\begin{example}
\label{ex: special_solitons}
    For the harmonic flow of $H$-structures, as in  Example \ref{ex: harmonic_soliton}, Lemma \ref{lem: soliton_in_Rn} shows that if $(\xi,X,c)$ is a harmonic $H$-soliton on $(\mathbb{R}^n,g_{\circ})$ then
    \[
    X(x)=\frac{c}{2}x + X_0(x),
    \qwithq
    X_0(x)=\sum_{i,j} a_{ij} x^j\partial_i + \sum_i b_i\partial_i,
    \]
    where $[a_{ij}]$ is skew-symmetric, therefore $\cL_{g_{\circ}}X_0 = 0$ and so $X_0$ is a Killing vector field. A special case is when $X_0(x)=-\frac{c}{2}x_0$ for some $x_0\in\mathbb{R}^n$, in which case $\cL_X g_{\circ} = cg_{\circ}$ is trivially satisfied, and also
    \[
    \nabla_{\fm}(X)=\frac{c}{2}\pi_{\fm}(dx^i\otimes\partial_i)=0,
    \] since $dx^i\otimes\partial_i\in\Omega^0\subset\Sigma^2(M)\subset\Gamma(\mathrm{End}(TM))$. Then solitons $(\xi,X,c)$ for which $X_0=0$ reduce to pairs $(\xi,c)$ satisfying the particular stationary condition
\begin{equation}
\label{eq: special_soliton}
    \Div T = \frac{c}{2}(x-x_0)\lrcorner T.    
\end{equation}
\end{example}

\section{The harmonic flow of $H$-structures}\label{sec: harmonic_flow}

Throughout this section, we assume that $H=\mathrm{Stab}_{\SO(n)}(\xi_{\circ})$, where $\xi_{\circ}$ is an element of a $r$-dimensional $\SO(n)$-submodule $V\leqslant \oplus \cT^{p,q}(\mathbb{R}^n)$, where $V=V_1\oplus\ldots\oplus V_k$ with $V_i\leqslant \cT^{p_i,q_i}(\mathbb{R}^n)$. Thus, a compatible $H$-structure on an oriented Riemannian $n$-manifold $(M^n,g)$ is simply a geometric structure $\xi$ modelled on $\xi_{\circ}$, i.e. $\xi\in\Gamma(\mathcal{F})$, where $\mathcal{F}\subset\oplus\cT^{p,q}(TM)$ is a rank $r$ vector subbundle with typical fibre $V$, such that for all $x\in M$ there is $u\in\Fr(M,g)_x$ with $u.\xi_{\circ}=\xi$. Furthermore, we shall assume that $H$ is such that we have $\lambda_1=\ldots=\lambda_k$ in Lemma \ref{lem: orthogonality}, i.e. we assume there is $c>0$ such that
\begin{equation}
\label{eq: key_assumption}
    \langle A\diamond\xi,B\diamond\xi\rangle = c\langle A,B\rangle,\quad\forall A,B\in\Omega_{\fm}^2(M).
\end{equation} This holds, for instance, when $\fm=\fh^{\perp}\subset\fso(n)$ is an irreducible $H$-module. In fact, all of these conditions are satisfied by the main examples $U(m)\subset\SO(2m)$, $\rm G_2\subset\SO(7)$ and $\rm Spin(7)\subset\SO(8)$ that illustrated Section \ref{sec: H-structures}, as well as by the quaternion-K\"ahler case $\Sp(k)\Sp(1)\subset\SO(4k)$, specifically studied in \cite{udhav2022quaternionic}, and also by the trivial subgroup case $H=\{1\}\subset\SO(n)$, see Example \ref{ex: trivial_subgroup}.
 {By contrast, recall from Example \ref{ex: inner_prod_SU} that this is not the case for $\SU(m)$, which has $\lambda_1(m)\neq\lambda_2(m)$ for all $m\geqslant 2$.}

\subsection{Review of known results: short-time existence and Shi-type estimates}
\label{sec: review}

Let $(M^n,g)$ be an oriented Riemannian $n$-manifold of bounded geometry. In §\ref{sec: preliminaries} we saw that a compatible $H$-structure $\xi$ on $(M^n,g)$ corresponds to a section $\sigma$ of $\pi:\Fr(M,g)/H\to M$. Now, there is a natural isomorphism between $\pi:\Fr(M,g)/H\to M$ and the associated bundle $\Fr(M,g)\times_{\mathrm{SO}(n)} \mathrm{SO}(n)/H$, which fibrewise is an isometry with respect to the bi-invariant metric on $\mathrm{SO}(n)$. The induced one-to-one correspondence between sections $\sigma\in\Gamma(\Fr(M,g)/H)$ and $\mathrm{SO}(n)$-equivariant maps $s:\Fr(M,g)\to\mathrm{SO}(n)/H$ identifies solutions to the harmonic section flow \eqref{eq: HSF} with $\mathrm{SO}(n)$-equivariant solutions to the classical harmonic map heat flow for maps $\Fr(M,g)\to\mathrm{SO}(n)/H$, where the target space $\mathrm{SO}(n)/H$ is considered with its normal homogeneous Riemannian manifold structure. Since the latter flow is known to have short-time existence and uniqueness of solutions, one has the following result \cite{loubeau-saearp}*{Proposition 17}:

\begin{proposition}[Short time existence]
\label{prop: short-time_existence}
   Given any smooth compatible $H$-structure $\xi_0$ on $(M^n,g)$, there is a maximal time $0<\tau(\xi_0)\leqslant\infty$ such that the harmonic $H$-flow \eqref{eq: harmonic_flow} with initial condition $\xi_0$ admits a unique smooth solution $\xi(t)$ for $t\in[0,\tau)$.
\end{proposition}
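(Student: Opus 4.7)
The plan is to reduce the problem to the classical short-time existence theory for the harmonic map heat flow by means of the equivariance correspondence sketched immediately before the statement. First I would upgrade the fibre bundle picture to a Riemannian one: equipping $\Fr(M,g)$ with the Kaluza--Klein-type metric induced by the Levi--Civita connection on $(M,g)$ and the bi-invariant metric on $\SO(n)$, the projection $\pi_{\SO(n)}\colon\Fr(M,g)\to M$ becomes a Riemannian submersion, $\SO(n)$ acts on the total space by isometries, and since $(M,g)$ is of bounded geometry and $\SO(n)$ is compact, $\Fr(M,g)$ is a complete Riemannian manifold of bounded geometry. Equipping $\SO(n)/H$ with its normal homogeneous metric, the correspondence $\sigma\in\Gamma(\Fr(M,g)/H) \leftrightarrow s\in C^{\infty}_{\SO(n)}(\Fr(M,g),\SO(n)/H)$ identifies compatible $H$-structures $\xi$ with $\SO(n)$-equivariant maps $s_\xi$.

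Next I would verify that \eqref{eq: harmonic_flow} corresponds, under this identification, to the harmonic map heat flow
\begin{equation*}
    \frac{\partial}{\partial t} s = \tau(s),
\end{equation*}
restricted to $\SO(n)$-equivariant maps. Concretely, for an equivariant $s$ the full tension field $\tau(s)$ takes values in the vertical distribution, and under the canonical identification $\mathcal{V}\cong\underline{\mathfrak{m}}$ it agrees with the vertical tension $\tau^{\mathcal{V}}(\sigma)$ of \cite{loubeau-saearp}*{Proposition 4}; combining this with the computation $\tau^{\mathcal{V}}(\sigma)=\Div T$ of \cite{Gonzalez-Davila2009}*{Theorem 3.6} and the identity $\nabla_X\xi=T_X\diamond\xi$ from Lemma~\ref{lem: torsion_equiv_nablaxi}, one obtains the equivalence of the two flow equations up to the chain-rule factor relating $\partial_t\sigma$ and $\partial_t\xi=\partial_t(\sigma^\ast\Xi)$ given by the universal section \eqref{eq: universal section}.

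Now I would invoke the short-time existence and uniqueness theorem for the harmonic map heat flow from a complete Riemannian manifold of bounded geometry into a compact Riemannian manifold; the non-compactness of the source is handled by the standard parabolic theory on such manifolds (Eells--Sampson combined with Hamilton's adaptation, or equivalently the DeTurck-style treatment used in \cite{he2021,Dwivedi-Loubeau-SaEarp2021}), which produces a unique smooth solution $s(t)$, defined on a maximal interval $[0,\tau)$, with initial condition $s_0=s_{\xi_0}$. The key observation is that, since $\SO(n)$ acts by isometries on both source and target, for every $g\in\SO(n)$ the map $t\mapsto s(t)\circ R_g$ (with $R_g$ the right translation on $\Fr(M,g)$) solves the same evolution equation with equivariantly translated initial data, so uniqueness forces $s(t)$ to remain $\SO(n)$-equivariant for all $t\in[0,\tau)$.

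The equivariant solution descends to a unique smooth family of sections $\sigma(t)=\sigma_{s(t)}\in\Gamma(\Fr(M,g)/H)$, and hence via \eqref{eq: Xi correspondence} to a unique smooth family of compatible $H$-structures $\xi(t)$ solving \eqref{eq: harmonic_flow} with $\xi(0)=\xi_0$; the maximal time $\tau(\xi_0)\in(0,\infty]$ is inherited from the maximal existence time of $s(t)$. The principal technical point that needs care is precisely the verification in the second paragraph that the $\diamond$-form of the flow coincides with the vertical tension field of the equivariant harmonic map; once granted, everything else is a clean application of standard parabolic PDE theory and the equivariance of the harmonic map heat flow under source/target isometries.
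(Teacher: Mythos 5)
Your proposal follows exactly the route the paper takes: the paper's proof is essentially a citation to \cite{loubeau-saearp}*{Proposition 17}, which reduces the harmonic $H$-flow to the $\SO(n)$-equivariant harmonic map heat flow on $\Fr(M,g)\to\SO(n)/H$, invokes classical short-time existence and uniqueness for that flow on a complete source of bounded geometry, and preserves equivariance by uniqueness. Your write-up fills in more of the technical scaffolding than the paper spells out (Kaluza--Klein metric on the total space, the comparison of tension fields via \cite{Gonzalez-Davila2009}, and the explicit uniqueness-plus-isometries argument for propagation of equivariance), but it is the same argument.
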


Another important fact that was also previously proved in the generality of harmonic section flows is the following Bochner-type estimate. In the following, we do not require the assumption \eqref{eq: key_assumption}.
\begin{lemma}[Bochner-type estimate]\label{lem: dif_ineq}
	There is a uniform constant $c>0$, depending only on $(M,g)$ and $H$, such that if $\{\xi(t)\}_{t\in I}$ is a solution to the harmonic $H$-flow \eqref{eq: harmonic_flow} on $B_{r}(y)\subset (M,g)$, defined along a nondegenerate interval $I\subset\mathbb{R}$, then the following differential inequality holds on $B_{r}(y)\times I$:
\begin{equation}
\label{ineq: Bochner_estimate}
	(\partial_t-\Delta)e(\xi)\leqslant c(e(\xi)^2+1),
\end{equation}
    where $e(\xi)$ denotes either $|T|^2$ or $|\nabla\xi|^2$. Moreover, if $g$ is flat then the above estimate improves to
	\[
	(\partial_t-\Delta)e(\xi)\leqslant ce(\xi)^2.
	\]
\end{lemma}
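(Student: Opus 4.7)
The idea is to combine a direct Bochner computation with the evolution of the intrinsic torsion along the harmonic flow, using the Bianchi-type identity of Corollary \ref{cor: m-part_Bianchi_identity} to relate $\Delta T$ to $\nabla\Div T$. Since Lemma \ref{lem: torsion_equiv_nablaxi} gives $|\nabla\xi|^2 \sim |T|^2$ pointwise (indeed $|\nabla\xi|^2 = \sum_i\lambda_i|\pi_{\fm_i}(T)|^2$), I would focus first on $e(\xi) = |T|^2$, as the computation for $|\nabla\xi|^2$ will be strictly parallel. Note that one cannot simply transfer the estimate from $|T|^2$ to $|\nabla\xi|^2$ via the norm equivalence, since differentiating an inequality does not yield a differential inequality, so both cases must be treated in turn.

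Specialising the flow \eqref{eq: general_H_flow} to $S=0$, $C=\Div T$, equation \eqref{eq: ddt_norm_T} collapses to $\partial_t|T|^2 = 2\sum_l\langle\nabla_l\Div T, T_l\rangle$ (the commutator contribution from \eqref{eq: m_part ddt_T} vanishes by $\mathrm{Ad}$-invariance of the trace pairing after summation over $l$). On the other hand, the standard Weitzenb\"ock identity reads
\[
    \Delta|T|^2 = 2\sum_l\langle\Delta T_l, T_l\rangle + 2|\nabla T|^2.
\]
Applying $\nabla^a$ to the Bianchi-type identity \eqref{eq: m-part_Bianchi_identity} and commuting covariant derivatives via $[\nabla^a,\nabla_l]T_a = R\ast T$, I would obtain a schematic identity of the form
\[
    \Delta T_l - \nabla_l\Div T = T\ast\nabla T + \nabla R + R\ast T,
\]
where the $\Omega^2_{\fh}$-ambiguity in \eqref{eq: m-part_Bianchi_identity} is harmless since it lies in $\ker(\cdot\diamond\xi)$ and one projects onto $\Omega^2_{\fm}$ throughout. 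Combining,
\[
    (\partial_t-\Delta)|T|^2 = -2|\nabla T|^2 - 2\sum_l \bigl\langle T\ast\nabla T + \nabla R + R\ast T,\; T_l\bigr\rangle,
\]
and Young's inequality $|T|^2|\nabla T| \leqslant \tfrac{1}{2}|\nabla T|^2 + \tfrac{1}{2}|T|^4$, together with the uniform bounds on $R$ and $\nabla R$ on $(M,g)$, absorbs the gradient term to yield $(\partial_t-\Delta)|T|^2 \leqslant c(|T|^4 + |T|^2 + 1) \leqslant c(e^2+1)$, using $|T|^2 \leqslant \tfrac{1}{2}(|T|^4+1)$. When $g$ is flat, $R\equiv 0$ and $\nabla R\equiv 0$, so the subleading $|T|^2$ and constant contributions disappear, improving the estimate to $(\partial_t-\Delta)e \leqslant ce^2$.

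For the analogous estimate with $e = |\nabla\xi|^2$, I would repeat the computation using Proposition \ref{prop: evo_nabla_xi}, which for the harmonic flow gives $\partial_t\nabla_l\xi = \Div T\diamond\nabla_l\xi + \nabla_l\Div T\diamond\xi$: Lemma \ref{lem: properties_diamond}--\ref{item: diamond_orth} kills the first term upon pairing with $\nabla_l\xi$, while $\Delta\nabla\xi$ is handled by another Weitzenb\"ock identity in conjunction with Lemma \ref{lem: basic_estimate_harmonic}, the latter accounting for the rough Laplacian of $\xi$ itself. The main technical hurdle is the careful algebraic control of the cubic $T\ast T\ast\nabla T$ term so that it can genuinely be absorbed by $-|\nabla T|^2$ (rather than merely bounded by $|\nabla T|^2$), which is precisely what forces the quadratic, not linear, dependence on $e$ on the right-hand side and justifies the improvement in the flat case, where there are no curvature terms to prevent an even sharper estimate.
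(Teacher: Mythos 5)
Your proposal is correct in substance and closely parallels the paper's proof, but there is one tactical difference for the case $e(\xi)=|\nabla\xi|^2$ that is worth spelling out, plus a small omission in your schematic identity.

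For $e(\xi)=|T|^2$, the paper simply cites the corresponding estimate from Dwivedi--Loubeau--S\'a Earp rather than reproving it; your route via applying $\nabla^a$ to the Bianchi-type identity \eqref{eq: m-part_Bianchi_identity} and commuting is the right shape. Note, however, that your schematic identity $\Delta T_l-\nabla_l\Div T = T\ast\nabla T + \nabla R + R\ast T$ omits a $T\ast T\ast T$ contribution (from differentiating the quadratic Bianchi terms and from $\nabla\pi_\fm$, whose size is $O(|T|)$ since the $\fm/\fh$ splitting is carried by $\xi$) and a $T\ast R$ piece. Both are harmless — when paired against $T$ they contribute $|T|^4=e^2$ and $c|T|^2\leqslant c(e^2+1)$ — but they should appear.

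For $e(\xi)=|\nabla\xi|^2$, the paper's computation starts from $\Delta|\nabla\xi|^2$, commutes twice via the Ricci identity \eqref{eq: Ricci_identity}, and then substitutes the \emph{combined} identity $\nabla_a\nabla_a\xi = \partial_t\xi + T_k\diamond\nabla_k\xi$ (obtained by merging Lemma \ref{lem: basic_estimate_harmonic} with the flow equation and $\nabla_k\xi=T_k\diamond\xi$), rather than the raw form $\Delta\xi = \Div T\diamond\xi + T_k\diamond(T_k\diamond\xi)$. With the combined form, applying $\nabla_l$ produces exactly two residual pieces: $\nabla_l T_k\diamond\nabla_k\xi$, which can be antisymmetrised in $(l,k)$ via the skew-adjointness of Lemma \ref{lem: properties_diamond}--\ref{item: diamond_adjoint} and then controlled directly by Bianchi; and $T_k\diamond\nabla_l\nabla_k\xi$, absorbed against $-|\nabla^2\xi|^2$ by Young. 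Your route, which substitutes the raw $\Delta\xi$ from Lemma \ref{lem: basic_estimate_harmonic} after computing $\partial_t|\nabla\xi|^2$ separately, additionally produces the piece $T_k\diamond(\nabla_l T_k\diamond\xi)$, whose pairing with $\nabla_l\xi$ is of size $|T|^2|\nabla T|$ and cannot be antisymmetrised. Absorbing it against $-|\nabla^2\xi|^2$ is still possible, but requires the extra observation $|\nabla T|^2\lesssim |\nabla^2\xi|^2 + |T|^4$, which follows from $\nabla^2\xi = \nabla T\diamond\xi + T\diamond(T\diamond\xi)$ together with the fact that $\pi_\fh(\nabla_a T_l)=-\pi_\fh([T_a,T_l])$, so that $\fh$-part of $\nabla T$ killed by $\diamond\xi$ is itself quadratic in $T$. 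You allude to this hurdle at the end; it should be made explicit, or better, avoided by adopting the paper's rewriting of $\Delta\xi$.
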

\begin{proof}
    When $e(\xi)=|T|^2$,  the inequality \eqref{ineq: Bochner_estimate} follows directly from \cite[Lemma 2.15]{Dwivedi-Loubeau-SaEarp2021}, which actually gives:
\[
\frac{1}{2}(\partial_t - \Delta)|T|^2 \leqslant c(|T|^4 + 1) -|\nabla T|^2.
\] 
    We shall work out a separate direct proof for the case $e(\xi)=|\nabla\xi|^2$, based on Lemmas \ref{lem: torsion_equiv_nablaxi} and \ref{lem: basic_estimate_harmonic}. We start by computing the Laplacian, at the centre of normal coordinates, using the Ricci identity \eqref{eq: Ricci_identity}:
\begin{align}
    \frac{1}{2}\Delta|\nabla\xi|^2 
    &= \langle\nabla_a\nabla_a(\nabla_l\xi),\nabla_l\xi\rangle + |\nabla^2\xi|^2\nonumber\\
    &= \langle\nabla_a(\nabla_l\nabla_a\xi + R_{la}\diamond\xi),\nabla_l\xi\rangle + |\nabla^2\xi|^2\nonumber\\
    &= \langle\nabla_l\nabla_a\nabla_a\xi + 2R_{la}\diamond\nabla_a\xi + \nabla_a R_{la}\diamond\xi,\nabla_l\xi\rangle + |\nabla^2\xi|^2.\label{eq: Bochner_part_1}
\end{align} 
    When $\xi=\xi(t)$ is a solution of the flow \eqref{eq: harmonic_flow}, it follows from Lemmas \ref{lem: torsion_equiv_nablaxi} and \ref{lem: basic_estimate_harmonic} that
\begin{equation}\label{eq: rough_laplacian_of_harm_flow_sol}
\nabla_a\nabla_a\xi = \partial_t\xi + T_k\diamond(\nabla_k\xi),
\end{equation} which implies
\begin{align}
    \langle \nabla_l\nabla_a\nabla_a\xi,\nabla_l\xi\rangle &= \frac{1}{2}\partial_t|\nabla\xi|^2 + \langle\nabla_l T_k\diamond\nabla_k\xi + T_k\diamond\nabla_l\nabla_k\xi,\nabla_l\xi\rangle\nonumber\\
    &= \frac{1}{2}\partial_t|\nabla\xi|^2+\frac{1}{2}\langle\left(\nabla_lT_k-\nabla_kT_l\right)\diamond\nabla_k\xi,\nabla_l\xi\rangle + \langle T_k\diamond\nabla_l\nabla_k\xi,\nabla_l\xi\rangle,\label{eq: Bochner_part_2}
\end{align} since $\langle\nabla_l T_k\diamond\nabla_k\xi,\nabla_l\xi\rangle = -\langle \nabla_k\xi,\nabla_lT_k\diamond\nabla_l\xi\rangle$, 
by Lemma \ref{lem: properties_diamond}--\ref{item: diamond_adjoint}. Using the Bianchi-type identity (Corollary \ref{cor: m-part_Bianchi_identity}), together with the inequality $|T|\leqslant c|\nabla\xi|$ [by Lemma \ref{lem: torsion_equiv_nablaxi}], and the fact that the curvature is uniformly bounded, we get
\begin{align*}
   |\langle\left(\nabla_lT_k-\nabla_kT_l\right)\diamond\nabla_k\xi,\nabla_l\xi\rangle| 
   &= |\langle \left(2[T_k,T_l] -\pi_\fm([T_k,T_l]) -\pi_\fm(R_{kl})\right)\diamond\nabla_k\xi,\nabla_l\xi\rangle|\\
   &\leqslant (|2[T_k,T_l]|+|\pi_\fm([T_k,T_l])|+|\pi_\fm(R_{kl})|)|\nabla_k\xi||\nabla_l\xi|\\
   &\leqslant c|\nabla\xi|^4+c|\nabla \xi|^2
\end{align*}
Moreover, using Young's inequality and again the inequality $|T|\leqslant c|\nabla\xi|$, we have
\begin{equation}
\label{ineq: Bochner_part_3}
    |\langle T_k\diamond\nabla_l\nabla_k\xi,\nabla_l\xi\rangle| \leqslant c|T||\nabla^2\xi||\nabla\xi| \leqslant |\nabla^2\xi|^2 + c|\nabla\xi|^4.
\end{equation} 
    Finally, since $(M,g)$ has bounded geometry, we have 
\begin{equation}
\label{ineq: Bochner_part_4}
    |\langle 2R_{la}\diamond\nabla_a\xi + \nabla_a R_{la}\diamond\xi,\nabla_l\xi\rangle|\leqslant c |\nabla\xi|^2 + c|\nabla\xi|\leqslant c(|\nabla\xi|^2 + 1).
\end{equation} 
    Combining \eqref{eq: Bochner_part_1}, \eqref{eq: Bochner_part_2}, \eqref{ineq: Bochner_part_3} and \eqref{ineq: Bochner_part_4}, we get
\[
\frac{1}{2}(\partial_t - \Delta)|\nabla\xi|^2 \leqslant c(|\nabla\xi|^4 + |\nabla\xi|^2 + 1)\leqslant c(|\nabla\xi|^4 + 1),
\] 
    as claimed. Note also from the above proof that if $g$ is flat then the constant in \eqref{ineq: Bochner_part_4} can be taken to be zero, so in this case one gets $\frac{1}{2}(\partial_t - \Delta)|\nabla\xi|^2\leqslant c|\nabla\xi|^4$. 
\end{proof}

We also recall the known Shi-type estimates along the harmonic $H$-flow \cite{Dwivedi-Loubeau-SaEarp2021}*{Proposition 2.16}.
\begin{proposition}[Shi-type estimates] 
\label{prop: Shi-estimates}
    Let $\kappa\geqslant 1$ and $\{\xi(t)\}_{t\in [0, \kappa^{-4}]}$ be a solution of the harmonic $H$-flow  \eqref{eq: harmonic_flow} on $(M^n,g)$. Assume that there are constants $B_j$, $0\leqslant j\in\mathbb{Z}$, such that
\[
    |\nabla^j Rm|\leqslant B_j \kappa^{j+2},\quad\forall j\geqslant 0.
\]
    If $|T| \leqslant \kappa$, then, for each $m\in \mathbb{N}$, there is a constant $c_m=c_m(M,g,H)$ such that 
\[
    |\nabla^{m} T| \leqslant c_m \kappa t^{-m/2},
    \quad\forall t\in \left[0, \frac{1}{\kappa^4}\right].
\] 
    If $|\nabla\xi|\leqslant \kappa$, then there exists  $c_0=c_0(M,g,H)\geqslant 1$ with the following property: for each $m\in \mathbb{N}$, there is a constant $c_m'=c_m'(M,g,H)$ such that 
\[
    |\nabla^{m}(\nabla\xi)| \leqslant c_m' \kappa t^{-m/2},
    \quad\forall t\in \left[0, \frac{1}{(c_0 \kappa)^4}\right].
\]
\end{proposition}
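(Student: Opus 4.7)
The plan is to follow the classical Bernstein--Shi method for parabolic PDEs, adapted to the tensorial setting of the harmonic $H$-flow, arguing by induction on $m$ via the parabolic maximum principle applied to carefully chosen auxiliary functions. The whole strategy is a direct extension of Lemma~\ref{lem: dif_ineq} to higher covariant derivatives, combined with the standard technique of multiplying by suitable powers of $t$ to absorb the initial-time singularity.

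The first step is to derive evolution inequalities for $|\nabla^m T|^2$ and $|\nabla^{m+1}\xi|^2$. Starting from the torsion evolution of Corollary~\ref{cor: evo_torsion} with $S = 0$ and $C = \Div T$, and combining with the Bianchi-type identity of Corollary~\ref{cor: m-part_Bianchi_identity} to rewrite $\nabla\Div T$ modulo commutator and curvature terms, one obtains schematically
\begin{equation*}
    \partial_t T = \Delta T + T\circledast T\circledast T + T\circledast\Riem + \nabla\Riem,
\end{equation*}
where $\circledast$ stands for a bounded multilinear contraction. Commuting covariant derivatives with $\partial_t - \Delta$ using the Ricci identity \eqref{eq: Ricci_identity} and iterating, for each $m\geqslant 0$ one derives
\begin{equation*}
    (\partial_t - \Delta)|\nabla^m T|^2
    \leqslant -2|\nabla^{m+1}T|^2
    + c_m\!\!\!\sum_{i+j+k=m}\!\!\!|\nabla^i T||\nabla^j T||\nabla^k T||\nabla^m T|
    + c_m\sum_{j=0}^{m}|\nabla^j T||\nabla^m T| + c_m.
\end{equation*}

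With these inequalities in hand, the second step is the inductive application of the maximum principle. Assuming $|T|\leqslant \kappa$ and the bounds $|\nabla^j T|\leqslant c_j\kappa\, t^{-j/2}$ for all $j < m$ (the base case $m = 0$ being the hypothesis), one considers an auxiliary quantity of the form
\begin{equation*}
    F_m(x,t)
    := \bigl(\alpha_m \kappa^2 + t^m|\nabla^m T|^2\bigr)\bigl(\beta_m \kappa^2 + t^{m-1}|\nabla^{m-1} T|^2\bigr)
\end{equation*}
(or a sum with similar weights), for constants $\alpha_m,\beta_m > 0$ to be fixed. A computation using the evolution inequalities above, the inductive bounds, and Young's inequality shows that, on $[0,\kappa^{-4}]$,
\begin{equation*}
    (\partial_t - \Delta)F_m \leqslant A_m\kappa^4 F_m + B_m\kappa^8,
\end{equation*}
whence the parabolic maximum principle on the closed manifold $M$ (or, in the local setting, combined with a standard cutoff function centered on geodesic balls) yields $F_m\leqslant C_m\kappa^4$, and hence the desired pointwise bound on $|\nabla^m T|$. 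The second statement follows by the same argument, with the starting bound $|\nabla\xi|\leqslant \kappa$ converted into $|T|\leqslant \tilde{c}\kappa$ via Lemma~\ref{lem: torsion_equiv_nablaxi} (which produces the overall constant $c_0$), and then $|\nabla^{m+1}\xi|$ is controlled from $|\nabla^m T|$ by iterating $\nabla\xi = T\diamond\xi$ and expanding the Leibniz rule.

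The main obstacle is the careful bookkeeping of the cubic and curvature terms in the evolution of $|\nabla^m T|^2$, and the precise choice of the coefficients $\alpha_m,\beta_m$ (depending on $\kappa$) that make the bad terms absorbable by $-2|\nabla^{m+1}T|^2$ and by the diffusion on the $(m-1)$-factor. A secondary subtlety, handled by the time-scale restriction $t\in[0,\kappa^{-4}]$, is ensuring that $t^m|\nabla^m T|^2\lesssim \kappa^2$ uniformly without $t$ growing large enough to destroy the Bernstein balance. Since the argument is essentially the one of Proposition~2.16 of \cite{Dwivedi-Loubeau-SaEarp2021}, with the algebraic ingredients replaced by the abstract identities of Section~\ref{sec: H-structures}, we expect no new analytic difficulties beyond those resolved there.
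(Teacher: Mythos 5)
Your proposal is correct and follows essentially the same route as the paper. The paper disposes of the first statement (for $|\nabla^m T|$) by citing \cite{Dwivedi-Loubeau-SaEarp2021}*{Proposition 2.16}, whose proof is precisely the Bernstein--Shi iteration you sketch, and then derives the second statement (for $|\nabla^m(\nabla\xi)|$) exactly as you do: convert $|\nabla\xi|\leqslant\kappa$ into $|T|\leqslant c\kappa$ via Lemma~\ref{lem: torsion_equiv_nablaxi} (this is the source of $c_0=\max\{c,1\}$ in the time interval), apply the first part, and then control $\nabla^{m+1}\xi=\nabla^m(T\diamond\xi)$ inductively by the Leibniz rule, noting that $|\xi|$ is pointwise constant and that the time-restriction $t\leqslant(c_0\kappa)^{-4}$ guarantees the cross-terms $\kappa^2 t^{-(m-1)/2}$ are dominated by $\kappa t^{-m/2}$.
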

\begin{proof}
    The first statement, for the intrinsic torsion $T$, is clear from \cite{Dwivedi-Loubeau-SaEarp2021}*{Proposition 2.16}. The result for $\nabla\xi$ follows from the previous case by taking $c_0:=\max\{c,1\}$, where $c>0$ is such that $|T|\leqslant c|\nabla\xi|$, and using induction on $m\in\mathbb{N}$ together with Lemma \ref{lem: torsion_equiv_nablaxi}. Note that the latter implies in particular that
\begin{align*}
    \nabla(\nabla^m\nabla\xi) 
    &= \sum\limits_{p=0}^m \nabla^{m-p+1}T\circledast\nabla^p\xi + \sum\limits_{p=0}^m\nabla^{m-p}T\circledast\nabla^{p+1}\xi.
    \qedhere
\end{align*}
\end{proof}

\subsection{Parabolic rescaling}
\label{sec: parabolic_rescaling}

In this paragraph, we prove a very useful scaling property of the harmonic $H$-flow \eqref{eq: harmonic_flow}, which will be frequently used in the sequel. The next result generalises \cite{dgk-isometric}*{Lemma 2.13} and \cite{Dwivedi-Loubeau-SaEarp2021}*{Lemma 4.11}, which were proved in the contexts of $\rG_2$ and $\S7$, respectively.

\begin{lemma}[Parabolic rescaling]
\label{lem: rescaling}
    Let $(M^n,g)$ be an oriented Riemannian $n$-manifold admitting a compatible $H$-structure $\xi_0$. Let $\lambda>0$ be a constant and $\{\xi(t)\}_{t\in [0,\tau)}$ be a solution to the harmonic $H$-flow \eqref{eq: harmonic_flow} with initial condition $\xi(0)=\xi_0$. Write $\xi(t)=(\xi_1(t),\ldots,\xi_k(t))$, where each $\xi_i(t)$ is a tensor of type $(p_i,q_i)$ and \emph{net degree} $\ell_i:=q_i-p_i$. Set $\lambda^{\ell}:=(\lambda^{\ell_1},\ldots,\lambda^{\ell_k})$. Then
\begin{equation}
\label{eq: parabolic_rescaling_xi}
    \tilde{\xi}( {\tilde{t}}):=\lambda^{\ell}\cdot{\xi( {\lambda^{2} t})}=(\lambda^{\ell_1}\xi_1( {\lambda^{2} t}),\ldots,\lambda^{\ell_k}\xi_k( {\lambda^{2} t})),
    \qforq
     {\tilde{t}}:=\lambda^2t,\quad t\in [0, {\lambda^{-2}\tau}),
\end{equation} 
    defines a solution to the harmonic $H$-flow \eqref{eq: harmonic_flow} on $(M,\tilde{g}:=\lambda^2g)$ with initial condition $\tilde{\xi}(0)=\lambda^{\ell}\cdot{\xi_0}$.
\end{lemma}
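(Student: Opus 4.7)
The plan is to verify directly that the rescaled object $\tilde\xi(t)$ satisfies \eqref{eq: harmonic_flow} on $(M,\tilde g)$, by tracking how each ingredient of the flow (the underlying $H$-structure, the Levi--Civita connection, the intrinsic torsion, its divergence, and the time derivative) transforms under the combined spatial and temporal rescaling. The argument is essentially scaling-weight bookkeeping, with no analytic content.

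First I would show that $\tilde\xi(t)$ is genuinely a compatible $H$-structure on $(M,\tilde g)$. Given a frame $u\in Q_{\xi(s)}\subset\Fr(M,g)$, so that $u.g_\circ = g$ and $u.\xi_\circ = \xi(s)$, the rescaled frame $\lambda u:T_xM\to\bR^n$ satisfies $(\lambda u).g_\circ = \lambda^2 g = \tilde g$, and a direct application of \eqref{eq: diamond_operator} yields $(\lambda u).\xi_{\circ,i} = \lambda^{\ell_i}(u.\xi_{\circ,i})$ componentwise. Hence $Q_{\xi(s)}$ and $Q_{\tilde\xi(t)}$ are related by the fibrewise scaling $u\mapsto\lambda u$, which is invisible at the level of endomorphisms -- one has $(\lambda u)^{-1}A(\lambda u) = u^{-1}Au$ for any $A\in\fso(n)$ -- so $\fh_{\tilde{Q}} = \fh_Q$ and $\fm_{\tilde{Q}} = \fm_Q$ as subbundles of $\End(TM)$.

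Next, since $\tilde g = \lambda^2 g$ is a constant rescaling, the Christoffel symbols coincide and $\tilde\nabla = \nabla$. Combined with the bilinearity of $\diamond$ and the defining relation $\nabla_X\xi = T_X\diamond\xi$ of Lemma \ref{lem: torsion_equiv_nablaxi}, this yields
\[
\tilde\nabla_X\tilde\xi_i = \lambda^{\ell_i}\nabla_X\xi_i = \lambda^{\ell_i}(T_X\diamond\xi_i) = T_X\diamond\tilde\xi_i,
\]
so by injectivity of $(\cdot\diamond\tilde\xi)|_{\Omega_\fm^2}$ (Lemma \ref{lem: ker_diamond}) the intrinsic torsion satisfies $\tilde T_X = T_X$ as an endomorphism. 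The divergence then picks up a single factor of the inverse metric, $\tilde\Div\,\tilde T = \tilde g^{ij}\tilde\nabla_i\tilde T_j = \lambda^{-2}\Div T$, and differentiating in time and changing variables gives
\[
\partial_t\tilde\xi_i(t) = \lambda^{\ell_i-2}(\Div T\diamond\xi_i)(\lambda^{-2}t) = \lambda^{-2}\Div T(\lambda^{-2}t)\diamond\tilde\xi_i(t) = \tilde\Div\,\tilde T(t)\diamond\tilde\xi_i(t),
\]
which is precisely the flow equation for $\tilde\xi$. The initial condition $\tilde\xi(0)=\lambda^\ell\cdot\xi_0$ is immediate, and the domain $t\in[0,\lambda^2\tau)$ follows from $s=\lambda^{-2}t$.

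No serious obstruction arises; the proof is an accounting of scaling weights. The one subtle point is the identification $\tilde T = T$, which hinges on the observation that $\diamond$ is an algebraic operation \emph{independent of the metric}, while simultaneously the $H$-module decomposition $\mathfrak{so}(TM)=\fh_Q\oplus\fm_Q$ is unaffected by the rescaling of frames. This is the pivot that lets every power of $\lambda$ balance cleanly in the final line.
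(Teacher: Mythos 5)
Your proof is correct and takes essentially the same route as the paper: both identify $\tilde T(t) = T(\lambda^{-2}t)$ via the injectivity of $(\cdot\diamond\xi)|_{\Omega_\fm^2}$ [Lemma~\ref{lem: ker_diamond}], compute $\Div_{\tilde g}\tilde T = \lambda^{-2}\Div_g T$, and chase the powers of $\lambda$ through the time derivative. The only difference is that you supply a preliminary verification — that $\tilde\xi(t)$ is genuinely a compatible $H$-structure on $(M,\tilde g)$, via the frame scaling $u\mapsto\lambda u$, and that this scaling acts trivially on $\mathrm{End}(T_xM)$ so $\fh_{\tilde Q}=\fh_Q$ and $\fm_{\tilde Q}=\fm_Q$ — which the paper implicitly assumes; this is a worthwhile addition that makes the injectivity step and the identification of the torsion rigorous.
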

\begin{proof}
    Without loss of generality, we can assume $\xi$ is a single tensor, i.e. $k=1$. Start noting that $\tilde{\nabla}=\nabla$. Next, for every $t\in [0, {\lambda^{-2}\tau})$, we know from  \eqref{eq: nabla of xi-GENERAL}  that
\[
\nabla\xi ( {\lambda^{2} t}) = T( {\lambda^{2} t})\diamond\xi( {\lambda^{2} t}),
\] 
    and multiplying by $\lambda^\ell$ we have
\[
\lambda^{\ell}(\nabla\xi)( {\lambda^{2} t}) = (\tilde{\nabla}\tilde{\xi})( {\tilde{t}}) = \tilde{T}( {\tilde{t}})\diamond\tilde{\xi}( {\tilde{t}}) = \lambda^{\ell}\tilde{T}( {\tilde{t}})\diamond\xi( {\lambda^{2} t}).
\] 
    From the injectivity of $\left(\cdot{}\diamond\xi( {\lambda^{2} t})\right)|_{\Omega_{\fm}^2}$, it follows that $\tilde{T}( {\tilde{t}}) = T( {\lambda^{2} t})$. Therefore, since $\Div_{g} T( {\lambda^{2} t})_{bc} = g^{ia}\nabla_i T_{a,bc}( {\lambda^{2} t})$, the rescaled divergence of torsion is
\[
\Div_{\tilde{g}}\tilde{T}( {\tilde{t}}) = \tilde{g}^{ia} \tilde{\nabla}_i \tilde{T}_{a;bc}( {\tilde{t}}) = \lambda^{-2}g^{ia}\nabla_i T_{a;bc}( {\lambda^{2} t}) = \lambda^{-2}\Div_g T( {\lambda^{2} t})_{bc}.
\] Finally, using  {\eqref{eq: harmonic_flow} for $\{\xi(t)\}_{t\in [0,\tau)}$}, we obtain a rescaled solution as claimed:
%\todo[inline]{AM@All: should be $\tilde\xi(t)=\xi(\lambda^2t)=\xi(\tilde t)$}
\begin{align*}
    (\partial_{ {\tilde{t}}}\tilde{\xi})( {\tilde{t}}) 
    &= \lambda^{\ell}\lambda^{-2}(\partial_t\xi)( {\lambda^{2} t}) = \lambda^{\ell}\lambda^{-2}\Div_g T( {\lambda^{2} t})\diamond\xi( {\lambda^{2} t}) \\
    &= \Div_{\tilde{g}}\tilde{T}( {\tilde{t}})\diamond\tilde{\xi}( {\tilde{t}}).
    \qedhere
\end{align*}
\end{proof}

%\begin{corollary}
%\label{cor: tilde xi scaling}    $e(\tilde\xi)=\lambda^{-2}e(\xi)$.
%\todo[inline]{HSE@All: complete the statement of the corollary}
%\end{corollary}

\subsection{Local version of an almost-monotonicity formula}
\label{subsec: local_monotonicity}

We obtain a general almost-monotonicity formula for harmonic $H$-flows on a complete Riemannian manifold $(M^n,g)$ with bounded geometry. We follow mainly the work on almost complex structures in \cite{he2021}, and the original study of harmonic maps in \cites{Struwe1988,Struwe1989}. We should also point out that the same formula was found simultaneously by a different set of authors in \cite{udhav2022quaternionic}*{\textsection5.2}, initially in the context of quaternion-Kähler structures, but ultimately in the same generality.

From now on, we let $r_M=r_M(g)>0$ be a lower bound to the injectivity radius of $(M^n,g)$ with the following properties. There is a uniform constant $c>0$ such that, for every point $y\in M$, the components $g_{ij}$ in normal coordinates $x=(x^1,\ldots,x^n)$ on the geodesic ball $B_{r_M}(y)$  satisfy:
\begin{align}
    \frac{1}{4}\delta_{ij}\leqslant g_{ij}\leqslant 4\delta_{ij},\quad\text{(as bilinear forms)}\label{ineq: metric_bounds_I}\\
    |g_{ij}-\delta_{ij}|\leqslant c|x|^2\quad\text{and}\quad |\partial_k g_{ij}|\leqslant c|x|,\label{ineq: metric_bounds_II}
\end{align} where $|x|$ is the Euclidean distance in $B_{r_M}(0)\subset T_xM\cong\mathbb{R}^n$. We note that the constants $r_M>0$ and $c>0$ can be chosen to depend only on the injectivity radius and the curvature of $g$. In particular, on flat Euclidean space $(M,g)=(\mathbb{R}^n,g_{\circ})$,  we can take any $r_M\in (0,\infty)$ and $c=0$, see e.g. \cite{Hebey2000}*{Theorem 1.3}.

Suppose that $(M^n,g)$ admits a compatible $H$-structure $\xi_0$ and let $\{\xi(t)\}$ be a solution to the harmonic $H$-flow \eqref{eq: harmonic_flow}, with initial condition $\xi(0)=\xi_0$ and maximal interval of existence and uniqueness $[0,\tau)$, cf. Proposition \ref{prop: short-time_existence}. Then, restricted to the geodesic ball $B_{r_M}(y)$, we can regard $\xi$ in normal coordinates as a tensor on $B_{r_M}(0)\times[0,\tau)\subset\mathbb{R}^n\times[0,\tau)$. Now fix any $\tau_0\in (0,\tau)$ and a cut-off function 
\[
\phi\in C_c^{\infty}(B_{r_M}(0))
\qwithq
\phi|_{B_{r_M/2}(0)}\equiv 1.
\] 
For all $t\in (0,\tau_0)$ and $0<r\leqslant\min\{\sqrt{\tau_0}/2,r_M\}$, we define the following functions associated to the energy functionals $\mathcal{E}$ and $\mathcal{D}$ (see Section \ref{sec: Dirichlet}), according to whether $e(\xi)=|T|^2$ or $|\nabla\xi|^2$, respectively:
\begin{align}
    \Theta^{(\cE,\cD)}(t) 
    &\equiv \Theta_{(y,\tau_0)}^{(\cE,\cD)}(\xi(t)) 
    := (\tau_0-t)\int_{\bR^n}e(\xi)^{(\cE,\cD)}(\cdot{},t)G_{(0,\tau_0)}(\cdot{},t)\phi^2\sqrt{\det(g)}dx,
    \label{eq: dfn_Theta}\\
    \Psi^{(\cE,\cD)}(r) 
    &\equiv \Psi_{(y,\tau_0)}^{(\cE,\cD)}(r;\xi(t)) 
    := \int^{\tau_0-r^2}_{\tau_0-4r^2}\int_{\bR^n}e(\xi)^{(\cE,\cD)}G_{(0,\tau_0)}\phi^2\sqrt{\det(g)}dxdt,
    \label{eq: dfn_Psi}
\end{align} 
where $T=T(\xi(t))$ denotes the torsion tensor of $\xi(t)$, and for any $(x_0,t_0)\in\mathbb{R}^n\times\mathbb{R}$ we denote by
\begin{equation}\label{eq: heat_kernel}
G_{(x_0,t_0)}(x,t):=(4\pi(t_0-t))^{-n/2}\exp\left(-\frac{|x-x_0|^2}{4(t_0-t)}\right)
\end{equation} 
the Euclidean backward heat kernel with singularity at $(x_0,t_0)$. We observe that the above quantities $\Theta(t)$ and $\Psi(r)$ are invariant under the parabolic rescalings of Lemma \ref{lem: rescaling}: indeed, if $\tilde{g}=\lambda^2 g$ and $\tilde{\xi}({\tilde{t}})=\lambda^{\ell}\xi( {\lambda^2t})$, as in \eqref{eq: parabolic_rescaling_xi}, one can readily check that 
\[
|\nabla\tilde{\xi}({\tilde{t}})|_{\tilde{g}}^2 = \lambda^{-2}|\nabla\xi( {\lambda^2t})|_{g}^2
\qandq
|\tilde{T}({\tilde{t}})|_{\tilde{g}}^2 = \lambda^{-2}|T( {\lambda^2t})|_g^2,
\] 
as well as $G_{(0, {\lambda^2}\tau_0)}(x, {\lambda^2t})=\lambda^{-n} G_{(0,\tau_0)}(x,t)$ and $\sqrt{\det{(\tilde{g})}} = \lambda^{n}\sqrt{\det{(g)}}$, thus concluding parabolic invariance. Moreover, in view of our hypothesis \eqref{eq: key_assumption} on the $H$-module $\fm$, the equivalence relation \eqref{eq: T_equiv_nablaxi} holds, so that 
\begin{equation}
\label{ineq: equiv_thetas}
    \Theta^{\cD}(t) = c\Theta^{\cE}(t),
\end{equation} 
and, in particular, also $\Psi^{\cD}(r)=c\Psi^{\cE}(r)$.

Our next results give general monotonicity formulas for $\Theta$ and $\Psi$ along the harmonic $H$-flow, generalising the $H=U(m)\subset\mathrm{SO}(2m)$ case originally proved in \cite[Theorems 3.1 and 3.2]{he2021}. See also Appendix \ref{ap: global_monotonicity} for yet another type of monotonicity formula along the harmonic $H$-flow, which will not be used in this paper but might attract an independent interest. For simplicity, henceforth we shall write $G:=G_{(0,\tau_0)}$.

\begin{theorem}
\label{thm: monotonicity_Theta}
    Write $\Theta(t)$ for either $\Theta^{\cE}(t)$ or $\Theta^{\cD}(t)$, as in \eqref{eq: dfn_Theta}. For any $\tau_0-\min\{\tau_0,1\}<t_1\leqslant t_2<\tau_0$ and $N>1$, the following almost-monotonicity formula holds:
\begin{equation}
\label{ineq: monotonicity_Theta}
    \Theta(t_2) \leqslant e^{c(f(t_2)-f(t_1))} \left(\Theta(t_1) +c\left(N^{n/2}(E_0+\sqrt{E_0}) +\frac{1}{\ln^2N}\right) (t_2-t_1)\right),
\end{equation}
    where $c=c(M,g)>0$ is a constant,  $E_0$ denotes the energy of $\xi(0)$ with respect to the corresponding functional, $\mathcal{E}$ or $\mathcal{D}$, and
\begin{align}
\begin{split}
    \label{def: f}
    f(t) 
    &= \hat{f}(\tau_0-t),\\
    \qwithq \hat{f}(x)
    &:=  {-x(\ln^4(x) - 4\ln^3(x) + 13\ln^2(x) - 26\ln(x) +26)}.
\end{split}
\end{align}
\end{theorem}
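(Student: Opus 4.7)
The plan is to differentiate $\Theta(t)$ with respect to $t$ along the harmonic $H$-flow and then bound the result from above by a sum of error terms that integrate to the right-hand side of \eqref{ineq: monotonicity_Theta}. In view of \eqref{ineq: equiv_thetas}, it suffices to treat one of the two versions, say $\Theta = \Theta^{\cE}$ with density $e(\xi)=|T|^2$; the other follows by rescaling by the constant $c$ of \eqref{eq: key_assumption}. Writing $d\mu := \sqrt{\det g}\,dx$, I would compute
\begin{align*}
    \tfrac{d}{dt}\Theta(t)
    = -\int_{\bR^n} e(\xi) G \phi^2 d\mu
    + (\tau_0-t)\int_{\bR^n}\bigl(\partial_t e(\xi)\bigr)G\phi^2 d\mu
    + (\tau_0-t)\int_{\bR^n} e(\xi)(\partial_t G)\phi^2 d\mu,
\end{align*}
and combine the Bochner-type inequality $(\partial_t - \Delta)e(\xi)\leqslant c(e(\xi)^2+1)$ from Lemma~\ref{lem: dif_ineq} with the backward heat equation $\partial_t G = -\Delta_{\bR^n} G$ satisfied by $G$. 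After integrating by parts against $G\phi^2\sqrt{\det g}$, the leading terms should assemble into a non-positive square of the form
\begin{equation*}
    -(\tau_0-t)\int_{\bR^n} \Bigl|\nabla e(\xi) + \frac{x}{2(\tau_0-t)} e(\xi)\Bigr|^2 G\phi^2 d\mu,
\end{equation*}
analogous to Struwe's construction \cite{Struwe1988,Struwe1989}, up to error terms arising from three sources.

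First, the discrepancy between the Riemannian Laplacian (appearing via the Bochner inequality) and the Euclidean Laplacian (governing $G$) produces terms controlled by $|g_{ij}-\delta_{ij}|$, $|\partial_k g_{ij}|$ and $|\partial_k \sqrt{\det g}|$, which by \eqref{ineq: metric_bounds_II} are of order $|x|^2$ and $|x|$ on $B_{r_M}(0)$. Against the Gaussian $G$, these contribute $O(\tau_0-t)$-type errors, independent of the energy. Second, derivatives of the cut-off $\phi$ produce contributions supported on the annulus $B_{r_M}\setminus B_{r_M/2}$, where $G$ decays like $e^{-r_M^2/16(\tau_0-t)}$, so these are rapidly decaying and bounded by the total energy $E_0$ which is monotone non-increasing along \eqref{eq: harmonic_flow}. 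Third, the inhomogeneity $+1$ in the Bochner inequality contributes $c(\tau_0-t)\int G\phi^2 d\mu$, which is uniformly bounded.

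The quadratic error $c(\tau_0-t)\int e(\xi)^2 G\phi^2 d\mu$ from the Bochner term is the trickiest; the standard trick is to split the integration domain at the scale $|x|\leqslant N\sqrt{\tau_0-t}$ versus $|x|>N\sqrt{\tau_0-t}$. On the inner region, one bounds $e(\xi)G\leqslant cN^{n/2} e(\xi) \cdot (\tau_0-t)^{-n/2}$ times a suitable Gaussian factor; pairing this with the remaining $e(\xi)G$ one obtains, after integration, a contribution of order $cN^{n/2}(E_0+\sqrt{E_0})$ via Young's inequality and the conservation $\int e(\xi) d\mu \lesssim E_0$. On the outer region, the Gaussian is smaller than $e^{-N^2/4}$, and a careful choice (following He--Li) gives a $1/\ln^2 N$ factor after optimisation. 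The exponential prefactor $e^{c(f(t_2)-f(t_1))}$ with $f$ defined in \eqref{def: f} will emerge from absorbing a lower-order term of the form $c \Theta(t)/(\tau_0-t)^{1/4}$ (or similar) into the left-hand side via Gr\"onwall, where $f$ is an antiderivative of the relevant weight.

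The main obstacle I anticipate is bookkeeping in the second and third steps: carefully controlling the error in replacing the Riemannian gradient/volume form with the Euclidean ones inside the Gaussian-weighted integrals, and getting the precise splitting inequality that yields the $N^{n/2}(E_0+\sqrt{E_0})+1/\ln^2 N$ combination. Once the differential inequality
\begin{equation*}
    \tfrac{d}{dt}\Theta(t) \leqslant c f'(t)\Theta(t) + c\bigl(N^{n/2}(E_0+\sqrt{E_0}) + 1/\ln^2 N\bigr)
\end{equation*}
is established on $(\tau_0 - \min\{\tau_0,1\},\tau_0)$, integration by Gr\"onwall yields \eqref{ineq: monotonicity_Theta}.
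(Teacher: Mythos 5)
The central step of your proposal does not work. You propose to differentiate $\Theta$, substitute the Bochner inequality $(\partial_t - \Delta)e(\xi)\leqslant c(e(\xi)^2+1)$, and integrate by parts to find a non-positive square of the form $-(\tau_0-t)\int|\nabla e(\xi) + \frac{x}{2(\tau_0-t)}e(\xi)|^2 G\phi^2$. This is not the right quantity, and no such square emerges from that computation. After using the Bochner inequality and the backward heat equation for $G$, one is left with $\tfrac{d}{dt}\Theta \leqslant -\int e\,G\phi^2 + c(\tau_0-t)\int(e^2+1)G\phi^2 + \text{errors}$, with no favourable sign. The quartic term $c(\tau_0-t)\int |T|^4 G\phi^2$ cannot be controlled by the energy: your domain-splitting trick estimates $\int_{|x|\leqslant N\sqrt{\tau_0-t}} e^2 G$ by $cN^{n/2}$ times $\int e^2$, but $\int e^2 = \int |T|^4$ is \emph{not} bounded by $E_0 = \int|T|^2$ (indeed bounding $\|T\|_{L^\infty}$ is precisely the content of the $\varepsilon$-regularity one is trying to set up with this monotonicity formula, so it may not be assumed here). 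The Bochner inequality is too lossy at this stage; it discards exactly the structure that produces the negative square.

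What the paper does instead is bypass the Bochner estimate entirely and work with the precise evolution equation for the torsion: along the harmonic $H$-flow (with $S=0$, $C=\Div T$), Corollary~\ref{cor: evo_torsion} gives $\partial_t|T|^2 = 2\langle\nabla(\Div T),T\rangle$. Substituting this into $\tfrac{d}{dt}\Theta$ and integrating by parts against $G\phi^2\sqrt{\det g}$ produces the Struwe-type completed square $-2(\tau_0-t)\int |\Div T - \frac{(g^{ij}x_i\partial_j)\lrcorner T}{2(\tau_0-t)}|^2 G\phi^2$, which is genuinely non-positive. All remaining error terms are then at most \emph{quadratic} in $T$ (e.g.\ $\int|x|^2|T|^2 G\phi^2$, $\int|x|^4|T|^2 G\phi^2/(\tau_0-t)$, and a linear term $\int|x||T|G\phi^2$), and these are what the domain splitting at scale $|x|^2 \sim (\tau_0-t)\ln^2(\tau_0-t)$ actually controls, absorbing a weight $\ln^4(\tau_0-t)+\ln^2(\tau_0-t)+1$ into $\Theta(t)$ itself. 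This is what the Gr\"onwall factor $f'(t)=\ln^4(\tau_0-t)+\ln^2(\tau_0-t)+1$ comes from, not a $(\tau_0-t)^{-1/4}$ weight. In short, the key missing idea in your proposal is to use the flow equation for $T$ and complete the square in $\Div T$, rather than appealing to the Bochner inequality; without it the $|T|^4$ term is fatal.
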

\begin{proof}
    In view of \eqref{ineq: equiv_thetas}, it suffices to prove the claim for $\Theta(t)=\Theta^{\cE}(t)$. Alternatively, noting that the proof below relies on the evolution equations for the torsion $T$ and the Bianchi-type identity for $T$, both of which have counterparts for $\nabla\xi$ (see Sections \ref{sec: general_flows} and \ref{sec: Bianchi}), one could follow the same arguments directly for $\Theta(t)=\Theta^{\cD}(t)$.

    We begin by differentiating $\Theta$ under the integral, then using the evolution equation \eqref{eq: ddt_norm_T} of Corollary \ref{cor: evo_torsion}, with $S=0$ and $C=\Div T$, as well as the facts that $g$ is time-independent (by Lemma \ref{lemma: isometric variation}), and $\partial_t G=\left(\frac{n}{2(\tau_0-t)}-\frac{|x|^2}{4(\tau_0-t)^2}\right)G$:
\begin{align*}
    \frac{d}{dt}\Theta(t) 
    &=-\int_M |T|^2 G\phi^2\vol_g 
    + 2(\tau_0-t)\int_M \langle\nabla(\Div T),T\rangle G\phi^2\vol_g\\
    &\quad+\int_M |T|^2\left(\frac{n}{2}-\frac{|x|^2}{4(\tau_0-t)}\right)G\phi^2\vol_g.
\end{align*}
    Let us expand the second summand, noting that
\[
\nabla(G\phi^2 \Div T) 
= (\phi^2 dG + 2G\phi d\phi)\otimes\Div T + G\phi^2\nabla(\Div T),
\] 
    and
\begin{equation}
\label{eq: grad_G}
    \nabla G := (dG)^{\sharp_g} 
    := g^{ij}(\partial_i G)\partial_j = -\frac{g^{ij}x_i\partial_j}{2(\tau_0-t)}G.
\end{equation} 
     {Using integration by parts and \eqref{eq: grad_G}, we get}
\begin{align*}
    \int_M \langle\nabla(\Div T),T\rangle G\phi^2\vol_g 
    &= -\int_M\langle\Div T, (\Div T)G\phi^2 + \phi^2 (\nabla G)\lrcorner T + 2(\nabla\phi)\lrcorner T.G\phi\rangle\vol_g\\
    &= -\int_M|\Div T|^2 G\phi^2\vol_g +\int_M\left\langle \Div T,\frac{(g^{ij}x_i\partial_j)\lrcorner T}{2(\tau_0-t)}\right\rangle G\phi^2\vol\\
    &\quad - \int_M\langle\Div T, 2(\nabla\phi)\lrcorner T.G\phi\rangle\vol_g\\
    &= -\int_M \left\vert\Div T - \frac{(g^{ij}x_i\partial_j)\lrcorner T}{2(\tau_0-t)}\right\vert^2 G\phi^2\vol_g +\int_M \left\vert\frac{(g^{ij}x_i\partial_j)\lrcorner T}{2(\tau_0-t)}\right\vert^2G\phi^2\vol_g\\
    &\quad -\int_M \left\langle\Div T, \frac{(g^{ij}x_i\partial_j)\lrcorner T}{2(\tau_0-t)}\right\rangle G\phi^2\vol_g - \int_M\langle\Div T, 2(\nabla\phi)\lrcorner T\rangle G\phi\vol_g.
\end{align*}
    The original expression can then be arranged as the sum of five terms, which we will estimate separately:
    
\begin{align}
    \frac{d}{dt}\Theta(t)
    &=-\int_M |T|^2G\phi^2\vol_g\nonumber\\
    &\quad-2(\tau_0-t)\int_M\left\vert\Div T - \frac{(g^{ij}x_i\partial_j)\lrcorner T}{2(\tau_0-t)}\right\vert^2 G\phi^2\vol_g\nonumber\\
    &\quad-2(\tau_0-t)\int_M \left(-\left\vert\frac{(g^{ij}x_i\partial_j)\lrcorner T}{2(\tau_0-t)}\right\vert^2+\left\langle\Div T, \frac{(g^{ij}x_i\partial_j)\lrcorner T}{2(\tau_0-t)}\right\rangle\right)G\phi^2\vol_g
    \nonumber\\
    &\quad-2(\tau_0-t)\int \langle \Div T,2(\nabla\phi)\lrcorner T\rangle G\phi \sqrt{\det(g)}dx\nonumber\\
    &\quad+\int_M |T|^2\left(\frac{n}{2}-\frac{|x|^2}{4(\tau_0-t)}\right)G\phi^2\vol_g\nonumber\\
    &=: I+II+III+IV+V.
    \label{eq: I_II_III_IV_V}
\end{align}

For the term $IV$, we have:
\begin{align*}
    |IV| &\leqslant 2(\tau_0-t)\int_M \left\vert \left(\Div T-\frac{(g^{ij}x_i\partial_j)\lrcorner T}{2(\tau_0-t)}\right)\phi \right\vert \left\vert 2(\nabla\phi)\lrcorner T\right\vert G\vol_g\\
    &\quad+ \int_M \left|\langle (g^{ij}x_i\partial_j)\lrcorner T,2(\nabla\phi)\lrcorner T\rangle\right| G\phi \vol_g\\
    &\leqslant (\tau_0-t)\int_M \left\vert \left(\Div T-\frac{(g^{ij}x_i\partial_j)\lrcorner T}{2(\tau_0-t)}\right) \right\vert^2 G\phi^2\vol_g +4(\tau_0-t)\int_M |T|^2 |\nabla\phi|^2G\vol_g\\
    &\quad+2\int_M|T|^2|\langle g^{ij}x_i\partial_j,\nabla\phi\rangle| G\phi\vol_g\\
    &\leqslant {-\frac12 II} + 4(\tau_0-t)\int_{M} |T|^2|\nabla\phi|^2 G \vol_g + c\int_{M} |x||T|^2|\nabla\phi| G\phi\vol_g.
\end{align*} 
    Now, given $N>1$ and a  time difference $\tau_0-t$, there are two cases to consider. If $\tau_0-t>\frac{1}{N}$, then $G<cN^{n/2}$ and $(\tau_0-t)G<cN^{n/2-1}<cN^{n/2}$. We know that $|\nabla\phi|\leqslant c$ and $\mathrm{supp}(|\nabla\phi|)\subset B_{r_M}(0)$, and that the energy $\cD(\xi(t))$ is non-increasing along the negative gradient flow \eqref{eq: harmonic_flow} [Lemma \ref{lem: gradient_dirichlet_along_iso_var}] hence $\cE(\xi(t))\leqslant cE_0$ [by  \eqref{eq: T_equiv_nablaxi}], and we have the estimate
\begin{equation*}
    |IV|\leqslant {-\frac12 II} +cN^{n/2}E_0.
\end{equation*}
    Otherwise, if $\tau_0-t\leqslant \frac{1}{N}<1$, note that $\phi|_{B_{r_M/2}(0)}\equiv 1$ and so $\nabla\phi = 0$ in $B_{r_M/2}(0)$, whereas $G$ is uniformly bounded outside $B_{r_M/2}(0)$, for all $t\in [0,\tau_0)$, so we get
\begin{equation*}
    |IV|\leqslant {-\frac12 II}+cE_0.
\end{equation*} Hence, for any $t\in (0,\tau_0)$, we have either way:
\begin{equation}
\label{eq: estimate_IV}
    |IV|\leqslant {-\frac12 II}+cN^{n/2}E_0.
\end{equation}

    As for the term $III$, integration by parts and again  \eqref{eq: grad_G} yield:
\begin{align}
    III &= \frac{1}{2(\tau_0-t)}\int_M |(g^{ij}x_i\partial_j)\lrcorner T|^2 
    G\phi^2\vol_g - \int_M\langle\Div T, (g^{ij}x_i\partial_j)\lrcorner T\rangle G\phi^2\nonumber\\
    &= \frac{1}{2(\tau_0-t)}\int_M |(g^{ij}x_i\partial_j)\lrcorner T|^2 
    G\phi^2\vol_g + \int_M\langle T, \nabla\left((G\phi^2)(g^{ij}x_i\partial_j)\lrcorner T \right)\rangle\vol_g\nonumber\\
    &= \frac{1}{2(\tau_0-t)}\int_M |(g^{ij}x_i\partial_j)\lrcorner T|^2 
    G\phi^2\vol_g + \int_M\langle T,\nabla\left((g^{ij}x_i\partial_j)\lrcorner T \right)\rangle G\phi^2\vol_g\nonumber\\
    &\quad -\frac{1}{2(\tau_0-t)}\int_M |(g^{ij}x_i\partial_j)\lrcorner T|^2 
    G\phi^2\vol_g + \int_M\langle 2(\nabla\phi)\lrcorner T, (g^{ij}x_i\partial_j)\lrcorner T\rangle G\phi\vol_g\nonumber\\
    &= \int_M\langle T,\nabla\left((g^{ij}x_i\partial_j)\lrcorner T \right)\rangle G\phi^2\vol_g + \int_M\langle 2(\nabla\phi)\lrcorner T, (g^{ij}x_i\partial_j)\lrcorner T\rangle G\phi\vol_g.
    \label{eq: III}
\end{align} Expanding the first term in \eqref{eq: III}, we have
\begin{align}
\label{eq: first_term_expansion}
    \int_M\langle T,\nabla\left((g^{ij}x_i\partial_j)\lrcorner T \right)\rangle G\phi^2\vol_g &= \int_M\langle T,(x_i(\partial_k g^{ij}) dx^k + g^{ij}dx_i)\otimes T_j\rangle G\phi^2\vol_g\nonumber\\
    &\quad+ \int_M\langle T,g^{ij}x_i\nabla T_j\rangle G\phi^2\vol_g,
\end{align} 
    and the first term in \eqref{eq: first_term_expansion} can be further expanded as
\begin{align*}
    \int_M\langle T,(x_i(\partial_k g^{ij}) dx^k + g^{ij}dx_i)\otimes T_j\rangle G\phi^2\vol_g = \int_M x_i(\partial_k g^{ij})g^{lk}\langle T_l,T_j\rangle G\phi^2\vol_g + \frac{1}{n}\int_M g^{ii}|T|^2 G\phi^2\vol_g.
\end{align*} 
    On the other hand, we can use the Bianchi-type identity \eqref{eq: m-part_Bianchi_identity} and $T_l\in\Omega_{\fm}^2(M)$ to develop the second term in \eqref{eq: first_term_expansion} as follows:
\begin{align*}
    \langle T,g^{ij}x_i\nabla T_j\rangle &= g^{la}g^{ij}x_i\langle T_l,\nabla_a T_j\rangle\\
    &= g^{la}g^{ij}x_i\langle T_l, \nabla_j T_a + [T_j,T_a] + R_{ja}\rangle\\
    &=g^{la}g^{ij}x_i\langle T_l, \nabla_j T_a\rangle + g^{la}g^{ij}x_i\langle T_l,R_{ja}\rangle, 
\end{align*} 
    since $g^{la}g^{ij}x_i\langle T_{l},[T_j,T_a]\rangle=-\tr(T_l(g^{ij}x_i\partial_j\lrcorner T)T_a)g^{la}+\tr(T_lT_a(g^{ij}x_i\partial_j\lrcorner T))g^{la}=0$. 
    Noting that $g^{la}g^{ij}x_i\langle T_l, \nabla_j T_a\rangle=\frac{1}{2}g^{ij}x_i\partial_j|T|^2$, integration by parts in $\mathbb{R}^n$ gives:
\begin{align*}
    \int_M g^{la}g^{ij}x_i\langle T_l, \nabla_j T_a\rangle G\phi^2\vol_g &= \frac{1}{2}\int_{\mathbb{R}^n} \partial_j|T|^2 x_ig^{ij} G\phi^2\sqrt{\det(g)}dx\\
    &= -\frac{1}{2}\int_M g^{ii}|T|^2G\phi^2\vol_g -\frac{1}{2}\int_M |T|^2 x_i (\partial_j g^{ij})G\phi^2\vol_g\\
    &\quad + \int_M |T|^2\frac{g^{ij} x_i x_j}{4(\tau_0-t)}G\phi^2\vol_g - \frac{1}{2}\int_M |T|^2 g^{ij} x_i\left(2\partial_j\phi + \frac{1}{2}\phi(\partial_jg_{ab})g^{ab}\right)G\phi\vol_g.
\end{align*} 
    Reinserting all of the above back in \eqref{eq: III}, we conclude that
\begin{align}
    III &= \int_M x_i(\partial_k g^{ij})g^{lk}\langle T_l,T_j\rangle G\phi^2\vol_g + \frac{1}{n}\int_M g^{ii}|T|^2 G\phi^2\vol_g\nonumber\\
    &\quad -\frac{1}{2}\int_M g^{ii}|T|^2G\phi^2\vol_g -\frac{1}{2}\int_M |T|^2 x_i (\partial_j g^{ij})G\phi^2\vol_g\nonumber\\
    &\quad + \int_M |T|^2\frac{g^{ij} x_i x_j}{4(\tau_0-t)}G\phi^2\vol_g - \frac{1}{2}\int_M |T|^2 g^{ij} x_i\left(2\partial_j\phi + \frac{1}{2}\phi(\partial_jg_{ab})g^{ab}\right)G\phi\vol_g\nonumber\\
    &\quad+\int_M g^{la}g^{ij}x_i\langle T_l,R_{ja}\rangle G\phi^2\vol_g + \int_M\langle 2(\nabla\phi)\lrcorner T, (g^{ij}x_i\partial_j)\lrcorner T\rangle G\phi\vol_g.
\end{align}
    
    Combining $III$ with $I$ and $V$, we have
\begin{align}
    I+III+V &= \left(\frac{1}{2}-\frac{1}{n}\right)\int (n-g^{ii})|T|^2G\phi^2\vol_g + \frac{1}{4(\tau_0-t)}\int (g^{ij}x_ix_j-|x|^2)|T|^2G\phi^2\vol_g \nonumber\\
    &\quad +\int_M x_i(\partial_k g^{ij})g^{lk}\langle T_l,T_j\rangle G\phi^2\vol_g -\frac{1}{2}\int_M |T|^2 x_i (\partial_j g^{ij})G\phi^2\vol_g\nonumber\\
    &\quad - \frac{1}{2}\int_M |T|^2 g^{ij} x_i\left(2\partial_j\phi + \frac{1}{2}\phi(\partial_jg_{ab})g^{ab}\right)G\phi\vol_g + \int_M\langle 2(\nabla\phi)\lrcorner T, (g^{ij}x_i\partial_j)\lrcorner T\rangle G\phi\vol_g\nonumber\\
    &\quad +\int_M g^{la}g^{ij}x_i\langle T_l,R_{ja}\rangle G\phi^2\vol_g
\end{align} 
    Now since $(M,g)$ has bounded geometry, inequalities \eqref{ineq: metric_bounds_I} and \eqref{ineq: metric_bounds_II} in normal coordinates on $B_{r_M}(y)$ imply, in particular 
\[
|n-g^{ii}|\leqslant c|x|^2, \quad
 |g^{ij}x_ix_j - |x|^2|\leqslant c|x|^4{,}\quad\text{and}\quad |\partial_k g_{ij}|\leqslant c|x|.
\]
    Therefore
\begin{align}
\label{ineq: pre_bound_I+III+V}
    |I+III+V| &\leqslant c\int_M |x||T| G\phi^2\vol_g + c\int_M |x|^2|T|^2 G\phi^2\vol_g + \frac{c}{\tau_0-t}\int_M |x|^4|T|^2 G\phi^2\vol_g\nonumber\\
    &\quad+ c\int_M|x||T|^2|\nabla\phi|G\phi\vol_g.
\end{align} 
    The last term above is bounded by $c {N^{n/2}}E_0$, by the same argument used for $IV$. Let us look individually at the remaining three terms on the right-hand side of \eqref{ineq: pre_bound_I+III+V}, for any $t\in(0,\tau_0)$. For the second term, we have
\begin{align}
    \int |x|^2|T|^2G\phi^2\vol_g &\leqslant 
    \left(\int _{|x|^2\leqslant  (\tau_0-t)\ln^2(\tau_0-t)}
    +\int _{|x|^2>  (\tau_0-t)\ln^2(\tau_0-t)}
    \right)
    |x|^2|T|^2G\phi^2\vol_g\nonumber\\
    &\leqslant (\tau_0-t)\ln^2(\tau_0-t)\int_{|x|^2\leqslant  (\tau_0-t)\ln^2(\tau_0-t)}|T|^2G\phi^2\vol_g\nonumber\\
    &\quad+\int _{|x|^2>  (\tau_0-t)\ln^2(\tau_0-t)}|x|^2|T|^2\frac{\exp{(-\ln^2(\tau_0-t)/4)}}{(4\pi(\tau_0-t))^{n/2}}\phi^2\vol_g\nonumber\\
    &\leqslant \ln^2(\tau_0-t)\Theta(t)+cE_0,\label{ineq: |x|^2|T|^2}
\end{align}
where in the last inequality we used the fact that $\frac{\exp{(-\ln^2(\tau_0-t)/4)}}{(4\pi(\tau_0-t))^{n/2}}$ is uniformly bounded for $t\in (0,\tau_0)$. Analogously, for the third term,
\begin{align*}
    \frac{1}{\tau_0-t}\int |x|^4|T|^2G\phi^2\vol_g &\leqslant \frac{1}{\tau_0-t} \left( \int _{|x|^2\leqslant  (\tau_0-t)\ln^2(\tau_0-t)}
    + \int _{|x|^2 >  (\tau_0-t)\ln^2(\tau_0-t)}\right) |x|^4|T|^2G\phi^2\vol_g\nonumber\\
    &\leqslant \ln^4(\tau_0-t)\Theta(t)
    + \frac{1}{(\tau_0-t)^{n/2+1}}\exp(-\ln^2(\tau_0-t)/4)cE_0\nonumber\\
    &\leqslant \ln^4(\tau_0-t)\Theta(t) + cE_0,
\end{align*}
where in the last line we used that $\frac{\exp{(-\ln^2(\tau_0-t)/4)}}{(\tau_0-t)^{n/2+1}}$ is uniformly bounded for $t\in (0,\tau_0)$. As to the first term on the right-hand side of \eqref{ineq: pre_bound_I+III+V}, we again distinguish two cases:  if $\tau_0-t>\frac{1}{N}$, then $G<cN^{n/2}$ and H\"older's inequality gives
\begin{align*}
    \int |x||T|G\phi^2\vol_g\leqslant cN^{n/2}\Big(\int |T|^2\vol_g\Big)^{1/2}\Big(\int |x|^2\phi^4\vol_g\Big)^{1/2}\leqslant cN^{n/2}\sqrt{E_0}.
\end{align*}
    Otherwise, if $\tau_0-t\leqslant \frac{1}{N}<1$, then using Young's inequality with  {$\varepsilon=2(\tau_0-t)\ln^4(\tau_0-t)$}, we obtain
     {
\begin{align*}
    \int |x||T|G\phi^2\vol_g &\leqslant \frac{1}{4\ln^4(\tau_0-t)}\int \frac{|x|^2}{(\tau_0-t)}G\phi^2\vol_g+(\tau_0-t)\ln^4(\tau_0-t)\int |T|^2G\phi^2\vol_g\\
    &= \frac{1}{4\ln^4(\tau_0-t)} \left( \int _{|x|^2\leqslant  (\tau_0-t)\ln^2(\tau_0-t)}
    + \int _{|x|^2 >  (\tau_0-t)\ln^2(\tau_0-t)}\right) \frac{|x|^2}{(\tau_0-t)}G\phi^2\vol_g+\ln^4(\tau_0-t)\Theta(t)\\
    &\leqslant \frac{c}{\ln^2N}+\frac{c}{\ln^4N}\frac{\exp(-\ln^2(\tau_0-t)/4)}{(\tau_0-t)^{n/2+1}}+\ln^4(\tau_0-t)\Theta(t)\\
    &\leqslant \frac{c}{\ln^2N} +\ln^4(\tau_0-t)\Theta(t).
\end{align*}}
    In summary so far, for any $t\in (0,\tau_0)$, we have
\begin{align}
\label{eq: estimate_I_III_V}
   |I+III+V|\leqslant cE_0+cN^{n/2}( {E_0}+\sqrt{E_0})+\frac{c}{\ln^2N}+c\left(\ln^4(\tau_0-t)+\ln^2(\tau_0-t)\right)\Theta(t).
\end{align}
Using \eqref{eq: I_II_III_IV_V}, \eqref{eq: estimate_IV}, and \eqref{eq: estimate_I_III_V}, we conclude that there is a uniform constant $c>0$ such that 
\begin{align*}
\frac{d}{dt}\Theta(t) & {\leqslant |I+III+V|+|IV|+II}\\
            &\leqslant {\frac12 II}+cN^{n/2}(E_0+\sqrt{E_0})+\frac{c}{\ln^2N}+c\left(\ln^4(\tau_0-t)+\ln^2(\tau_0-t)\right)\Theta(t).
\end{align*}
    Finally,  {notice that $II\leqslant 0$, and} defining $f(t)$ as in \eqref{def: f} we have $f'(t)=\ln^4(\tau_0-t)+\ln^2(\tau_0 - t) $, hence
\begin{align*}
    \frac{d}{dt}(e^{-cf}\Theta(t)) &= e^{-cf}\Big(\frac{d}{dt}\Theta(t)-cf'(t)\Theta(t)\Big)\\
    &\leqslant ce^{-cf}\Big(N^{n/2}(E_0+\sqrt{E_0})+\frac{1}{\ln^2N}\Big).
\end{align*} The result now follows by integrating the above inequality from $t_1$ to $t_2$ and noting that $f(t)$ is increasing, so that $e^{-cf(t)}\leqslant e^{-cf(t_1)}$, for all $t\in[t_1,t_2]$.
\end{proof}

\begin{remark}
\label{rmk: monotonicity_Theta}
    In the context of Theorem \ref{thm: monotonicity_Theta}, since the function $f$ given by \eqref{def: f} is increasing, one has
    \[
     {-26} = f(\tau_0 - 1) \leqslant f(\tau_0 - \min\{\tau_0,1\})\leqslant f(t_1)\leqslant f(t_2)\leqslant f(\tau_0^{-}) = 0.
    \] 
    Therefore, $0\leqslant f(t_2)-f(t_1)\leqslant  {26}$ and so the  factor $e^{c(f(t_2)-f(t_1))}$ in \eqref{ineq: monotonicity_Theta} is uniformly bounded.
\end{remark}

\begin{theorem}
\label{thm: monotonicity_Psi}
    Write $\Psi(r)$ for either $\Psi^{\cE}(r)$ or $\Psi^{\cD}(r)$, as in \eqref{eq: dfn_Psi}. For any $0<r_2\leqslant r_1<\min\{\sqrt{\tau_0}/2, {1/2}\}$ and $N>1$, the following almost-monotonicity formula holds:
\begin{equation}
\label{ineq: monotonicity_Psi}
    \Psi(r_2)\leqslant e^{c(h(r_2)-h(r_1))}\left(\Psi(r_1)+c\left(N^{n/2}(E_0+\sqrt{E_0})+\frac{1}{\ln^2N}\right)(r_1-r_2)\right)
\end{equation}
where $c=c(M,g)>0$ is a constant, $E_0$ denotes the energy of $\xi(0)$ with respect to the corresponding functional, $\mathcal{E}$ or $\mathcal{D}$, and
 {
\begin{equation}
\label{def: h}
    h(r):= f(\tau_0 - r^2),
    %-108r^2-64r^2\ln^4(2r)+128r^2\ln^3(2r)-208r^2\ln^2(2r)+208r^2\ln(2r).
\end{equation} with $f(t)$ given by \eqref{def: f}.}
\end{theorem}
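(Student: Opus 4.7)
The natural strategy is to reduce the almost-monotonicity of $\Psi$ to that of $\Theta$ from Theorem \ref{thm: monotonicity_Theta}, by viewing $\Psi$ as an integral average of $\Theta$.

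\textbf{Step 1 (Change of variables).} Substituting $t = \tau_0 - s^2$ (so that $\tau_0 - t = s^2$) in \eqref{eq: dfn_Psi} gives
$$\Psi(r) \;=\; 2\int_r^{2r} \frac{\Theta(\tau_0 - s^2)}{s}\,ds \;=\; 2\int_1^2 \frac{\Theta(\tau_0 - u^2 r^2)}{u}\,du,$$
where the second equality follows from the further substitution $s = ur$. The hypothesis $r \leqslant \min\{\sqrt{\tau_0}/2,\,1\}$ ensures that every time slice $t = \tau_0 - u^2 r^2$ with $u \in [1,2]$ falls within the range of validity of Theorem \ref{thm: monotonicity_Theta}.

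\textbf{Step 2 (Identifying $h$).} A revealing computation shows that
$$h(r) \;=\; f(\tau_0 - 4r^2),$$
obtained by expanding $\hat f(4r^2) = -4r^2 P(2\ln(2r))$ with $P(y) = y^4 - 4y^3 + 13y^2 - 26y + 27$ and matching coefficients with \eqref{def: h}. In particular, for $u \in [1,2]$,
$$h(r) \;\leqslant\; f(\tau_0 - u^2 r^2) \;\leqslant\; f(\tau_0 - r^2).$$

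\textbf{Step 3 (Weighted auxiliary quantity and un-weighting).} The cleanest route to \eqref{ineq: monotonicity_Psi} is then to introduce
$$\eta(r) \;:=\; 2\int_1^2 e^{-cf(\tau_0 - u^2 r^2)}\,\Theta(\tau_0 - u^2 r^2)\,\frac{du}{u},$$
and to differentiate under the integral, using the \emph{sharp} differential inequality
$$\frac{d}{dt}\bigl(e^{-cf(t)}\Theta(t)\bigr) \;\leqslant\; c\,e^{-cf(t)}\bigl(N^{n/2}(E_0+\sqrt{E_0}) + \tfrac{1}{\ln^2 N}\bigr)$$
that is implicit in the proof of Theorem \ref{thm: monotonicity_Theta}. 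The uniform boundedness of $|f|$ on the relevant range (Remark \ref{rmk: monotonicity_Theta}) turns this into $\eta'(r) \geqslant -c\,r\,\bigl(N^{n/2}(E_0+\sqrt{E_0}) + \tfrac{1}{\ln^2 N}\bigr)$; integrating from $r_2$ to $r_1$ and using $r_1 + r_2 \leqslant 2$ to convert $r_1^2 - r_2^2$ into a multiple of $r_1 - r_2$ yields an almost-monotonicity statement for $\eta$. The sandwich $e^{-cf(\tau_0-r^2)}\Psi(r) \leqslant \eta(r) \leqslant e^{-ch(r)}\Psi(r)$ from Step 2 then converts this back into the desired bound on $\Psi$, with the exponent $c(h(r_2)-h(r_1))$ absorbing the uniformly bounded discrepancy $f(\tau_0 - r_2^2) - h(r_2)$ into the constant.

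\textbf{Main obstacle.} The principal technical point is Step 3: retaining the \emph{sharp} differential inequality for $e^{-cf(t)}\Theta(t)$ extracted from the proof of Theorem \ref{thm: monotonicity_Theta}, rather than only its integrated form. The algebraic identification $h(r) = f(\tau_0 - 4r^2)$ in Step 2 is what makes the final exponent land in the clean form \eqref{def: h}; without it one would still obtain an almost-monotonicity statement, but with a cruder uniform constant in place of $e^{c(h(r_2)-h(r_1))}$.
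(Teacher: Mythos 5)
Your route is correct in substance but genuinely different from the paper's. The paper makes the affine substitution $t = \alpha\tilde t + (1-\alpha)\tau_0$ with $\alpha = r_2^2/r_1^2$ (equivalently, the multiplicative rescaling $\sqrt{\tau_0-t}\mapsto\sqrt{\alpha}\,\sqrt{\tau_0-t}$, which is the same change of variables you perform in the $s = \sqrt{\tau_0-t}$ coordinate). This converts $\Psi(r_2)$ directly into an integral over the $\Psi(r_1)$ time-slab, where the \emph{integrated} almost-monotonicity \eqref{ineq: monotonicity_Theta} is applied pairwise at $(\tilde t, t)$ for each $\tilde t$; the factor $\frac{t-\tilde t}{\tau_0-\tilde t} = 1-\alpha$ is constant and yields the $(r_1-r_2)$ term after integrating, and the exponential is pulled out as a single prefactor. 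Your Step~3 replaces this one-line manoeuvre with an auxiliary weighted functional $\eta(r)$, differentiated in $r$ using the \emph{differential} form of the estimate from the proof of Theorem \ref{thm: monotonicity_Theta}, and then a sandwich $e^{-cf(\tau_0-r^2)}\Psi(r)\leqslant\eta(r)\leqslant e^{-ch(r)}\Psi(r)$ to translate the bound on $\eta$ back to $\Psi$. Both work; the identity $h(r)=f(\tau_0-4r^2)$ (your Step~2) is exactly what the paper records in Remark \ref{rmk: monotonicity_Psi}.

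Two remarks worth making. First, your sandwich introduces a harmless but strictly positive discrepancy: you land on $\Psi(r_2)\leqslant e^{c'}\,e^{c(h(r_2)-h(r_1))}\Psi(r_1) + cK(r_1-r_2)$ with an extra uniformly bounded factor $e^{c'}$ (coming from $f(\tau_0-r_2^2)-h(r_2)$), which cannot be absorbed into $e^{c(h(r_2)-h(r_1))}$ when $r_1=r_2$. Since the whole exponential factor is uniformly bounded anyway (Remark \ref{rmk: monotonicity_Psi}), this is inconsequential for all downstream applications, but it is not literally the stated inequality \eqref{ineq: monotonicity_Psi}; the paper's affine change of variables produces that exact shape directly. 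Second, your argument has a small robustness advantage: the paper's justification that $\frac{d}{d\tilde t}(f(t)-f(\tilde t)) = (\alpha-1)f'(\tilde t)\leqslant 0$ is not actually correct (by the chain rule the left-hand side is $\alpha f'(t)-f'(\tilde t)$, and a short numerical check shows it can be positive, i.e., the supremum of $f(t)-f(\tilde t)$ over the slab is not at $\tilde t=\tau_0-4r_1^2$); the paper's conclusion nevertheless survives because $0\leqslant f(t)-f(\tilde t)\leqslant 27$ uniformly. Your derivation sidesteps that pointwise sign claim entirely, replacing it by the global bound on $f$ via Remark \ref{rmk: monotonicity_Theta}.
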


\begin{proof}
    Setting\footnote{ {Here note that we only need to consider the case where $r_2<r_1$, i.e. $\alpha<1$.}} $\alpha:=\frac{r_2^2}{r_1^2}\in (0,1)$, we will perform a change of time variables defined by  $t=:\alpha\tilde{t}+(1-\alpha)\tau_0$ in the integral $\Psi(r_2)$. At first,  {noting that $\tau_0-\tilde{t} = \frac{1}{\alpha}(\tau_0-t)$,} we have
\begin{equation*}
    \Psi(r_2)=\int_{\tau_0-4r_2^2}^{\tau_0-r_2^2}\frac{\Theta(t)}{\tau_0-t}dt=\int_{\tau_0-4r_1^2}^{\tau_0-r_1^2}\frac{\Theta(t)}{\tau_0-\tilde{t}}d\tilde{t}.
\end{equation*}
 {Moreover, using the hypothesis that $r_1<\min\{\sqrt{\tau_0}/2,1/2\}$, for $\tilde{t}\in(\tau_0-4r_1^2,\tau_0-r_1^2)$ we have $\tau_0>t>\tilde{t}>\tau_0-\min\{\tau_0,1\}$. In particular,} by the almost-monotonicity of $\Theta$ from Theorem \ref{thm: monotonicity_Theta}, between times $t_1=\tilde{t} < t=t_2$, we have
\begin{align}\label{ineq: pre_estimate_psi}
    \Psi(r_2)\leqslant \int_{\tau_0-4r_1^2}^{\tau_0-r_1^2}e^{c(f(t)-f(\tilde{t}))}\left(\frac{\Theta(\tilde{t})}{\tau_0-\tilde{t}}+c\bigg(N^{n/2}(E_0+\sqrt{E_0})+\frac{1}{\ln^2N} \bigg)\frac{t-\tilde{t}}{\tau_0-\tilde{t}}\right)d\tilde{t}.
\end{align}
 {Next, we claim that $\frac{d}{d\tilde{t}}(f(t)-f(\tilde{t}))\geqslant 0$ for $\tilde{t}\in(\tau_0-4r_1^2,\tau_0-r_1^2)$. Indeed, on the one hand, by the chain rule,
\begin{equation}\label{eq: derivative_nonnegativity}
    \frac{d}{d\tilde{t}}(f(t)-f(\tilde{t})) = \alpha\frac{df}{dt}(t) - \frac{df}{d\tilde{t}}(\tilde{t}) = \alpha\left(f'(t) - f'(\tilde{t})\right).
\end{equation} On the other hand, recalling from the proof of Theorem \ref{thm: monotonicity_Theta} that $f'(t)=\ln^4(\tau_0-t) +\ln^2(\tau_0-t)$, we have
\[
f''(t) = -2\frac{\ln(\tau_0-t)}{\tau_0-t}(2\ln^2(\tau_0-t) + 1),
\] so that $f''(t)>0$ exactly when $\tau_0 - t<1$, i.e. $f'(t)$ is strictly increasing when $\tau_0 - t<1$. Thus, for $\tilde{t}\in(\tau_0-4r_1^2,\tau_0-r_1^2)$, we have $f'(t) - f'(\tilde{t})\geqslant 0$ (since $\tilde{t}<t$ and $\tau_0-t<\tau_0-\tilde{t}<1$), and combining this with \eqref{eq: derivative_nonnegativity} we get the desired claim. In particular, $f(t)-f(\tilde{t})\leqslant f(\tau_0-r_2^2)-f(\tau_0-r_1^2)$ for $\tilde{t}\in(\tau_0-4r_1^2,\tau_0-r_1^2)$. This last fact together with inequality \eqref{ineq: pre_estimate_psi} gives}
\begin{align*}
    \Psi(r_2) &\leqslant \int_{\tau_0-4r_1^2}^{\tau_0-r_1^2}e^{c(f(\tau_0- {r_2^2})-f(\tau_0- {r_1^2}))}\left(\frac{\Theta(\tilde{t})}{\tau_0-\tilde{t}}+c\bigg(N^{n/2}(E_0+\sqrt{E_0})+\frac{1}{\ln^2N} \bigg)(1-\alpha)\right)d\tilde{t}\\
    & \leqslant e^{c(h(r_2)-h(r_1))}\left(\Psi(r_1)+c\bigg(N^{n/2}(E_0+\sqrt{E_0})+\frac{1}{\ln^2N} \bigg)(r_1-r_2)\right),
\end{align*}
where $h(t)$ is given by \eqref{def: h}, as we wanted.
\end{proof}

\begin{remark}
\label{rmk: monotonicity_Psi}
    By analogy with Remark \ref{rmk: monotonicity_Theta}, we note that whenever $0<r_2\leqslant r_1\leqslant\min\{\sqrt{\tau_0}/2, {1/2}\}$, the exponential factor $e^{c(h(r_2)-h(r_1))}$ in inequality \eqref{ineq: monotonicity_Psi} is uniformly bounded. Indeed, from the above proof of Theorem \ref{thm: monotonicity_Psi} one has 
    $h(r_2)-h(r_1) = f(\tau_0- {r_2^2})-f(\tau_0- {r_1^2})$,
    and since $f$ is increasing  we have $f(\tau_0- {r_1^2})\leqslant f(\tau_0- {r_2^2})\leqslant f(\tau_0^{-})=0$ and $f(\tau_0- {r_1^2})\geqslant f(\tau_0- {1})\geqslant - {26}$, so that $0\leqslant h(r_2)-h(r_1)\leqslant  {26}$.
\end{remark}
\begin{remark}
\label{rmk: time_transl_invariance}
    The harmonic $H$-flow \eqref{eq: harmonic_flow} is clearly invariant under time-translation, i.e. if $\xi(t)$ is a solution to \eqref{eq: harmonic_flow}  then so is $\xi(t+t_0)$, for any $t_0\in\mathbb{R}$. In particular, given any $\tau_0>0$, by reparametrising $t\mapsto t-\tau_0$ we can translate Theorems \ref{thm: monotonicity_Theta} and \ref{thm: monotonicity_Psi}, previously stated for solutions over a time interval $[0,\tau_0]$, to analogous conclusions for solutions defined over a time interval of the form $[-\tau_0,0]$.
\end{remark}

\subsection{$\varepsilon$-regularity and energy gap}
\label{sec: e-regularity}

We remain in the setting of §\ref{subsec: local_monotonicity}, following mainly the sources \cites{Struwe1988,Struwe1989,he2021}, so that $(M^n,g)$ denotes a complete oriented Riemannian manifold of bounded geometry, admitting a compatible $H$-structure.

In what follows, for any $y\in M$, we shall identify $B_{r_M}(y)\subset (M,g)$ with the Euclidean ball $B_{r_M}(0)\subset\mathbb{R}^n$ via normal coordinates centred at $y$, and regard any solution $\xi(t)$ of the harmonic $H$-flow \eqref{eq: harmonic_flow} on $B_{r_M}(y)\times [0,\tau)$ as defined on $B_{r_M}(0)\times [0,\tau)\subset\mathbb{R}^n\times\mathbb{R}$ under such identification. We shall also use the following notation, for any $(x_0,t_0)\in\mathbb{R}^n\times\mathbb{R}$:
\begin{align*}
    P_r(x_0,t_0) &:= \{(x,t):d(x,x_0)\leqslant r, |t-t_0|\leqslant r^2\},\\
    T_r(t_0) &:= \{(x,t): t_0-4r^2<t<t_0-r^2\}.
\end{align*}
The following $\varepsilon$-regularity result along the harmonic $H$-flow generalises what was proved for $H=\U(m)\subset\SO(2m)$  in \cite{he2021}*{Theorem 3.3}.

\begin{theorem}[$\varepsilon$-regularity]
\label{thm: e-regularity}
    Let $(M^n,g)$ be a complete oriented Riemannian manifold of bounded geometry, admitting a compatible $H$-structure. Then, for any $E_0\in (0,\infty)$, there exists a constant $\varepsilon_0>0$, depending only on $(M^n,g)$, $H$ and $E_0$, with the following significance.
    Suppose that $\{\xi(t)\}$ is a solution to the harmonic $H$-flow \eqref{eq: harmonic_flow} on $B_{r_M}(y)\times[0,\tau)\subset M\times[0,\tau)$, with $\tau\leqslant r_M^2$ and initial energy bounded by $E_0$, and fix any $\tau_0\in (0,\tau)$. 
    
    If, for some $0<R<\min\{\varepsilon_0,\sqrt{\tau_0}/2\}$, the quantity  $\Psi(R):=\Psi_{(y,\tau_0)}(R;\xi(t))$ defined by \eqref{eq: dfn_Psi} satisfies
\begin{equation}
\label{ineq: smallness_Psi}
    \Psi(R)<\varepsilon_0,
\end{equation} 
    then
\begin{equation}
\label{ineq: e-reg}
    \sup_{P_{\delta R}(0,\tau_0)} e(\xi)\leqslant 4(\delta R)^{-2},
\end{equation} 
    where the constant $\delta>0$ depends only on $(M^n,g)$ and $H$, and possibly on $E_0$ and $\min\{1,R\}$. 
\end{theorem}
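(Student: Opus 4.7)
The strategy is the classical Struwe $\varepsilon$-regularity scheme, as adapted to the almost complex case in \cite{he2021}. Proceed by contradiction: suppose \eqref{ineq: e-reg} fails for every small $\delta$. Introduce the scaling-adapted function
\[F(x,t) := (R-|x|)^2(\tau_0 - t)\, e(\xi)(x,t), \quad (x,t)\in \overline{P_R(0,\tau_0)},\]
which vanishes on the parabolic boundary of $P_R(0,\tau_0)$. The assumed failure forces an interior maximiser $(x_1,t_1)\in P_{R/2}(0,\tau_0)$ at which $F$ is arbitrarily large as $\delta\downarrow 0$. A standard point-picking argument, together with $\lambda:=\sqrt{e(\xi)(x_1,t_1)}$, yields a radius $\rho_1>0$ with
\[\sup_{P_{\rho_1}(x_1,t_1)} e(\xi)\leqslant 4\,e(\xi)(x_1,t_1) \qandq \lambda\rho_1\to\infty \text{ as }\delta\downarrow 0.\]

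Next, apply the parabolic rescaling of Lemma \ref{lem: rescaling} at $(x_1,t_1)$ by $\lambda$, working in normal coordinates centred at $x_1$, to produce a solution $\tilde\xi$ on a large cylinder $P_{\lambda_0}(0,0)$ with $\tilde e(\tilde\xi)(0,0)=1$ and $\sup\tilde e\leqslant 4$ there. The rescaled metric $\tilde g=\lambda^2 g$ has bounded geometry with curvature $O(\lambda^{-2})$, and the Bochner estimate of Lemma \ref{lem: dif_ineq} rescales to $(\partial_t-\Delta_{\tilde g})\tilde e\leqslant c(\tilde e^2 + \lambda^{-4})\leqslant C$ on $P_1(0,0)$. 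A parabolic sub-solution mean value / Moser iteration applied to $\tilde e$, combined with $\tilde e(0,0)=1$, yields an $L^1$ lower bound on a fixed sub-cylinder $P_{\rho_\ast}(0,0)$. Since both the backward heat kernel $\tilde G$ and the cut-off $\tilde\phi^2$ are bounded below by positive constants there, this upgrades to
\[\tilde\Psi_{(0,\tilde\tau_0)}(\tilde r_0;\tilde\xi)\geqslant c_1>0,\]
for suitable $\tilde r_0,\tilde\tau_0=O(1)$.

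By the parabolic invariance of $\Psi$ noted after \eqref{eq: dfn_Psi}, this transfers to the original flow as $\Psi_{(x_1,t_1)}(\lambda^{-1}\tilde r_0;\xi)\geqslant c_1$ at the much smaller scale $\lambda^{-1}\tilde r_0\ll R$. Translating the singular base point from $(x_1,t_1)$ to $(y,\tau_0)$ — which introduces only a universal factor when $(x_1,t_1)\in P_{R/2}(0,\tau_0)$ — and then invoking the almost-monotonicity of Theorem \ref{thm: monotonicity_Psi} upward from scale $\lambda^{-1}\tilde r_0$ to scale $R$, we obtain
\[\Psi_{(y,\tau_0)}(R;\xi)\geqslant c\,c_1 - c\bigl(N^{n/2}(E_0+\sqrt{E_0}) + \tfrac{1}{\ln^2 N}\bigr)\, R.\]
Choosing $N=N(E_0)$ first, then $\varepsilon_0>0$ small enough that $R<\varepsilon_0$ forces the error below $c\,c_1/2$, and finally $\varepsilon_0<c\,c_1/2$, contradicts \eqref{ineq: smallness_Psi}. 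The constant $\delta$ is then fixed by the point-picking threshold as the largest $\delta$ for which the maximiser of $F$ is forced inside $P_{\delta R}$.

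\textbf{Main obstacle.} The subtle step is converting the pointwise normalisation $\tilde e(0,0)=1$ into the weighted integral lower bound for $\tilde\Psi$. This requires a parabolic sub-solution mean value inequality compatible with the Gaussian weight $\tilde G$ and cut-off $\tilde\phi^2$ in \eqref{eq: dfn_Psi}; the cleanest route is first to produce an unweighted $L^1$ lower bound on a fixed sub-cylinder via Moser iteration, and then exploit the fact that both the Gaussian kernel and the cut-off are bounded below there. A secondary delicacy is the balancing of the two error terms in Theorem \ref{thm: monotonicity_Psi}, which is precisely what forces $\varepsilon_0$ to depend on $E_0$.
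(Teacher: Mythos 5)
Your plan is, in essence, the paper's own proof: the same Struwe/Chen--Ding scheme of point-picking, parabolic rescaling to normalise $e(\tilde\xi)(0,0)=1$ with $\sup\leqslant 4$ on a cylinder, Moser's parabolic Harnack inequality applied to the rescaled Bochner subsolution, and the almost-monotonicity of Theorem~\ref{thm: monotonicity_Psi} to transfer the resulting lower bound up to scale $R$ and contradict $\Psi(R)<\varepsilon_0$. The paper organises the contradiction slightly differently (it assumes the rescaled cylinder radius $r_0\geqslant 1$ and derives the absurdity, rather than assuming failure of \eqref{ineq: e-reg} for all $\delta$), but these framings are equivalent.

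The one step you state incorrectly is the base-point translation. Moving the heat-kernel singularity from $(x_1,t_1+2\lambda^{-2})$ to $(y,\tau_0)$ does \emph{not} cost "only a universal factor when $(x_1,t_1)\in P_{R/2}(0,\tau_0)$": the ratio of the two Gaussians contains a factor of the form $\exp\bigl(c'\delta^2|x|^2/(4|t|)\bigr)$, which is unbounded over the support of the cut-off $\phi$ (of radius $r_M\gg R$) as $|x|$ grows. The paper's estimate \eqref{ineq: lb_kernel} handles this by splitting into $|x|\leqslant R/\delta$ and $|x|\geqslant R/\delta$, absorbing the far region into an error term $\varepsilon R^{-2}$ at the price of choosing $\delta\sim(|\ln R|+|\ln\varepsilon|)^{-1/2}$. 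This is precisely why the statement allows $\delta$ to depend on $\min\{1,R\}$; with that repair your argument closes, but as written the "universal factor" claim would not survive scrutiny.
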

The main ingredients in the proof of Theorem \ref{thm: e-regularity} will be the local monotonicity formula of Theorem \ref{thm: monotonicity_Psi} and the Bochner-type estimate of Lemma \ref{lem: dif_ineq}. Our argument follows closely the proof of \cite{Struwe1989}*{Lemma 2.4}.

\begin{proof}[Proof of Theorem \ref{thm: e-regularity}]
    By a parabolic rescaling as in Lemma \ref{lem: rescaling}, we can assume $r_M=1$. Moreover, by  time-translation invariance of the harmonic $H$-flow equation (see Remark \ref{rmk: time_transl_invariance}), we may as well prove the analogous statement for a solution $\{\xi(t)\}$ defined over the time interval $[-\tau_0,0]$ instead. This amounts to showing that  {we can find a constant $\varepsilon_0=\varepsilon_0(M,g,H,E_0)>0$, such that if $\{\xi(t)\}$ is a solution to the harmonic $H$-flow \eqref{eq: harmonic_flow} on $B_{1}(y)\times [-\tau_0,0]$, $0<\tau_0<1$, with initial energy bounded by $E_0$, and $\Psi_{(y,0)}(R)<\varepsilon_0$ for some $0<R<\min\{\varepsilon_0,\sqrt{\tau_0}/2\}$, then
\begin{equation}\label{ineq: e-reg_proof}
\sup_{P_{\delta R}(0,0)}e(\xi)\leqslant 4(\delta R)^{-2},
\end{equation} for some constant $\delta=\delta(M,g,H,E_0,R)>0$.}

     {For now, let $\varepsilon_0>0$ and $\delta\in (0,1/4]$ be constants to be determined in the course of the proof, and let $0<R<\min\{\varepsilon_0,\sqrt{\tau_0}/2\}<1/2$. Define $r:=2\delta R$; in particular, $r\leqslant R/2<1/4$}. Since $\xi$ is smooth, there is $\sigma_0\in [0,r)$ such that  {the following maximum is attained:}
\[
(r-\sigma_0)^2\sup_{P_{\sigma_0}(0,0)} e(\xi) = \max_{0\leqslant\sigma\leqslant r}\left\{ (r-\sigma)^2\sup_{P_{\sigma}(0,0)} e(\xi)\right\}.
\] 
    Moreover, the supremum of the energy density is attained at some $(x_0,t_0)\in P_{\sigma_0}(0,0)$:
\[
\sup_{P_{\sigma_0}(0,0)} e(\xi) = e(\xi)(x_0,t_0)=:e_0.
\]  {\textbf{Claim:} For $\varepsilon_0=\varepsilon_0(M,g,H,E_0)>0$ and $\delta=\delta(M,g,H,E_0,R)>0$ sufficiently small, if $\Psi_{(y,0)}(R)<\varepsilon_0$ then
\begin{equation}\label{ineq: key_e-reg_proof}
    (r-\sigma_0)^2 e_0\leqslant 4.
\end{equation} Note that inequality \eqref{ineq: key_e-reg_proof} is equivalent to
\[
(r-\sigma)^2\sup_{P_{\sigma}(0,0)} e(\xi)\leqslant 4,\quad\forall\sigma\in [0,r],
\] and taking $\sigma=\frac{1}{2}r=\delta R$ in this last inequality gives inequality \eqref{ineq: e-reg_proof}, and therefore proves the theorem. Thus, in what follows, we proceed to prove the above claim.} 

Set $\rho_0:=\frac{1}{2}(r-\sigma_0)$. Then, by the above choices of $\sigma_0$ and $(x_0,t_0)$, and further noting that $P_{\rho_0}(x_0,t_0)\subset P_{\rho_0+\sigma_0}(0,0)$ and $\rho_0+\sigma_0<r$, we have
\begin{equation}
\label{ineq: sup_bound}
    \sup_{P_{\rho_0}(x_0,t_0)} e(\xi) \leqslant \sup_{P_{\rho_0+\sigma_0}(0,0)} e(\xi) \leqslant 4e_0.
\end{equation} 
    Now let $r_0:=\rho_0\sqrt{e_0}$ and,  {performing a change of variables}, define a new tensor $\tilde{\xi}$ on $P_{r_0}(0,0)$ by
\[
\tilde{\xi}(x,t):=\xi\left(\frac{x}{\sqrt{e_0}} + x_0, \frac{t}{e_0} + t_0\right).
\] 
     {Writing $\tilde{x}:=\frac{x}{\sqrt{e_0}} + x_0$ and $\tilde{t}:=\frac{t}{e_0} + t_0$, note that $(x,t)\in P_{r_0}(0,0)$ if and only if $(\tilde{x},\tilde{t})\in P_{\rho_0}(x_0,t_0)$. Moreover, the energy density $e(\tilde{\xi})$ of $\tilde{\xi}$ with respect to the metric $\tilde{g}(x):=g(\tilde{x})$ satisfies}
    \begin{equation}\label{eq: new_density}
       {  e(\tilde{\xi})(x,t) = e_0^{-1}e(\xi)(\tilde{x},\tilde{t}).}
    \end{equation}
     {In particular,} $e(\tilde{\xi})(0,0)=e_0^{-1}e(\xi)(x_0,t_0)=1$ and \eqref{ineq: sup_bound} becomes
\begin{equation}
\label{ineq: scaled_sup_bound}
    \sup_{P_{r_0}(0,0)} e(\tilde{\xi}) \leqslant 4.
\end{equation}  
    Denoting by $\tilde{\Delta}$ the Laplacian with respect to the metric $\tilde{g}$, we have
\[
(\partial_t - \tilde{\Delta})e(\tilde{\xi})(x,t) = e_0^{-2}(\partial_t - \Delta)e(\xi)( {\tilde{x},\tilde{t}}).
\] 
    So using the Bochner-type estimate \eqref{ineq: Bochner_estimate} together with \eqref{eq: new_density} and \eqref{ineq: scaled_sup_bound} we get
\begin{align}
(\partial_t - \tilde{\Delta})e(\tilde{\xi})(x,t) &=  {e_0^{-2}(\partial_t - \Delta)e(\xi)(\tilde{x},\tilde{t})}\nonumber\\
&\leqslant  {c e_0^{-2}\left(e(\xi)^2(\tilde{x},\tilde{t})+1\right)}\nonumber\\
&\leqslant  {c\left(e(\tilde{\xi})^2(x,t)+\frac{1}{e_0^2}\right)}\nonumber\\
&\leqslant c\left(e(\tilde{\xi})(x,t)+\frac{1}{e_0^2}\right),\quad\forall (x,t)\in P_{r_0}(0,0).\nonumber
\end{align} 
    Thus, the function $u(x,t):=\exp(-ct)(e(\tilde{\xi})(x,t)+e_0^{-2})$ satisfies
    \begin{equation}\label{ineq: heat_op_bound}
        (\partial_t - \tilde{\Delta})u\leqslant 0\quad\text{in }P_{r_0}(0,0).
    \end{equation} Now \emph{suppose that} $r_0>1$;  {by definition $r_0=\frac{1}{2}(r-\sigma_0)\sqrt{e_0}$, so this assumption is equivalent to saying that the bound \eqref{ineq: key_e-reg_proof} does not hold, and the idea now is to derive a contradiction in case $\Psi_{(y,0)}(R)<\varepsilon_0$, provided we choose $\varepsilon_0$ and $\delta$ sufficiently small; this will prove our main claim.}
    
     {On the one hand, using $r_0>1$, inequality \eqref{ineq: heat_op_bound}} and applying Moser's parabolic Harnack inequality \cite[Theorem 3]{Moser1964} to $u(x,t)$, gives the following a priori estimate:
\begin{equation}
\label{ineq: Harnack_bound}
    1+e_0^{-2} = e(\tilde{\xi})(0,0) + e_0^{-2} \leqslant c\int_{P_1(0,0)}(e(\tilde{\xi})+e_0^{-2}) dxdt.
\end{equation} 
    Setting $\lambda_0:= 1/\sqrt{e_0}$ and scaling back, we have
\begin{equation}
\label{eq: scaling_back}
    \int_{P_1(0,0)} e(\tilde{\xi})dxdt = \lambda_0^{-n}\int_{P_{\lambda_0}(x_0,t_0)} e(\xi)dxdt.
\end{equation}
    
    On the other hand, note that  {$r_0>1$ means $\lambda_0<\rho_0=\frac{1}{2}(r-\sigma_0)$, and so $\lambda_0+\sigma_0<r$. Recalling that $r\leqslant R/2<1/4$, and also that $(x_0,t_0)\in P_{\sigma_0}(0,0)$, we get in particular $\lambda_0<R/2$ and }$B_{\lambda_0}(x_0)\subset B_{1/2}(0)$. Noting furthermore that $G_{(x_0,t_0+2\lambda_0^2)}\geqslant c\lambda_0^{-n}$ in $P_{\lambda_0}(x_0,t_0)$ and that $P_{\lambda_0}(x_0,t_0)\subset \overline{T}_{\lambda_0}(t_0+2\lambda_0^2)$, it follows from the almost-monotonicity formula of Theorem \ref{thm: monotonicity_Psi} that
\begin{align}
\label{ineq: after_monotonicity}
    \lambda_0^{-n} \int_{P_{\lambda_0}(x_0,t_0)} e(\xi)dxdt 
    &\leqslant c\int_{P_{\lambda_0}(x_0,t_0)} e(\xi)G_{(x_0,t_0+2\lambda_0^2)} \phi^2\sqrt{\det (g)} dxdt\nonumber\\
    &\leqslant c\int_{T_{\lambda_0}(t_0+2\lambda_0^2)} e(\xi)G_{(x_0,t_0+2\lambda_0^2)}\phi^2\sqrt{\det (g)} dxdt\nonumber\\
    &\leqslant c\int_{T_{R}(t_0+2\lambda_0^2)} e(\xi)G_{(x_0,t_0+2\lambda_0^2)} \phi^2\sqrt{\det (g)} dxdt+cR(E_0 + 1)\nonumber\\
    &\leqslant c\left(\int_{-4R^2}^{-R^2} + \int_{-R^2}^{t_0+2\lambda_0^2-R^2}\right)\int_{\mathbb{R}^n}e(\xi)G_{(x_0,t_0+2\lambda_0^2)}\phi^2\sqrt{\det (g)} dxdt + cR(E_0 + 1)\nonumber\\
    & {\leqslant c\left(\int_{-4R^2}^{-R^2} + \int_{-R^2}^{-R^2/4}\right)\int_{\mathbb{R}^n}e(\xi)G_{(x_0,t_0+2\lambda_0^2)}\phi^2\sqrt{\det (g)} dxdt + cR(E_0 + 1)}\nonumber\\
    &\leqslant c\int_{T_{R}(0)}e(\xi)G_{(x_0,t_0+2\lambda_0^2)}\phi^2\sqrt{\det (g)}dxdt+cR(E_0 + 1).
\end{align}  {Here, we used\footnote{ {The way we apply Theorem \ref{thm: monotonicity_Psi} is by setting e.g. $N=2$ in its statement and also using the rough bounds $r_1-r_2\leqslant r_1$ and $2\sqrt{E_0}\leqslant E_0 + 1$, as well as Remark \ref{rmk: monotonicity_Psi}, to simplify the inequality given by the theorem to $\Psi(r_2)\leqslant c\Psi(r_1) + cr_1(E_0+1)$.}} Theorem \ref{thm: monotonicity_Psi} in the third inequality and in the last inequality; from the fourth line to the fifth we used that $t_0+2\lambda_0^2 -R^2 < -R^2/4$, since $t_0\leqslant\sigma_0^2<r^2\leqslant R^2/4$ and $\lambda_0^2<R^2/4$, and we used the monotonicity with radii $R/2<R$ to go from the fifth inequality to the last one, bounding the second integral by the first plus $cR(E_0+1)$.}

    Now, given $t\in [-4R^2,-R^2]$ and any point $x$, {we want to estimate $G_{(x_0,t_0+2\lambda_0^2)}(x,t)$ in terms of $G_{(0,0)}(x,t)$, and plug such estimate into inequality \eqref{ineq: after_monotonicity} in order to bound the integral in that inequality in terms of $\Psi_{(y,0)}(R)$. To this end, first note that for $t\in [-4R^2,-R^2]$ we have}
\begin{align}\label{ineq: first_kernel_bound}
    G_{(x_0,t_0+2\lambda_0^2)}(x,t) 
    &\leqslant \frac{{c}}{(4\pi |t|)^{n/2}}\exp\left(-\frac{|x-x_0|^2}{4(t_0+2\lambda_0^2-t)}\right)\nonumber\\
    &\leqslant c\exp\left(\frac{|x|^2}{4|t|}-\frac{|x-x_0|^2}{4|t_0+2\lambda_0^2-t|}\right)G_{(0,0)}(x,t).
\end{align} Next, recall that $|x_0|\leqslant\sigma_0<r=2\delta R$ and $t_0\leqslant \sigma_0^2<r^2=4\delta^2R^2$, as well as that by assumption $\lambda_0<\rho_0\leqslant r/2=\delta R$ and thus $2\lambda_0^2<2\delta^2R^2$. In particular, $|t_0+2\lambda_0^2 -t|\leqslant 6\delta^2R^2 + |t|$. Then, using that $R^2\leqslant |t|\leqslant 4R^2$, and the mean-value theorem on the function $s\mapsto 1/(s+|t|)$ on the interval $[0,6\delta^2R^2]$, we get
\begin{align}\label{ineq: difference_ratios_bounds}
\frac{|x|^2}{|t|}-\frac{|x-x_0|^2}{|t_0+2\lambda_0^2-t|} &\leqslant \frac{|x|^2}{|t|}-\frac{|x-x_0|^2}{6\delta^2R^2 +|t|}\nonumber\\
&\leqslant \frac{1}{|t|}(|x|^2 - |x-x_0|^2) + c\frac{\delta^2R^2}{|t|^2}|x-x_0|^2\nonumber\\
&\leqslant \frac{1}{R^2}(2|x||x_0|-|x_0|^2) + c\frac{\delta^2}{R^2}(|x|^2+2|x||x_0|+|x_0|^2)\nonumber\\
&\leqslant \frac{2}{R^2}|x||x_0| + c\delta^2\frac{|x|^2}{R^2} + c\delta^4\nonumber\\
&\leqslant 4\delta\frac{|x|}{R} + c\delta^2\frac{|x|^2}{R^2} + c\nonumber\\
&\leqslant c\delta^2\frac{|x|^2}{R^2} + c. 
\end{align}
Combining the inequalities \eqref{ineq: first_kernel_bound} and \eqref{ineq: difference_ratios_bounds} yields:
\[
G_{(x_0,t_0+2\lambda_0^2)}(x,t) \leqslant
    c\exp\left(c\delta^2\frac{|x|^2}{4R^2}\right)G_{(0,0)}(x,t).
\] {
If $|x|\leqslant R/\delta$, then}
\begin{equation}\label{ineq: second_kernel_bound}
G_{(x_0,t_0+2\lambda_0^2)}(x,t)\leqslant cG_{(0,0)}(x,t).
\end{equation}  {Otherwise, if $|x|>R/\delta$, then for small enough $\delta=\delta(n)>0$ we have:}
\begin{align}
   G_{(x_0,t_0+2\lambda_0^2)}(x,t) &\leqslant c\exp\left(c\delta^2\frac{|x|^2}{4R^2}\right)G_{(0,0)}(x,t)\nonumber\\
    &\leqslant  {cR^{-n}\exp\left(\frac{|x|^2}{4R^2}(c\delta^2 - \frac{1}{4})\right)}\nonumber\\
    &\leqslant  {cR^{-n}\exp\left(\frac{1}{4\delta^2}(c\delta^2 - \frac{1}{4})\right)}\nonumber\\
    &\leqslant cR^{-n}\exp(-c'\delta^{-2}),
    \label{ineq: third_kernel_bound}
\end{align}  where $c=c(n)>0$ depends only on $n$ and $c'>0$ is an absolute constant. Therefore, for every $\varepsilon\in (0,1)$, and for small enough $\delta=\delta(n,\varepsilon,R)>0$, combining inequalities \eqref{ineq: second_kernel_bound} and \eqref{ineq: third_kernel_bound} it follows that, whenever $t\in [-4R^2,-R^2]$, we have:
\begin{align}\label{ineq: final_kernel_bound}
G_{(x_0,t_0+2\lambda_0^2)}(x,t)\leqslant cG_{(0,0)}(x,t) + c\varepsilon R^{-2},
\end{align} where $c>0$ depends only on $n$, while we choose $\delta\sim(\ln(\varepsilon^{-1} R^{2-n}))^{-1/2}$. Combining \eqref{ineq: Harnack_bound}, \eqref{eq: scaling_back}, \eqref{ineq: after_monotonicity} and \eqref{ineq: final_kernel_bound}, we arrive at the following inequality for every $\varepsilon\in (0,1)$:
\begin{equation}\label{ineq: pre_step_final}
    1+e_0^{-2} \leqslant ce_0^{-2} + c\Psi_{(y,0)}(R)+c\varepsilon R^{-2}\int_{T_R(0)}e(\xi)\phi^2\sqrt{\det(g)}dxdt + cR(E_0+1),
\end{equation} where $c>0$ depends only on $(M^n,g)$ and $H$. In fact, further noting that $E_0$ bounds the initial energy of the solution $\{\xi(t)\}$, and absorbing the $e_0^{-2}$ term into the right-hand side of \eqref{ineq: pre_step_final}, while also noting that $e_0^{-2}\leqslant \rho_0^4 <(\delta R)^4$, we conclude:
\begin{equation}\label{ineq: final_e-reg_contradiction}
1\leqslant c\Psi_{(y,0)}(R) + c\varepsilon E_0 +cR(E_0+1) + c(\delta R)^4. 
\end{equation} Finally, if $\Psi_{(y,0)}(R)<\varepsilon_0$, then by assuming $\varepsilon_0<E_0$ and choosing $\varepsilon:=\varepsilon_0/E_0$, it follows from inequality \eqref{ineq: final_e-reg_contradiction} and the facts that $R<\varepsilon_0$ and $\delta=\delta(n,\varepsilon_0,E_0,R)<1$ that
    \[
    1\leqslant c\varepsilon_0(E_0+1) + c\varepsilon_0^4.
    \] Hence, by taking $\varepsilon_0$ small enough, depending only on $(M,g)$, $H$ and $E_0$, we obtain a \emph{contradiction}. Therefore, for such choices of $\varepsilon_0=\varepsilon_0(M,g,H,E_0)$ and $\delta=\delta(M,g,H,E_0,R)$, we must have $r_0\leqslant 1$, which concludes the proof.
\end{proof}

As a first consequence of $\varepsilon$-regularity, we close this section with a general energy gap result for harmonic $H$-structures, which is a direct analogue of the classical statement  \cite{Chen-Ding1990}*{Lemma 3.1} in harmonic map theory. It also generalises what was shown for  almost Hermitian structures in \cite{he2021}*{Lemma 3.4}.

\begin{proposition}[Energy gap for harmonic $H$-structures]
\label{prop: energy_gap}
	Let $(M^n,g)$ be a closed Riemannian manifold admitting a compatible $H$-structure. Then there is a constant $\varepsilon_0>0$, depending only on $(M^n,g)$ and the group $H$, such that, if $\xi$ is a compatible harmonic $H$-structure whose Dirichlet energy satisfies $\cD(\xi)=\frac{1}{2}\|\nabla\xi\|_{L^2(M)}^2<\varepsilon_0$, then $\xi$ is actually torsion-free, i.e. $\nabla\xi = 0$.
\end{proposition}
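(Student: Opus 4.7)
The plan is to treat the harmonic $H$-structure $\xi$ as the constant-in-time solution $\xi(t)\equiv \xi$ of the flow \eqref{eq: harmonic_flow}, so that the $\varepsilon$-regularity of Theorem \ref{thm: e-regularity} applies directly. For any $y\in M$, any $\tau_0>0$ and $R\in(0,\min\{\varepsilon_{\text{reg}},\sqrt{\tau_0}/2,r_M\})$, since $e(\xi)$ is time-independent one bounds the scale-invariant functional $\Psi_{(y,\tau_0)}(R)$ of \eqref{eq: dfn_Psi} by combining $\|e(\xi)\|_{L^1(M)}=2\mathcal{D}(\xi)$ with $\|G_{(0,\tau_0)}(\cdot,t)\|_{L^\infty}\leqslant c(\tau_0-t)^{-n/2}$ to obtain, for $n\geqslant 3$,
\[
\Psi_{(y,\tau_0)}(R) \leqslant c\int_{\tau_0-4R^2}^{\tau_0-R^2}(\tau_0-t)^{-n/2}\,dt\cdot \mathcal{D}(\xi) \leqslant c R^{2-n}\mathcal{D}(\xi),
\]
with the analogous logarithmic bound when $n=2$. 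Fixing $R=R_0$ depending only on $(M,g)$ and $H$ and requiring $\mathcal{D}(\xi)<\varepsilon_1:=\varepsilon_{\text{reg}}R_0^{n-2}/c$, the hypothesis of Theorem \ref{thm: e-regularity} is satisfied; because $\xi$ is constant in time, the conclusion $\sup_{P_{\delta R_0}(y,\tau_0)}|\nabla\xi|^2\leqslant 4(\delta R_0)^{-2}$ degenerates to a pointwise bound at $y$, and varying $y\in M$ yields the uniform estimate
\[
\|\nabla\xi\|_{L^\infty(M)}^{2}\leqslant K_0,
\qquad
K_0=K_0(M,g,H).
\]

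To convert this a priori boundedness into the vanishing $\nabla\xi\equiv 0$, the next step is a Bochner-type identity on the closed manifold $M$. A Weitzenböck computation for $|\nabla\xi|^2$ --- using the Ricci identity of Proposition \ref{prop: Bianchi_identity}, the Bianchi-type identity of Corollary \ref{cor: m-part_Bianchi_identity}, the harmonicity $\Div T = 0$, and the Laplacian identity of Lemma \ref{lem: basic_estimate_harmonic} --- yields on $M$
\[
\Delta|\nabla\xi|^2 \geqslant |\nabla^2\xi|^2 - c(|\nabla\xi|^4+|\nabla\xi|^2).
\]
Integrating on the closed manifold, the left-hand side vanishes, and the uniform $L^\infty$-bound from the previous step gives
\[
\int_M|\nabla^2\xi|^2\,\vol_g \leqslant c(K_0+1)\mathcal{D}(\xi).
\]

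From here the plan is to run a Kato--Moser iteration. Applying Kato's inequality $|\nabla|\nabla\xi|^2|^2\leqslant 4|\nabla\xi|^2|\nabla^2\xi|^2$ together with Sobolev embedding to powers $e^{q/2}$ of $e:=|\nabla\xi|^2$ upgrades the $W^{1,2}$-estimate into higher $L^p$-estimates, each step bringing a factor of $\mathcal{D}(\xi)$ inside a controllable constant depending on $K_0$ and the geometry. After finitely many iterations one arrives at a super-linear bound of the shape $\|\nabla\xi\|_{L^\infty}^{2+\alpha}\leqslant c\,\mathcal{D}(\xi)$ for some $\alpha>0$; combined with the trivial $\mathcal{D}(\xi)\leqslant \tfrac12\vol(M)\|\nabla\xi\|_{L^\infty}^2$, this gives $\mathcal{D}(\xi)\leqslant c\,\mathcal{D}(\xi)^{1+\alpha/2}$, forcing $\mathcal{D}(\xi)=0$ once $\mathcal{D}(\xi)<\varepsilon_0$ is small enough, and hence $\nabla\xi\equiv 0$. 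The delicate point --- the main obstacle --- is precisely closing this iteration so as to secure a genuinely super-linear dependence of $\|\nabla\xi\|_\infty$ on $\mathcal{D}(\xi)$; the algebra of the Bochner inequality has to be handled with care because of the tensorial structure and the $H$-constraints, but the scheme is modelled on the classical harmonic map argument of Chen--Ding \cite{Chen-Ding1990} and fits naturally with the identities of Section \ref{sec: Bianchi}.
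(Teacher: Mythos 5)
Your step 1 is correct and, in fact, makes explicit what the paper's proof keeps implicit: viewing a harmonic $\xi$ as a static solution of \eqref{eq: harmonic_flow}, bounding $\Psi_{(y,\tau_0)}(R)$ by $cR^{2-n}\mathcal{D}(\xi)$, and reading off a uniform $L^\infty$ bound $\|\nabla\xi\|_{L^\infty(M)}^2\leqslant K_0$ from Theorem~\ref{thm: e-regularity}. (Note the $\varepsilon$-regularity constant also depends on an energy upper bound $E_0$; this is harmless since you may take $E_0=1$.)

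From there the paper's proof takes a different and simpler route. Rather than a direct Bochner argument, the paper argues by contradiction: given a sequence $(\xi_k)$ of harmonic $H$-structures with $\cD(\xi_k)\to 0$ and $\nabla\xi_k\neq 0$, the $\varepsilon$-regularity plus Shi-type estimates yield uniform $C^\infty$ bounds and smooth subconvergence to a torsion-free $\xi$; the crucial estimate is then not Bochner but the decomposition of Lemma~\ref{lem: basic_estimate_harmonic}, which for $\Div T=0$ gives $|\Delta\xi_k|\leqslant c|\nabla\xi_k|^2$ directly, so that a single integration by parts gives $\int_M|\nabla(\xi_k-\xi)|^2\leqslant c\|\xi_k-\xi\|_{L^\infty}\int_M|\nabla(\xi_k-\xi)|^2$ and the contradiction follows from $\|\xi_k-\xi\|_{L^\infty}\to 0$. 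This avoids all quantitative Bochner/Moser bookkeeping.

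Your proposed direct route has two genuine gaps. First, the inequality $\Delta|\nabla\xi|^2\geqslant|\nabla^2\xi|^2 - c(|\nabla\xi|^4+|\nabla\xi|^2)$ is not what the Weitzenböck computation yields: as in the proof of Lemma~\ref{lem: dif_ineq}, the ambient curvature produces a term $\langle\nabla_aR_{la}\diamond\xi,\nabla_l\xi\rangle$ which is only bounded by $c|\nabla\xi|$, i.e.\ linear in $|\nabla\xi|$; after Young one picks up a $+c$ (not a $+c|\nabla\xi|^2$), so the paper's Bochner estimate is $(\partial_t-\Delta)e\leqslant c(e^2+1)$. Integrating therefore gives $\int_M|\nabla^2\xi|^2\lesssim K_0\cD(\xi) + \sqrt{\cD(\xi)}$ (or $+\mathrm{Vol}(M)$), and the $O(\sqrt{\cD})$ term dominates $\cD$ for small energy, ruining the linear-in-$\cD$ bound your iteration scheme needs (unless $g$ is flat). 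Second, the final exponent argument is backwards. From $\|\nabla\xi\|_{L^\infty}^{2+\alpha}\leqslant c\cD(\xi)$ and the trivial $\cD(\xi)\leqslant\tfrac12\vol(M)\|\nabla\xi\|_{L^\infty}^2$ one gets $\cD(\xi)\geqslant c''\cD(\xi)^{1+\alpha/2}$, which only bounds $\cD(\xi)$ from above and gives no gap. A gap argument of this shape would require a bound with exponent strictly \emph{less} than $2$ on the left, i.e.\ $\|\nabla\xi\|_{L^\infty}^{2-\alpha'}\leqslant c\cD(\xi)$; this is not what Kato--Moser produces, which yields the mean-value inequality $\|\nabla\xi\|_{L^\infty}^2\lesssim\cD+\text{lower order}$. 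The compactness argument of the paper is therefore not merely an alternative but side-steps both obstructions.
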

\begin{proof}
	Arguing by contradiction, suppose otherwise that there exists a sequence $(\xi_k)$ of harmonic $H$-structures inducing $g$ such that $\cD(\xi_k)\to 0$ as $k\to\infty$, but $\nabla\xi_k\neq 0$ for all $k$.
	
	By the $\varepsilon$-regularity [Theorem \ref{thm: e-regularity}] and Shi-type estimates [Proposition \ref{prop: Shi-estimates}], applied individually to each $\xi_k$ thought of as a static harmonic $H$-flow,  for $k\gg 1$ we eventually have that $|\nabla^m\xi_k|$ is uniformly bounded for each $m\in\mathbb{N}_0$. Therefore $(\xi_k)$ subconverges in the smooth topology to a torsion-free $H$-structure $\xi$. But since $\xi_k$ is harmonic, it follows from Lemma \ref{lem: basic_estimate_harmonic} that $|\Delta\xi_k|\leqslant c|\nabla\xi_k|^2$ for all $k$, where $c>0$ only depends on the geometry (and not on $k$). In particular, since $\nabla\xi = 0$, we get
	\[
	|\Delta (\xi_k - \xi)| \leqslant c|\nabla(\xi_k-\xi)|^2. 
	\] 
	Integrating by parts on $M$ gives
	\begin{align*}
		\int_M |\nabla(\xi_k-\xi)|^2 &= -\int_M \langle\xi_k-\xi,\Delta(\xi_k-\xi)\rangle\\
		&\leqslant c\|\xi_k-\xi\|_{L^{\infty}(M)}\int_M|\nabla(\xi_k-\xi)|^2.
	\end{align*} 
	Since $\xi_k\to\xi$ as $k\to\infty$ in the smooth topology, the above yields $\nabla(\xi_k-\xi)=0$,  i.e. $\nabla\xi_k=\nabla\xi=0$, for all $k\gg 1$. This contradicts our assumption that $\nabla\xi_k\neq 0$.
\end{proof}

\begin{remark}
    Of course, by the equivalence relation \eqref{ineq: T_equiv_nablaxi}, up to a change in the constant $\varepsilon_0$, the same can be stated in terms of the energy $\cE(\xi)=\frac{1}{2}\|T_{\xi}\|_{L^2(M)}^2$ instead.
\end{remark}

\subsection{Long-time existence and finite-time singularity}
\label{sec: long_time_existence}

We shall prove two main results about the harmonic $H$-flow under certain assumptions on the initial condition. Our first result establishes long-time existence and convergence of the harmonic $H$-flow to a torsion-free limit, under a smallness condition on the initial energy, relative to the $L^{\infty}$-norm of torsion. In particular, this answers in the affirmative a question raised by Grigorian \cite[p.8]{grigorian2020isometric} for $H=\rm G_2$, as to the possibility of proving long-time existence given small initial energy, rather than small initial entropy or small pointwise torsion, as obtained in  \cites{Grigorian2019,dgk-isometric}. The following also generalises the almost Hermitian case $H=\U(m)\subset\SO(2m)$, proved in \cite[Theorem 1]{he2021}.

\begin{theorem}[Long-time existence under small initial energy]
\label{thm: long-time_existence}
    Let $(M^n,g)$ be a closed, oriented Riemannian manifold admitting a compatible $H$-structure. For any given constant $\kappa>0$, there exists a universal constant $\varepsilon(\kappa)>0$, depending only on $\kappa$, the geometry $(M,g)$ and $H$, such that, if $\xi_0$ is a compatible $H$-structure on $(M^n,g)$ satisfying
\begin{itemize}
\item[(i)] $\|\nabla\xi_0\|_{L^{\infty}(M)}\leqslant \kappa$ and
\item[(ii)] $\cD(\xi_0)=\frac{1}{2}\|\nabla\xi_0\|_{L^2(M)}^2<\varepsilon(\kappa)$,
\end{itemize} 
    then the harmonic $H$-flow with initial condition $\xi_0$ exists for all time $t\geqslant 0$ and subconverges smoothly to a torsion-free $H$-structure as $t\to\infty$. Moreover, the universal constant can be chosen of the form 
    $$
    \varepsilon(\kappa)=\min\left\{\varepsilon_{\ast},c\left(\arctan{\frac{1}{2\kappa^2}}\right)^{n-2}\right\},
    $$
    where $\varepsilon_{\ast},c>0$ are constants depending only on $(M^n,g)$ and $H$.
\end{theorem}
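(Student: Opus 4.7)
The overall strategy is a Chen--Ding type bootstrap: use the pointwise Bochner estimate of Lemma \ref{lem: dif_ineq} to propagate an $L^{\infty}$ bound on $|\nabla\xi|$ for a definite (albeit short) time window determined by $\kappa$; within that window, use the monotonicity formula of Theorem \ref{thm: monotonicity_Psi} to convert smallness of the total energy into smallness of the localised quantity $\Psi_{(y,\tau_0)}(R)$ uniformly in $y$; then apply the $\varepsilon$-regularity Theorem \ref{thm: e-regularity} at the end of the window to obtain an improved $L^{\infty}$ bound on $|\nabla\xi|$, which can be iterated indefinitely. Shi-type estimates (Proposition \ref{prop: Shi-estimates}) will then provide uniform bounds on all derivatives, and convergence will follow from the gradient-flow structure together with the energy gap (Proposition \ref{prop: energy_gap}).

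Concretely, by Proposition \ref{prop: short-time_existence} let $[0,\tau)$ denote the maximal interval of smooth existence. Applying the maximum principle to the Bochner inequality $(\partial_t - \Delta)|\nabla\xi|^2 \leqslant c(|\nabla\xi|^4 + 1)$ and comparing with the ODE $\dot{u} = c(u^2+1)$, $u(0)=\kappa^2$, whose blow-up time is $T_{\kappa} := c^{-1}\arctan(1/\kappa^2)$, one obtains an explicit $t_{\ast}(\kappa)$ of the order $c^{-1}\arctan(1/(2\kappa^2))$ such that
\[
\|\nabla\xi(t)\|_{L^{\infty}(M)} \leqslant 2\kappa \qquad \text{for all } t \in [0,\min\{t_{\ast}(\kappa),\tau\}).
\]
The energy $\mathcal{D}(\xi(t))$ is non-increasing along the flow by Lemma \ref{lem: gradient_dirichlet_along_iso_var}, so $\mathcal{D}(\xi(t)) \leqslant \mathcal{D}(\xi_0) < \varepsilon(\kappa)$ throughout.

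The crucial step is to choose $R \sim t_{\ast}(\kappa)^{1/2}$ and use the uniform $L^{\infty}$ bound on $|\nabla\xi|$ together with the pointwise bound $G_{(0,\tau_0)} \leqslant c R^{-n}$ on the integration domain to get
\[
\Psi_{(y,\tau_0)}(R;\xi(t)) \leqslant c\, R^{2-n} (2\kappa)^2 \cdot R^2 + cR\,\mathcal{D}(\xi_0),
\]
so that, after absorbing constants, $\Psi_{(y,\tau_0)}(R) < \varepsilon_0$ provided $\varepsilon(\kappa) \leqslant c\,(\arctan(1/(2\kappa^2)))^{n-2}$, where $\varepsilon_0$ is the threshold of Theorem \ref{thm: e-regularity}. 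Applied at any $y\in M$ and $\tau_0 \in (0,t_{\ast}(\kappa))$, the $\varepsilon$-regularity theorem then yields a universal pointwise bound $|\nabla\xi|^2(y,t_{\ast}(\kappa)) \leqslant 4(\delta R)^{-2}$, thus a bound $\|\nabla\xi(t_{\ast}(\kappa))\|_{L^{\infty}} \leqslant \kappa'$ for some $\kappa'$ depending only on $\kappa$ and the geometry. Restarting the argument from $t_{\ast}(\kappa)$ with initial bound $\kappa'$ and noting that the energy has only decreased, the process iterates to cover any finite time, forcing $\tau = \infty$. The main obstacle here is to verify that all the absorption of constants works out so that the threshold $\varepsilon(\kappa)$ really has the announced form, and that the iteration preserves the smallness of the localised energy rather than compounding errors; the monotonicity formula of Theorem \ref{thm: monotonicity_Psi} is what makes this possible, since its almost-monotonicity error terms are controlled by $E_0$ and $1/\ln^2 N$, both of which can be made small a priori.

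For the convergence statement, Shi-type estimates (Proposition \ref{prop: Shi-estimates}) applied uniformly on $[t, t+1]$ give uniform $C^{m}$-bounds on $\xi(t)$ for each $m$, so that by Arzelà--Ascoli any sequence $t_k \to \infty$ admits a smoothly subconvergent subsequence $\xi(t_k) \to \xi_{\infty}$. Because $\mathcal{D}$ is non-increasing and bounded below, $\int_0^{\infty}\|\Div T(t)\|_{L^2(M)}^2\,dt = -\int_0^{\infty}\frac{d}{dt}\mathcal{D}(\xi(t))\,dt$ is finite, so along a subsequence $\|\Div T(t_k)\|_{L^2} \to 0$, and uniform higher-derivative bounds upgrade this to $\Div T_{\xi_{\infty}} = 0$, meaning $\xi_{\infty}$ is harmonic. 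Finally, $\mathcal{D}(\xi_{\infty}) \leqslant \mathcal{D}(\xi_0) < \varepsilon(\kappa) \leqslant \varepsilon_{\ast}$; choosing $\varepsilon_{\ast}$ to coincide with the gap constant from Proposition \ref{prop: energy_gap}, we conclude $\xi_{\infty}$ is torsion-free.
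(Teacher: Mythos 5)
Your high-level strategy (short-time propagation via Bochner, monotonicity plus $\varepsilon$-regularity to control the torsion for all later times, Shi estimates and the energy gap for convergence to a torsion-free limit) matches the paper's conceptually, but the execution has gaps and the paper packages the argument quite differently, as a clean contradiction: Corollary~\ref{cor: at_least} gives the time lower bound $\tau>\delta\arctan\frac{1}{2\kappa^2}$ from the $L^\infty$ hypothesis, while Lemma~\ref{lem: unbdound_case} (the heart of the matter, proved via the monotonicity of $\Theta$, the Bochner estimate and Moser's Harnack inequality applied to a blow-up sequence) shows that if torsion blows up at finite time then $\tau^{(n-2)/2}\leqslant c_1\sqrt{\varepsilon}$; when $\varepsilon<c_1^{-2}(\delta\arctan\frac{1}{2\kappa^2})^{n-2}$ these are incompatible, hence torsion stays bounded and Lemma~\ref{lem: unif_bound_case} gives long-time existence and the torsion-free subsequential limit.

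Your proposal has two concrete problems. First, the displayed bound $\Psi_{(y,\tau_0)}(R)\leqslant cR^{2-n}(2\kappa)^2R^2+cR\,\mathcal{D}(\xi_0)$ is not right. The first term equals $c\kappa^2R^{4-n}$, which blows up as $R\to 0$ when $n>4$ and would actively prevent $\Psi(R)$ from being small. The clean estimate, used (in the form for $\Theta$) in the proof of Lemma~\ref{lem: unbdound_case}, follows from $G_{(0,\tau_0)}\leqslant c R^{-n}$ on $\tau_0-t\in[R^2,4R^2]$ together with $\int_M e(\xi)\vol_g\leqslant 2E_0$: it reads $\Psi(R)\leqslant cR^{2-n}E_0$ and does \emph{not} use the $L^\infty$ bound at all. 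Second, the iteration step is not controlled: after one application of $\varepsilon$-regularity you obtain $\kappa'\sim(\delta R)^{-1}$ with $R\sim t_*(\kappa)^{1/2}$; if $\kappa'>\kappa$ the next window $t_*(\kappa')$ is shorter, forcing a smaller $R'$, which in turn requires a \emph{smaller} threshold on $E_0$ — precisely the compounding you were worried about. You do not resolve this. In fact no iteration is needed: fix $R<\min\{\varepsilon_0,\sqrt{t_*(\kappa)}/2\}$ once, observe that the bound $\Psi_{(y,\tau_0)}(R)\leqslant cR^{2-n}E_0<\varepsilon_0$ holds uniformly in $y\in M$ and in every $\tau_0\in(4R^2,\tau)$, and apply $\varepsilon$-regularity at all such $(y,\tau_0)$ at once to get a single time-uniform $L^\infty$ bound on $e(\xi)$ on $(4R^2,\tau)$; together with Lemma~\ref{lem: time_bounds} on $[0,t_*(\kappa)]\supset[0,4R^2]$ this forces $\tau=\infty$. (Note also that this direct route would actually give the slightly stronger threshold exponent $(n-2)/2$ rather than $n-2$; the paper's exponent $n-2$ comes from the $\sqrt{E_0}$ error terms in the almost-monotonicity formula invoked in Lemma~\ref{lem: unbdound_case}.) Your convergence paragraph and the use of the energy gap are correct and match the paper.
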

\begin{remark}
    In the next section, we shall prove in Proposition \ref{prop: uniqueness} that for a (possibly) smaller choice of constant $\varepsilon(\kappa)$ in Theorem \ref{thm: long-time_existence}, with a (possibly) more complicated dependence on $\kappa$ and $(M^n,g)$, the torsion-free limit of the flow as $t\to\infty$ is unique (independent of subsequence).
\end{remark}
\begin{remark}
    In Section \ref{sec: stability}, we shall also prove long-time existence under small initial torsion (see Theorem \ref{thm: long_time_small_torsion}), subsuming the results known in particular for the $H=\rm G_2$ and $\mathrm{Spin}(7)$ cases, cf. \cites{Grigorian2019,dgk-isometric,Dwivedi-Loubeau-SaEarp2021}. Together with the energy gap of Proposition \ref{prop: energy_gap}, this implies that whenever $(M^n,g)$ admits a compatible torsion-free $H$-structure $\overline{\xi}$, then there is a (possibly very small)  {$C^2$}-neighbourhood $\mathcal{U}$ of $\overline{\xi}$ such that the harmonic $H$-flow starting anywhere inside $\mathcal{U}$ exists for all time and converges smoothly to a torsion-free limit; see Theorem \ref{thm: stability}.
\end{remark}

The second main result in this paragraph gives sufficient conditions for the formation of a finite-time singularity along the harmonic $H$-flow, i.e., for the torsion to blow up in finite time. It is a direct generalisation of the result for almost complex structures proved in \cite{he2021}*{Theorem 2}. 
For what follows, we define the \emph{isometric homotopy class} $[\xi]$ of a compatible $H$-structure $\xi$ on $(M^n,g)$ to be the homotopy class of $\xi$ as a section of the fibre bundle $\pi: \Fr(M,g)/H\to M$, i.e. among compatible $H$-structures. 

\begin{theorem}[Finite-time singularity]
\label{thm: blow_up}
    Let $(M^n,g)$ be a closed oriented Riemannian manifold, with $n>2$, endowed with a compatible $H$-structure $\overline{\xi}$, the isometric homotopy class $[\overline{\xi}]$ of which does not contain any torsion-free $H$-structure, but satisfies
\[
\inf_{\xi\in [\overline{\xi}]} \cD(\xi) = 0.
\] 
    Then there exists a constant $\varepsilon_{\ast}>0$, depending only on $(M^n,g)$ and $H$, such that, if $\xi_0\in [\overline{\xi}]$ has $\cD(\xi_0)<\varepsilon_{\ast}$, then the harmonic $H$-flow starting at $\xi_0$ develops a finite-time singularity. Moreover, if $[0,\tau(\xi_0))$ denotes the maximal existence interval for the solution, then $\tau(\xi_0)^{n-2}\lesssim \cD(\xi_0)$; in particular, $\tau(\xi_0)\to 0$ as $\cD(\xi_0)\to 0$.
\end{theorem}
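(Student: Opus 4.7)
The plan is to argue by contradiction: if the flow existed for all time starting from arbitrarily small energy, it would subsequentially converge smoothly to a harmonic $H$-structure of small energy, forced by the energy gap (Proposition~\ref{prop: energy_gap}) to be torsion-free, and lying in the isometric homotopy class $[\overline{\xi}]$, contradicting the standing hypothesis. The key preliminary is that the isometric homotopy class is preserved along smooth existence of the flow: the map $t \mapsto \xi(t)$ is continuous in $C^0(M)$, hence $\xi(t) \in [\overline{\xi}]$ for every $t \in [0, \tau(\xi_0))$.

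Assume for contradiction $\tau(\xi_0) = \infty$. By Lemma~\ref{lem: gradient_dirichlet_along_iso_var}, $\cD(\xi(t))$ is non-increasing, so $\cD(\xi(t)) \leq \cD(\xi_0) < \varepsilon_*$ for all $t \geq 0$. Select $\varepsilon_*$ no larger than the energy-gap constant of Proposition~\ref{prop: energy_gap} (applied with $E_0 = 1$). A pointwise bound on the Euclidean backward heat kernel, $\|G\|_{L^\infty(T_R)} \leq c R^{-n}$, yields the crude estimate
\[
\Psi_{(y,t_0)}(R) \;\leq\; c\, R^{2-n}\, \cD(\xi_0),
\qquad \forall y\in M,\; \forall t_0>0,
\]
so by taking $\varepsilon_*$ sufficiently small we ensure that, for some fixed admissible radius $R_0$ and every $t_0 \gg 1$, the hypothesis of Theorem~\ref{thm: e-regularity} $\Psi_{(y,t_0)}(R_0)<\varepsilon_0$ holds at every point $y \in M$. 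This yields a uniform bound $|\nabla\xi(\cdot,t)|^2 \leq C$ on $M$ for all $t$ large, whence Proposition~\ref{prop: Shi-estimates} supplies uniform bounds on every $|\nabla^k \xi(\cdot,t)|$. The energy-dissipation identity
\[
\int_0^\infty \|\Div T(t)\|_{L^2(M)}^2\, dt \;=\; \cD(\xi_0) - \lim_{t\to\infty}\cD(\xi(t)) \;<\; \infty
\]
then provides a sequence $t_k \to \infty$ with $\|\Div T(t_k)\|_{L^2} \to 0$. Combined with the uniform $C^k$-bounds, an Arzelà--Ascoli diagonal argument yields a subsequence along which $\xi(t_k) \to \xi_\infty$ smoothly, where $\xi_\infty$ is a harmonic compatible $H$-structure with $\cD(\xi_\infty) \leq \cD(\xi_0) < \varepsilon_*$. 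By the choice of $\varepsilon_*$ and Proposition~\ref{prop: energy_gap}, $\xi_\infty$ is torsion-free. Finally, for large $k$, smooth convergence makes $\xi(t_k)$ close to $\xi_\infty$ in $C^0$, so the two represent homotopic sections of $\pi : \Fr(M,g)/H \to M$ (they are joined by the fibrewise geodesic homotopy in $\mathrm{SO}(n)/H$, well-defined for small displacements thanks to the positive injectivity radius of the homogeneous fibre); hence $\xi_\infty \in [\xi(t_k)] = [\overline{\xi}]$, contradicting the hypothesis. Therefore $\tau(\xi_0) < \infty$.

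For the quantitative bound at the finite singular time $\tau := \tau(\xi_0)$, the contrapositive of Theorem~\ref{thm: e-regularity} forces $\Psi_{(y,\tau)}(R) \geq \varepsilon_0$ at some $y \in M$ for every admissible $R \in (0, \min\{\varepsilon_0, \sqrt{\tau}/2\})$. Combining with the same crude upper bound $\Psi_{(y,\tau)}(R) \leq c R^{2-n}\cD(\xi_0)$ and choosing $R = \sqrt{\tau}/2$ (admissible once $\cD(\xi_0)$ is small enough to force $\sqrt{\tau}/2 < \varepsilon_0$) gives $\varepsilon_0 \lesssim \tau^{1-n/2}\cD(\xi_0)$, i.e.\ $\tau^{(n-2)/2} \lesssim \cD(\xi_0)$, from which the claimed bound $\tau^{n-2} \lesssim \cD(\xi_0)$ follows for $\cD(\xi_0)$ small, along with $\tau(\xi_0) \to 0$ as $\cD(\xi_0) \to 0$.

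The most delicate step is the topological verification that the subsequential limit $\xi_\infty$ lies in the original homotopy class $[\overline{\xi}]$; this rests on the standard but careful claim that sufficiently $C^0$-close sections of the fibre bundle $\Fr(M,g)/H$ are homotopic, which exploits the compactness and positive injectivity radius of the homogeneous fibre $\mathrm{SO}(n)/H$. The remaining ingredients --- monotonicity of $\cD$, $\varepsilon$-regularity, Shi-type bounds, and the energy gap --- have already been assembled in Sections~\ref{sec: review}--\ref{sec: e-regularity}, so they combine cleanly once the homotopy-class persistence is in hand.
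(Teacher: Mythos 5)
Your proof is essentially correct, but follows a genuinely different route from the paper's. The paper's proof is short because it invokes the dichotomy $\sup_t\overline e(t)<\infty$ or $=\infty$: the bounded case is handled by Lemma~\ref{lem: unif_bound_case}, whose content you effectively re-derive (Shi-type bounds, energy dissipation, Arzel\`a--Ascoli, energy gap, homotopy persistence), while the unbounded case is Lemma~\ref{lem: unbdound_case}, which obtains $\tau^{(n-2)/2}\leqslant c_1\sqrt{\cD(\xi_0)}$ through a parabolic rescaling about a blow-up sequence, Moser's Harnack applied to the rescaled energy density, and direct use of Theorem~\ref{thm: monotonicity_Theta}. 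You replace both ingredients by combining Theorem~\ref{thm: e-regularity} with the elementary heat-kernel bound $\Psi_{(y,\tau_0)}(R)\leqslant cR^{2-n}\cD(\xi_0)$ --- forward, to extract the uniform torsion bound under the contradiction hypothesis $\tau=\infty$, and contrapositively, to extract the quantitative blow-up time estimate. This is leaner, and in fact gives the \emph{sharper} bound $\tau^{(n-2)/2}\lesssim\cD(\xi_0)$ rather than the paper's $\tau^{(n-2)/2}\lesssim\sqrt{\cD(\xi_0)}$: the square root in Lemma~\ref{lem: unbdound_case} is inherited from the $\sqrt{E_0}$-error in the almost-monotonicity formula, which your route buries inside the fixed $\varepsilon$-regularity threshold $\varepsilon_0$ (which is uniform once $E_0\leqslant 1$). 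Your estimate also matches the natural parabolic scaling $\tau\sim r^2$, $\cD\sim r^{n-2}$ suggested by Example~\ref{ex: finite_time_blow-up}, whereas the paper's does not.

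Two points warrant tightening. First, Theorem~\ref{thm: e-regularity} is stated for $\tau_0\in(0,\tau)$, so the contrapositive cannot literally be applied ``at $\tau$''; you should pick a blow-up sequence $(y_i,t_i)$ with $t_i\nearrow\tau$ and $e(\xi)(y_i,t_i)\to\infty$, apply the contrapositive at $\tau_0=t_i$ (which gives $\Psi_{(y_i,t_i)}(R)\geqslant\varepsilon_0$ for $i$ large), and pass to the limit using compactness of $M$ and continuity of $(y,\tau_0)\mapsto\Psi_{(y,\tau_0)}(R)$. Second, the phrase ``admissible once $\cD(\xi_0)$ is small enough to force $\sqrt{\tau}/2<\varepsilon_0$'' is circular as written, since smallness of $\tau$ is the conclusion sought; replace it by the short case split: if $\sqrt{\tau}/2\geqslant\varepsilon_0$, taking $R\nearrow\varepsilon_0$ yields $\varepsilon_0^{n-1}\lesssim\cD(\xi_0)$, which is ruled out by shrinking $\varepsilon_*$; otherwise $R\nearrow\sqrt{\tau}/2$ is admissible and gives $\tau^{(n-2)/2}\lesssim\cD(\xi_0)$.
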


Before we move on to prove these two theorems, let us examine a concrete instance of Theorem \ref{thm: blow_up}, which also illuminates the hypotheses of Theorem \ref{thm: long-time_existence}. The following is an adaptation of the analogous example constructed in \cite[§3.3]{he2021}, in the context of almost complex structures; see also Remark \ref{rmk: gen_example} below for other possible generalisations.

\begin{example}[Finite-time singularity]
\label{ex: finite_time_blow-up}
	Let $M = \mathbb{T}^7:=\mathbb{S}^1\times\ldots\times \mathbb{S}^1$ be the $7$-torus, endowed with the standard $\rm G_2$-structure $\varphi_{\circ}$ inducing the flat metric $g_{\circ}$. Then the frame bundle $\Fr(\mathbb{T}^7,g_{\circ})$ is trivialised by a \emph{parallel} global orthonormal frame, which in turn induces a trivialisation of the homogeneous bundle 
	\[
	\Fr(\mathbb{T}^7,g_{\circ})/\rm{G}_2\cong\mathbb{T}^7\times SO(7)/{\rm G_2}=\mathbb{T}^7\times\mathbb{RP}^7.
    \] 
	Its space of sections $[[\varphi_{\circ}]]$ consists of $\rm G_2$-structures which are \emph{isometric} to $\varphi_{\circ}$, i.e. compatible $\rm G_2$-structures on $(\mathbb{T}^7,g_{\circ})$. Any $\rm G_2$-structure $\varphi\in [[\varphi_{\circ}]]$ can be thought of as a map from $\mathbb{T}^7$ to $\mathbb{RP}^7$ and under such identification the torsion-free compatible $\rG_2$-structures, such as the standard $\varphi_{\circ}$ itself, correspond to constant maps. Moreover, the isometric homotopy class of $\varphi$ is the (unrestricted) homotopy class of the corresponding map. 
	
	Fix $p\in\mathbb{T}^7$ and $0<r_0<\mathrm{inj}(\mathbb{T}^7,g_{\circ})$ so that the geodesic ball $B(p,r_0)\subset (\mathbb{T}^7,g_{\circ})$ is isometric to the Euclidean ball $B(0,r_0)\subset\mathbb{R}^7\cong T_p\mathbb{T}^7$. We first show the following\\
    
    \emph{\textbf{Claim:} There is a smooth $\rm G_2$-structure $\varphi\in[[\varphi_{\circ}]]$ which coincides with the constant map $\varphi_{\circ}\equiv y_0\in\mathbb{RP}^7$ outside $B(p,r_0)$, but whose isometric homotopy class $[\varphi]$ is nontrivial, $[\varphi]\neq 0=[\varphi_{\circ}]$.}\\
    
    In order prove this, we start considering a quotient map
    \[
    q:(\overline{B}(p,r_0),\partial\overline{B}(p,r_0))\to (\mathbb{S}^7,x_0)
    \] identifying the boundary $\partial\overline{B}(p,r_0)$ of the closed ball $\overline{B}(p,r_0)$ with a point $x_0\in\mathbb{S}^7$, inducing a diffeomorphism $B(p,r_0)\cong \mathbb{S}^7\setminus\{x_0\}$ between the open ball and the $7$-sphere minus the point $x_0$. Recall that the set $[\mathbb{S}^7,\mathbb{RP}^7]$ of unrestricted homotopy classes of maps $\mathbb{S}^7\to\mathbb{RP}^7$, can be identified with the quotient of $\pi_7(\mathbb{RP}^7,y_0)$ by the usual action of $\pi_1(\mathbb{RP}^7,y_0)$ (cf. \cite[Proposition 4A.2]{Hatcher2002}), i.e.
    \[
    [\mathbb{S}^7,\mathbb{RP}^7] = \pi_7(\mathbb{RP}^7,y_0)/{\pi_1(\mathbb{RP}^7,y_0)}.
    \] On the other hand, it is a general fact that the action of $\pi_1(\mathbb{RP}^n)$ on $\pi_n(\mathbb{RP}^n)\cong\mathbb{Z}$ is trivial when $n$ is odd, so the set $[\mathbb{S}^7,\mathbb{RP}^7]$ is countably infinite. It follows that we can choose a nontrivial element $0\neq [f]\in [\mathbb{S}^7,\mathbb{RP}^7]$, $f:(\mathbb{S}^7,x_0)\to (\mathbb{RP}^7,y_0)$, which in turn induces the $\rm G_2$-structure $\overline{\varphi}:\mathbb{T}^7\to\mathbb{RP}^7$ isometric to $\varphi_{\circ}$ given by
    \begin{equation}\label{eq: def_varphi}
        \overline{\varphi}(x) := \begin{cases}
            \varphi_{\circ}(x) = y_0,\quad\text{if } x\in \mathbb{T}^7\setminus {B(p,r_0)},\\
            f(q(x)),\quad\text{if } x\in B(p,r_0).
        \end{cases} 
    \end{equation} 
    Note that $\overline{\varphi}$ is continuous on $\mathbb{T}^7$ and, by choosing the representative $f$ to be smooth, we further have that $\overline{\varphi}$ is also smooth outside the measure-zero set $\partial\overline{B}(p,r_0)$. Moreover, we have $[\overline{\varphi}]\neq 0$: indeed, 
    \[
    \mathrm{deg}(\overline{\varphi}:\mathbb{T}^7\to\mathbb{RP}^7)\mathrm{vol}(\mathbb{RP}^7)=\int_{\mathbb{T}^7}\overline{\varphi}^{\ast}\vol = \int_{B(p,r_0)}(f\circ q)^{\ast}\vol = \int_{\mathbb{S}^7}f^{\ast}\vol \neq 0, 
    \] where in the last step we used that $[f]\neq 0$ and the fact that the degree is a complete homotopy invariant for maps $\mathbb{S}^7\to\mathbb{RP}^7$, since such a map always lifts to a map into the universal cover $\mathbb{S}^7$ and one can apply Hopf's theorem\footnote{Hopf's theorem says that the degree is the only homotopy invariant for continuous maps into spheres. It is also interesting to notice, although not needed in the example, that the degree is a complete homotopy invariant for the $\rm G_2$-structure map $\overline{\varphi}:\mathbb{T}^7\to\mathbb{RP}^7$ given by \eqref{eq: def_varphi}, and in the end $[\overline{\varphi}]\neq 0$ if and only if $[f]\neq 0$. This is because the induced homomorphism $\pi_1(\overline{\varphi}):\pi_1(\mathbb{T}^7,p)\to\pi_1(\mathbb{RP}^7,y_0)$ is trivial, and thus $\overline{\varphi}$ lifts to a map $\tilde{\varphi}$ into $\mathbb{S}^7$, so $\mathrm{deg}(\overline{\varphi})=2\mathrm{deg}(\tilde{\varphi})$ and we can apply Hopf's theorem.} to the lifted map.

    By Whitney's approximation theorem, we then find a smooth map $\varphi:\mathbb{T}^7\to\mathbb{RP}^7$ which coincides with the constant map $\varphi_{\circ}\equiv y_0\in\mathbb{RP}^7$ outside $B(p,r_0)$, and is sufficiently $C^0$-close to $\overline{\varphi}$ so that we still have $[\varphi]\neq 0$. This gives us a smooth $\rm G_2$-structure $\varphi\in [[\varphi_{\circ}]]$ with the desired properties, thereby proving the claim.
	
    Next, for each $r\in (0,r_0)$, let $\varphi_r$ be the $\rm G_2$-structure on $\mathbb{T}^7$ defined by
    \begin{equation}\label{eq: def_phi_r}
        \varphi_r(x) := \begin{cases}
            \varphi_{\circ}(x) = y_0,\quad\text{if } x\in \mathbb{T}^7\setminus{\overline{B}(p,r)},\\
            \varphi\left(\frac{xr_0}{r}\right),\quad\text{if } x\in B(p,r_0)\simeq B(0,r_0)\subset\mathbb{R}^7,
        \end{cases} 
    \end{equation}
    where the smooth map $x\mapsto\varphi\left(\frac{xr_0}{r}\right)$ is defined using the normal coordinates giving the isometric identification $B(p,r_0)\simeq B(0,r_0)$. Note that, because $\varphi$ agrees with $\varphi_{\circ}$ outside $B(p,r_0)$, the two smooth maps in the definition of $\varphi_r$ agree in the intersection of their open domains (whose union is $\mathbb{T}^7$ since $0<r<r_0$). Thus, $\varphi_r$ is a smooth $\rm G_2$-structure on $\mathbb{T}^7$, isometric to $\varphi_{\circ}$ and such that $\varphi_r\in[\varphi]\neq 0=[\varphi_{\circ}]$. Next, we compute its Dirichlet energy:
\begin{align*}
	\cD(\varphi_r) 
	&= \frac{1}{2}\int_{B(p,r)}|\nabla\varphi_r|^2(x)dx &\quad\text{(since $\varphi_r=\varphi_{\circ}$ outside $B(p,r)$)}\\
	&= \frac{1}{2}\int_{B(p,r)}\left|\nabla\left(\varphi\left(\frac{xr_0}{r}\right)\right)\right|^2 dx 
	&\quad\text{(by \eqref{eq: def_phi_r})}\\
	&= r_0^2 r^{-2}\frac{1}{2}\int_{B(p,r)}|\nabla\varphi|^2\left(\frac{xr_0}{r}\right) dx &\\
	&= r_0^2r^{-2}r_0^{-7} r^7\frac{1}{2}\int_{B(p,r_0)}|\nabla\varphi|^2(y)dy &\quad\text{(by change of variables)}\\
	&= r_0^{-5}r^5 \cD(\varphi).
	&\quad\text{(since $\varphi=\varphi_{\circ}$ outside $B(p,r_0)$)}
\end{align*} 
	In particular, $\cD(\varphi_r)\to 0$ as $r\to 0$, and therefore
	\[
		\inf_{\tilde{\varphi}\in[\varphi]} \cD(\tilde{\varphi})=0.
	\] 
	On the other hand, since $[\varphi]\neq0$, this class cannot contain a torsion-free $\rm G_2$-structure, which would correspond to a constant map from $\mathbb{T}^7$ to $\mathbb{RP}^7$. 
	
	We claim that, for small enough $r\ll 1$, the harmonic $\rm G_2$-flow starting at $\varphi_r$ has a finite-time singularity, as guaranteed by Theorem \ref{thm: blow_up}. Indeed, if otherwise the flow $\{\varphi(t)\}$ with $\varphi(0)=\varphi_r$ existed for all time $t>0$, then, since $r\ll 1$ and thus $\cD(\varphi_r)\ll 1$, it would follow from the $\varepsilon$-regularity [Theorem \ref{thm: e-regularity}], together with Shi-type estimates [Proposition \ref{prop: Shi-estimates}], that $\varphi(t)$ converges smoothly as $t\to\infty$ to a $\rm G_2$-structure $\varphi_{\infty}\in[\varphi_r]=[\varphi]$ with divergence-free torsion  (note that the flow is isometric and is itself a  homotopy in the class of $\varphi(0)=\varphi_r$). In fact, if $r\ll 1$ is small enough, $\varphi_{\infty}$ would be torsion-free because of the energy gap of Proposition \ref{prop: energy_gap}, since the energy is non-increasing along the flow [Lemma \ref{lem: gradient_dirichlet_along_iso_var}]. But $\varphi_{\infty}$ being torsion-free implies that its homotopy class corresponds to that of a constant map from $\mathbb{T}^7$ to $\mathbb{RP}^7$, contradicting the non-triviality of $[\varphi]$. This gives an instance of Theorem \ref{thm: blow_up}, which moreover asserts that the maximal existence interval $[0,\tau(\varphi_r))$ of the flow starting at $\varphi_r$ shrinks to $\{0\}$ as $r\to 0$; indeed, $\tau(\varphi_r)\lesssim \cD(\varphi_r)^{1/5} = r_0^{-1}r\cD(\varphi)^{1/5}$.
	
	It is also noteworthy that however much $\varphi_r$ may have arbitrarily small energy $\cD(\varphi_r)\to 0$ as $r\to 0$, the $L^{\infty}$-norm of its torsion is actually blowing-up: 
    \[
    \|\nabla\varphi_r\|_{L^{\infty}(M)} = r_0 r^{-1}\|\nabla\varphi\|_{L^{\infty}(B(p,r_0))}\to\infty\quad\text{as }r\to 0.
    \] 
    This also exemplifies why a general result of long-time existence for the harmonic flow under small initial energy should take into account the $L^{\infty}$-norm of the initial torsion, as does Theorem \ref{thm: long-time_existence}.
\end{example} 

\begin{remark}
\label{rmk: gen_example}
    For any dimension $n>2$ and any closed and connected subgroup $H\subset\mathrm{SO}(n)$, the construction in Example \ref{ex: finite_time_blow-up} can be easily generalised for $H$-structures on the flat $n$-torus $\mathbb{T}^n$, provided  {the set of homotopy classes $[\mathbb{S}^n,\SO(n)/H]$, which bijectively corresponds to the quotient of $\pi_n(\SO(n)/H)$ by the action of $\pi_1(\SO(n)/H)$, has more than one element and the universal cover of $\SO(n)/H$ is a sphere. For instance, since $\SO(4)/U(2)\cong \mathbb{S}^2$ is simply connected and $\pi_4(\mathbb{S}^2)\cong\mathbb{Z}_2$, the construction works for $U(2)$-structures on the $4$-torus \cite[§3.3]{he2021}. It can also be reproduced for $\rm Spin(7)$-structures on the $8$-torus, since $\SO(8)/\rm Spin(7)\cong\mathbb{RP}^7$ and $\pi_8(\mathbb{RP}^7)\cong\mathbb{Z}_2\cong\pi_1(\mathbb{RP}^7)$, and the action of $\pi_1\cong\mathbb{Z}_2$ on $\pi_8\cong\mathbb{Z}_2$ must be by group automorphisms, so in this case it must be the trivial action and $[\mathbb{S}^8,\SO(8)/\rm Spin(7)]$ has two elements. On the other hand, since $\pi_6(\SO(6)/U(3))=\pi_6(\mathbb{CP}^3)=\{1\}$, the same construction does not work for $\rm U(3)$-structures on the $6$-torus.}
\end{remark}

\begin{remark}[What happens in dimension $n=2$]
\label{rmk: 2-torus}
    The only proper closed and connected subgroup $H\subset\mathrm{SO}(2)$ is $H=\{1\}$, and so $\pi_2(\mathrm{SO}(2)/H)=\pi_2(\mathrm{SO}(2))=\{1\}$. Let $(M^2,g)$ be a closed, oriented Riemannian surface admitting a compatible $\{1\}$-structure $\xi$, i.e.  a global oriented orthonormal frame $\xi=\{e_1,e_2\}$. Then, letting $\omega_{12}:=\langle\nabla e_1,e_2\rangle_g$, a standard computation gives $d\omega_{12}=-K_g\vol_g$, where $K_g$ is the Gaussian curvature of $(M^2,g)$. By the Gauss--Bonnet and Stokes' theorems, it follows that $\chi(M^2)=0$, and thus $M$ is diffeomorphic to a $2$-torus $\mathbb{T}^2$. 
 
    Without loss of generality, we let $M^2=\mathbb{T}^2$. For any Riemannian metric $g$ on $\mathbb{T}^2$, the orthonormal frame bundle $\Fr(\mathbb{T}^2,g)\cong \mathbb{T}^2\times\mathrm{SO}(2)$ is trivial, moreover there is always a conformal factor $e^{2f}\in C^{\infty}(\mathbb{T}^2)$ such that the metric $g_f:=e^{2f}g$ is flat. Now, one can verify directly that $\xi=(X,Y)$ is a compatible $\{1\}$-structure on $(\mathbb{T}^2,g)$ if and only if $\xi_f:=(e^{-f}X,e^{-f}Y)$ is a compatible $\{1\}$-structure on $(\mathbb{T}^2,g_f)$, and this gives a one-to-one correspondence between solutions of the harmonic $\{1\}$-flow on $(\mathbb{T}^2,g)$ and $(\mathbb{T}^2,g_f)$, because $n=2$ is the critical dimension for the Dirichlet functional.
 
    Suppose henceforth that $(M^2,g)=(\mathbb{T}^2,g_{\circ})$  is a flat torus. The frame bundle $\Fr(\mathbb{T}^2,g_{\circ})\cong \mathbb{T}^2\times\mathrm{SO}(2)$ is trivialised by a parallel global orthonormal frame, so any compatible $\{1\}$-structure on $(\mathbb{T}^2,g_{\circ})$ can be seen as a smooth map $\sigma:\mathbb{T}^2\to\mathrm{SO}(2)$. The isometric homotopy class of a $\{1\}$-structure is then simply the homotopy class of the map $\sigma:\mathbb{T}^2\to\mathrm{SO}(2)$, and its harmonic flow on $(\mathbb{T}^2,g_{\circ})$ corresponds to the harmonic map heat flow for maps $\mathbb{T}^2 \to \mathrm{SO}(2)$. Moreover, $\sigma$ is a harmonic (resp. torsion-free) compatible $\{1\}$-structure on $(\mathbb{T}^2,g_{\circ})$ if and only if the corresponding map $\sigma:\mathbb{T}^2\to\mathrm{SO}(2)$ is harmonic (resp. constant).
 
    Now since $\pi_2(\mathrm{SO}(2))=\{1\}$, a classical result in the theory of harmonic maps from Riemann surfaces (see e.g. \cite{jost2011riemannian}*{Theorem 9.2.1}) guarantees that any smooth map $\overline{\sigma}:\mathbb{T}^2\to\mathrm{SO}(2)$ is homotopic to a harmonic map $\sigma:\mathbb{T}^2\to\mathrm{SO}(2)$, which is energy-minimising in the homotopy class $[\overline{\sigma}]$. In particular, if $[\overline{\sigma}]\neq 0$ then
\[
\inf_{\sigma\in[\overline{\sigma}]} \cD(\sigma) \neq 0,
\] 
    for otherwise one would find a harmonic map $\sigma\in[\overline{\sigma}]$ with $\cD(\sigma)=0$ (i.e. $\sigma$ is a constant map), contradicting $[\overline{\sigma}]\neq 0$. So we cannot find a compatible $\{1\}$-structure $\overline{\sigma}$ in $(\mathbb{T}^2,g_{\circ})$ satisfying the hypotheses of Theorem \ref{thm: blow_up}.  {Moreover, since $\rm SO(2)$ is $1$-dimensional, and therefore its Riemann curvature tensor vanishes, it follows from the classical Eells--Sampson theorem \cite{Eells1964} that the harmonic map heat flow with any given smooth initial map $\sigma_0:\mathbb{T}^2\to\mathrm{SO}(2)$ has a unique smooth solution $\{\sigma(t)\}$ which exists for all time $t\geqslant 0$, and for a suitable sequence $t_i\to\infty$, the sequence $(\sigma(t_i))$ converges smoothly to a harmonic map $\sigma_{\infty}:\mathbb{T}^2\to\mathrm{SO}(2)$. Thus, there are no finite-time singularities for harmonic flows of $\{1\}$-structures on $(\mathbb{T}^2,g_{\circ})$.}
 
    %Moreover, we claim that there are no finite-time singularities for harmonic flows of $\{1\}$-structures on $(\mathbb{T}^2,g_{\circ})$ with sufficiently small initial energy. Indeed, defining\footnote{By the energy gap for non-constant regular harmonic maps, see e.g. \cite[Lemma 3.1]{Chen-Ding1990}, one has $\varepsilon_{\ast}>0$.}
%\[
%\varepsilon_{\ast}:=\inf\{\cD(u):u:\mathbb{S}^2\to\mathrm{SO}(2)\text{ is a non-constant regular harmonic map}\},
%\] 
 %   where $\mathbb{S}^2$ is the round sphere and $\cD(u)$ denotes the Dirichlet energy of the map $u$, then it follows from the classical work of Struwe \cite[Remark 4.4]{struwe1985evolution} that, for any smooth initial map $\sigma_0:\mathbb{T}^2\to\mathrm{SO}(2)$ with energy $\cD(\sigma_0)<\varepsilon_{\ast}$, the harmonic map heat flow with initial condition $\sigma_0$ has a unique smooth solution $\{\sigma(t)\}$ which exists for all time $t\geqslant 0$. Furthermore, for some sequence $t_i\to\infty$, the sequence $(\sigma(t_i))$ converges smoothly to a smooth harmonic map $\sigma_{\infty}:\mathbb{T}^2\to\mathrm{SO}(2)$.
\end{remark}

The remainder of this section is dedicated to the proofs of Theorems \ref{thm: long-time_existence} and \ref{thm: blow_up}. Our approach mostly follows the work of He--Li \cite{he2021} on almost complex structures, as well as the classical work of Chen--Ding \cite{Chen-Ding1990} on harmonic maps. 
Let $\{\xi(t)\}$ be a solution to the harmonic $H$-flow \eqref{eq: harmonic_flow} on $(M,g)$, and let $[0,\tau)$ be its (possibly semi-infinite) maximal time-interval of existence and uniqueness. In what follows, we write 
\[
e(\xi):=|\nabla\xi|^2
\qandq
\overline{e}(t) := \max_M e(\xi(t)).
\]
\begin{lemma}[cf. {\cite{Chen-Ding1990}*{Lemma 2.1}} and {\cite{he2021}*{Lemma 3.3}}]
\label{lem: time_bounds}
    If we let $\delta:=1/c>0$, where $c>0$ is given by Lemma \ref{lem: dif_ineq}, then, for any $t_0\in [0,\tau)$, 
\begin{equation}
	t_0 + \delta\arctan\frac{1}{2\overline{e}_0} <\tau,
	\qwithq 
	\overline{e}_0 :=\overline{e}(t_0),
\end{equation} 
    and 
\begin{equation}
	\overline{e}(t) \leqslant \frac{\overline{e}_0 + \tan{c(t-t_0)}}{1-\overline{e}_0\tan{c(t-t_0)}}\quad\forall t\in\left[t_0,t_0+\delta\arctan\frac{1}{\overline{e}_0}\right).
\end{equation} 
    In particular,
\begin{equation}
	\overline{e}(t) \leqslant 2\overline{e}_0 + \frac{1}{\overline{e}_0},\quad\forall t\in\left[t_0,t_0+\delta\arctan\frac{1}{2\overline{e}_0}\right].
	\end{equation}
\end{lemma}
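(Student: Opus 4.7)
The plan is a standard ODE comparison argument based on the Bochner-type estimate \eqref{ineq: Bochner_estimate} of Lemma~\ref{lem: dif_ineq}, combined with the fact that the maximal existence interval $[0,\tau)$ of the flow is characterised by blow-up of $\overline{e}(t)$ (via the Shi-type estimates of Proposition~\ref{prop: Shi-estimates}).

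First I would establish the differential inequality for $\overline{e}(t)$. Since $M$ is closed and $e(\xi)$ is smooth in $(x,t)$, a standard Hamilton-type maximum-principle argument shows that $\overline{e}(t):=\max_M e(\xi(\cdot,t))$ is locally Lipschitz in $t$, and that at any time of differentiability, if $x_t\in M$ realises the maximum, then $\Delta e(\xi)(x_t,t)\leqslant 0$. Applying Lemma~\ref{lem: dif_ineq} at such $(x_t,t)$ yields
\begin{equation*}
    \frac{d}{dt}\overline{e}(t)
    \;\leqslant\; (\partial_t-\Delta)e(\xi)\bigl(x_t,t\bigr)
    \;\leqslant\; c\bigl(\overline{e}(t)^2+1\bigr)
\end{equation*}
in the a.e.\ (or viscosity) sense, with $c$ the constant from Lemma~\ref{lem: dif_ineq} and $\delta:=1/c$.

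Next I would integrate the ODE $\dot y=c(y^2+1)$, $y(t_0)=\overline{e}_0$, whose explicit solution is $y(t)=\tan\!\bigl(c(t-t_0)+\arctan\overline{e}_0\bigr)$. Using the tangent addition formula this rewrites as
\begin{equation*}
    y(t)\;=\;\frac{\overline{e}_0+\tan c(t-t_0)}{1-\overline{e}_0\tan c(t-t_0)},
\end{equation*}
which is finite precisely for $c(t-t_0)<\tfrac{\pi}{2}-\arctan\overline{e}_0=\arctan(1/\overline{e}_0)$, i.e.\ for $t<t_0+\delta\arctan(1/\overline{e}_0)$. A standard comparison argument (e.g.\ Gr\"onwall-type for the Lipschitz function $\overline{e}$) then gives $\overline{e}(t)\leqslant y(t)$ on this interval, which is the second displayed bound. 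Evaluating at $t=t_0+\delta\arctan\tfrac{1}{2\overline{e}_0}$ so that $\tan c(t-t_0)=\tfrac{1}{2\overline{e}_0}$, a direct computation gives $y(t)=2\overline{e}_0+\tfrac{1}{\overline{e}_0}$, which is the third assertion.

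Finally, for the strict inequality $t_0+\delta\arctan\tfrac{1}{2\overline{e}_0}<\tau$, I would argue by contradiction. If $\tau\leqslant t_0+\delta\arctan\tfrac{1}{2\overline{e}_0}$, then on $[t_0,\tau)$ the bound $\overline{e}(t)\leqslant 2\overline{e}_0+1/\overline{e}_0$ holds; together with the Shi-type estimates of Proposition~\ref{prop: Shi-estimates}, this gives uniform $C^k$ bounds on $\nabla\xi(t)$ up to $t=\tau$, so $\xi(t)$ converges smoothly as $t\nearrow\tau$ to a smooth $H$-structure, allowing the flow to be restarted past $\tau$ by Proposition~\ref{prop: short-time_existence}. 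This contradicts the maximality of $\tau$. The main (and really only) obstacle is the careful justification of the ODE differential inequality for the Lipschitz function $\overline{e}(t)$, which is a standard Hamilton's trick argument on a closed manifold.
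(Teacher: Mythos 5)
Your argument is correct and is precisely the proof the paper invokes: the paper's own ``proof'' of Lemma~\ref{lem: time_bounds} is a one-line reference to \cite{he2021}*{Lemma 3.3} (itself modelled on Chen--Ding), which consists of exactly the Hamilton-trick differential inequality $\frac{d}{dt}\overline{e}\leqslant c(\overline{e}^2+1)$, the comparison with $y(t)=\tan\bigl(c(t-t_0)+\arctan\overline{e}_0\bigr)$, and the extension-past-$\tau$ contradiction via the Shi-type estimates that you spell out. No gaps.
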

\begin{proof}
	Using the Bochner-type estimate of Lemma \ref{lem: dif_ineq}, the proof is the same as in \cite[Lemma 3.3]{he2021}.
\end{proof}

\begin{corollary}
\label{cor: at_least}
	If the initial condition $\xi_0$ of a harmonic $H$-flow  \eqref{eq: harmonic_flow} satisfies   $\|\nabla\xi_0\|_{L^{\infty}(M)}\leqslant \kappa$, then the solution $\{\xi(t)\}$  exists at least for all $t\in[0,\delta\arctan\frac{1}{2\kappa^2}]$.
\end{corollary}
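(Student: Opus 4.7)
The plan is to apply Lemma \ref{lem: time_bounds} at $t_0=0$ and exploit the fact that the lower bound on the maximal existence time it provides is a monotone-decreasing function of the initial $L^\infty$ energy density. By Proposition \ref{prop: short-time_existence}, there is a maximal time $\tau=\tau(\xi_0)\in(0,\infty]$ such that the harmonic $H$-flow starting at $\xi_0$ admits a unique smooth solution $\{\xi(t)\}_{t\in[0,\tau)}$. Setting $t_0=0$ in Lemma \ref{lem: time_bounds} we have
\[
\overline{e}_0 = \overline{e}(0) = \max_M |\nabla\xi(0)|^2 = \|\nabla\xi_0\|_{L^\infty(M)}^2 \leqslant \kappa^2,
\]
and the lemma asserts $\delta\arctan\tfrac{1}{2\overline{e}_0}<\tau$.

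The map $s\mapsto \arctan\tfrac{1}{2s}$ is strictly decreasing on $(0,\infty)$, so the bound $\overline{e}_0\leqslant \kappa^2$ gives
\[
\delta\arctan\tfrac{1}{2\kappa^2}\leqslant \delta\arctan\tfrac{1}{2\overline{e}_0}<\tau.
\]
In particular, the closed interval $[0,\delta\arctan\tfrac{1}{2\kappa^2}]$ is contained in the maximal interval of existence $[0,\tau)$, yielding the claim. There is no genuine obstacle here: the statement is a direct monotonicity consequence of the quantitative lower bound on $\tau$ supplied by Lemma \ref{lem: time_bounds}, and no further Bochner or regularity input is required beyond what was already used to establish that lemma.
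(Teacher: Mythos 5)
Your proof is correct and is exactly the intended argument: the paper states Corollary~\ref{cor: at_least} without a proof precisely because it is an immediate consequence of Lemma~\ref{lem: time_bounds} at $t_0=0$, together with the monotone decrease of $s\mapsto\arctan\tfrac{1}{2s}$. You have spelled this out cleanly, and no additional input is needed. (If one wishes to be exhaustive, the degenerate case $\overline{e}_0=0$ means $\xi_0$ is torsion-free and stationary, so $\tau=\infty$ trivially; this does not affect the argument.)
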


We now combine the above maximum time lower bound estimates [Lemma \ref{lem: time_bounds}] with our previous local monotonicity formula [Theorem \ref{thm: monotonicity_Theta}], the $\varepsilon$-regularity mechanism [Theorem \ref{thm: e-regularity}] and the energy gap [Proposition \ref{prop: energy_gap}], to prove the two key results underlying the proofs of Theorems \ref{thm: long-time_existence} and \ref{thm: blow_up}.

\begin{lemma}[Existence and convergence under uniformly bounded torsion]
\label{lem: unif_bound_case}
    Suppose that $\xi_0$ is a compatible $H$-structure on $(M^n,g)$. Let $\{\xi(t)\}_{[0,\tau)}$ be the maximal unique solution to the harmonic $H$-flow \eqref{eq: harmonic_flow} with initial condition $\xi(0)=\xi_0$, and suppose that
\begin{equation}
\label{ineq: torsion_uniform_bound}
    \sup\{\overline{e}(t):t\in [0,\tau)\}<\infty.
\end{equation} 
    Then actually $\tau=\infty$, and the flow $\{\xi(t)\}$ subconverges smoothly when $t\to\infty$. 
    Moreover, any such subsequential limit $\xi_{\infty}$ satisfies $\cD(\xi_{\infty})\leqslant \cD(\xi_0)$ and has divergence-free torsion:
\begin{equation}
\label{eq: div_free_limit}
    \Div T(\xi_{\infty}) = 0.
\end{equation} 
    If furthermore $\cD(\xi_0)<\varepsilon_0$, as in Proposition \ref{prop: energy_gap}, then any subsequential limit $\xi_\infty$ is torsion-free. 
\end{lemma}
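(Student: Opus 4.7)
\medskip
\noindent\textbf{Proof plan.} The proof naturally splits into four parts, and I would address them in order.

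\emph{Long-time existence.} By hypothesis there is $K<\infty$ with $\|\nabla\xi(t)\|_{L^\infty(M)}^2=\overline{e}(t)\leqslant K$ for all $t\in[0,\tau)$. If $\tau<\infty$, then for any $t_0\in[0,\tau)$ Corollary~\ref{cor: at_least} (applied to the flow time-translated to start at $t_0$) extends the solution up to $t_0+\delta\arctan(1/(2K))$; choosing $t_0$ close enough to $\tau$ that this extension strictly exceeds $\tau$ contradicts maximality. Hence $\tau=\infty$.

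\emph{Uniform higher regularity and subsequential limit.} The uniform $L^\infty$-bound on $|\nabla\xi|$ (equivalently on $|T|$ by Lemma~\ref{lem: torsion_equiv_nablaxi}) allows one to apply the Shi-type estimates of Proposition~\ref{prop: Shi-estimates} on any time-shifted window $[t_0,t_0+1]$ and evaluate at the right end-point, obtaining constants $C_m=C_m(M,g,H,K)$ with
\[
\sup_{t\geqslant 2}\,\|\nabla^{m}\xi(t)\|_{C^0(M)}\leqslant C_m,\qquad\forall\, m\in\mathbb{N}_0.
\]
Given any $s_i\to\infty$, the time-translated flows $\xi_i(s):=\xi(s_i+s)$, $s\in[0,1]$, are smooth solutions of \eqref{eq: harmonic_flow} with uniform $C^\infty$-bounds, so by Arzelà-Ascoli a subsequence converges smoothly on $[0,1]\times M$ to a smooth flow $\xi_\infty(s)$, which is itself a solution of \eqref{eq: harmonic_flow}. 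In particular $\xi(s_i)\to\xi_\infty:=\xi_\infty(0)$ smoothly along the subsequence, which proves smooth subconvergence. Energy monotonicity gives $\cD(\xi_\infty)=\lim_i\cD(\xi(s_i))\leqslant\cD(\xi_0)$.

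\emph{Divergence-free limit.} By Lemma~\ref{lem: gradient_dirichlet_along_iso_var} and the key assumption \eqref{eq: key_assumption},
\[
\frac{d}{dt}\cD(\xi(t))=-c\,\|\Div T(\xi(t))\|_{L^2(M)}^2\leqslant 0,
\]
so $\cD(\xi(t))$ is non-increasing and bounded below by $0$; it therefore converges to some limit $L\geqslant 0$. Consequently, for each fixed $s\in[0,1]$, $\cD(\xi_i(s))=\cD(\xi(s_i+s))\to L$, and smooth convergence forces $\cD(\xi_\infty(s))\equiv L$ on $[0,1]$. Differentiating this constant in $s$ and using the same energy identity for the limit flow yields $\Div T(\xi_\infty(s))\equiv 0$; in particular $\Div T(\xi_\infty)=0$. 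Finally, under the additional assumption $\cD(\xi_0)<\varepsilon_0$, monotonicity gives $\cD(\xi_\infty)<\varepsilon_0$, so Proposition~\ref{prop: energy_gap} applied to the harmonic $H$-structure $\xi_\infty$ yields $\nabla\xi_\infty=0$.

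\emph{Main obstacle.} The subtle point is Step~3: from $\int_0^\infty\!\|\Div T(\xi(t))\|_{L^2}^2dt<\infty$ one can only directly conclude $\|\Div T(\xi(t_j))\|_{L^2}\to 0$ along \emph{some} sequence $t_j$, not along any prescribed sequence $s_i$ realising the smooth subconvergence. The device of translating the flow and extracting a limiting solution $\xi_\infty(s)$ on a short parabolic window — whose constant energy forces vanishing divergence of its torsion — is what promotes the conclusion $\Div T(\xi_\infty)=0$ to every subsequential limit, and is the essential input that is less routine than the remaining compactness and regularity steps.
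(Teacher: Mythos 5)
Your proposal is correct, and its first two steps (long-time existence by combining the uniform bound with Lemma~\ref{lem: time_bounds}/Corollary~\ref{cor: at_least}, then Shi-type estimates plus Arzel\`a--Ascoli for smooth subconvergence) match the paper's argument. Where you diverge is the third step. The paper's proof simply writes
$0\leqslant\int_0^\infty\int_M|\Div T\diamond\xi|^2 = \cD(\xi_0)-\cD(\xi_\infty)<\infty$ and asserts ``in particular, any subsequential limit $\xi_\infty$ satisfies $\Div T(\xi_\infty)=0$'' without elaboration; you instead translate the flow, extract a \emph{limit flow} $\xi_\infty(s)$ on $[0,1]\times M$, observe its energy is constant, and differentiate the energy identity for the limit flow to force $\Div T(\xi_\infty(s))\equiv 0$. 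You are right that the bare integrability of $\|\Div T(t)\|_{L^2}^2$ only gives vanishing along \emph{some} sequence, so the paper's ``in particular'' is hiding a step. Your limit-flow device fills this gap rigorously. An equally standard alternative, closer in spirit to what the paper probably has in mind, is to note that the uniform Shi-type bounds also bound $\bigl|\tfrac{d}{dt}\|\Div T(t)\|_{L^2}^2\bigr|$, so $\|\Div T(t)\|_{L^2}^2$ is Lipschitz in $t$; a nonnegative Lipschitz function with finite integral over $[0,\infty)$ tends to $0$ as $t\to\infty$ (no passage to a subsequence needed), and then smooth convergence of $\xi(s_i)\to\xi_\infty$ gives $\Div T(\xi_\infty)=\lim_i\Div T(\xi(s_i))=0$. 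Both routes are valid; yours is a little more structural (and more robust in contexts where one really wants a limit flow), while the Lipschitz argument is shorter. The final energy-gap step is identical.

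One small point to make explicit in a write-up: when you pass the energy identity $\tfrac{d}{ds}\cD(\xi_\infty(s)) = -c\|\Div T(\xi_\infty(s))\|_{L^2}^2$ to the limit flow, you should note that $\xi_\infty(\cdot)$ is indeed a smooth solution of \eqref{eq: harmonic_flow}, which follows from $C^\infty_{\mathrm{loc}}$ convergence of the translated flows on $[0,1]\times M$; otherwise the identity is not a priori available for the limit.
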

\begin{proof}
    It is straightforward to check that the flow $\{\xi(t)\}$ exists for all $t\geqslant 0$, because otherwise the uniform bound \eqref{ineq: torsion_uniform_bound} would lead to a contradiction in Lemma \ref{lem: time_bounds}, when $t_0$ is sufficiently close to $\tau$. Moreover, combining \eqref{ineq: torsion_uniform_bound} with the Shi-type estimates [Proposition \ref{prop: Shi-estimates}] shows that $|\nabla^m\xi|$ is uniformly bounded for all $m$. Thus, for any sequence $t_n\to\infty$, there is a subsequence of $\xi(t_n)$ converging smoothly to a limit $\xi_{\infty}$.

    On the other hand, by Lemma \ref{lem: gradient_dirichlet_along_iso_var}, $\frac{d}{dt}\cD(\xi(t)) = - \int_M |\Div T(t)\diamond\xi(t)|^2\leqslant 0$, 
    and so
\[
0\leqslant \int_{0}^{\infty}\int_M |\Div T(t)\diamond\xi(t)|^2 = \cD(\xi_0) - \cD(\xi_{\infty})\leqslant \cD(\xi_0)<\infty.
\] 
    In particular, any subsequential limit $\xi_{\infty}$ as above satisfies \eqref{eq: div_free_limit};  {indeed, since $\xi(t_n)$ subconverges smoothly to $\xi_{\infty}$, the finiteness $\int_0^{\infty}\|\Div T(t)\diamond\xi(t)\|_{L^2}^2 dt<\infty$ implies that, up to taking a further subsequence, we must have $\|\Div T(t_n)\diamond\xi(t_n)\|_{L^2}^2\to 0=\|\Div T(\xi_{\infty})\diamond\xi_{\infty}\|_{L^2}^2$ as $t_n\to\infty$, which then gives the claim by the injectivity of $(\cdot{}\diamond\xi_{\infty})|_{\Omega_{\fm}^2}$ [Lemma \ref{lem: ker_diamond}]}. Furthermore, since $\cD(\xi_{\infty})\leqslant \cD(\xi_0)$, if $\cD(\xi_0)<\varepsilon_0$ then it follows from Proposition \ref{prop: energy_gap} that $\xi_{\infty}$ is torsion-free.
\end{proof}

\begin{lemma}[Finite-time singularity under unbounded torsion]
\label{lem: unbdound_case}
    There are constants $\varepsilon_1>0$ and $c_1>0$, depending only on $(M^n,g)$ and $H$, with the following significance. Suppose that $\xi_0$ is a compatible $H$-structure on $(M^n,g)$. Let $\{\xi(t)\}_{[0,\tau)}$ be the maximal unique solution to the harmonic $H$-flow \eqref{eq: harmonic_flow} with initial condition $\xi(0)=\xi_0$, and suppose that
\begin{equation}
\label{ineq: torsion_blows_up}
    \sup\{\overline{e}(t):t\in [0,\tau)\}=\infty.
\end{equation} 
    If $\varepsilon:=\cD(\xi_0)<\varepsilon_1$, then
\begin{equation}
\label{ineq: max_time_energy_bound}
    \tau^{\frac{n-2}{2}} \leqslant c_1\sqrt{\varepsilon}.
\end{equation}
\end{lemma}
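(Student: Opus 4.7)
My plan is to argue by contradiction: assuming that $\tau^{(n-2)/2}>c_1\sqrt{\varepsilon}$ for a sufficiently large constant $c_1$, I will use the $\varepsilon$-regularity [Theorem \ref{thm: e-regularity}] to derive a uniform $L^{\infty}$-bound on $|\nabla\xi|$ for all times $\tau_0$ close to $\tau$, which contradicts the blow-up hypothesis \eqref{ineq: torsion_blows_up}.

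The first step is a crude bound on the scale-invariant localised energy $\Psi$. Since the harmonic $H$-flow is the negative gradient flow of $\cD$ on isometric $H$-structures (Lemma \ref{lem: gradient_dirichlet_along_iso_var}), the energy is non-increasing, so $\cD(\xi(t))\leqslant\varepsilon$ for every $t\in[0,\tau)$. Fixing any $y\in M$ and $\tau_0\in(0,\tau)$, on the time interval $[\tau_0-4R^2,\tau_0-R^2]$ of length $3R^2$ the backward heat kernel $G_{(0,\tau_0)}$ is bounded pointwise by $(4\pi R^2)^{-n/2}$, so for every $0<R<\sqrt{\tau_0}/2$ I get
\begin{equation*}
    \Psi_{(y,\tau_0)}(R)\;\leqslant\;c R^{2}\cdot(4\pi R^{2})^{-n/2}\cdot 2\varepsilon\;=\;c' R^{2-n}\varepsilon.
\end{equation*}

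Next, let $\varepsilon_0>0$ be the universal constant from Theorem \ref{thm: e-regularity} associated to the a priori initial energy bound $E_0:=\varepsilon_1$ (to be fixed below), and pick $R_0$ so that $R_0^{n-2}=2c'\varepsilon/\varepsilon_0$; then $\Psi_{(y,\tau_0)}(R_0)\leqslant\varepsilon_0/2<\varepsilon_0$ at every $(y,\tau_0)$ where the estimate applies. Choosing $\varepsilon_1$ small enough forces $R_0<\varepsilon_0$, handling the first part of the requirement $R_0<\min\{\varepsilon_0,\sqrt{\tau_0}/2\}$. The remaining condition $R_0<\sqrt{\tau_0}/2$ amounts to $\tau_0>4R_0^{2}$, and choosing the constant $c_1$ large enough translates the contradiction hypothesis $\tau^{(n-2)/2}>c_1\sqrt{\varepsilon}$ into $\tau>4R_0^{2}$, so I may select $\tau_0\in(4R_0^{2},\tau)$ arbitrarily close to $\tau$. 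Varying the centre $y\in M$ and applying Theorem \ref{thm: e-regularity} yields
\begin{equation*}
    |\nabla\xi|^{2}(y,\tau_0)\;\leqslant\;4(\delta R_0)^{-2}
\end{equation*}
uniformly in $(y,\tau_0)$, with $\delta$ depending only on $(M,g)$, $H$, $\varepsilon_1$ and $R_0$ (and hence independent of $\tau_0$ and $y$). Taking the supremum over $y$ gives a uniform bound $\overline{e}(\tau_0)\leqslant 4(\delta R_0)^{-2}$ for all $\tau_0\in(4R_0^{2},\tau)$, which contradicts \eqref{ineq: torsion_blows_up}: indeed, a uniform bound on $\overline{e}$ in a left-neighbourhood of $\tau$ would, via Lemma \ref{lem: time_bounds} applied at a time $t_0$ sufficiently close to $\tau$, allow the flow to be continued past $\tau$, violating maximality. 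This gives $\tau\leqslant 4R_0^{2}\lesssim\varepsilon^{2/(n-2)}$, hence $\tau^{(n-2)/2}\lesssim\varepsilon\leqslant\sqrt{\varepsilon_1}\sqrt{\varepsilon}$, which is exactly the desired inequality after adjusting $c_1$.

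The main obstacle I anticipate is the bookkeeping of how the constant $\delta$ in Theorem \ref{thm: e-regularity} depends on $R_0$ (equivalently on $\varepsilon$), since it might in principle deteriorate as $\varepsilon\to 0$; this does not break the contradiction, which only uses that the bound $4(\delta R_0)^{-2}$ is finite and uniform in $\tau_0$ (with $R_0$ fixed). A secondary technical point is that Theorem \ref{thm: e-regularity} is stated for a flow on a single geodesic ball with $\tau\leqslant r_M^{2}$; in the general case one reduces to this setting by a parabolic rescaling as in Lemma \ref{lem: rescaling}, after which all constants in the final estimate depend only on $(M^{n},g)$ and $H$, as required by the statement.
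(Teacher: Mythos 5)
Your argument is correct, but it takes a genuinely different route from the paper's. The paper proves the lemma ``from scratch'': it extracts a sequence of almost-maximal points $(p_i,t_i)$ of the energy density, performs a parabolic blow-up at scale $\lambda_i^2=\arctan\frac{1}{2\overline{e}(t_i)}$, applies the Bochner estimate and Moser's Harnack inequality to get the lower bound $1\leqslant\gamma_1\int_U|\tilde{\nabla}\tilde{\xi}|^2$, and plays this against the almost-monotonicity of $\Theta$ [Theorem~\ref{thm: monotonicity_Theta}] with the explicit choice $N=\exp(\sqrt{2c})$, forcing $\rho^{(n-2)/2}\leqslant c_1\sqrt{\varepsilon}$ for $\rho=\min\{1,\tau_0\}$ and then letting $\tau_0\to\tau$. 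You instead black-box the $\varepsilon$-regularity theorem (which internally packages exactly that blow-up/Harnack/monotonicity machinery) and feed it the crude kernel bound $\Psi_{(y,\tau_0)}(R)\leqslant c'R^{2-n}\varepsilon$, so that at the scale $R_0\sim\varepsilon^{1/(n-2)}$ where $\Psi$ drops below $\varepsilon_0$ you convert energy decay into a uniform $L^\infty$ bound on $|\nabla\xi|$ on $(4R_0^2,\tau)$, contradicting \eqref{ineq: torsion_blows_up} (note the bound on $[0,4R_0^2]$ is automatic by compactness and smoothness, so the continuation remark via Lemma~\ref{lem: time_bounds} is not needed). Your route is shorter and even yields the slightly stronger estimate $\tau^{(n-2)/2}\lesssim\varepsilon$; the paper's is self-contained and independent of how $\varepsilon_0$ and $\delta$ depend on the data.

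Two small technical corrections. First, the reduction to the hypothesis $\tau\leqslant r_M^2$ of Theorem~\ref{thm: e-regularity} is \emph{not} achieved by parabolic rescaling: under $g\mapsto\lambda^2 g$ both $\tau$ and $r_M^2$ scale by $\lambda^2$, so their ratio is invariant. The correct fix is to restrict and time-translate the flow to the final sub-interval $[\tau_0-\min\{\tau_0,r_M^2\},\tau_0]$ (cf.\ Remark~\ref{rmk: time_transl_invariance}); the restricted flow still has initial energy $\leqslant\varepsilon$ by energy decay, and the compatibility condition $4R_0^2<r_M^2$ holds once $\varepsilon_1$ is small. Second, taking $E_0:=\varepsilon_1$ makes $\varepsilon_0$ depend on $\varepsilon_1$, which you then choose in terms of $\varepsilon_0$; simply fix $E_0:=1$ and require $\varepsilon_1\leqslant 1$ to break the circularity. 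Neither point affects the validity of the argument.
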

\begin{proof}
    By the assumption \eqref{ineq: torsion_blows_up}, there is a sequence $(t_i)\subset (0,\tau)$ with $\lim t_i=\tau$, such that $\lim \overline{e}(t_i)=\infty $. In particular, 
\[
\lambda_i^2:=\arctan{\frac{1}{2\overline{e}(t_i)}}\to 0, \quad\text{as }i\to\infty.
\] 
    Now, for each $i\in\mathbb{N}$, let $p_i\in M$ be a point where the supremum is attained,
\[
e(\xi)(p_i,t_i) = \overline{e}(t_i),
\] 
    and let $\{x_{\alpha}\}$ be normal coordinates centred at $p_i$. In such coordinates, we can define $\Theta(t)$ as in \eqref{eq: dfn_Theta}, with $\tau_0:=t_i + \delta\lambda_i^2<\tau$, where $\delta {=\delta(M^n,g,H)}>0$ is given by Lemma \ref{lem: time_bounds}. Possibly after scaling, we can assume $\mathrm{inj}(M,g)>1$, by Lemma \ref{lem: rescaling} and the parabolic scale-invariance of $\Theta$. On $U_i:=B_{\lambda_i^{-1}}(0)\times [-\lambda_i^{-2}t_i,\delta]$ we define
\[
\tilde{\xi}(x,t) := \xi (\lambda_i x, t_i+\lambda_i^2 t).
\] 
    Then $\tilde{\xi}$ satisfies the harmonic $H$-flow on $U_i$, with respect to the scaled metric $\tilde{g}_{\alpha\beta}(x):=g_{\alpha\beta}(\lambda_i x)$.  { Indeed, noting that $\tilde{\nabla}_k\tilde{\xi}(x,t) = \lambda_i\nabla_k\xi(\lambda_i x, t_i+\lambda_i^2 t)$ and using the relation given by Lemma \ref{lem: torsion_equiv_nablaxi}, we see that the torsion $\tilde{T}$ of $\tilde{\xi}$ is given by $\tilde{T}(x,t) = \lambda_i T(\lambda_i x, t_i+\lambda_i^2 t)$. In particular, $(\Div_{\tilde{g}}\tilde{T})_{bc}(x,t) = \tilde{g}^{ka}\tilde{\nabla}_k\tilde{T}_{a,bc}(x,t) = \lambda_i^{2}(\Div_{g}T)_{bc}(\lambda_i x, t_i+\lambda_i^2 t)$, and since $\partial_t\tilde{\xi}(x,t) = \lambda_i^2(\partial_t\xi)(\lambda_i x, t_i+\lambda_i^2 t)$, we do have $\partial_t\tilde{\xi} = \Div_{\tilde{g}}\tilde{T}\diamond\tilde{\xi}$. Moreover, it follows that the energy density $e(\tilde{\xi})$ of $\tilde{\xi}$ with respect to $\tilde{g}$ is given by $e(\tilde{\xi})(x,t) = \lambda_i^2e(\xi)(\lambda_i x, t_i+\lambda_i^2 t)$.}

    Since $\lim\lambda_i^2 \overline{e}(t_i)= \frac{1}{2}$, for $i\gg 1$ we have, on one hand,
\[
e(\tilde{\xi})(0,0) = \lambda_i^2\overline{e}(t_i)>\frac{1}{4}.
\] 
    On the other hand, using Lemma \ref{lem: time_bounds}, 
\[
e(\tilde{\xi})(x,t) \leqslant \lambda_i^2\overline{e}(t_i+\lambda_i^2t)\leqslant\lambda_i^2\left(2\overline{e}(t_i)+\frac{1}{\overline{e}(t_i)}\right)<2,
\quad\forall (x,t)\in U_i,\quad  {i\gg 1}.
\] 
    
    In particular, by the Bochner-type estimate \eqref{ineq: Bochner_estimate},
\[
(\partial_t - \tilde{\Delta})e(\tilde{\xi})(x,t) = \lambda_i^4(\partial_t - \Delta)e(\xi)(\lambda_i x, t_i+\lambda_i^2 t)\leqslant c(e(\tilde{\xi})+\lambda_i^4),\quad\forall (x,t)\in U_i,
\quad i\gg 1.
\] 
    Thus, the function $u(x,t):=\exp(-ct)(e(\tilde{\xi})(x,t)+\lambda_i^4)$ satisfies $(\partial_t - \tilde{\Delta})u\leqslant 0$ on $U_i$. If we consider, for $i\gg 1$, the subset
\[
U:=B_1(0)\times\left(-\min\left\{\frac{\delta}{2},\frac{\delta}{c}\right\},\frac{\delta}{2}\right)\subset U_i,
\] 
    then by Moser's parabolic Harnack inequality there is $\gamma>0$, depending only on $(M^n,g)$, such that
\[
\frac{1}{4}<e(\tilde{\xi})(0,0)\leqslant u(0,0)\leqslant\gamma\left(\frac{1}{\delta\mathrm{Vol}(B_1(0))}\int_U u^2 dxdt\right)^{\frac{1}{2}}.
\] 
    Since $e(\tilde{\xi})<2$ and $\exp(-2ct)\leqslant \exp(2\delta)$ in $U$, we get
\[
1\leqslant 16\gamma^2\left(\frac{(2+\lambda_i^4)\exp{(2\delta)}}{\delta\mathrm{Vol}(B_1(0))}\int_U(e(\tilde{\xi})+\lambda_i^4)dxdt\right),\quad i\gg 1.
\] 
    Recalling that $\lim \lambda_i = 0$, we can assume for $i\gg 1$ that $\sqrt{\det{(\tilde{g}_{\alpha\beta}})}>1/2$ on $B_1(0)$, $\lambda_i\ll 1$ and $16\gamma^2(2+\lambda_i^4)\exp(2\delta)\lambda_i^4<1/4$. Hence, noting further that $\int_U dxdt\leqslant\delta\mathrm{Vol}(B_1(0))$, we get
    \begin{align}
        1 &\leqslant \frac{1}{4} + 16\gamma^2\frac{(2+\lambda_i^4)\exp{(2\delta)}}{\delta\mathrm{Vol}(B_1(0))}\int_U e(\tilde{\xi})dxdt\nonumber\\
        &\leqslant \frac{1}{4} + 16\gamma^2\frac{3\exp{(2\delta)}}{\delta\mathrm{Vol}(B_1(0))}\int_U e(\tilde{\xi})dxdt\nonumber\\
        &\leqslant \frac{1}{4} + 32\gamma^2\frac{3\exp{(2\delta)}}{\delta\mathrm{Vol}(B_1(0))}\int_U e(\tilde{\xi})\sqrt{\det{(\tilde{g}_{\alpha\beta}})}dxdt,\nonumber
    \end{align} yielding the lower bound
\begin{equation}
\label{ineq: crucial_lower_bound}
    1\leqslant \gamma_1\int_U  {e(\tilde{\xi})} \sqrt{\det{(\tilde{g}_{\alpha\beta}})}dxdt,
\end{equation} 
    where $\displaystyle\gamma_1 := \frac{128\gamma^2\exp{(2\delta)}}{\delta\mathrm{Vol}(B_1(0))}$, and therefore $\gamma_1>0$ depends only on $(M^n,g)$  {and $H$}.

    We now invoke the monotonicity of $\Theta(t)$ from Theorem \ref{thm: monotonicity_Theta}, and the uniform bound from  Remark \ref{rmk: monotonicity_Theta}, to deduce that, for any $N>1$ and $t\in (\tau_0 - \rho, \tau_0)$, with $\rho:=\min\{1,\tau_0\}$,
\begin{equation}
\label{ineq: invoked_monotonicity}
    \Theta(t) \leqslant c\Theta(\tau_0 - \rho) + c\left(N^{n/2}(\varepsilon+\sqrt{\varepsilon})+\frac{1}{\ln^2{N}}\right)\rho.
\end{equation} 
    Since $G_{(0,\tau_0)}(x,\tau_0-\rho)\leqslant (4\pi \rho)^{-n/2}$, $0\leqslant\phi\leqslant 1$ and $\mathcal{D}(\xi(\tau_0-\rho))\leqslant \mathcal{D}(\xi_0)=\varepsilon$, the first term on the right-hand side is bounded as follows:
\[
\Theta(\tau_0 - \rho)\leqslant c\rho^{1-n/2}\int_{B_1(0)} {e(\xi)}(x,\tau_0 - \rho)\sqrt{\det{(g_{\alpha\beta})}}dx \leqslant c\rho^{1-n/2}\varepsilon,
\] 
    so \eqref{ineq: invoked_monotonicity} gives
\[
\Theta(t)\leqslant c\rho^{1-n/2}\varepsilon + c\rho N^{n/2}(\varepsilon + \sqrt{\varepsilon}) + \frac{c\rho}{\ln^2{N}}.
\]
    Assuming $\varepsilon<1$, and recalling that $N>1$ and $\rho\in (0,1]$, we obtain
\begin{equation}
\label{ineq: crucial_monotonicity_bound}
    \Theta(t)\leqslant cN^{n/2}\rho^{1-n/2}\sqrt{\varepsilon} + \frac{c}{\ln^2 N},\quad\forall t\in(\tau_0-\rho,\tau_0),
    \quad\forall N>1.
\end{equation} 
    Note that $c>0$ is a uniform constant, depending only on $(M^n,g)$, and in particular independent of $N$.

    Since $\tau_0 = t_i + \delta \lambda_i^2$, at any time $-\min\{\delta/2,\delta/c\}<t<\delta/2$ we have
\begin{equation}\label{ineq: minor_step_1}
\frac{1}{2}\delta\lambda_i^2<\tau_0 - (t_i+\lambda_i^2t) = \lambda_i^2(\delta - t)<\frac{3}{2}\delta\lambda_i^2,
\end{equation} 
    so that if furthermore $| {\tilde{x}}|\leqslant \lambda_i$, then
\begin{equation}\label{ineq: minor_step_2}
G_{(0,\tau_0)}( {\tilde{x}},t_i + \lambda_i^2t) = (4\pi(\tau_0 - t_i -\lambda_i^2t))^{-n/2}\exp{\left(-\frac{| {\tilde{x}}|^2}{4(\tau_0 - t_i -\lambda_i^2t)}\right)}\geqslant c\delta^{-n/2}\lambda_i^{-n}\exp\left(-\frac{1}{2\delta}\right).
\end{equation}
    Together with \eqref{ineq: crucial_monotonicity_bound}, this yields the following upper bounds:
\begin{align}
    \int_{B_1(0)} {e(\tilde{\xi})}( {x},t)\sqrt{\det{( {\tilde{g}}_{\alpha\beta})}}dx
    &= \lambda_i^{2-n}\int_{B_{\lambda_i}(0)} {e(\xi)}( {\tilde{x}},t_i+\lambda_i^2t) \sqrt{\det{(g_{\alpha\beta})}}d {\tilde{x}}\nonumber\\
    & {\leqslant 2\delta^{-1}(\tau_0 - t_i - \lambda_i^2t)\lambda_i^{-n}\int_{B_{\lambda_i}(0)}e(\xi)(\tilde{x},t_i+\lambda_i^2t) \sqrt{\det{(g_{\alpha\beta})}}d\tilde{x}\quad\text{(by \eqref{ineq: minor_step_1})}}\nonumber\\
    &\leqslant c\delta^{\frac{n-2}{2}}\exp{\left(\frac{1}{2\delta}\right)}\Theta(t_i+\lambda_i^2t)\quad\text{ {(by \eqref{ineq: minor_step_2})}}\nonumber\\
    &\leqslant cN^{n/2}\rho^{1-n/2}\sqrt{\varepsilon} + \frac{c}{\ln^2 N},\quad\forall N>1.\label{ineq: crucial_upper_bound}
\end{align} 
    Combining the estimates \eqref{ineq: crucial_lower_bound} and \eqref{ineq: crucial_upper_bound}, we conclude that there is a uniform constant $c>0$, depending only on $(M^n,g)$ and $H$, such that
\[
1\leqslant cN^{n/2}\rho^{1-n/2}\sqrt{\varepsilon} + \frac{c}{\ln^2 N},\quad\forall N>1.
\] 
    Choosing $N:=\exp{(\sqrt{2c})}$, we get
\begin{equation}\label{ineq: pre_final}
\rho^{\frac{n-2}{2}}\leqslant c_1\sqrt{\varepsilon},
\end{equation} where $c_1:=2c\exp{(n\sqrt{2c}/2)}>0$ is a uniform constant, depending only on $(M^n,g)$ and $H$. Now define $\varepsilon_1:=\min\{1,c_1^{-2}\}$ and assume that $\varepsilon<\varepsilon_1$. Then by \eqref{ineq: pre_final} and the definition of $\rho:=\min\{1,\tau_0\}$, it follows that $\rho=\tau_0$. Finally, since $\tau_0:=t_i+\lambda_i^2\delta\to\tau$ as $i\to\infty$, the inequality \eqref{ineq: pre_final} implies the desired result \eqref{ineq: max_time_energy_bound}.
\end{proof}

We are now in position to prove the main results of this section.
\begin{proof}[Proof of Theorem \ref{thm: long-time_existence}]
Define
\begin{equation}\label{def: e(K)_1}
\varepsilon(\kappa):=\min\left\{\varepsilon_0,\varepsilon_1,c_1^{-2}\left(\delta\arctan{\frac{1}{2\kappa^2}}\right)^{n-2}\right\},
\end{equation} where $\varepsilon_0$ is given by Proposition \ref{prop: energy_gap}, $\varepsilon_1$ and $c_1$ are given by Lemma \ref{lem: unbdound_case}, and $\delta$ is given by Lemma \ref{lem: time_bounds}. Let $\{\xi(t)\}_{[0,\tau)}$ be the unique solution to the harmonic $H$-flow with initial condition $\xi_0$ satisfying (i) $\|\nabla\xi_0\|_{L^{\infty}(M)}\leqslant \kappa$ and (ii) $\cD(\xi_0)<\varepsilon(\kappa)$. If \eqref{ineq: torsion_blows_up} was true, then Corollary \ref{cor: at_least} would contradict the maximal time upper bound \eqref{ineq: max_time_energy_bound} of Lemma \ref{lem: unbdound_case}. Therefore, the uniform bound \eqref{ineq: torsion_uniform_bound} holds, and by Lemma \ref{lem: unif_bound_case} it follows that the harmonic $H$-flow $\{\xi(t)\}$ exists for all time $t\geqslant 0$ and subconverges as $t\to\infty$ to a torsion-free $H$-structure $\xi_{\infty}$.
\end{proof}

\begin{proof}[Proof of Theorem \ref{thm: blow_up}]
Define
\[
\varepsilon_{\ast}:=\min\{\varepsilon_0,\varepsilon_1\},
\] where $\varepsilon_0$ is given by Proposition \ref{prop: energy_gap}, and $\varepsilon_1$ is given by Lemma \ref{lem: unbdound_case}. Let $\{\xi(t)\}_{[0,\tau)}$ be the unique solution to the harmonic $H$-flow with initial condition $\xi_0$, such that the isometric homotopy class $[\xi_0]$ contains no torsion-free $H$-structure and $\varepsilon:=\cD(\xi_0)<\varepsilon_{\ast}$. If the uniform bound \eqref{ineq: torsion_uniform_bound} held, then by Lemma \ref{lem: unif_bound_case} the flow  would exist for all time and smoothly subconverge to a torsion-free $H$-structure $\xi_{\infty}$ in the same homotopy class of $\xi_0$, which is a contradiction. Therefore the opposite condition \eqref{ineq: torsion_blows_up} holds, and by Lemma \ref{lem: unbdound_case} we get the maximal time upper bound \eqref{ineq: max_time_energy_bound}. We conclude that the flow has a finite-time singularity at $\tau$, and $\tau\to 0$ as $\varepsilon\to 0$. This behaviour in higher dimensions contrasts with the $n=2$ case, cf. Remark \ref{rmk: 2-torus}.
\end{proof}

\subsection{Uniqueness of the long-time limit}
\label{sec: uniqueness}

By a slight modification in the definition \eqref{def: e(K)_1} of the constant $\varepsilon(\kappa)$ in the proof of Theorem \ref{thm: long-time_existence}, which possibly makes it smaller and with a more complicated dependence on $\kappa$, {we can prove a uniqueness result for the long-time limit along the harmonic $H$-flow. For this purpose, we shall require the group $H$ to satisfy
\begin{equation}
\label{eq: hyp h-part ddt of T}
    \pi_{\fh}\left(\frac{\partial}{\partial t}T_m\right)=c_H\pi_{\fh}\left([T_m,\Div T]\right),
\end{equation}
for some constant $c_H\in \bR$ depending only on $H$, where $T$ is the torsion of any solution $\{\xi(t)\}$ of the harmonic $H$-flow \eqref{eq: harmonic_flow}. Notice that any of the groups $H=\U(m)$, $\rm G_2$, $\rm Spin(7)$, $\Sp(k)\Sp(1)$ or $\{1\}$ satisfies \eqref{eq: hyp h-part ddt of T} for some $c_H\in \bR$ (cf. Remark \ref{rm: h-part ddt of T}).}
%for some $c\leqslant 2$ depending of $H$. Notice that any of the groups $U(m)$, $\rm G_2$, $\rm Spin(7)$, $\Sp(k)\Sp(1)$ or $\{1\}$ satisfy \eqref{eq: hyp h-part ddt of T} (cf. Remark \ref{rm: h-part ddt of T}).}
\begin{proposition}[Uniqueness of the limit along the flow]
\label{prop: uniqueness}
   Under the hypotheses of Theorem \ref{thm: long-time_existence}, and the {additional assumption  \eqref{eq: hyp h-part ddt of T} on the group $H$}, possibly adopting a smaller constant $\varepsilon(\kappa)>0$, still depending only on $\kappa$, the geometry $(M^n,g)$ and $H$, the harmonic $H$-flow $\{\xi(t)\}$ has a unique smooth limit as $t\to\infty$.
\end{proposition}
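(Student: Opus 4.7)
The plan is to upgrade the smooth subsequential convergence from Theorem~\ref{thm: long-time_existence} to unconditional smooth convergence by running the classical Łojasiewicz--Simon argument for gradient flows of real-analytic functionals. I would shrink the threshold $\varepsilon(\kappa)$ in \eqref{def: e(K)_1} further as needed so that, for each admissible initial datum $\xi_0$, the trajectory $\{\xi(t)\}$ eventually enters, and stays inside, a fixed $C^{2,\alpha}$-neighbourhood of any subsequential torsion-free limit $\xi_\infty$ on which a Łojasiewicz inequality holds uniformly.

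First, Theorem~\ref{thm: long-time_existence} already produces a sequence $t_i\to\infty$ and a torsion-free compatible $H$-structure $\xi_\infty$ with $\xi(t_i)\to\xi_\infty$ smoothly. Since $\cD$ is non-increasing along the flow by Lemma~\ref{lem: gradient_dirichlet_along_iso_var} and $\cD(\xi_\infty)=0$ by \eqref{eq: T_equiv_nablaxi}, one has $\cD(\xi(t))\to 0$; combining the Bochner estimate \eqref{ineq: Bochner_estimate} with Proposition~\ref{prop: Shi-estimates} yields uniform $C^k$-bounds on the full orbit for every $k\in\mathbb{N}$. The core analytic step is then to establish a Łojasiewicz--Simon gradient inequality of the form
\begin{equation*}
    \cD(\xi)^{1-\theta} \;\leqslant\; C\,\|\Div T(\xi)\|_{L^2(M)},
    \qquad \theta\in(0,\tfrac12],
\end{equation*}
valid for all compatible $H$-structures $\xi$ in a $C^{2,\alpha}$-neighbourhood of $\xi_\infty$. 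Because the homogeneous bundle $\Fr(M,g)/H\to M$ with its induced metric $\eta$ is real-analytic, the Dirichlet functional \eqref{eq: Dirichlet energy} is real-analytic on the Banach manifold of $C^{2,\alpha}$ sections, and by Lemma~\ref{lem: gradient_dirichlet_along_iso_var} its gradient identifies (via the isomorphism $\cdot\diamond\xi\vert_{\Omega^2_\fm}$) with the quasilinear second-order elliptic operator $\xi\mapsto\Div T(\xi)\diamond\xi$. The abstract Łojasiewicz--Simon theorem, in Simon's original form or its sharper Feehan--Maridakis refinement, then applies at the critical point $\xi_\infty$.

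Combining the inequality above with the flow equation \eqref{eq: harmonic_flow} and the identity $\|\Div T\diamond\xi\|_{L^2}^2 = c\,\|\Div T\|_{L^2}^2$ coming from \eqref{eq: key_assumption}, a direct computation gives
\begin{equation*}
    -\frac{d}{dt}\,\cD(\xi(t))^\theta \;\geqslant\; c'\,\|\partial_t\xi(t)\|_{L^2(M)}
\end{equation*}
whenever $\xi(t)$ lies in the Łojasiewicz neighbourhood of $\xi_\infty$. A standard continuity-in-time argument, initialised by the smooth subconvergence along $\{t_i\}$, will show that the orbit indeed remains in this neighbourhood from some time $T_0$ onward. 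Integrating over $[T_0,\infty)$ gives $\int_{T_0}^\infty\|\partial_t\xi\|_{L^2}\,dt<\infty$, so $\{\xi(t)\}$ is Cauchy in $L^2(M)$; interpolation with the uniform $C^k$-bounds then promotes this to smooth convergence to a \emph{unique} limit $\xi_\infty$.

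The main obstacle is the technical verification of the Łojasiewicz--Simon inequality in this abstract setting: one must check that the space of compatible $H$-structures is a real-analytic Banach submanifold of $\Gamma^{C^{2,\alpha}}(\Fr(M,g)/H)$, and that the linearisation of $\xi\mapsto\Div T(\xi)\diamond\xi$ at a torsion-free $\xi_\infty$ is Fredholm on the appropriate slice. Both ingredients should follow from the ellipticity inherent in the second variation of $\cD$ (already visible in the rough Laplacian decomposition of Lemma~\ref{lem: basic_estimate_harmonic}) and the real-analyticity of $(\Fr(M,g)/H,\eta)$; their rigorous execution, however, requires careful bookkeeping in the present generality.
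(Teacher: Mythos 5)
Your strategy is the standard Łojasiewicz--Simon route for gradient flows, and it is a genuinely different approach from the one the paper takes. The paper's proof is considerably more elementary: it first shrinks $\varepsilon(\kappa)$ so that, by the interpolation Lemma~\ref{lem: interpolation} combined with the Shi-type estimates, the torsion stays pointwise small for all time, in fact $|T_{\xi(t)}|^2<\Lambda/24$ for $t\geqslant 1$, where $\Lambda$ is the first nonzero eigenvalue of the rough Laplacian on $\Omega^2(M)$. The convexity Lemma~\ref{lem: convexity} then gives $\frac{d}{dt}\int_M|\Div T|^2\leqslant -\frac{\Lambda}{2}\int_M|\Div T|^2$, hence \emph{exponential} decay of $\|\Div T_{\xi(t)}\|_{L^2}$, and integrating $\|\partial_t\xi\|_{L^1}$ shows the flow is Cauchy in $L^1$; uniqueness of the smooth limit follows by combining this with the smooth subconvergence. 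The key structural point the paper exploits, and which your proposal does not use, is that the subsequential limit is torsion-free (an absolute minimum of $\cD$), so near it the second variation is coercive with the explicit spectral constant $\Lambda$; no real-analyticity is needed and the convergence rate obtained is exponential rather than the polynomial rate that a Łojasiewicz exponent $\theta<\tfrac12$ would yield.

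That said, as written your argument has a genuine gap: the Łojasiewicz--Simon inequality is the crux of your proof and you do not establish it, you only sketch why it \emph{should} hold. In the present generality one must actually verify (a) that the space of compatible $H$-structures is a real-analytic Banach manifold and that $\cD$ is real-analytic on it, and (b) that the linearisation of $\xi\mapsto\Div T(\xi)\diamond\xi$ at $\xi_\infty$, restricted to the slice $\Omega^2_{\fm}$, is an elliptic Fredholm operator with the mapping properties required by Simon's theorem (or the Feehan--Maridakis refinement). Neither is automatic for an arbitrary closed connected $H=\Stab_{\SO(n)}(\xi_\circ)$ satisfying \eqref{eq: key_assumption}, and deferring them to ``careful bookkeeping'' leaves the proof incomplete. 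If you want to pursue your route, you should either carry out that verification or note that, at a torsion-free limit, the Hessian computation implicit in Lemma~\ref{lem: convexity} already shows the critical point is integrable with $\theta=\tfrac12$ -- at which point you have essentially rederived the paper's spectral-gap argument and the analytic machinery becomes unnecessary.
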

In order to prove this result, we are going to combine the previous techniques with the following two key lemmas, the first of which generalises known counterparts for $H=\rm G_2$ \cite{dgk-isometric}*{Lemmas 5.10-11} and $H=\mathrm{Spin}(7)$  \cite{Dwivedi-Loubeau-SaEarp2021}*{Lemma 5.7}.

\begin{lemma}[Convexity of the energy under small torsion]
\label{lem: convexity}
    Along a solution $\{\xi(t)\}$ of the harmonic $H$-flow \eqref{eq: harmonic_flow} on a closed Riemannian  manifold $(M^n,g)$,  {under the assumption \eqref{eq: hyp h-part ddt of T}}, one has
    \begin{align}
        \frac{d^2}{dt^2}\cE(\xi(t))=-\frac{d}{dt}\int_M |\Div T|^2\vol_g\geqslant \int_M(\Lambda- {4(5+2|c_H|)}|T|^2)|\Div T|^2\vol_g,
   \end{align}
    where $\Lambda$ is the first non-zero eigenvalue of the rough Laplacian $\nabla^{\ast}\nabla=-\Delta:\Omega^2(M)\to\Omega^2(M)$ of $g$ on $2$-forms.
\end{lemma}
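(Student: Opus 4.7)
The first identity in the lemma is immediate from Corollary~\ref{cor: first_iso_variation}: along the harmonic $H$-flow $\partial_t \xi = \Div T \diamond \xi$ one has $S \equiv 0$, so by Lemma~\ref{lemma: isometric variation} the induced metric $g$ is time-independent, and taking $C = \Div T$ in the first-variation formula gives $\cE'(t) = -\int_M |\Div T|^2 \vol_g$; differentiating in $t$ yields the stated equality for $\cE''(t)$. The substantive content is the coercivity estimate, for which I will abbreviate $C := \Div T \in \Omega^2_{\fm}$.

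The plan is first to derive the complete evolution equation for $T_l$ including its $\fh$-component. By Corollary~\ref{cor: evo_torsion} applied with $A = C$ and $S = 0$, the $\fm$-projection is immediately $\pi_{\fm}(\partial_t T_l) = \nabla_l C + [T_l, C] =: \nabla^H_l C$, where $\nabla^H := \nabla + [T, \cdot]$ acts on $\fso(TM) \simeq \Omega^2$. To identify the $\fh$-part $\zeta_l := \pi_{\fh}(\partial_t T_l)$, I would differentiate the constraint $\pi_{\fh}^{\xi(t)}(T_l(t)) \equiv 0$, which reduces the task to computing the infinitesimal rotation $(\partial_t \pi_{\fh})(T_l)$ of the subbundle $\Omega^2_{\fh}(\xi(t)) = \ker(\cdot \diamond \xi(t)) \cap \Omega^2$ along $\partial_t \xi = C \diamond \xi$. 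Using the kernel characterisation of $\Omega^2_{\fh}$ (Lemma~\ref{lem: ker_diamond}), the self-adjointness of orthogonal projections, and the $\Ad$-invariance of $\langle \cdot, \cdot\rangle$ on $\fso(TM)$, this yields $\zeta_l = \pi_{\fh}[T_l, C]$, hence
\[
\partial_t T_l = \nabla_l C + [T_l, C] + \pi_{\fh}[T_l, C].
\]

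Next, since $g$ is time-independent, $\partial_t(\Div T) = \nabla^a \partial_t T_a$, and integration by parts gives $\frac{d}{dt}\int_M |C|^2 \vol_g = -2\int_M \langle \partial_t T_a, \nabla^a C\rangle \vol_g$. For the third summand after substitution, the key algebraic input is the identity $\pi_{\fh}(\nabla_l C) = -\pi_{\fh}[T_l, C]$ for $C \in \Omega^2_{\fm}$, which follows because $\nabla^H$ preserves the reductive decomposition $\fso(TM) = \fh_Q \oplus \fm_Q$: $\nabla^H_l C \in \Omega^2_{\fm}$, so taking $\pi_{\fh}$ of $\nabla^H_l C = \nabla_l C + [T_l, C]$ produces the claim. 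Collecting terms yields
\[
-\frac{d}{dt}\int_M |\Div T|^2 \vol_g = 2\int_M |\nabla C|^2 \vol_g + 2\int_M \langle [T_a, C], \nabla^a C\rangle \vol_g - 2\int_M |\pi_{\fh}[T, C]|^2_g \vol_g,
\]
where $|\pi_{\fh}[T,C]|^2_g := g^{ab}\langle \pi_{\fh}[T_a,C], \pi_{\fh}[T_b,C]\rangle$.

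To finish, I would apply standard estimates. The commutator bound $|[T_a, C]| \leqslant 2|T_a||C|$ in $\fso(n)$ combined with Young's inequality gives $\big|2\int \langle [T_a, C], \nabla^a C\rangle\big| \leqslant \int |\nabla C|^2 + 4\int |T|^2|C|^2$, and the last summand satisfies $|\pi_{\fh}[T, C]|^2 \leqslant |[T,C]|^2 \leqslant 4|T|^2|C|^2$. Combining with the spectral-gap estimate $\int |\nabla C|^2 \geqslant \Lambda \int |C|^2$---which applies because $C = \Div T$ is $L^2$-orthogonal to $\ker(\nabla^\ast \nabla)$ on $\Omega^2$ (for any parallel $2$-form $\omega$, integration by parts yields $\int \langle \Div T, \omega\rangle = -\int \langle T, \nabla \omega\rangle = 0$)---delivers the claimed lower bound. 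The main obstacle will be the first step: the $\fh$-contribution $\pi_{\fh}(\partial_t T_l) = \pi_{\fh}[T_l, C]$ is nontrivial precisely because the splitting $\Omega^2 = \Omega^2_{\fm} \oplus \Omega^2_{\fh}$ genuinely rotates with $\xi$, and the resulting integral $-2\int |\pi_{\fh}[T,C]|^2$ (bounded by $-8\int|T|^2|C|^2$) combines with the $-4\int|T|^2|C|^2$ from the cross term to produce the sharp constant $12$ in the statement.
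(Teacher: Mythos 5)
Your proof is correct and follows essentially the same strategy as the paper: compute $\cE'' = -\tfrac{d}{dt}\int|\Div T|^2$, substitute the evolution of the torsion, exploit the fact that $\nabla^H$ preserves $\Omega^2_\fm$ so that $\pi_\fh(\nabla_m \Div T) = -\pi_\fh[T_m,\Div T]$, and finish with Young's inequality and the spectral gap for $\Div T \in (\ker\nabla^\ast\nabla)^{\perp_{L^2}}$. Both routes produce the sharp constant $12$.

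The one place where you are more explicit than the paper is in determining the $\fh$-component of $\partial_t T_l$. The paper works only with the $\fm$-projection from Corollary~\ref{cor: evo_torsion} and reorganises the intermediate integral identity in index form; you instead differentiate the orthogonality constraint $T_l(t)\perp\Omega^2_\fh(\xi(t))$ to deduce $\pi_\fh(\partial_t T_l)=\pi_\fh[T_l,C]$, and then obtain the closed-form $\partial_t T_l = \nabla^H_l C + \pi_\fh[T_l,C]$. This extra step is in fact needed for a fully rigorous evaluation of $\int\langle\partial_t T_m,\nabla^m \Div T\rangle$, because $\nabla^m\Div T$ has a non-vanishing $\fh$-component. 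Your sketch of the rotation argument is correct as stated (it amounts to: for $A(t)\in\Omega^2_\fh(\xi(t))$ one has $\pi_\fm(\partial_t A)=\pi_\fm[A,C]$, and dualising via $\langle A,T_l\rangle\equiv 0$ together with Ad-invariance gives $\pi_\fh(\partial_t T_l)=\pi_\fh[T_l,C]$), and the reconciliation with Corollary~\ref{cor: evo_torsion} via $\pi_\fh(\nabla_l C)=-\pi_\fh[T_l,C]$ is exactly right. The estimates that follow are identical in both proofs, so I would characterise this as the same approach carried out with an additional layer of care on the rotation of the $\fh$/$\fm$ splitting.
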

\begin{proof}
    Using equation \eqref{eq: ddt_norm_T} of Corollary \ref{cor: evo_torsion}, with $S=0$ and $C=\Div T$, and integrating by parts, we get:
    \[
    \frac{d^2}{dt^2}\cE(\xi(t))=-\frac{d}{dt}\int_M |\Div T|^2\vol_g.
    \]
    Applying \eqref{eq: m_part ddt_T}, \eqref{eq: hyp h-part ddt of T} and again integrating by parts, yields
\begin{align*}
    \frac{d^2}{dt^2}\cE(\xi(t))
    &= 2\int_M \langle \pi_\fm(\nabla_m\Div T), \nabla_m \Div T\rangle\vol_g+2\int_M\langle\pi_{\fm}([T_m,\Div T],\nabla_m\Div T\rangle\vol_g\\ & \quad  {+2c_H\int_M\langle\pi_{\fh}([T_m,\Div T]),\nabla_m\Div T\rangle\vol_g}\\
    &= 2\int_M \langle \pi_\fm(\nabla_m\Div T), \nabla_m \Div T\rangle\vol_g+2\int_M\langle [T_m,\Div T],\nabla_m\Div T\rangle\vol_g.\\
    & \quad  {-2\int_M\langle\pi_{\fh}([T_m,\Div T]),\nabla_m\Div T\rangle\vol_g+2c_H\int_M\langle\pi_{\fh}([T_m,\Div T]),\nabla_m\Div T\rangle\vol_g}\\
    &\geqslant 2\int_M (|\nabla \Div T|^2-|\pi_\fh(\nabla \Div T)|^2)\vol_g-2\int_M|[T_m,\Div T]||\nabla_m\Div T|\vol_g\\
    & \quad  {-2(1-c_H)\int_M\langle\pi_{\fh}([T_m,\Div T]),\nabla_m\Div T\rangle\vol_g.}
\end{align*}
    Now,  $\nabla^H:=\nabla+T$ defines an $H$-connection, so that $\nabla^H_m\Div T\in \Omega^2_\fm$ and we have
    \[
      |\pi_\fh(\nabla_m \Div T)|^2=|\pi_\fh([T_m,\Div T])|^2\leqslant 4|T_m|^2|\Div T|^2.
    \]
    Combining the above with Young's inequality, we get
    \begin{align*}
     \frac{d^2}{dt^2}\cE(\xi(t))&\geqslant 2\int_M |\nabla \Div T|^2\vol_g- {2(2+|c_H|)\int_M|\pi_{\fh}([T_m,\Div T])|^2\vol_g}-4\int_M|\nabla_m\Div T||T_m||\Div T|\vol_g\\
    &\geqslant 2\int_M (|\nabla \Div T|^2- {4(2+|c_H|)|T_m|^2|\Div T|^2})\vol_g-4\int_M|\nabla_m\Div T||T_m||\Div T|\vol_g\\
    &\geqslant \int_M |\nabla \Div T|^2\vol_g- {4(5+2|c_H|)}\int_M|T|^2|\Div T|^2\vol_g\\
    &\geqslant \int_M (\Lambda- {4(5+2|c_H|)}|T|^2)|\Div T|^2\vol_g.
\end{align*}
%\begin{align*}
 %   \frac{d^2}{dt^2}\cE(\xi(t))&= 2\int_M |\nabla \Div T|^2\vol_g {-2(2-c)\int_M|\pi_{\fh}([T_m,\Div T]|^2\vol_g}-4\int_M\nabla_m\Div T_{ac}T_{m;cb}\Div T_{ab}\vol_g\\
  %  &\geqslant 2\int_M |\nabla \Div T|^2- {4(2-c)}|T|^2|\Div T)|^2)\vol_g-4\int_M|\nabla\Div T||T||\Div T|\vol_g\\
   % &\geqslant \int_M |\nabla \Div T|^2\vol_g- {4(5-2c)}\int_M|T|^2|\Div T|^2\vol_g\\
    %&\geqslant \int_M (\Lambda- {4(5-2c)}|T|^2)|\Div T|^2\vol_g.
%\end{align*}
    The last step in the above inequality is justified because, on a closed manifold, the non-negative elliptic operator $\nabla^{\ast}\nabla=-\Delta:\Omega^2(M)\to\Omega^2(M)$ has discrete spectrum, and its kernel consists of parallel $2$-forms, therefore there is $\Lambda>0$ such that
    \[
    \int_M |\nabla\omega|^2 \vol_g = \int_M \langle\nabla^{\ast}\nabla\omega,\omega\rangle \vol_g \geqslant \Lambda\int_M|\omega|^2 \vol_g,\quad\forall\omega\in (\ker\Delta)^{\perp_{L^2}}.
    \] 
    One can readily verify, integrating by parts, that for each parallel $2$-form $\omega$ one has:
    \[
    \int_M\langle\Div T,\omega\rangle = -\int_M\langle T,\nabla\omega\rangle = 0,
    \] i.e. $\Div T\in(\ker\Delta)^{\perp_{L^2}}$.
\end{proof}

\begin{lemma}[Interpolation]
\label{lem: interpolation}
    Let $(M^n,g)$ be a closed oriented Riemannian manifold, and suppose that $\xi$ is a compatible $H$-structure with torsion $T$. Suppose that
\[
|\nabla T|\leqslant C<\infty.
\] %and let $v_0>0$ be a constant such that for every $x\in M$ and $0<r\leqslant 1$ we have
%\[
%\mathrm{Vol}_g(B_r(x))\geqslant v_0 r^n.
%\]
Then, for every $\nu>0$, there exists $\mu=\mu(\nu,C,M^n,g,H)>0$ such that
\[
\cD(\xi)<\mu \quad\Longrightarrow\quad |T|<\nu.
\] 
\end{lemma}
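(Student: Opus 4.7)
The argument is a standard propagation-of-smallness (or reverse Poincaré/interpolation) argument, so I would proceed by contradiction. Suppose, for some $\nu>0$, that no such $\mu$ exists; then there is a sequence of compatible $H$-structures $\xi_k$ with $|\nabla T_k|\leqslant C$ and $\cD(\xi_k)\to 0$ such that $\|T_k\|_{L^\infty}\geqslant \nu$ for every $k$. Equivalently, I would fix $\xi$ and assume that $|T|(x_0)\geqslant \nu$ at some $x_0\in M$, and then show that this forces $\cD(\xi)$ to exceed an explicit threshold $\mu(\nu,C,M,g,H)$.

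The key geometric step is pointwise propagation: the hypothesis $|\nabla T|\leqslant C$ gives that the scalar function $|T|$ is $C$-Lipschitz on $M$. Concretely, for any unit-speed minimising geodesic $\gamma\colon[0,d]\to M$ from $x_0$ to $y$ with $d:=d_g(x_0,y)$ less than the injectivity radius, one has
\[
\bigl||T|(y)-|T|(x_0)\bigr|
\leqslant \int_0^d \bigl|\tfrac{d}{ds}|T|(\gamma(s))\bigr|\, ds
\leqslant \int_0^d |\nabla T|(\gamma(s))\, ds
\leqslant Cd.
\]
Hence, setting $r:=\min\{\nu/(2C),\,r_M\}$ (where $r_M>0$ is a lower bound on the injectivity radius of $(M,g)$), we obtain $|T|\geqslant \nu/2$ on the entire geodesic ball $B_r(x_0)$.

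Combining this pointwise lower bound with the volume comparison $\vol_g(B_r(x_0))\geqslant c_0 r^n$, valid for $r\leqslant r_M$ with a constant $c_0=c_0(M,g)>0$, gives
\[
\int_M |T|^2 \vol_g
\;\geqslant\; \int_{B_r(x_0)}|T|^2\vol_g
\;\geqslant\; \frac{\nu^2}{4}\, c_0 r^n.
\]
Now I invoke the standing assumption of Section~\ref{sec: harmonic_flow}, namely \eqref{eq: key_assumption}, which via Lemma~\ref{lem: torsion_equiv_nablaxi} yields $|\nabla\xi|^2=c|T|^2$ for a constant $c>0$ depending only on $H$. Thus
\[
\cD(\xi) \;=\; \tfrac{c}{2}\int_M|T|^2\vol_g \;\geqslant\; \tfrac{c\, c_0}{8}\,\nu^2\, r^n.
\]
Substituting $r=\min\{\nu/(2C),r_M\}$ produces an explicit lower bound
\[
\cD(\xi)\;\geqslant\;\mu(\nu,C,M,g,H)
\;:=\;\tfrac{c\, c_0}{8}\,\nu^2\,
\min\!\Bigl\{\bigl(\tfrac{\nu}{2C}\bigr)^n,\, r_M^n\Bigr\},
\]
contradicting $\cD(\xi)<\mu$ and finishing the proof.

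There is no substantive obstacle here; the only points requiring care are (i) using the injectivity radius to ensure the geodesic-ball volume comparison applies, and (ii) remembering that the equivalence $\cD\sim \cE$ with an explicit constant is precisely the content of \eqref{ineq: T_equiv_nablaxi}/\eqref{eq: T_equiv_nablaxi} under the Section~\ref{sec: harmonic_flow} standing hypothesis, so the lemma could be stated equivalently with $\cE$ in place of $\cD$.
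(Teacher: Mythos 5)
Your argument is correct and is essentially the same as the paper's: the paper simply defers to \cite[Lemma 5.12]{dgk-isometric}, whose proof is exactly this Kato-inequality/Lipschitz propagation of a pointwise lower bound on $|T|$ to a geodesic ball of radius $\sim\nu/C$, followed by a volume lower bound and the equivalence $\cD\sim\cE$ from Lemma~\ref{lem: torsion_equiv_nablaxi}. The explicit threshold $\mu$ you extract is a nice bonus but changes nothing of substance.
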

\begin{proof}
    Using  \eqref{eq: T_equiv_nablaxi}, the proof is the same as in \cite[Lemma 5.12]{dgk-isometric}.
\end{proof}

\begin{proof}[Proof of Proposition \ref{prop: uniqueness}]
    From the proof of Theorem \ref{thm: long-time_existence}, taking $\varepsilon(\kappa)$ as in \eqref{def: e(K)_1}, the unique harmonic flow $\{\xi(t)\}$ with initial condition $\xi_0$ satisfying $\|\nabla\xi_0\|_{L^{\infty}(M)}\leqslant \kappa$ and $\cD(\xi_0)<\varepsilon(\kappa)$ exists for all time $t\geqslant 0$, and for any given sequence $t_n\to\infty$ there is a subsequence of $\xi(t_n)$ converging smoothly to a torsion-free limit $\xi_{\infty}$. 

    We claim that, taking $\varepsilon(\kappa)$ perhaps even smaller than in \eqref{def: e(K)_1}, yet still depending only on $\kappa$, $(M,g)$ and $H$, we must have
\[
t_{\ast}:=\sup\left\{t\geqslant 0: \overline{e}(t)\leqslant 2\kappa^2+\frac{1}{\kappa^2}\right\}=\infty.
\] 
    Indeed, suppose on the contrary that $t_{\ast}<\infty$. By Lemma \ref{lem: time_bounds} we know that
\[
t_{\ast}>\delta\arctan{\frac{1}{2\kappa^2}}=:\sigma_\kappa.
\] 
    Applying the Shi-type estimates [Proposition \ref{prop: Shi-estimates}] over $[t_{\ast}-\sigma_\kappa,t_{\ast}]$,  we find a constant $c_\kappa>0$, depending only on $\kappa$, $(M,g)$ and $H$, such that
\[
|\nabla T_{\xi(t_{\ast})}|<c_\kappa.
\] 
    Hence, by Lemma \ref{lem: interpolation}, there exists $\gamma_\kappa>0$, depending only on $\kappa$, $(M^n,g)$ and $H$, such that $\overline{e}(t_{\ast})< 2\kappa^2 + \frac{1}{\kappa^2}$ whenever  $\cD(\xi(t_{\ast}))<\gamma_\kappa$, which in turn contradicts the maximality of $t_{\ast}$. Since $\cD(\xi(t))\leqslant \cD(\xi_0)$ along the flow, redefining the  $\varepsilon(\kappa)$ of \eqref{def: e(K)_1} by
\begin{equation}
\label{def: e(K)_2}
    \varepsilon(\kappa)
    :=\min\left\{\gamma_\kappa,\varepsilon_0,\varepsilon_1,c_1^{-2}\left(\delta\arctan{\frac{1}{2\kappa^2}}\right)^{n-2}\right\},
\end{equation} 
    would guarantee $t_{\ast}=\infty$, as claimed.

    Again by the Shi-type estimates, we actually get a uniform constant $c_\kappa=c_\kappa(M,g,H)>0$ such that
\[
|\nabla T_{\xi(t)}|<c_\kappa,\quad\forall t\geqslant 1.
\]
    Now set $\nu := \left( {\frac{\Lambda}{8(5+2|c_H|)}}\right)^{1/2}$, where $\Lambda>0$ is the first non-zero eigenvalue of the rough Laplacian $\nabla^{\ast}\nabla=-\Delta$ on $\Omega^2(M)$, and let $\mu = \mu(\nu,c_\kappa,M^n,g,H)>0$ be the constant given by Lemma \ref{lem: interpolation}. Then, redefining the $\varepsilon(\kappa)$ in \eqref{def: e(K)_2} by the possibly smaller constant $\min\{\varepsilon(\kappa),\mu\}$, which in turn still depends only on $\kappa$, $(M^n,g)$ and $H$, and recalling that $\cD(\xi(t))$ is decreasing along the flow, it follows from Lemma \ref{lem: interpolation} that $|T_{\xi(t)}|^2< {\Lambda/8(5+2|c_H|)}$, for all $t\geqslant 1$. Hence, using Lemma \ref{lem: convexity}, we have
\[
\frac{d}{dt}\int_M |\Div T_{\xi(t)}|^2 \leqslant -\frac{\Lambda}{2}\int_M |\Div T_{\xi(t)}|^2,\quad\forall t\geqslant 1,
\] 
    which implies the exponential decay estimate
\begin{equation}
\label{ineq: exp_decay}
    \int_M |\Div T_{\xi(t)}|^2 \leqslant e^{-\frac{\Lambda}{2}(t-1)}\int_M |\Div T_{\xi(1)}|^2,\quad\forall t\geqslant 1.
\end{equation} 
    Thus, for all $1\leqslant s_1<s_2$, we get
\begin{align}
    \int_M |\xi(s_1)-\xi(s_2)| &= \int_M\int_{s_1}^{s_2} |\partial_t\xi(s)|ds\text{ }\vol_g
    \leqslant c\int_{s_1}^{s_2}\int_M |\Div T_{\xi(s)}|\vol_g\text{ }ds\nonumber\\
    &\leqslant c\int_{s_1}^{s_2}\left(\int_M |\Div T_{\xi(s)}|^2\vol_g\right)^{1/2} ds\nonumber\\
    &\leqslant c\|\Div T_{\xi(1)}\|_{L^2(M)}\int_{s_1}^{s_2}e^{-\frac{\Lambda}{4}(s-1)}ds.\label{ineq: uniqueness}
\end{align} 
    In particular, $\xi(t)$ decays exponentially to a unique limit in $L^1$, as $t\to\infty$. This means that any subsequential limit $\xi_{\infty}$, guaranteed by Theorem \ref{thm: long-time_existence}, is independent of the subsequence and is in fact the unique smooth limit of the flow $\{\xi(t)\}$ as $t\to\infty$.    
\end{proof}

\subsection{Stability of torsion-free structures}\label{sec: stability}

In this final paragraph, we prove a version of dynamical stability of torsion-free $H$-structures under the harmonic $H$-flow, using some of the same techniques as in the previous paragraph.

We start with the following long-time existence result under small initial torsion, which generalises  \cite{dgk-isometric}*{Theorem 5.13} and \cite{Dwivedi-Loubeau-SaEarp2021}*{Theorem 5.9}, established  for $H=\rG_2$ and $H=\S7$, respectively. 

\begin{theorem}[Long time existence under small initial torsion]
\label{thm: long_time_small_torsion}
    Let $(M^n,g)$ be a closed, oriented Riemannian manifold admitting a compatible $H$-structure $\xi_0$,  {where $H$ satisfies the assumption \eqref{eq: hyp h-part ddt of T}}. Then, for every $\delta>0$, there exists $\varepsilon=\varepsilon(\delta,M^n,g,H)>0$ such that if $\|\nabla\xi_0\|_{L^{\infty}(M)}<\varepsilon$, then the harmonic $H$-flow starting from $\xi_0$ exists for all time $t\geqslant 0$ and converges smoothly to a harmonic $H$-structure $\xi_{\infty}$, which furthermore satisfies $\|\nabla\xi_{\infty}\|_{L^{\infty}(M)}<\delta$.
\end{theorem}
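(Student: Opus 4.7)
The plan is to run a bootstrap on $\|T_{\xi(t)}\|_{L^{\infty}(M)}$, leveraging the equivalence \eqref{ineq: T_equiv_nablaxi} to translate between $\|T_{\xi_0}\|_{L^{\infty}}$ and $\|\nabla\xi_0\|_{L^{\infty}}$. Let $\Lambda > 0$ be the first nonzero eigenvalue of $\nabla^{\ast}\nabla$ on $\Omega^2(M)$ and set $\eta_0 := \sqrt{\Lambda/24}$, so that Lemma~\ref{lem: convexity} gives
\[
\tfrac{d}{dt}\int_M |\Div T|^2 \vol_g \;\leqslant\; -\tfrac{\Lambda}{2}\int_M |\Div T|^2 \vol_g \quad\text{whenever } \|T_{\xi(t)}\|_{L^{\infty}} \leqslant \eta_0.
\]
With $[0,\tau)$ the maximal existence interval from Proposition~\ref{prop: short-time_existence}, I would define
\[
t_{\ast} := \sup\bigl\{t\in[0,\tau) : \|T_{\xi(s)}\|_{L^{\infty}(M)} \leqslant \eta_0 \text{ for all } s\in[0,t]\bigr\},
\]
and aim to show $t_{\ast} = \tau = \infty$.

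For the short initial phase, I would use the Bochner estimate of Lemma~\ref{lem: dif_ineq} (comparison with the scalar ODE $f' = c(f^2+1)$) to produce a constant $\sigma_0 = \sigma_0(M,g,H) > 0$ such that $\|T_{\xi_0}\|_{L^{\infty}} \leqslant \eta_0/2$ already forces $t_{\ast} \geqslant \sigma_0$. The Shi-type estimates of Proposition~\ref{prop: Shi-estimates} with $\kappa = 1$, applied on parabolic windows of length $\sigma_0/2$, then yield a uniform bound $\|\nabla T_{\xi(t)}\|_{L^{\infty}} \leqslant C = C(M,g,H)$ throughout $[\sigma_0, t_{\ast}]$. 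Lemma~\ref{lem: interpolation} applied with $\nu := \eta_0/2$ next furnishes a threshold $\mu = \mu(\eta_0, C, M, g, H) > 0$ such that $\mathcal{D}(\xi) < \mu$ implies $\|T_{\xi}\|_{L^{\infty}} < \eta_0/2$. By Lemma~\ref{lem: gradient_dirichlet_along_iso_var} and \eqref{ineq: T_equiv_nablaxi}, we have $\mathcal{D}(\xi(t)) \leqslant \mathcal{D}(\xi_0) \leqslant c\varepsilon^2 \vol_g(M)$ throughout $[0,t_{\ast}]$; choosing $\varepsilon = \varepsilon(\delta,M,g,H)$ small enough that $\mathcal{D}(\xi_0) < \mu$ and $c\varepsilon < \eta_0/2$, the strict inequality $\|T_{\xi(t_{\ast})}\|_{L^{\infty}} < \eta_0/2$ would contradict the supremum property were $t_{\ast} < \tau$. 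Hence $t_{\ast} = \tau$, and Lemma~\ref{lem: time_bounds} with $\|T\|_{L^{\infty}}$ now uniformly bounded precludes finite-time blow-up of $\bar{e}(t)$, forcing $\tau = \infty$.

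Global uniform $C^m$-bounds on $T$ (from Shi estimates) combined with the exponential $L^2$-decay of $\Div T$ feed into a Cauchy estimate analogous to \eqref{ineq: uniqueness} in the proof of Proposition~\ref{prop: uniqueness}, showing that $\xi(t)$ is $L^1$-Cauchy and therefore converges smoothly, by Arzel\`a--Ascoli, to a unique limit $\xi_{\infty}$ with $\Div T_{\xi_{\infty}} = 0$. For the final bound $\|\nabla\xi_{\infty}\|_{L^{\infty}(M)} < \delta$, I would reapply Lemma~\ref{lem: interpolation} with $\nu \sim \delta$ (via \eqref{ineq: T_equiv_nablaxi}) to obtain a threshold $\mu_{\delta} > 0$, and shrink $\varepsilon$ to enforce $\mathcal{D}(\xi_0) < \mu_{\delta}$; passing to the limit then gives the claim. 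The main obstacle is closing the bootstrap loop in the middle paragraph: one must chain convexity (to drive $\|\Div T\|_{L^2}$ down), Shi smoothing (to upgrade to $C^1$-control), and interpolation (to return to $L^{\infty}$-control of the torsion, which is the bootstrap quantity), while separately handling the short initial phase $[0,\sigma_0]$ where Shi estimates are not yet effective.
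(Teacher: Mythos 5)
Your proposal is correct and follows essentially the same structure as the paper's proof: (1) a bootstrap argument on a threshold for the torsion, using Lemma~\ref{lem: time_bounds} for the initial phase, Shi-type estimates for $C^1$-control, the interpolation Lemma~\ref{lem: interpolation} to close the loop, and the monotonicity of $\mathcal{D}$ along the flow; (2) subsequential convergence from Lemma~\ref{lem: unif_bound_case}; (3) unique convergence via the convexity Lemma~\ref{lem: convexity} and exponential decay of $\|\Div T\|_{L^2}$; (4) the final $\delta$-bound by shrinking $\varepsilon$ and reapplying interpolation.

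The one genuinely different choice is your bootstrap threshold. The paper tracks $\overline{e}(t)=\max_M|\nabla\xi|^2$ against a fixed numerical cutoff $3$, closing the bootstrap with that cutoff, and only \emph{afterwards} invokes interpolation a second time with $\nu = \sqrt{\Lambda/24}$ to enter the convexity regime. You instead bootstrap directly on $\|T\|_{L^\infty}\leqslant\eta_0 := \sqrt{\Lambda/24}$, so that once $t_* = \infty$ is established the pointwise bound already activates convexity with no further interpolation. This streamlines the argument at the minor cost of a geometry-dependent threshold (and hence a geometry-dependent Shi parameter: you should take $\kappa = \max\{1,\eta_0\}$ rather than $\kappa = 1$, since $\eta_0$ need not be $\leqslant 1$; Shi's parabolic window then has length $\sim \kappa^{-4}$, and $\sigma_0$ must be taken $\geqslant \kappa^{-4}$ so the window fits inside $[0,t_*]$). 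One small imprecision in your closing remark: the bootstrap loop itself only needs monotonicity of $\mathcal{D}$ (from the gradient-flow structure, Lemma~\ref{lem: gradient_dirichlet_along_iso_var}), not convexity; convexity enters only in the convergence step to drive $\|\Div T\|_{L^2}$ to zero exponentially.
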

\begin{proof}
    Suppose that $\overline{e}_0:=\|\nabla\xi_0\|_{L^{\infty}(M)}^2<\varepsilon^2<1$. Then, by Lemma \ref{lem: time_bounds}, there is $\sigma>0$ depending only on $(M^n,g)$ and $H$ such that
    \[
    t_{\ast}:=\max\{t\geqslant 0: \overline{e}(t)\leqslant 3\}>\sigma.
    \] 
    Suppose that $t_{\ast}<\infty$. Then, applying the Shi-type estimates [Proposition \ref{prop: Shi-estimates}] over the interval $[t_{\ast}-\sigma, t_{\ast}]$ we get a constant $c>0$ depending only on $(M^n,g)$ and $H$ such that
    \[
    |\nabla T_{t_{\ast}}|<c.
    \] Therefore, by Lemma \ref{lem: interpolation}, there is $\gamma>0$ depending only on $(M^n,g)$ and $H$, such that if $\cD(\xi(t_{\ast}))<\gamma$ then $\overline{e}({t_{\ast}})<3$, which would contradict the maximality of $t_{\ast}$. Thus, taking $$
    \varepsilon<\min\{1,\gamma^{1/2}\mathrm{Vol}(M)^{-1/2}\},$$ 
    since $\cD(\xi(t))$ is decreasing along the flow, it follows that $t_{\ast}=\infty$. In particular, for such small $\varepsilon$, the flow exists for all time $t\geqslant 0$ and $\sup_{t\geqslant 0}\overline{e}(t)\leqslant 3$. Thus, Lemma \ref{lem: unif_bound_case} implies that for any $t_n\to\infty$ there is a subsequence of $\xi(t_n)$ which converges smoothly to a harmonic $H$-structure $\xi_{\infty}$. Moreover, by the Shi-type estimates [Proposition \ref{prop: Shi-estimates}], it follows that there is $c>0$, depending only on $(M^n,g)$ and $H$, such that
    \[
    |\nabla T_{\xi(t)}|<c,\quad\forall t\geqslant 1.
    \]
    
    We now set $\nu:=\left( {\frac{\Lambda}{8(5+2|c_H|)}}\right)^{1/2}$, where $\Lambda>0$ is the first non-zero eigenvalue of the rough Laplacian on $\Omega^2(M)$, and let $\mu=\mu(\nu,c,M^n,g,H)>0$ be the constant given by Lemma \ref{lem: interpolation}. Then, taking 
    $$
    \varepsilon<\min\{1,\mu^{1/2}\mathrm{Vol}(M)^{-1/2},\gamma^{1/2}\mathrm{Vol}(M)^{-1/2}\},
    $$ 
    it follows that $|T_{\xi(t)}|^2<\Lambda/8(5+2|c_H|)$, for all $t\geqslant 1$, and we get the exponential decay \eqref{ineq: exp_decay} from the convexity Lemma \ref{lem: convexity}. Hence one has \eqref{ineq: uniqueness}, for all $1\leqslant s_1<s_2$,  which in turn implies that $\xi(t)$ decays exponentially to a unique limit in $L^1$, as $t\to\infty$. This means that any subsequential limit $\xi_{\infty}$, guaranteed by Lemma \ref{lem: unif_bound_case}, is independent of the subsequence and is in fact the unique smooth limit of the flow $\xi(t)$ as $t\to\infty$. Finally, given $\delta>0$, by choosing $\varepsilon=\varepsilon(\delta, M^n,g,H)>0$ small enough we can also achieve $\|\nabla\xi_{\infty}\|_{L^{\infty}(M)}<\delta$, using Lemma \ref{lem: interpolation}.
\end{proof}
 {
\begin{remark}
    In the above proof of Theorem \ref{thm: long_time_small_torsion}, the hypothesis \eqref{eq: hyp h-part ddt of T} was only used to guarantee the uniqueness of the limit along the flow as $t\to\infty$.
\end{remark}}

    Combining the energy gap of Proposition \ref{prop: energy_gap} with the above Theorem \ref{thm: long_time_small_torsion}, we obtain the following stability feature of torsion-free structures.

\begin{theorem}[Stability of torsion-free structures under the harmonic flow]
\label{thm: stability}
    Let $(M^n,g)$ be a closed, oriented Riemannian manifold admitting a compatible $H$-structure,  {where $H$ satisfies the assumption \eqref{eq: hyp h-part ddt of T}}. Then the following hold:
\begin{itemize}
    \item[(i)] There is a constant $\kappa_0=\kappa_0(M,g,H)>0$ such that, if  $\xi_0$ is a compatible $H$-structure satisfying $\|\nabla\xi_0\|_{L^{\infty}(M)}<\kappa_0$, then the harmonic $H$-flow \eqref{eq: harmonic_flow} starting at $\xi_0$ exists for all $t\geqslant 0$ and converges smoothly to a torsion-free $H$-structure $\xi_{\infty}$, as $t\to\infty$.
    
    \item[(ii)] If $(M^n,g)$ admits a compatible torsion-free $H$-structure $\overline{\xi}$, then for every $\delta>0$, there exists  $\overline{\varepsilon}(\delta,M,g,H)>0$ such that, for any compatible $H$-structure $\xi_0$ with $\|\xi_0-\overline{\xi}\|_{ {C^2}(M)}<\overline{\varepsilon}$, the harmonic $H$-flow \eqref{eq: harmonic_flow} with initial condition $\xi_0$ exists for all $t\geqslant 0$, satisfies the estimate $\|\xi_t-\overline{\xi}\|_{C^1(M)}<\delta$ for all $t\geqslant 0$, and converges smoothly to a torsion-free $H$-structure $\xi_{\infty}$ as $t\to\infty$.
\end{itemize} 
\end{theorem}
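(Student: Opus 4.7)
The plan is to read this theorem as a direct application of Theorem~\ref{thm: long_time_small_torsion} (long-time existence under small torsion) together with Proposition~\ref{prop: energy_gap} (energy gap), exactly as the paragraph before the statement suggests. For part (i), I would first fix some auxiliary constant $\delta_0>0$ and invoke Theorem~\ref{thm: long_time_small_torsion} with that $\delta_0$ to obtain $\varepsilon(\delta_0,M,g,H)>0$ such that whenever $\|\nabla\xi_0\|_{L^\infty(M)}<\varepsilon(\delta_0)$, the harmonic $H$-flow exists for all time and converges smoothly to a harmonic limit $\xi_\infty$ with $\|\nabla\xi_\infty\|_{L^\infty(M)}<\delta_0$. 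Since $\cD$ is non-increasing along the flow [Lemma~\ref{lem: gradient_dirichlet_along_iso_var}], one has
\[
\cD(\xi_\infty)\;\leqslant\;\cD(\xi_0)\;\leqslant\;\tfrac{1}{2}\,\mathrm{Vol}(M,g)\,\|\nabla\xi_0\|_{L^\infty(M)}^2.
\]
With $\varepsilon_0$ denoting the energy-gap constant from Proposition~\ref{prop: energy_gap}, setting
\[
\kappa_\ast\;:=\;\min\!\left\{\varepsilon(\delta_0),\;\bigl(2\varepsilon_0/\mathrm{Vol}(M,g)\bigr)^{1/2}\right\}
\]
forces $\cD(\xi_\infty)<\varepsilon_0$, so Proposition~\ref{prop: energy_gap} applied to the harmonic limit gives $\nabla\xi_\infty=0$. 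This proves (i).

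For (ii), the key observation is that a torsion-free compatible $H$-structure $\overline{\xi}$ satisfies $\nabla\overline{\xi}=0$ (by Lemma~\ref{lem: torsion_equiv_nablaxi} applied to its vanishing intrinsic torsion), so whenever $\|\xi_0-\overline{\xi}\|_{C^1(M)}<\overline{\varepsilon}$ one automatically has
\[
\|\nabla\xi_0\|_{L^\infty(M)}\;=\;\|\nabla(\xi_0-\overline{\xi})\|_{L^\infty(M)}\;\leqslant\;\|\xi_0-\overline{\xi}\|_{C^1(M)}\;<\;\overline{\varepsilon}.
\]
If $\overline{\varepsilon}<\kappa_\ast$, part (i) already grants long-time existence and smooth convergence to a torsion-free $\xi_\infty$, leaving only the uniform-in-time estimate $\|\xi_t-\overline{\xi}\|_{C^1(M)}<\delta$ to be established.

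For the gradient part $\|\nabla\xi_t\|_{L^\infty(M)}$, I would apply Theorem~\ref{thm: long_time_small_torsion} with $\delta/2$ in place of $\delta$; inspection of its proof shows that the time-threshold $t_\ast:=\max\{t\geqslant 0\colon\overline{e}(t)\leqslant 3\}$ is in fact $+\infty$ for sufficiently small initial torsion, and a further application of Lemma~\ref{lem: interpolation} (combined with uniform Shi-type bounds on $\nabla T$ from Proposition~\ref{prop: Shi-estimates}) yields a uniform bound $\|\nabla\xi_t\|_{L^\infty(M)}<\delta/2$ for all $t\geqslant 0$, provided $\overline{\varepsilon}$ is taken small enough. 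For the $L^\infty$ part $\|\xi_t-\overline{\xi}\|_{L^\infty(M)}$, I would integrate the flow equation \eqref{eq: harmonic_flow} directly:
\[
\|\xi_t-\xi_0\|_{L^\infty(M)}\;\leqslant\;c\int_0^t \|\Div T_{\xi(s)}\|_{L^\infty(M)}\,ds,
\]
and combine two ingredients to control the integrand. First, the convexity Lemma~\ref{lem: convexity} gives exponential $L^2$-decay
$\|\Div T_{\xi(t)}\|_{L^2}\lesssim e^{-\Lambda(t-1)/4}\|\Div T_{\xi(1)}\|_{L^2}$ (valid once $|T|^2<\Lambda/24$, which is arranged by Lemma~\ref{lem: interpolation} and the $\cD$-smallness ensured above). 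Second, Shi-type estimates [Proposition~\ref{prop: Shi-estimates}] yield uniform $C^k$-bounds on $\Div T_{\xi(t)}$ for all $k\geqslant 0$ and all $t\geqslant 1$. A standard interpolation between Sobolev and $L^2$ norms then upgrades the $L^2$-decay to exponential decay in $L^\infty$, making the time-integral above uniformly bounded by a constant multiple of $\|\Div T_{\xi(1)}\|_{L^2}$, which in turn is controlled by $\overline{\varepsilon}$. Finally,
$\|\xi_t-\overline{\xi}\|_{L^\infty}\leqslant\|\xi_t-\xi_0\|_{L^\infty}+\|\xi_0-\overline{\xi}\|_{L^\infty}$ and choosing $\overline{\varepsilon}=\overline{\varepsilon}(\delta,M,g,H)$ sufficiently small (smaller than all the thresholds used so far) produces the desired uniform $C^1$-estimate.

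The main obstacle will be arranging the $L^\infty$ estimate \emph{uniformly on the initial short-time interval} $[0,1]$, before the exponential-decay regime of Lemma~\ref{lem: convexity} takes effect. There, the flow equation and Lemma~\ref{lem: time_bounds} provide only the polynomial control $\overline{e}(t)\leqslant 2\overline{e}_0+1/\overline{e}_0$ on time intervals of length $\sim\arctan(1/2\overline{e}_0)$, which does not itself shrink with $\overline{\varepsilon}$. I would bypass this via the Bochner inequality \eqref{ineq: Bochner_estimate} applied to $e=|\nabla\xi|^2$ together with a maximum-principle / ODE-comparison argument, which pins down $\overline{e}(t)\leqslant c\overline{e}_0$ on a short fixed time interval depending only on the geometry, and then patch with the long-time exponential regime.
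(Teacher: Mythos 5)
Your proposal is correct and follows the paper's skeleton almost exactly: part (i) is Theorem~\ref{thm: long_time_small_torsion} followed by the energy gap of Proposition~\ref{prop: energy_gap} (you close the loop via monotonicity of $\cD$ and the crude bound $\cD(\xi_0)\leqslant\tfrac12\mathrm{Vol}(M)\|\nabla\xi_0\|_{L^\infty}^2$, whereas the paper applies the volume bound to the limit $\xi_\infty$ using $\|\nabla\xi_\infty\|_{L^\infty}<\delta$ — an immaterial difference), and part (ii) rests on the same three ingredients: the convexity Lemma~\ref{lem: convexity}, the Shi-type estimates, and the interpolation Lemma~\ref{lem: interpolation}. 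The one genuine divergence is the final $C^0$ estimate on $\xi_t-\xi_0$. The paper integrates the flow in $L^1$, obtains $\|\xi_t-\xi_0\|_{L^1}\lesssim\overline{\varepsilon}$ from the exponential $L^2$-decay of $\Div T$, and then upgrades $L^1$-smallness to $C^0$-smallness via the mean value theorem combined with the uniform gradient bound $\|\nabla\xi_t\|_{C^0}<\delta/4$. You instead integrate in $L^\infty$ and upgrade the $L^2$-decay of $\Div T_{\xi(t)}$ to $L^\infty$-decay by Gagliardo--Nirenberg interpolation against the uniform Shi $C^k$-bounds; this costs you a harmless power $\|\Div T_{\xi(1)}\|_{L^2}^{1-a}$ but still yields a time-integral that vanishes as $\overline{\varepsilon}\to 0$, so it works. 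Your flagged concern about the initial interval $[0,1]$, before the decay regime of Lemma~\ref{lem: convexity} is activated, is legitimate — the paper's display \eqref{eq: a priori_bound} integrates from $s=0$ and quietly elides this — but the simplest repair is not your Bochner/ODE-comparison argument: Proposition~\ref{prop: Shi-estimates} already gives $|\Div T_{\xi(s)}|\leqslant|\nabla T_{\xi(s)}|\leqslant c\,\overline{\varepsilon}\,s^{-1/2}$ on $[0,1]$ (valid since $\overline{\varepsilon}^{-4}\geqslant 1$), whose time-integral is $2c\,\overline{\varepsilon}$, directly controlling $\|\xi_t-\xi_0\|_{L^\infty}$ there.
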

\begin{proof}\quad
    \begin{itemize}
    \item[(i)] 
    Take $\delta:=\left(\frac{\varepsilon_0}{\mathrm{Vol}(M)}\right)^{1/2}$, where $\varepsilon_0=\varepsilon_0(M^n,g,H)>0$ is the constant given by Proposition \ref{prop: energy_gap}, and let $\kappa_0:=\varepsilon(\delta,M^n,g,H)$ be given by Theorem \ref{thm: long_time_small_torsion}. The latter result then implies that the harmonic $H$-flow starting at $\xi_0$ with $\|\nabla\xi_0\|_{L^{\infty}(M)}<\kappa_0$ must exist for all $t\geqslant 0$ and converge smoothly to a harmonic $H$-structure $\xi_{\infty}$ satisfying $|\nabla\xi_{\infty}|<\delta$. Thus, from the definition of $\delta$ and Proposition \ref{prop: energy_gap}, it follows that $\xi_{\infty}$ is torsion-free.
    
    \item[(ii)] 
    It follows from (i) that, taking $0<\overline{\varepsilon}<\kappa_0(M,g,H)$,  {and assuming $\|\xi_0-\overline{\xi}\|_{C^2(M)}<\overline{\varepsilon}$}, the harmonic $H$-flow $\xi(t)$ with initial condition $\xi_0$ exists for all $t\geqslant 0$ and converges smoothly to a torsion-free $H$-structure $\xi_{\infty}$. In fact, from the choice of $\kappa_0(M,g,H)$ in the above proof of (i), and the proof of Theorem \ref{thm: long_time_small_torsion}, we can also assume that
    \begin{equation}\label{ineq: bounded_torsion_for_all_time}
        \overline{e}(t)\leqslant 3,\quad\forall t\geqslant 0,
    \end{equation} and, by the Shi-type estimates [Proposition \ref{prop: Shi-estimates}], given any $\eta=\eta(M^n,g,H)>0$, there is a constant $c>0$ depending only on $(M^n,g)$ and $H$, so that
    \begin{equation}\label{ineq: bounded_torsion_for_t>1}
        \|\nabla T_{\xi(t)}\|_{L^{\infty}(M)}\leqslant c,\quad\forall t\geqslant\eta.
    \end{equation}
    It remains to show that, given any $\delta>0$, then taking $\overline{\varepsilon}>0$ {, perhaps even smaller than above}, depending only on $\delta$, $(M^n,g)$ and $H$, then we also have $\|\xi_t - \overline{\xi}\|_{C^1(M)}<\delta$, for all $t\geqslant 0$.  {In order to prove this, we first make a sequence of reductions to restate the problem in a more convenient way.}
    
    Assuming at first  {$\overline{\varepsilon}<\min\{\delta/2,\kappa_0(M,g,H)\}$}, then
    \[
    \|\xi_t - \overline{\xi}\|_{C^1}\leqslant \|\xi_t - \xi_0\|_{C^1} + \|\xi_0 - \overline{\xi}\|_{C^1} < \|\xi_t - \xi_0\|_{C^1}+\delta/2,\quad\forall t\geqslant 0.
    \] 
    So it suffices to show that, for $\overline{\varepsilon}=\overline{\varepsilon}(\delta,M,g,H)>0$, perhaps even smaller, one has $\|\xi_t - \xi_0\|_{C^1}<\delta/2$ for all $t\geqslant 0$.  {In fact, allowing $\overline{\varepsilon}>0$ to be possibly smaller, with same dependence, note that} one can also achieve $\|\nabla\xi_t\|_{C^0}<\delta/8$, for all $t\geqslant 0$, by the interpolation Lemma \ref{lem: interpolation} and the fact that $\|\nabla\xi_t\|_{L^2}^2$ decreases along the flow. Then, by also imposing $\overline{\varepsilon}<\delta/8$, it follows that $\|\nabla\xi_t - \nabla\xi_0\|_{C^0}<\delta/4$ for all $t\geqslant 0$.  {Using this last inequality and} the mean value theorem  {(see e.g. the proof of \cite[Lemma 5.12]{dgk-isometric})}, one can choose $\mu>0$ sufficiently small, depending only on $\delta$ and $(M^n,g)$, such that $\|\xi_t - \xi_0\|_{C^0}<\delta/4$ whenever $\|\xi_t-\xi_0\|_{L^1}<\mu$,  {which combined with the previous derivative bound would imply} $\|\xi_t - \xi_0\|_{C^1}<\delta/2$, as desired. These arguments show that the proof boils down to proving that, for $\overline{\varepsilon}=\overline{\varepsilon}(\delta,M,g,H)>0$ small enough, we can achieve
    \begin{equation}\label{ineq: stability_final_bound}
        \|\xi_t-\xi_0\|_{L^1}<\mu,\quad\forall t\geqslant 0.
    \end{equation} 
    In order to prove the $L^1$-bound \eqref{ineq: stability_final_bound}, we extend the bound \eqref{ineq: bounded_torsion_for_t>1} for all $t\geqslant 0$ by proving
    \begin{equation}\label{ineq: bounded_torsion_for_t<1}
        \|\nabla T_{\xi(t)}\|_{L^{\infty}(M)}\leqslant c,\quad\forall t\in [0,\eta].
    \end{equation} If we prove this, then proceeding as in the final parts of the proofs of Proposition \ref{prop: uniqueness} and Theorem \ref{thm: long_time_small_torsion}, we get that the following holds whenever $\overline{\varepsilon}=\overline{\varepsilon}(\delta,M,g,H)>0$ is small enough:
\begin{align}
    \|\xi_t - \xi_0\|_{L^1} &\leqslant c\|\Div T_{\xi_0}\|_{L^2}\int_0^t e^{{-\frac{\Lambda}{4}}s} ds = \frac{4c}{\Lambda}\|\Div T_{\xi_0}\|_{L^2}(1 - e^{-\frac{\Lambda}{4}t})\nonumber\\
    &<\frac{4c}{\Lambda}\overline{\varepsilon}\mathrm{Vol}_g(M)^{1/2},\quad\forall t\geqslant 0,
    \label{eq: a priori_bound}
\end{align} 
    where in the last step we used that  {$\|\Div T_{\xi_0}\|_{C^0}\leqslant c\|\nabla^2\xi_0\|_{C^0}<c\overline{\varepsilon}$, since $\|\xi_0 - \overline{\xi}\|_{C^2(M)}<\overline{\varepsilon}$ and $\nabla\overline{\xi}=0$. Hence,} by taking $\overline{\varepsilon}<\frac{\Lambda\mu}{4c\mathrm{Vol}_g(M)^{1/2}}$, we would get the $L^1$-bound \eqref{ineq: stability_final_bound} and therefore prove the desired result.

    Having made all the previous considerations, the rest of this proof is dedicated to obtain the estimate \eqref{ineq: bounded_torsion_for_t<1}. To avoid cumbersome notation, in what follows we suppress the $t$ and denote $\xi=\xi(t)$, $T=T(\xi(t))$ etc. We begin by recalling Lemma \ref{lem: torsion_equiv_nablaxi} and the hypothesis \eqref{eq: key_assumption}.  Note that not only $|\nabla\xi|^2=c|T|^2$, but also since $\nabla_k\xi = T_k\diamond\xi$, we can invert the $(\cdot{}\diamond\xi)|_{\Omega_{\fm}^2}$ operator [Lemma \ref{lem: ker_diamond}] and write schematically $T = \xi\circledast\nabla\xi$ so that
    \begin{align}
        \nabla T &= \nabla\xi\circledast\nabla\xi + \xi\circledast\nabla^2\xi,\nonumber\\
        \nabla^2 T &= \nabla^2\xi\circledast\nabla\xi + \xi\circledast\nabla^3\xi.\nonumber
    \end{align} In particular, since $\xi=\xi(t)$ has constant norm, depending only on the geometric data, and since we have a uniform bound $|T|\sim|\nabla\xi|\leqslant c$ for all $t\geqslant 0$ by \eqref{ineq: bounded_torsion_for_all_time}, it follows that
    \begin{align}
    |\nabla T| &\leqslant c+c|\nabla^2\xi|,\quad\text{and}\nonumber\\
    |\nabla^2 T| &\leqslant c|\nabla^2\xi| + c|\nabla^3\xi|,\quad\forall t\geqslant 0.\nonumber
    \end{align} Now, by a similar computation as we did in the proof of Lemma \ref{lem: dif_ineq}, recalling the equation \eqref{eq: rough_laplacian_of_harm_flow_sol} satisfied by the solution $\xi=\xi(t)$,
\[
\Delta\xi = \partial_t\xi + T_k\diamond(\nabla_k\xi),
\] we compute:
\begin{align}
    \frac{1}{2}(\partial_t-\Delta)|\nabla^2\xi|^2 &= \langle\nabla^2\partial_t\xi,\nabla^2\xi\rangle -\langle\Delta\nabla^2\xi,\nabla^2\xi\rangle - |\nabla^3\xi|^2\nonumber\\
    &=\langle\nabla^2\Delta\xi - \Delta\nabla^2\xi,\nabla^2\xi\rangle - \langle\nabla^2(T_k\diamond\nabla_k\xi),\nabla^2\xi\rangle - |\nabla^3\xi|^2,\quad\forall t\geqslant 0.\label{eq: bound_0_nabla2}
\end{align} On the other hand, using the Ricci identity we can write schematically
\[
\nabla^2\Delta\xi - \Delta\nabla^2\xi = R\circledast\nabla^2\xi + \nabla R\circledast\nabla\xi +\nabla^2R\circledast\xi.
\] Thus, since $(M^n,g)$ is closed and therefore has bounded geometry, combining all the above facts and Young's inequality, we get
\begin{equation}\label{ineq: bounds_1_nabla2}
\langle\nabla^2\Delta\xi - \Delta\nabla^2\xi,\nabla^2\xi\rangle \leqslant c|\nabla^2\xi| + c|\nabla^2\xi|^2\leqslant c + c|\nabla^2\xi|^2,    
\end{equation}
and 
\begin{align}
 \langle\nabla^2(T_k\diamond\nabla_k\xi),\nabla^2\xi\rangle &\leqslant c|\nabla^2 T ||\nabla^2\xi| + c|\nabla T||\nabla^2\xi|^2 + c|\nabla^3\xi||\nabla^2\xi| \nonumber\\
 &\leqslant c|\nabla^2\xi|^2 + c|\nabla^2\xi|^3 + c|\nabla^3\xi||\nabla^2\xi|\nonumber\\
 &\leqslant c|\nabla^2\xi|^2 + c|\nabla^2\xi|^3 + |\nabla^3\xi|^2.\label{ineq: bounds_2_nabla2}
\end{align} Thus, plugging inequalities \eqref{ineq: bounds_1_nabla2} and \eqref{ineq: bounds_2_nabla2} into equation \eqref{eq: bound_0_nabla2}, we estimate:
\begin{align}
    \frac{1}{2}(\partial_t-\Delta)|\nabla^2\xi|^2 &\leqslant c + c|\nabla^2\xi|^2 + c|\nabla^2\xi|^3\nonumber\\
    &\leqslant c + c|\nabla^2\xi|^2 + c|\nabla^2\xi|^4\nonumber\\
    &\leqslant c(|\nabla^2\xi|^4+1),\quad\forall t\geqslant 0.\nonumber
\end{align} In other words, considering the function $f(\xi):=|\nabla^2\xi|^2(x,t)$, we deduced that
\begin{equation}\label{ineq: heat_op_bound_nabla2}
    (\partial_t-\Delta)f(\xi) \leqslant c(f(\xi)^2 + 1),\quad\text{on }M\times [0,\infty),
\end{equation} which is the same sort of inequality as we have deduced in the Bochner estimate \eqref{ineq: Bochner_estimate}. Therefore, writing $\overline{f}(t):=\max_M f(\xi(t))$, we can use inequality \eqref{ineq: heat_op_bound_nabla2} to derive an analogue of Lemma \ref{lem: time_bounds} for $\overline{f}(t)$; in particular, if $\overline{f}_0:=\overline{f}(0)$ then we can infer
\[
\overline{f}(t)\leqslant\frac{\overline{f}_0 + \tan{ct}}{1-\overline{f}_0\tan{ct}},\quad\forall t\in\left[0,\frac{1}{c}\arctan\frac{1}{\overline{f}_0}\right).
\] Now, since we are assuming $\|\xi_0-\overline{\xi}\|_{C^2}<\overline{\varepsilon}<1$, and $\nabla\overline{\xi}=0$, it follows that $\overline{f}_0=\max_M|\nabla^2\xi_0|^2\leqslant 1$ is uniformly bounded and hence there is a time $\eta=\eta(M^n,g,H)>0$ such that $\overline{f}(t)\leqslant c$ for all $t\in [0,\eta]$, i.e. $\|\nabla^2\xi\|_{L^{\infty}(M)}\leqslant c$ for all $t\in [0,\eta]$. Finally, since we already observed earlier that $|\nabla T| \leqslant c+c|\nabla^2\xi|$ for all $t$, we then get inequality \eqref{ineq: bounded_torsion_for_t<1}, as we wanted. This completes the proof.
    \qedhere
    \end{itemize}
\end{proof}

 {We finish noting that part (i) of Theorem \ref{thm: stability} can also be rephrased as a gap result:
\begin{corollary}[$L^{\infty}$ torsion gap in the absence of torsion-free structures]
    Let $(M^n,g)$ be a closed, oriented Riemannian manifold admitting a compatible $H$-structure $\xi_0$. If there is no compatible torsion-free $H$-structure in the same isometric homotopy class of $\xi_0$ then
    \begin{equation}\label{eq: L_infty_gap}
    \|\nabla\xi_0\|_{L^{\infty}}\geqslant \kappa_0,
    \end{equation}
    where $\kappa_0=\kappa_0(M,g,H)>0$ is the constant given by Theorem \ref{thm: stability}. In particular, if $(M,g)$ admits no torsion-free compatible $H$-structure, then all compatible $H$-structures $\xi_0$ on $(M,g)$ satisfy the $L^{\infty}$ torsion gap \eqref{eq: L_infty_gap}.
\end{corollary}}

\newpage
\appendix 

\section{Global version of an almost-monotonicity formula}\label{ap: global_monotonicity}

We derive a Hamilton-type monotonicity formula along the harmonic $H$-flow \eqref{eq: harmonic_flow}, using the
backward heat kernel of the background Riemannian metric,  along the lines of \cite{Hamilton1993}. This  generalises known results in the cases $H=\U(m)$, $\rm G_2$ and $\rm Spin(7)$, respectively \cite[Lemma 3.1]{he2021}, \cite[Lemma 5.2 and Theorem 5.3]{dgk-isometric}, \cite[Theorem 6.1]{Grigorian2019} and \cite[Lemma 5.1 and Theorem 5.2]{Dwivedi-Loubeau-SaEarp2021}. Just as in the case of the local monotonicity formulas proved in §\ref{subsec: local_monotonicity}, the key ingredients are the evolution of the torsion along the flow, given by Corollary \ref{cor: evo_torsion}, and the Bianchi-type identity of Corollary \ref{cor: m-part_Bianchi_identity}.

Let $(M^n,g)$ be a connected and oriented Riemannian $n$-manifold of bounded geometry, admitting a compatible $H$-structure, for some closed and connected subgroup $H\subset\SO(n)$ of the form $\mathrm{Stab}_{\SO(n)}(\xi_{\circ})$, as in Section \ref{sec: harmonic_flow}. For $(x_0,t_0)\in M\times\mathbb{R}$, we let $G=G_{(x_0,t_0)}$ be the fundamental solution of the backward heat equation on $(M,g)$, starting with the delta function $\delta_{x_0}$ at time $t_0$:
\begin{align*}
    \Big( \frac{\partial}{\partial t} - \Delta \Big) G &= 0,\quad\forall t\in (-\infty,t_0).\\
    \lim_{t \to t_0^{-}} G &= \delta_{x_0}.
\end{align*} 
We also let $f=f_{(x_0,t_0)}\in C^{\infty}(M)$ be such that
\[
G = \frac{\exp(-f)}{\big( 4\pi (t_0 -t) \big)^\frac{n}{2 }}.
\] 
For instance, when $(M^n,g)=(\mathbb{R}^n,g_{\circ})$, then $G=G_{(x_0,t_0)}$ is given by \eqref{eq: heat_kernel}, and in particular $f(x)=|x-x_0|^2/4(t_0-t)$ in this case. 

Now, if $\{\xi(t)\}_{t\in [0,t_0)}$ is a family of compatible $H$-structures on $(M,g)$ solving the harmonic $H$-flow equation \eqref{eq: harmonic_flow}, define the function
\begin{align}\label{thetadefn}
  \Theta_{(x_0, t_0)} (\xi(t)) := (t_0 -t) \int_M |T(t)|^2 G \, \vol_g.  
\end{align} Note that $\Theta_{(x_0, t_0)}(\xi(t))$ is invariant under the parabolic rescalings of Lemma \ref{lem: rescaling}. Moreover, we can deduce the following evolution along the flow:

\begin{lemma}\label{lemma: ddt_theta_xi}
Let $\{\xi(t)\}_{t\in [0,t_0)}$ be a solution to the harmonic $H$-flow equation \eqref{eq: harmonic_flow} on $(M^n,g)$. If $(M^n,g)$ is noncompact, assume further that the torsion $T(t)$ of $\xi(t)$ has at most polynomial growth at infinity. Then the evolution of $\Theta_{(x_0,t_0)}(\xi(t))$ along the flow is given by
\begin{align}\label{eq: ddt_theta_xi}
    \frac{d}{d t} \Theta_{(x_0, t_0)}(\xi(t)) &+ 2(t_0-t)\int_M |\Div T-\nabla f\lrcorner T|^2 G\vol_g\nonumber\\
    &=-2(t_0-t)\int_M\left(\nabla_m \nabla_s G-\frac{\nabla_m G\nabla_s G}{G}+\frac{G g_{ms}}{2(t_0-t)}\right)g^{mn}g^{rs}\langle T_n,T_r\rangle\vol_g\nonumber\\
    &\quad-2(t_0-t)\int_M(\nabla_a\Ric_{bm} -\nabla_b\Ric_{am})T_{m;ij}g^{mn}g^{ai}g^{bj} G\vol_g\nonumber\\
    &\quad-(t_0-t)\int_M\langle R_{mr},-2[T_s,T_n] +\pi_\fm([T_s,T_n])+\pi_\fm(R_{ns}))\rangle g^{mn}g^{rs}G\vol_g.
\end{align}
\end{lemma}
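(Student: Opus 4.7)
The strategy follows Hamilton-type monotonicity for the harmonic map heat flow, adapted to $H$-structures via the torsion evolution in Corollary~\ref{cor: evo_torsion} and the Bianchi-type identity in Corollary~\ref{cor: m-part_Bianchi_identity}. First I would differentiate $\Theta_{(x_0,t_0)}(\xi(t))$ under the integral. Since \eqref{eq: harmonic_flow} is isometric, the background metric and volume form are time-independent, and $G$ satisfies the backward heat equation $\partial_t G = -\Delta G$; combining this with \eqref{eq: ddt_norm_T} for $S=0$ and $C=\Div T$ (which yields $\partial_t|T|^2 = 2\langle \nabla\Div T, T\rangle$) gives
\[
\frac{d\Theta}{dt} = -\int_M |T|^2 G\,\vol_g + 2(t_0-t)\int_M \langle \nabla\Div T, T\rangle G\,\vol_g - (t_0-t)\int_M |T|^2 \Delta G\,\vol_g.
\]
In the noncompact case, the polynomial growth hypothesis on $T$ against the Gaussian decay of $G$ is what will justify every subsequent integration by parts.

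Next I would transfer the Laplacian from $G$ onto $|T|^2$ via two integrations by parts, and expand through the Bochner identity $\Delta|T|^2 = 2|\nabla T|^2 + 2\langle \Delta T, T\rangle$. To control $\Delta T_l = g^{ab}\nabla_a\nabla_b T_l$, I would rewrite $\nabla_a T_l = \nabla_l T_a + (\nabla_a T_l - \nabla_l T_a)$ and use Corollary~\ref{cor: m-part_Bianchi_identity} to replace the skew part by $-2[T_a,T_l] + \pi_\fm([T_a,T_l]) + \pi_\fm(R_{la})$. Commuting derivatives $\nabla^a\nabla_l T_a = \nabla_l \Div T + [\nabla^a,\nabla_l]T_a$ produces a Ricci-type term; after one further integration by parts against $G$ together with the contracted second Bianchi identity \eqref{eq: riem2ndBid}, this becomes the $(\nabla_a\Ric_{bm} - \nabla_b\Ric_{am})$ contribution on the right-hand side of \eqref{eq: ddt_theta_xi}. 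The remaining $\nabla^a\pi_\fm(R_{la})$ pieces, together with the $[T,T]$ corrections, recombine via the ``reversed'' reading of Corollary~\ref{cor: m-part_Bianchi_identity} into the pairing $\langle R_{mr},\,-2[T_s,T_n] + \pi_\fm([T_s,T_n]) + \pi_\fm(R_{ns})\rangle g^{mn}g^{rs}$.

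The algebraic heart is to complete the square. Writing $\nabla G = -G\,\nabla f$ and integrating the $\langle \nabla\Div T, T\rangle$ term by parts produces, together with the Bochner term $|\nabla T|^2 G$, the combination $-2(t_0-t)\int_M |\Div T - \nabla f\lrcorner T|^2 G\,\vol_g$ plus Hessian-like cross terms involving $\nabla\nabla G$ and $\nabla G\otimes\nabla G / G$. These, combined with the initial $-\int |T|^2 G$ and the standard backward-heat-kernel relation $\partial_t f - \Delta f + |\nabla f|^2 = n/(2(t_0-t))$, assemble into the required Hessian correction $\nabla_m\nabla_s G - \nabla_m G\,\nabla_s G/G + G g_{ms}/(2(t_0-t))$ paired against $\langle T_n, T_r\rangle g^{mn}g^{rs}$. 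I expect the main obstacle to be the bookkeeping: tracking every curvature contribution and every $\nabla G$ cross term so that they split cleanly into exactly one of the three integrals on the right-hand side, keeping the $\pi_\fm$- and $\pi_\fh$-projections straight throughout so that the residual curvature terms match the statement precisely.
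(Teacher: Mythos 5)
Your plan has the right ingredients in the right places---the torsion evolution from Corollary~\ref{cor: evo_torsion}, the Bianchi-type identity of Corollary~\ref{cor: m-part_Bianchi_identity}, the contracted second Bianchi identity, and the square completion via $\nabla G = -G\nabla f$---but the organisational choice to go through the Bochner identity $\Delta|T|^2 = 2|\nabla T|^2 + 2\langle\Delta T,T\rangle$ creates a genuine obstruction. Once you expand $\Delta T_l$ far enough to extract $\nabla_l\Div T$ (as you explicitly propose, via $\nabla^a\nabla_l T_a = \nabla_l\Div T + [\nabla^a,\nabla_l]T_a$), the term $2(t_0-t)\int\langle\nabla\Div T,T\rangle G$ coming from $\partial_t|T|^2$ \emph{exactly cancels} the corresponding piece inside $-2(t_0-t)\int\langle\Delta T,T\rangle G$. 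There is then no $\langle\nabla\Div T,T\rangle$ left to integrate by parts in your "algebraic heart" step, and what you actually have is $-2(t_0-t)\int|\nabla T|^2 G$ plus lower-order terms. Since $|\nabla T|^2$ is a full Hilbert--Schmidt norm and the target $|\Div T - \nabla f\lrcorner T|^2$ is built from a single divergence contraction, these do not match; to recover the stated formula you would have to undo one of your two integrations by parts, swap indices with the Bianchi identity, and integrate by parts again, at which point you have reproduced the paper's argument by a longer route.

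The paper avoids all of this by moving only \emph{one} derivative off $\Delta G$, so that $-(t_0-t)\int|T|^2\Delta G = 2(t_0-t)\int \nabla_r T_{m;ab}\,T_{n;ij}\,\nabla_s G\, g^{mn}g^{rs}g^{ai}g^{bj}$, and then replacing $\nabla_r T_m$ by $\nabla_m T_r + [T_m,T_r] + R_{mr}$ \emph{inside the pairing with} $T_n$. This is the cleaner place to invoke Corollary~\ref{cor: m-part_Bianchi_identity}: the discrepancy lies in $\Omega_\fh^2$ and is annihilated by $T_n\in\Omega_\fm^2$, so one never has to differentiate a $\pi_\fm$-projection. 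That is a second, subtler issue with your route: $\nabla$ does not commute with $\pi_\fm$ (the bundle $\fm_Q$ is only $\nabla^H$-parallel, not $\nabla$-parallel), so computing $\Delta T_l$ from $\nabla_s(\nabla_m T_r + \pi_\fm([T_m,T_r]) + \pi_\fm(R_{mr}))$ generates $\nabla\pi_\fm$-terms whose cancellation you would have to track explicitly. The subsequent integration by parts on $\int \nabla_m T_r\cdot T_n\,\nabla_s G$ produces the Hessian $\nabla_m\nabla_s G$ and the $\nabla_s R_{mrab}\,T_{n;ij}\,G$ contributions in one pass, the latter becoming the Ricci-gradient term via \eqref{eq: riem2ndBid}. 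Finally, one small point: the heat-kernel relation you quote should read $\partial_t f + \Delta f - |\nabla f|^2 = n/(2(t_0-t))$ (your signs on $\Delta f$ and $|\nabla f|^2$ are reversed); but this identity is not actually used in the paper's proof---the factor $Gg_{ms}/(2(t_0-t))$ enters simply by rewriting $-\int|T|^2 G = -2(t_0-t)\int\tfrac{g_{ms}}{2(t_0-t)}g^{mn}g^{rs}\langle T_n,T_r\rangle G$.
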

\begin{proof}
    To justify all the arguments below in the noncompact case, note that the polynomial growth assumption on $T(t)$ implies, via Shi-type estimates [Proposition \ref{prop: Shi-estimates}], that $|\nabla^m T|(t)$ also grows at most polynomially  at infinity. This together with the well-known fact that the (backward) heat kernel $G$ of a Riemannian manifold of bounded geometry decays exponentially \cite[Corollary 3.1]{li1986parabolic} ensures that all the following integrals and integrations by parts are well-defined.

    Using the evolution equation in  \eqref{eq: ddt_norm_T} with $S=0$, we can compute:
\begin{align*}
    \frac{d}{dt}\Theta
    =&\int_M\Big(-|T|^2G+(t_0-t)\frac{\partial}{\partial t}|T|^2G+(t_0-t)|T|^2\frac{\partial G}{\partial t} \Big)\vol_g \\ =&\int_M\Big(\big(-|T|^2+2(t_0-t)\langle\nabla C,T\rangle\big)G-(t_0-t)|T|^2\Delta G \Big)\vol_g\\
    =&\int_M\Big(-|T|^2 G+2(t_0-t)\nabla_mC_{ab}T_{n;ij}g^{mn}g^{ai}g^{bj}G-(t_0-t)T_{m;ab}T_{n;ij}g^{mn}g^{ai}g^{bj}\Delta G\Big)\vol_g
\end{align*}
    Integrating by parts, we get
\begin{align*}
    \frac{d}{dt}\Theta=&\int_M\Big(-|T|^2G  -2(t_0-t) C_{ab}\nabla_mT_{n;ij}g^{mn}g^{ai}g^{bj}G 
    -2(t_0-t) C_{ab}T_{n;ij}g^{mn}g^{ai}g^{bj}\nabla_m G\\
    &+2(t_0-t)\nabla_rT_{m;ab}T_{n;ij}g^{mn})g^{ai}g^{bj}\nabla_s G g^{rs}\Big)\vol_g.
\end{align*}
    Now using the Bianchi-type identity \eqref{eq: m-part_Bianchi_identity} one has
\begin{align*}
    \frac{d}{dt}\Theta
    =&\int_M \Big(-|T|^2G-2(t_0-t)C_{ab}\Div T_{ij}g^{ai}g^{bj}G -2(t_0-t)C_{ab}(\nabla G\lrcorner T)_{ij}g^{ai}g^{bj}\\
    &+2(t_0-t)\big(\nabla_mT_{r;ab}+[T_m,T_r]_{ab}+R_{mrab}\big)T_{n;ij}g^{mn}g^{ai}g^{bj}\nabla_s G g^{rs}\Big)\vol_g.
    \end{align*}
    Notice that $[T_m,T_r]_{ab}T_{n;ij}g^{mn}g^{ai}g^{bj}=\langle[T_m,T_r],T_n\rangle g^{mn}=-\tr(T_mT_rT_n-T_rT_mT_n)g^{mn}=0$, then applying integrating by parts,
\begin{align*}
    \frac{d}{dt}\Theta
    =&\int_M \Big(-|T|^2G-2(t_0-t)\langle C,\Div T\rangle G -2(t_0-t)\langle C, \nabla G\lrcorner T\rangle\\
    &-2(t_0-t)T_{r;ab}\nabla_mT_{n;ij}g^{mn}g^{ai}g^{bj}\nabla_s G g^{rs}-2(t_0-t)T_{r;ab}T_{n;ij}g^{mn}g^{ai}g^{bj}\nabla_m\nabla_s G g^{rs}\\
    &-2(t_0-t)(\nabla_sR_{mrab}T_{n;ij}+R_{mrab}\nabla_sT_{n;ij})G g^{mn}g^{ai}g^{bj}g^{rs}\Big)\vol_g.
\end{align*}
    Thus, using $\nabla_i
    G=-G\nabla_i f$, the Bianchi-type identity \eqref{eq: m-part_Bianchi_identity}, the skew-symmetry $R_{mrab}=-R_{rmab}$ and the second Bianchi identity \eqref{eq: riem2ndBid} for $g^{rs}\nabla_s R_{rmab}$, we get
\begin{align*}
    \frac{d}{dt}\Theta
    =&\int_M \Big(-|T|^2G-2(t_0-t)\langle C,\Div T\rangle G +2(t_0-t)\langle C,\nabla f\lrcorner T\rangle G \\
    &+2(t_0-t)\langle \nabla f\lrcorner T,\Div T\rangle G-2(t_0-t)\nabla_m\nabla_s G\langle T_n,T_r\rangle g^{mn}g^{rs}\\
    &-2(t_0-t)(\nabla_a\Ric_{bm} -\nabla_b\Ric_{am})T_{m;ij}g^{mn}g^{ai}g^{bj} G\\
    & -(t_0-t)\langle R_{mr},-2[T_s,T_n] +\pi_\fm([T_s,T_n])+\pi_\fm(R_{ns})\rangle g^{mn}g^{rs}G\Big)\vol_g.
\end{align*}
    Finally, replacing $C=\Div T$ and completing the square using again $\nabla_i
    G=-G\nabla_i f$, we conclude
\begin{align*}
    \frac{d}{d t} \Theta 
    &=-2(t_0-t)\int_M \Big(\frac{g^{nr}}{2(t_0-t)}\langle T_n,T_r\rangle G +|\Div T|^2G-2\langle \nabla f\lrcorner T,\Div T\rangle G +\nabla_m\nabla_s G g^{mn}g^{rs}\langle T_n,T_r\rangle\\
    &\hspace{3cm} +(\nabla_a\Ric_{bm} -\nabla_b\Ric_{am})T_{m;ij}g^{mn}g^{ai}g^{bj} G\\
    &\hspace{3cm} +\frac{1}{2}\langle R_{mr},-2[T_s,T_n]  +\pi_\fm([T_s,T_n]) +\pi_\fm(R_{ns})\rangle g^{mn}g^{rs}G\Big)\vol_g\\
    &=-2(t_0-t)\int_M\Big(\big(\nabla_m \nabla_s G-\frac{\nabla_m G\nabla_s G}{G} +\frac{G g_{ms}}{2(t_0-t)}\big)g^{mn}g^{rs}\langle T_n,T_r\rangle +|\Div T-\nabla f\lrcorner T|^2G\\
    &\hspace{3cm} +(\nabla_a\Ric_{bm} -\nabla_b\Ric_{am})T_{m;ij}g^{mn}g^{ai}g^{bj} G\\
    &\hspace{3cm} +\frac{1}{2}\langle R_{mr},-2[T_s,T_n] +\pi_\fm([T_s,T_n])+\pi_\fm(R_{ns})\rangle g^{mn}g^{rs}G\Big)\vol_g.
    \qedhere
    \end{align*}
\end{proof}
%Using the identity $\nabla^rR_{rmab}=\nabla_aR_{bm}-\nabla_bR_{am}$, obtained from second Bianchi identity for the Riemannian curvature, we have:    

\begin{theorem}[Hamilton-type almost-monotonicity]
    Let $(M^n,g)$ be an oriented Riemannian manifold of bounded geometry, admitting a compatible $H$-structure, with $H=\mathrm{Stab}_{\SO(n)}(\xi_{\circ})$, and let $\{\xi(t)\}_{t\in [0,t_0)}$ be a solution to the harmonic $H$-flow \eqref{eq: harmonic_flow}. If $(M^n,g)$ is noncompact, assume further that the torsion $T(t)$ of $\xi(t)$ grows at most polynomially fast at infinity. 

    Then, for any $x_0\in M$ and $\max\{0,t_0-1\}<t_1\leqslant t_2<t_0$, there is a constant $c>0$ depending only on the geometry $(M^n,g)$ (and possibly $H$) such that the following holds:
\begin{equation}
\label{eq: global_monotonicity}
    \Theta_{(x_0,t_0)}(\xi(t_2))\leqslant c\Theta_{(x_0,t_0)}(\xi(t_1)) +c(E_0+1)(t_2-t_1),
\end{equation} 
    where $E_0:=\cE(\xi(0))$.
    Moreover, if $(M^n,g)=(\mathbb{R}^n,g_{\circ})$ is the flat Euclidean space, then for all $x_0\in\mathbb{R}^n$ and $0<t_1\leqslant t_2<t_0$, we have
\begin{equation}\label{eq: global_monotonicity_Rn}
\Theta_{(x_0,t_0)}(\xi(t_2))\leqslant\Theta_{(x_0,t_0)}(\xi(t_1)),
\end{equation} and equality holds if and only if 
\begin{equation}
\label{eq: equality_monotonicity}
    \Div T(t) =  \frac{x-x_0}{2(t_0-t)}\lrcorner T(t),
    \quad\forall 
    t\in [t_1,t_2].
\end{equation}
\end{theorem}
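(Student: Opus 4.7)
The plan is to integrate the identity of Lemma~\ref{lemma: ddt_theta_xi} over $[t_1, t_2]$, treating the flat Euclidean case and the general bounded-geometry case separately.

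For the flat Euclidean case $(M,g)=(\mathbb{R}^n,g_\circ)$, I would first verify by direct differentiation of \eqref{eq: heat_kernel} that
$$\nabla_m G = -\frac{(x-x_0)_m}{2(t_0-t)}G, \qquad \nabla_m\nabla_s G = \left(\frac{(x-x_0)_m(x-x_0)_s}{4(t_0-t)^2} - \frac{\delta_{ms}}{2(t_0-t)}\right)G,$$
so that the bracketed Hessian-log factor in the first error integral of \eqref{eq: ddt_theta_xi} vanishes identically. The Ricci and Riemann tensors being zero in $\mathbb{R}^n$, the second and third error integrals also vanish, leaving
$$\frac{d}{dt}\Theta_{(x_0,t_0)}(\xi(t)) = -2(t_0-t)\int_{\mathbb{R}^n} \bigl|\Div T - \nabla f\lrcorner T\bigr|^2 G\,\vol_g \leqslant 0,$$
where $\nabla f = (x-x_0)/(2(t_0-t))$. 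Integration over $[t_1,t_2]$ produces \eqref{eq: global_monotonicity_Rn}, and the equality case forces the nonnegative integrand to vanish pointwise, yielding \eqref{eq: equality_monotonicity}.

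For the general bounded-geometry case, the three error terms on the right-hand side of \eqref{eq: ddt_theta_xi} must be estimated. The crucial input is the heat-kernel Hessian bound
$$\left|\nabla_m\nabla_s G - \frac{\nabla_m G\,\nabla_s G}{G} + \frac{G\,g_{ms}}{2(t_0-t)}\right| \leqslant c\,G,$$
valid on complete manifolds of bounded geometry, which records that the pointwise defect of $\nabla^2\log G$ from $-g/(2(t_0-t))$ is controlled by the curvature. Coupled with the uniform bounds $|\nabla\Ric|,|R| \leqslant c$, the mass normalisation $\int_M G\,\vol_g \leqslant 1$, and Young's inequality applied to the Ricci integrand (which is linear in $T$, so $|T|G \leqslant \tfrac{1}{2}|T|^2 G + \tfrac{1}{2}G$), one obtains a differential inequality of the form
$$\frac{d}{dt}\Theta(t) \leqslant c\,\Theta(t) + c(E_0+1),$$
after invoking the monotonicity $\cE(\xi(t)) \leqslant E_0$ along the flow to handle any integral of $|T|^2 G$ for which the weight $G$ cannot be usefully controlled from above. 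Grönwall's inequality applied on $[t_1,t_2] \subset (\max\{0,t_0-1\}, t_0)$, with $t_2-t_1 \leqslant 1$, then delivers \eqref{eq: global_monotonicity}.

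The main obstacle is cleanly establishing the heat-kernel Hessian bound in this generality; while it is standard in the Ricci-flow and comparison-geometry literature, it requires either a parametrix expansion for $G$ or a Li--Yau-type differential Harnack argument adapted to manifolds of bounded geometry. The polynomial-growth hypothesis on $T(t)$, combined with the Gaussian upper bounds on $G$ and its covariant derivatives, is what legitimises both the integrations by parts already used in the proof of Lemma~\ref{lemma: ddt_theta_xi} and the differentiation under the integral sign that initiates the whole argument.
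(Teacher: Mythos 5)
Your treatment of the flat Euclidean case is correct and identical to the paper's: the Hessian-of-$\log G$ defect and the curvature terms vanish, leaving only the nonpositive complete square, and the equality case is read off pointwise.

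The general bounded-geometry case has a genuine gap, and it sits exactly where you flag your own uncertainty: the claimed two-sided pointwise bound
$\bigl|\nabla_m\nabla_s G - \nabla_m G\,\nabla_s G/G + G\,g_{ms}/(2(t_0-t))\bigr| \leqslant c\,G$
is false in general. Writing the left side as $G\bigl(\nabla^2\log G + \tfrac{g}{2(t_0-t)}\bigr)$ and using the parametrix expansion $G\sim (4\pi\tau)^{-n/2}e^{-d^2/4\tau}(u_0+\cdots)$ with $\tau=t_0-t$, the defect of $\nabla^2\log G$ from $-g/2\tau$ contains a term of order $d(x,x_0)^2/\tau$ (coming from $\nabla^2 d^2 - 2g = O(d^2)$), which is not uniformly bounded as $\tau\to 0$; so no estimate by $cG$ with a $\tau$-independent constant can hold. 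What is true — and what the paper uses — is Hamilton's matrix Harnack estimate for the backward heat kernel, which is only a \emph{lower} bound (a lower bound suffices here, since the Hessian term is contracted against the positive semi-definite quantity $\langle T_n,T_r\rangle g^{mn}g^{rs}$ and one only needs to bound $\tfrac{d}{dt}\Theta$ from above) and which carries an unavoidable logarithmic correction:
\begin{equation*}
\nabla_m\nabla_s G -\frac{\nabla_m G \nabla_s G}{G} + \frac{G g_{ms}}{2(t_0-t)} \geqslant -c\Bigl(1+G\ln\tfrac{c'}{(t_0-t)^{n/2}}\Bigr)g_{ms}.
\end{equation*}
As a consequence the differential inequality you would actually obtain is not $\Theta'\leqslant c\Theta + c(E_0+1)$ but
$\Theta'(t)\leqslant c\bigl(1+\ln\tfrac{c'}{(t_0-t)^{n/2}}\bigr)\Theta(t) + c(E_0+1)$,
with a coefficient that blows up as $t\to t_0^-$. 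Your plain Gr\"onwall step therefore does not apply as written; one must observe that the coefficient is integrable on $(t_0-1,t_0)$ and integrate against an explicit antiderivative $F(t)=\bigl(-1-\ln c'+\tfrac{n}{2}\ln(t_0-t)-\tfrac{n}{2}\bigr)(t_0-t)$, which is uniformly bounded on that interval, to recover the constant $c$ in \eqref{eq: global_monotonicity}. (Also note that the ``$1$'' in Hamilton's bound, having no factor of $G$, produces the term $c(t_0-t)\int_M|T|^2\vol_g\leqslant c\cE(\xi(t))\leqslant cE_0$ rather than a multiple of $\Theta$; this is where the $E_0$ in the statement actually enters from the Hessian term, in addition to the Ricci term you identified.) The rest of your estimates — the $|\nabla\Ric|$ and curvature terms via Young's inequality, mass normalisation of $G$, and energy monotonicity — match the paper.
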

\begin{proof}
    We shall bound each of the terms on the right-hand side of  \eqref{eq: ddt_theta_xi} in Lemma \ref{lemma: ddt_theta_xi}.
    
    When $(M^n,g)=(\mathbb{R}^n,g_{\circ})$, the curvature $R \equiv 0$ vanishes identically, and $G$ is given by \eqref{eq: heat_kernel}, so that the term
\[
\nabla_m\nabla_s G -\frac{\nabla_m G \nabla_s G}{G} + \frac{G g_{ms}}{2(t_0-t)}
\] 
    also vanishes identically, and furthermore $\nabla f = \frac{x-x_0}{2(t_0-t)}$. Therefore, in this case we get from Lemma \ref{lemma: ddt_theta_xi} that
\[
\frac{d}{dt}\Theta_{(x_0,t_0)}(\xi(t)) = -2(t_0-t)\int_M |\Div T-\frac{x-x_0}{2(t_0-t)}\lrcorner T|^2 G\vol_g\leqslant 0,
\] 
    which upon integration directly implies the strict monotonicity \eqref{eq: global_monotonicity_Rn}, with equality if and only if \eqref{eq: equality_monotonicity} holds.

    In the general case,  Hamilton's matrix Harnack estimate,  combining \cite[Theorem 4.3]{hamilton1993matrix}  with \cite[Corollary 1.3]{Hamilton1993}), yields  constants $c,c'>0$ depending only on $(M^n,g)$ such that, for all $t\in(t_0-1,t_0)$,
\[
\nabla_m\nabla_s G -\frac{\nabla_m G \nabla_s G}{G} + \frac{G g_{ms}}{2(t_0-t)} \geqslant -c\left(1+G\ln\left(\frac{c'}{(t_0-t)^{n/2}}\right)\right)g_{ms}.
\] 
    Observing that $t_0-t\leqslant 1$, the first term on the right-hand side of \eqref{eq: ddt_theta_xi} can be estimated as follows:
\begin{align*}
    -2(t_0-t)&\int_M\left(\nabla_m \nabla_s G-\frac{\nabla_m G\nabla_s G}{G}+\frac{G g_{ms}}{2(t_0-t)}\right)g^{mn}g^{rs}\langle T_n,T_r\rangle\vol_g\\
    &\leqslant c(t_0-t)\int_M|T|^2\vol_g + c\ln\left(\frac{c'}{(t_0-t)^{n/2}}\right)(t_0-t)\int_M|T|^2 G\vol_g\\
    &\leqslant c\cE(\xi(t)) + c\ln\left(\frac{c'}{(t_0-t)^{n/2}}\right)\Theta_{(x_0,t_0)}(\xi(t))\\
    &\leqslant cE_0 + c\ln\left(\frac{c'}{(t_0-t)^{n/2}}\right)\Theta_{(x_0,t_0)}(\xi(t)),
\end{align*} 
    where in the last step we used the fact that $\cE(\xi(t))\leqslant c\cD(\xi(t))\leqslant c\cD(\xi(0))\leqslant cE_0$, since $\{\xi(t)\}$ is a solution to \eqref{eq: harmonic_flow}, which is the negative gradient flow of $\cD$ restricted to isometric structures [Lemma \ref{lem: gradient_dirichlet_along_iso_var}], and also $\cD(\xi)\sim\cE(\xi)$, by  \eqref{ineq: T_equiv_nablaxi}. As to the second term in the right-hand side of \eqref{eq: ddt_theta_xi}, using that $(M,g)$ has bounded geometry, $\int_M G\vol_g = 1$, $t_0-t\leqslant 1$, and  Young's inequality, we can estimate: 
\begin{align*}
    -2(t_0-t) &\int_M(\nabla_a\Ric_{bm} -\nabla_b\Ric_{am}) T_{m;ij}g^{mn}g^{ai}g^{bj} G\vol_g\\
    &\leqslant c(t_0-t)\int_M |\nabla\Ric|^2 G\vol_g + c(t_0-t)\int_M|T|^2 G\vol_g\\
    &\leqslant c(t_0-t)\int_M G\vol_g + c\Theta_{(x_0,t_0)}(\xi(t))\\
    &\leqslant c\left(1+\Theta_{(x_0,t_0)}(\xi(t))\right).
\end{align*} Next, using the same facts as before, we estimate the last term in the right-hand side of \eqref{eq: ddt_theta_xi} as follows:
\begin{align*}
    -(t_0-t)&\int_M\langle R_{mr},-2[T_s,T_n] +\pi_\fm([T_s,T_n])+\pi_\fm(R_{ns}))\rangle g^{mn}g^{rs}G\vol_g\\
    &\leqslant c(t_0-t)\int_M |R||T|^2G\vol_g + c(t_0-t)\int_M |R|^2G\vol_g\\
    &\leqslant c\Theta_{(x_0,t_0)}(\xi(t)) + c.
\end{align*} 
    In summary, and using Lemma \ref{lemma: ddt_theta_xi}, we get
\begin{equation}
    \frac{d}{dt}\Theta_{(x_0,t_0)}(\xi(t)) \leqslant c\left(1+\ln\left(\frac{c'}{(t_0-t)^{n/2}}\right)\right)\Theta_{(x_0,t_0)}(\xi(t)) + c(E_0+1).
\end{equation} 
    
    Since the function
\[
F(t):=\left(-1-\ln c' + \frac{n}{2}\ln(t_0-t)-\frac{n}{2}\right)(t_0-t)
\] satisfies $F'(t) = 1+\ln\left(\frac{c'}{(t_0-t)^{n/2}}\right)$, it follows that
\begin{align*}
    \frac{d}{dt}\left(e^{-cF(t)}\Theta_{(x_0,t_0)}(\xi(t))\right) \leqslant ce^{-cF(t)}(E_0+1).
\end{align*} Now observe that $F(t)$ is uniformly bounded for $\max\{0,t_0-1\}<t<t_0$, and thus integrating for $\max\{0,t_0-1\}<t_1<t_2<t_0$ we get
\begin{align*}
    \Theta_{(x_0,t_0)}(\xi(t_2)) &\leqslant e^{c(F(t_2)-F(t_1))}\Theta_{(x_0,t_0)}(\xi(t_1)) + c(E_0+1)(t_2-t_1)\\
    &\leqslant c\Theta_{(x_0,t_0)}(\xi(t_1)) + c(E_0+1)(t_2-t_1).
    \qedhere
\end{align*}
\end{proof}

\begin{remark}
    The equality case \eqref{eq: equality_monotonicity} of the monotonicity in $\mathbb{R}^n$ is attained precisely by self-similar solutions $\xi(t)$ induced by a specific kind of shrinking soliton described in Example \ref{ex: special_solitons}: namely, those compatible with the Euclidean metric $g_{\circ}$ and which satisfy equation \eqref{eq: special_soliton} with $c=1$. 
\end{remark}

\bibliography{references}
%===============================================================================

% \bib, bibdiv, biblist are defined by the amsrefs package.

\bigskip

\noindent
(DF)  {\textit{Current affiliation:} Institute of Mathematics (IM), Federal University of Rio de Janeiro (UFRJ), 21941-909 Rio de Janeiro - RJ, Brazil.\\
\textit{Former:}} Institute of Mathematics, Statistics and Scientific Computing (IMECC), University of Campinas (Unicamp), 13083-859 Campinas-SP, Brazil.\\
\& Univ. Brest, CNRS UMR 6205, LMBA, F-29238 Brest, France.\\ \href{fadel.daniel@gmail.com}{fadel.daniel@gmail.com}

\medskip

\noindent
(EL) Univ. Brest, LMBA, France.\\ \href{loubeau@univ-brest.fr}{loubeau@univ-brest.fr}

\medskip

\noindent
(AM) IMECC, Unicamp, Brazil.\\ \href{amoreno@unicamp.br}{amoreno@unicamp.br}

\medskip

\noindent
(HSE) IMECC, Unicamp, Brazil.\\ \href{henrique.saearp@ime.unicamp.br}{henrique.saearp@ime.unicamp.br}

\end{document}